\documentclass[10pt]{amsart}

\usepackage{amsmath,amssymb,graphicx,amscd,pinlabel,yhmath,enumitem}

\usepackage[all]{xy}

\usepackage{hyperref}
\hypersetup{colorlinks=true,linkcolor=brown,filecolor=magenta,urlcolor=cyan}

\usepackage[dvipsnames]{xcolor}
\SelectTips{cm}{}

\numberwithin{equation}{section}
\allowdisplaybreaks

\theoremstyle{definition}
\newtheorem{definition}{Definition}[section]
\newtheorem{example}[definition]{Example}
\newtheorem{remark}[definition]{Remark}

\theoremstyle{theorem}
\newtheorem{theorem}[definition]{Theorem}
\newtheorem{proposition}[definition]{Proposition}
\newtheorem{lemma}[definition]{Lemma}
\newtheorem{cor}[definition]{Corollary}
\newtheorem*{claim*}{Claim}

\author{Gw\'ena\"el Massuyeau}
\address{Institut de Math\'ematiques de Bourgogne, UMR 5584, CNRS, 
Universit\'e de Bourgogne, 21000 Dijon, France}
\email{{gwenael.massuyeau@u-bourgogne.fr}}

\newcommand\nc{\newcommand}

%
%  Colors
%
\newcommand{\red}[1]{\textcolor{red}{#1}}
\newcommand{\blue}[1]{\textcolor{blue}{#1}}
\newcommand{\magenta}[1]{\textcolor{magenta}{#1}}
%%%%%%

%
%  Straight letters
%
\nc\id{\operatorname{id}}
\nc\ad{{\operatorname{ad}}}
\nc\op{{\operatorname{op}}}
\nc\End{\operatorname{End}}
\nc{\interior}{\operatorname{int}}
\nc\Span{\operatorname{Span}}
\nc\ev{{\operatorname{ev}}}
\nc\coev{{\operatorname{coev}}}
\nc\Mag{\operatorname{Mag}}
\nc\Mon{\operatorname{Mon}}
\nc\Gr{\operatorname{Gr}}
\nc\Hom{\operatorname{Hom}}
\nc\gr{\operatorname{gr}}
\nc\conv{\operatorname{conv}}
\nc\Ob{\operatorname{Ob}}
\nc\dbl{\mathsf{d}}
\nc\sfH{\mathsf{H}}
\nc\sfI{\mathsf{I}}
\nc\grp{{\mathrm{grp}}}
\nc\sfP{\mathsf{P}}
\nc\Aut{\operatorname{Aut}}
\nc\diffeo{\operatorname{diffeo}}
%%%%%%

%
% Bold letters
%
\nc\K {{\mathbb K}}
\nc\Q {{\mathbb Q}}
\nc\Z {{\mathbb Z}}
\nc\R {{\mathbb R}}
\nc\N {{\mathbb N}}
%%%%%%

%
% Curly letters
%
\nc\calC {{\mathcal C}}
\nc\calK {{\mathcal{K}}}
\nc\calF {{\mathcal{F}}}
\nc\calI {{\mathcal{I}}}
\nc\calJ {{\mathcal{J}}}
\nc\calV {{\mathcal{V}}}
\nc\calD {{\mathcal{D}}}
\nc\calM {{\mathcal{M}}}
%%%%%%

%
% Other
%
\newcommand{\centre}[1]{\begin{array}{c} #1 \end{array}}

%%%%%%

\title{Surgery equivalence relations for $3$-manifolds}

\date{July 21, 2023}

\begin{document}

\begin{abstract}
By classical results of Rochlin, Thom, Wallace and Lickorish, it is well-known that any two 3-manifolds (with diffeomorphic boundaries) are related one to the other by surgery operations. Yet, by restricting the type of the surgeries, one can define  several families of non-trivial equivalence relations 
on the sets of (diffeomorphism classes of) 3-manifolds.
In this expository paper, which is based on lectures given at the school ``\emph{Winter Braids XI}'' (Dijon, December 2021),
  we  explain how certain filtrations of mapping class groups of surfaces enter into the definitions 
  and the mutual comparison of these surgery equivalence relations.
We also survey the ways in which concrete invariants of 3-manifolds (such as finite-type invariants) 
can be used to characterize such relations.
\end{abstract}

\maketitle

{\small \tableofcontents}

\section*{Introduction}

It is a classical result of  Rochlin and Thom, dating back to the early~50's,
that any closed oriented $3$-manifold $M$ is the boundary of a compact oriented $4$-manifold~$W$.
By elementary differential topology arguments (considering a handle decomposition of $W$), 
it follows that  $M$ is obtained   from the $3$-sphere $S^3$ by finitely many \emph{knot surgeries}.
Here a ``knot surgery'' in a $3$-manifold $V$ merely consists in removing a regular neighborhood $\operatorname{N}(K)$  
of a knot  $K$ in $V$ and gluing it back 
while exchanging the meridian with a parallel curve of $K$ on $\partial \operatorname{N}(K)$.

Here is another (equivalent) way of viewing any closed oriented $3$-manifold $M$ as the result of ``modifying''  $S^3$ in some way.
Consider a \emph{Heegaard splitting} of $M$,
i.e$.$ the decomposition of $M=H\cup H'$ into two handlebodies $H,H'$ of the same genus $g$ such that $H\cap H'=\partial H= \partial H'$:
the existence of such a decomposition arises again from elementary differential topology (considering, this time, a handle decomposition of $M$ itself).
Since there also exists a Heegaard splitting of $S^3$ of genus $g$, and since any two handlebodies of genus $g$ are diffeomorphic,
one can find a compact oriented surface $T$ in $S^3$ and a self-diffeomorphism $t$ of $T$
such that  $M$ 
 is obtained from the $3$-sphere $S^3$ by cutting along $T$ and gluing back with $t$. We call this operation a \emph{twist} along $T$ by $t$.
 
 Since ``knot surgeries'' and ``twists'' (as defined above) are thus too general to define interesting relations between $3$-manifolds, 
 it is natural to impose some conditions on these operations. 
 For instance, if one desires a twist to preserve the homology type of $3$-manifolds,
we should require the gluing diffeomorphism to act trivially in homology;
similarly, one can ensure that a knot surgery preserves
the homology type by requiring the knot to be null-homologous 
and by choosing the parallel in a convenient way.
Stronger conditions on knot surgeries or twists can guarantee 
preservation of stricter features of the $3$-manifolds: for instance, their ``nilpotent homotopy types'',
or, their invariance under certain families of topological invariants. 
It turns out that, in the past 40 years, several families of highly non-trivial equivalence relations have been defined for $3$-manifolds
by restricting the type of the ``knot surgeries'' or ``twists.''

In this expository paper, we  aim at surveying the study of such \emph{surgery equivalence relations} 
which, for some of them, have been introduced several times in the literature with different  descriptions.
More specifically, via the above notion of ``twists'',
we shall review how  certain filtrations of mapping class groups of surfaces enter into the definitions 
  and the mutual comparison of these  equivalence relations.
Furthermore, we will survey the ways in which  concrete invariants of 3-manifolds (such as finite-type invariants) 
can be used to characterize such relations.

This expository paper is based on lectures  given at the school ``\emph{Winter Braids XI}'', 
which was held at the IMB (Dijon) in December 2021.
So, in \S 1, we start with preliminary contents for readers
who might not be so familiar with certain constructions of differential topology (e.g$.$ handle decompositions)
or basic results of low-dimensional topology (including the generation of the mapping class groups
in relation with the above-mentioned theorem of Rochlin~\cite{Rochlin} and Thom \cite{Thom51}).
Next, in \S 2, we review the definitions of three families of surgery equivalence relations:
the \emph{$k$-equivalence relations}  defined by Cochran, Gerges \& Orr \cite{CGO},
the \emph{$Y_k$-equivalence relations} 
defined under different names by Goussarov \cite{Go99} and Habiro \cite{Hab00},
and the \emph{$J_k$-equivalence relations}  which arise naturally from the study of the latter.
It follows from their definitions that all these relations are ``hierarchized'' as follows:
$$
\begin{array}{cccccccccccc}
Y_1\hbox{\small-eq$.$} & \! \Longleftarrow \! & Y_2\hbox{\small-eq$.$}  
& \! \Longleftarrow \! & Y_3\hbox{\small-eq$.$}   & \! \Longleftarrow \! & \cdots & Y_k\hbox{\small-eq$.$}   
&  \! \Longleftarrow \! & Y_{k+1}\hbox{\small-eq$.$}   & \! \Longleftarrow \! & \cdots  \\
  \parallel & & \Downarrow & & \Downarrow && &  \Downarrow & & \Downarrow && \\
J_1\hbox{\small-eq$.$}   & \! \Longleftarrow \! & J_2\hbox{\small-eq$.$}   & \! \Longleftarrow \! & J_3\hbox{\small-eq$.$}   & \! \Longleftarrow \! 
& \cdots & J_k\hbox{\small-eq$.$}   &  \! \Longleftarrow \! & J_{k+1}\hbox{\small-eq$.$}   & \! \Longleftarrow \! & \cdots \\
 & & \parallel & &  && & & &  && \\
 &  & 2\hbox{\small-eq$.$}   & \! \Longleftarrow \! & 3\hbox{\small-eq$.$}   & \! \Longleftarrow \! 
& \cdots & k\hbox{\small-eq$.$}   &  \! \Longleftarrow \! & {(k+1)}\hbox{\small-eq$.$}   & \! \Longleftarrow \! & \cdots 
\end{array}
$$
For instance, $Y_1$-equivalence (resp$.$  $2$-equivalence) is generated 
by the twists  (resp$.$ the knot surgeries) of the above-mentioned kinds that preserve the homology type of $3$-manifolds.
We give particular emphasis on the $Y_k$-equivalence relations: indeed, their definition and their study
are closely tied to those of the lower central series of the subgroup 
of the mapping class group acting trivially in homology, namely the \emph{Torelli group} of a surface.
The main advantage of the $Y_k$-equivalence, with respect
to the $J_k$-equivalence and the $k$-equivalence,  consists 
in the existence of a kind  of ``surgery calculus''  --- known as \emph{clasper calculus} ---
which is very efficient to describe the associated quotient sets of $3$-manifolds.

The final section, \S 3, is devoted to the problem 
of characterizing all these equivalence relations. 
We start by reviewing a result of Matveev \cite{Matveev} 
which classifies $Y_1$-equivalence  for closed $3$-manifolds, 
and we extract from the literature several results 
for the characterization of the other equivalence relations in low degree $k$.
We also consider the problem of characterizing them in arbitrary degree $k$:
in the case of the $Y_k$-equivalence relations, 
such a problem is connected to the theory of finite-type invariants which we briefly outline.
In fact, the exact connection between this theory and the family of $Y_k$-equivalence relations
can be viewed as an instance of the so-called ``\emph{Dimension Subgroup Problem}'' in group theory.

Our exposition will be mainly directed towards \emph{closed} oriented $3$-manifolds
and \emph{homology cylinders} over a compact oriented surface $\Sigma$.
The latter constitute a particular, but very important, class of compact oriented $3$-manifolds 
with boundary parametrized by $\partial(\Sigma \times [-1,+1])$:
in fact, homology cylinders even constitute a monoid into which the Torelli group of $\Sigma$ naturally embeds via the mapping cylinder construction,
and which is essentially the monoid of $\Z$-homology $3$-spheres in the case $\Sigma:=D^2$.
Since the works of Goussarov \cite{Go99}, Habiro \cite{Hab00} and Garoufalidis \& Levine \cite{GL05}, 
most of the study on surgery equivalence relations for $3$-manifolds have been focused on monoids of homology cylinders
in relation with the theory of finite-type invariants and the algebraic structure of mapping class groups.

The case of  $3$-manifolds with \emph{arbitrary} boundary is not so much developed in the literature,
although we should mention the notable exception of knots and (string-)links exteriors.
In the study of knots and {(string-)}\-links, 
the $Y_k$-equivalence relations are replaced by the more specific ``$C_k$-equivalence relations''
(which can be formulated in purely knot-diagrammatic terms),
and the role played by the lower central series of the Torelli group for $3$-manifolds
is  played by the lower central series of the pure braid group (which is much better understood):  
then, the study in this case turns out to be rather particular, but it also shares many similarities and connections with the general case.
This study started in relation with the theory of Vassiliev invariants through the works of Stanford \cite{Stanford} and Habiro \cite{Hab00}, 
before being developed and generalized in several directions (see \cite{MY} and references therein).
Yet, for a better delimitation of the problematics, 
the present survey will not consider the specific case of knots and (string-)links.\\

\noindent
\textbf{Acknowledgment.} 
This work has been partly funded by the project ``ITIQ-3D'' of the Région Bourgogne Franche--Comté 
and the project ``AlMaRe'' (ANR-19-CE40-0001-01).
 The IMB receives support from the EIPHI Graduate School (ANR-17-EURE-0002).
 The author is grateful to the referee for the careful reading of the manuscript.

\section{Basics about $3$-manifolds and mapping class groups} \label{sec:basics}

We start this expository paper by reviewing basic facts  and constructions for $3$-manifolds
and mapping class groups of surfaces.\\[-0.2cm]

\noindent
\textbf{Conventions.}
All manifolds are assumed to be smooth and, unless otherwise stated, they are connected and oriented.
For any integer~$n\geq 0$,  $D^n \subset \R^n$  is the $n$-dimensional euclidean disk
and $S^{n} := \partial D^{n+1}$ is the $n$-dimensional sphere. \hfill $\blacksquare$

\subsection{Surgeries and handle decompositions}

We first recall the general definitions of surgeries and handle decompositions in any dimension $m\geq 1$, 
before illustrating these constructions by specializing to the dimension $m=3$.

Let $M$ be a (possibly disconnected) $m$-manifold, let $k\in \{1,2,\dots,m\} $  
and let $i: S^{k-1} \times D^{m+1-k} \hookrightarrow \interior(M)$ be an embedding.
The $m$-manifold
$$
M' := \big(M \setminus \interior i(S^{k-1} \times D^{m+1-k})\big) \cup_{i'} \big( D^k \times S^{m-k}\big) \quad \hbox{where } i':=i\vert_{S^{k-1} \times S^{m-k}}
$$
is said to be obtained from $M$ by the \emph{surgery} of index $k$ along $i$.
Observe that, reversely, $M$ is  obtained from $M'$ by a surgery of index $(m+1-k)$.

\begin{example}\label{ex:dim3}   In dimension $m:=3$, we get the following operations $M\leadsto M'$:
\begin{enumerate}
\item \emph{Index $k=1$:} we consider the disjoint union $S^0 \times D^3$ of two balls in $M$ and replace it by $D^1 \times S^2$;
thus the two balls are deleted and their boundaries are identified one to the other in an orientation-preserving way.
\item \emph{Index $k=2$:} we consider a solid torus $S^1 \times D^2$  in $M$  and replace it by another one $D^2\times S^1$; 
``meridians'' and ``parallels'' of solid tori are exchanged during this process.
\item \emph{Index $k=3$:} we consider a thickened sphere $S^2 \times D^1$ in $M$ and we fill each of the two spheres $S^2 \times S^0$ with a ball.
\end{enumerate}

Thus, a surgery of index $1$ can be of two types in dimension $3$: if the two balls $S^0 \times D^3$ belong to the same connected component of $M$,
then  $M' \cong M\sharp (S^1 \times S^2)$ which  can also be obtained by surgery of index $2$ along a solid torus $S^1 \times D^2 \subset M$ 
such that   $S^1 \times \{0\}$ bounds a disk; otherwise,  $M'$ is obtained from $M$
by taking the connected sum of two of its connected components.

Similarly, a surgery of index $3$ can be of two types: if the thickened sphere  $S^2 \times D^1$ is separating,
then $M$ is reversely obtained from $M'$ by taking the connected sum of two of its connected  components; 
otherwise, we have  $M\cong M'\sharp  (S^1\times S^2)$.

We conclude that, in dimension $3$, it is enough to consider surgeries of index~$2$. 
For  later use, we reformulate them in knot-theoretical terms.
Let $K\subset \interior(M)$ be a knot;  a \emph{parallel} of $K$
is a simple closed curve  in the boundary  $\partial \hbox{N}(K)$ 
of the regular neighborhood $\hbox{N}(K)$ of $K$, that is isotopic to $K$ inside $ \hbox{N}(K)$;
the \emph{meridian} of $K$ is the simple closed curve  $\mu(K)$ in $\partial \hbox{N}(K)$ that bounds a disk in $\hbox{N}(K)$
 but not in $\partial \hbox{N}(K)$; 
 up to isotopy in $\partial \hbox{N}(K)$, the meridian is unique but there are infinitely many possibilities for a parallel.
 See Figure \ref{fig:knot}.
 
\begin{figure}[h!]
\includegraphics[scale=0.8]{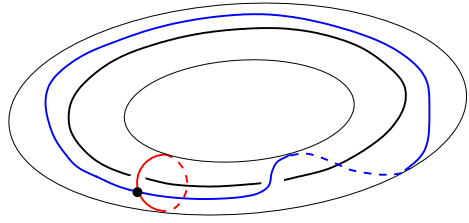}
\caption{A knot $K$ (black) in its regular neighborhood $\hbox{N}(K)$,
together with the meridian  (red) and a parallel (blue)} \label{fig:knot}
\end{figure}

We now assume that $K$ is \emph{framed} in the sense that a parallel $\rho(K)$ has been specified;
 then the $3$-manifold obtained from $M$ by \emph{surgery} along  $K$ is
 $$
 M_K := \big( M \setminus \interior \hbox{N}(K) \big) \cup_\phi (D^2 \times S^1)
 $$ 
 where $\phi: S^1 \times S^1 \to \partial  \hbox{N}(K)$ is a diffeomorphism mapping $\{1\} \times S^1$ to $\mu(K)$
 and $S^1 \times \{1\}$ to $\rho(K)$. 
 The manifold $M_K$ is well-defined only up to orientation-preserving diffeomorphisms,
 and the surgery $M\leadsto M_K$  is the same as a surgery $M\leadsto M'$ of index~$2$, 
 where the embedding $i: S^1 \times D^2 \hookrightarrow \interior(M)$ has image $\hbox{N}(K)$ and
 maps $S^1 \times \{0\}$ (resp$.$  $S^1 \times \{1\}$) to $K$ (resp$.$ to $\rho(K)$).   
 
Very often, a framed knot $K$ in a $3$-manifold $M$ is given by drawing on the blackboard a \emph{knot diagram}, 
which represents the image of a generic projection
of the knot on a planar surface $B\subset M$ onto which (part of) $M$ deformation retracts:
we keep track of the ``over/under''  crossing information  at each double point
and the parallel of $K$ is given  by lifting the curve parallel to the projection of $K$ in $B$.
This is called the ``\emph{blackboard framing convention}''.
For instance, here are three diagrams of the trivial knot $U\subset S^3$ showing three different framings:
$$
\centre{
\includegraphics[scale=0.3]{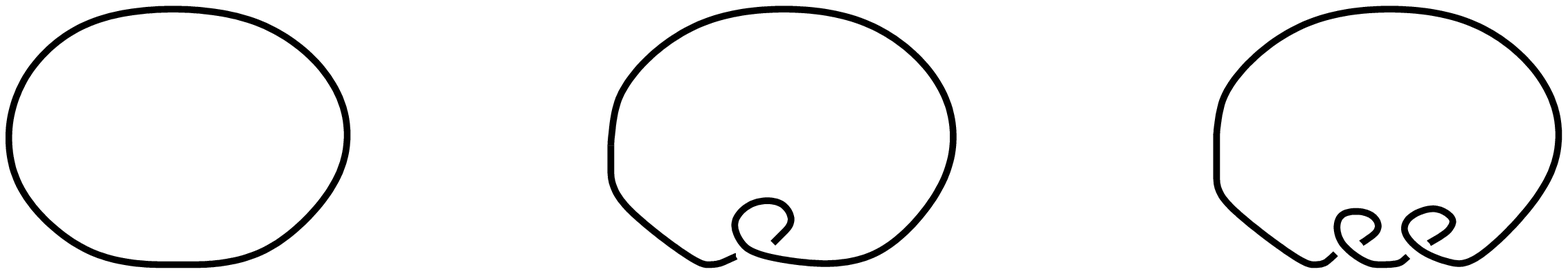}
}
$$
 then the resulting manifold $S^3_U$ is  $S^1 \times S^2$, $S^3$ and $\R P^3$, respectively. 
  (To be specific, the knots are given in $\R^3 \subset  S^3 $ and the planar surface $B$ onto which we project  is an affine plane of $\R^3$.)  
\hfill $\blacksquare$
\end{example}

A surgery of index $k$ is only the tip of the iceberg of a higher-dimensional operation. 
Let $n\in \N$ and $k\in \{0,\dots,n\}$. A \emph{$k$-handle} in dimension $n$ is a copy of $D^k\times D^{n-k}$; 
its boundary can be decomposed into two parts:
$$
\partial (D^k \times D^{n-k}) = \big( S^{k-1} \times D^{n-k} \big) \cup \big( D^k \times S^{n-k-1} \big)
$$
Let $W$ be an $n$-manifold with boundary. \emph{Attaching} a $k$-handle to $W$ means to specify an embedding
$i: S^{k-1} \times D^{n-k} \hookrightarrow \partial W$ to construct the new $n$-manifold 
$$
W' = W \cup_i \big(D^k \times D^{n-k}\big).
$$
Then $\partial W'$ is obtained from $\partial W$ by a surgery of index $k$.

\begin{remark}
Technically speaking, the new manifold $W'$ has ``corners'' but there exists a standard procedure to round those ``corners''.
Alternatively, one can give a  smooth model of the attachment of a $k$-handle that arises from Morse theory (see below).
For instance, here are schematic images  (with or without corners) of  a $1$-handle attached in dimension $2$:\\[0.2cm]
$$
\centre{
\labellist \small \hair 2pt
\pinlabel {corners} [r] at 10 500
\endlabellist
\includegraphics[scale=0.16]{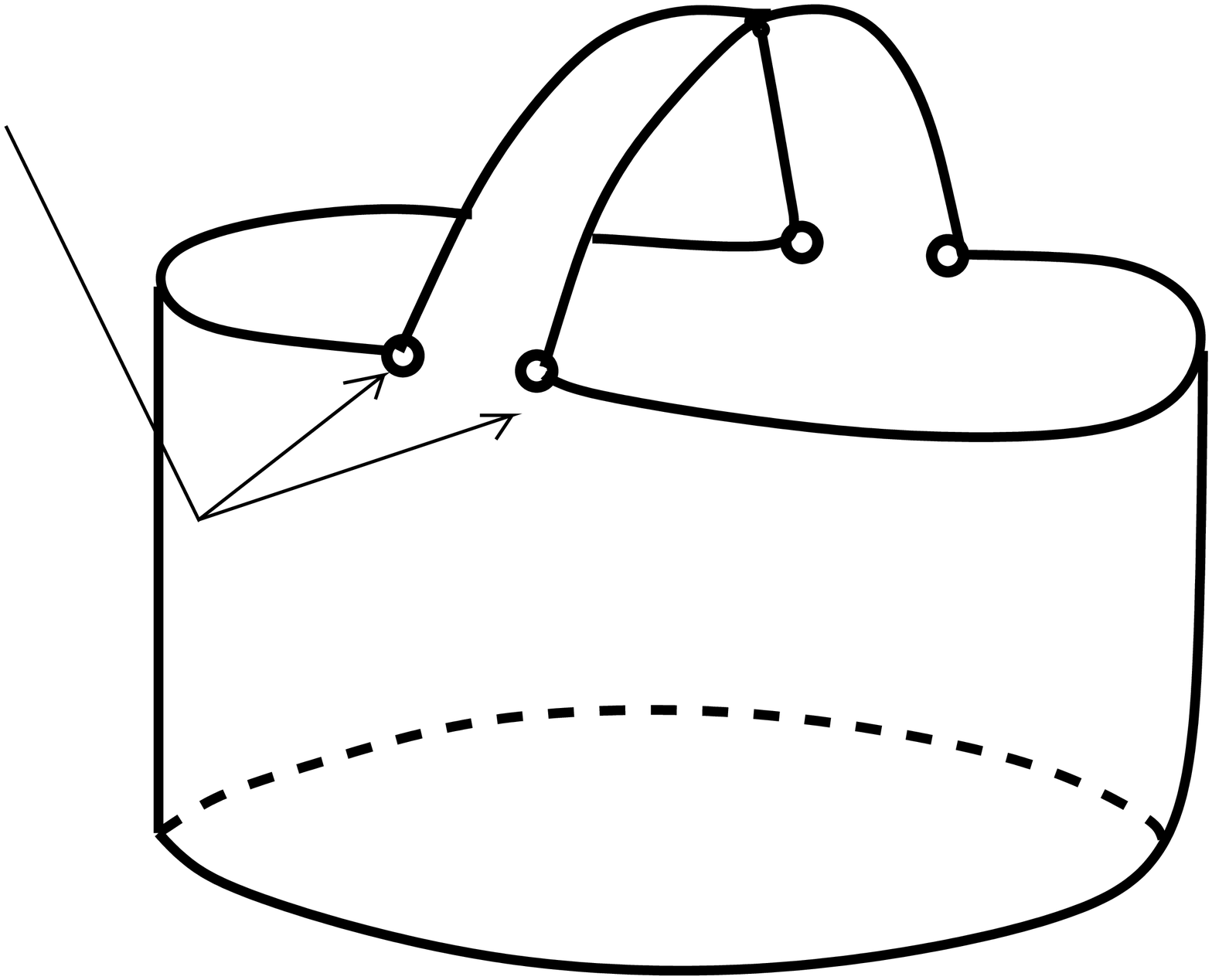}
}
\quad \hbox{vs} \quad
\centre{
\includegraphics[scale=0.16]{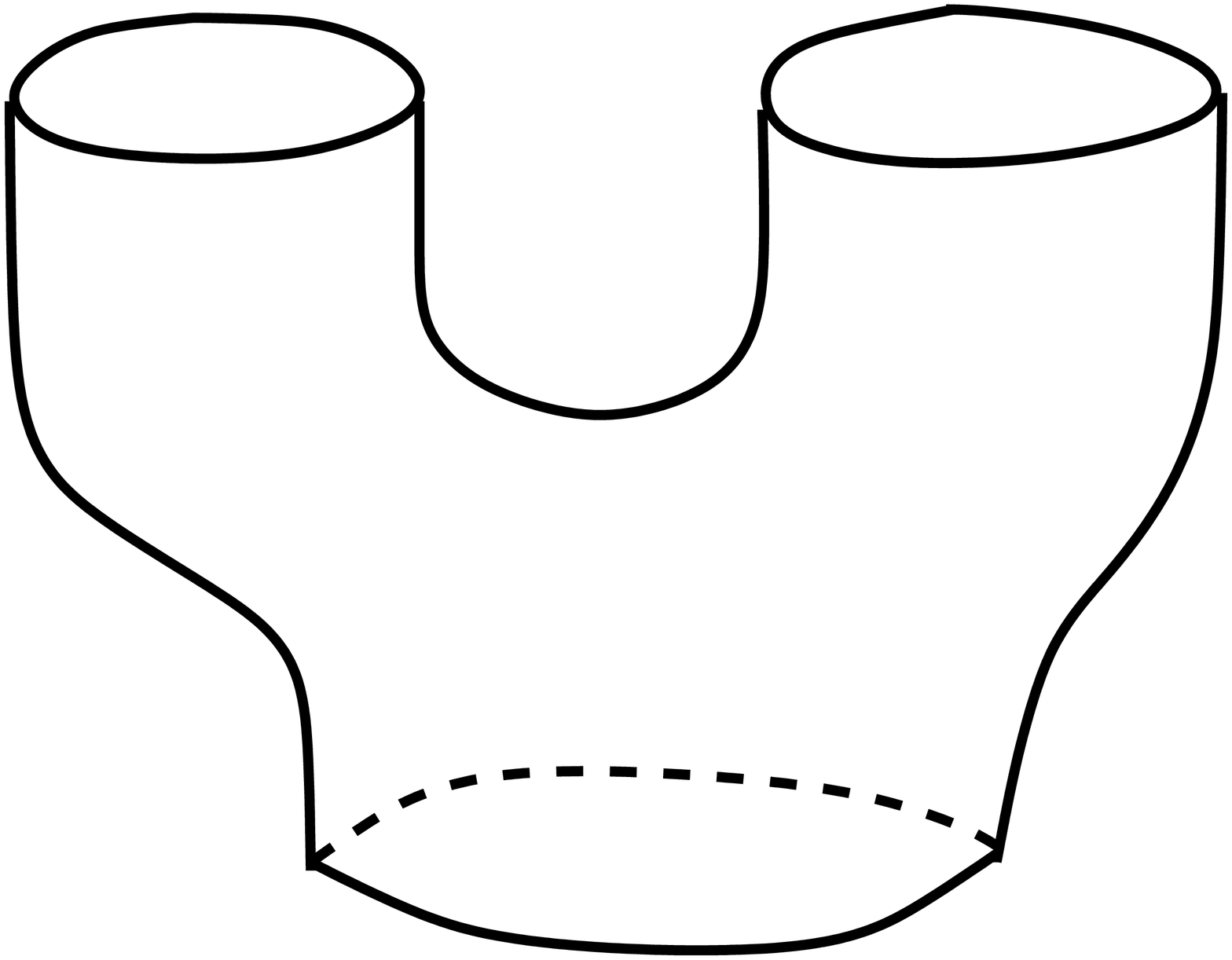}
}
$$
\hfill $\blacksquare$
\end{remark}

Two closed $m$-manifolds $M$ and $M'$ are \emph{cobordant} if there exists a compact $(m+1)$-manifold $W$ such that 
$\partial W \cong (-M) \sqcup M' $. Then, $W$ is called a \emph{cobordism} from $M$ to $M'$. 
  Of course, any compact $n$-manifold $W$ with boundary  can be viewed as a cobordism from $\varnothing$ to $\partial W$
and, in particular, any closed $n$-manifold $W$ can be viewed as a cobordism from $\varnothing$ to $\varnothing$.  

\begin{definition}
The \emph{$m$-th cobordism group} is the quotient set
$$
\Omega_m :=  \frac{\{\hbox{\small closed $m$-manifolds}\}}{\hbox{\small cobordism}}
$$
equipped with the disjoint union $\sqcup$ operation.  \hfill $\blacksquare$
\end{definition}

Thom \cite{Thom54} studied those abelian groups   for all integers $m\geq 1$:
he described them as kinds of stable homotopy groups, he showed that they constitute the coefficient modules 
of a generalized homology theory, he computed $\Omega_m$ up to degree $m=7$
and   gave, among other things, an explicit computation of the ring $\Omega_* \otimes \Q$ \dots 
For this pioneering work, Thom received the Fields Medal in 1958.

\begin{example}
As soon as one knows the classification of closed $k$-manifolds for $k\in \{0,1,2\}$, it is pretty clear that
$$
\Omega_0  \simeq \Z, \quad \hbox{and} \quad 
\Omega_1= \Omega_2=\{0\}.
$$
However, it is much less obvious that $\Omega_3=\{0\}$ as well: we shall prove it in \S \ref{subsec:Omega_3}. \hfill $\blacksquare$
\end{example}

Let $W$ be an $n$-dimensional cobordism from $M$ to $M'$.
A \emph{handle decomposition} of $W$ is an increasing sequence
$$
W_{-1} \subset  W_0 \subset W_1 \subset \cdots \subset W_{n} = W 
$$
where $W_{-1} \cong M \times [-1-\epsilon,-1+\epsilon]$  and $W_i$ is obtained from $W_{i-1}$ by attaching finitely many $i$-handles.
  Note that $-W$ is a cobordism from $M'$ to $M$ and has a \emph{dual} handle decomposition, 
consisting of  one handle of index $n-i$ for every handle of index $i$ in $W$.\\[-0.2cm]  
 
\noindent
\textbf{Fact.} \emph{Morse theory tells us that any cobordism $W$ has a handle decomposition. 
Specifically, any  Morse function $f:W \to [-1-\epsilon,n + \epsilon]$ such that 
\begin{itemize}[label=$\diamond$]
\item for each $i\in\{0,1,\dots,n\}$, all critical points of $f$ of index $i$ are in $f^{-1}(i)$,
\item  $(-1-\epsilon)$ and $(n+\epsilon)$ are regular values of $f$,
\item $f^{-1}(-1-\epsilon)=M$ and $f^{-1}(n+\epsilon)=M'$,
\end{itemize}
defines a handle decomposition of $W$ by setting $W_i:=f^{-1}([-1-\epsilon,i+\epsilon])$.
Furthermore, there is one handle of index $i$ for every critical point of $f$ of index $i$.}  \hfill $\blacksquare$\\

\noindent
  We recommend Milnor's textbooks \cite{Milnor1,Milnor2}  for an introduction to Morse theory. As a complement to this, 
Cerf theory can also tell us how any two handle decompositions of the same cobordism
are related one to the other by some  operations (namely, \emph{creation/annihilation} of two handles of consecutive indices, and \emph{handle slidings}).
But we shall not need that in these lectures. 

It follows from the above fact that, in particular, any closed $n$-manifold $W$ has a handle decomposition $W_0 \subset W_1 \subset \cdots \subset W_n=W$
where $W_0$ consists of  $0$-handles, 
$W_1$ is obtained from $W_0$ by attaching $1$-handles, and so on, to finish by gluing $n$-handles to get $W$. As is easily checked, we can assume that

\begin{itemize}[label=$\diamond$]
\item $W_0$ consists of a single $0$-handle  $D^0\times D^n$,
\item dually, $W_n$ is obtained from $W_{n-1}$ by attaching a single $n$-handle $D^n \times D^0$.
\end{itemize} 

\begin{example} \label{ex:Heegaard}
Let $M$ be a closed $3$-manifold. According to what has been recalled above, $M$ has a handle decomposition
$$
M_0 \subset M_1 \subset M_2 \subset M_3 =M
$$
with a single $0$-handle and a single $3$-handle. Thus, there is an integer $g\geq 0$ such that $M_1$ is diffeomorphic to 
\begin{center}
\labellist \small \hair 2pt
\pinlabel {$H_g:=$} [r] at 0 100
\pinlabel {$1$} [br] at 48 212
\pinlabel {$g$} [bl] at 383 210
\pinlabel {$\cdots$} at 215 193
\endlabellist
\centering
\includegraphics[scale=0.3]{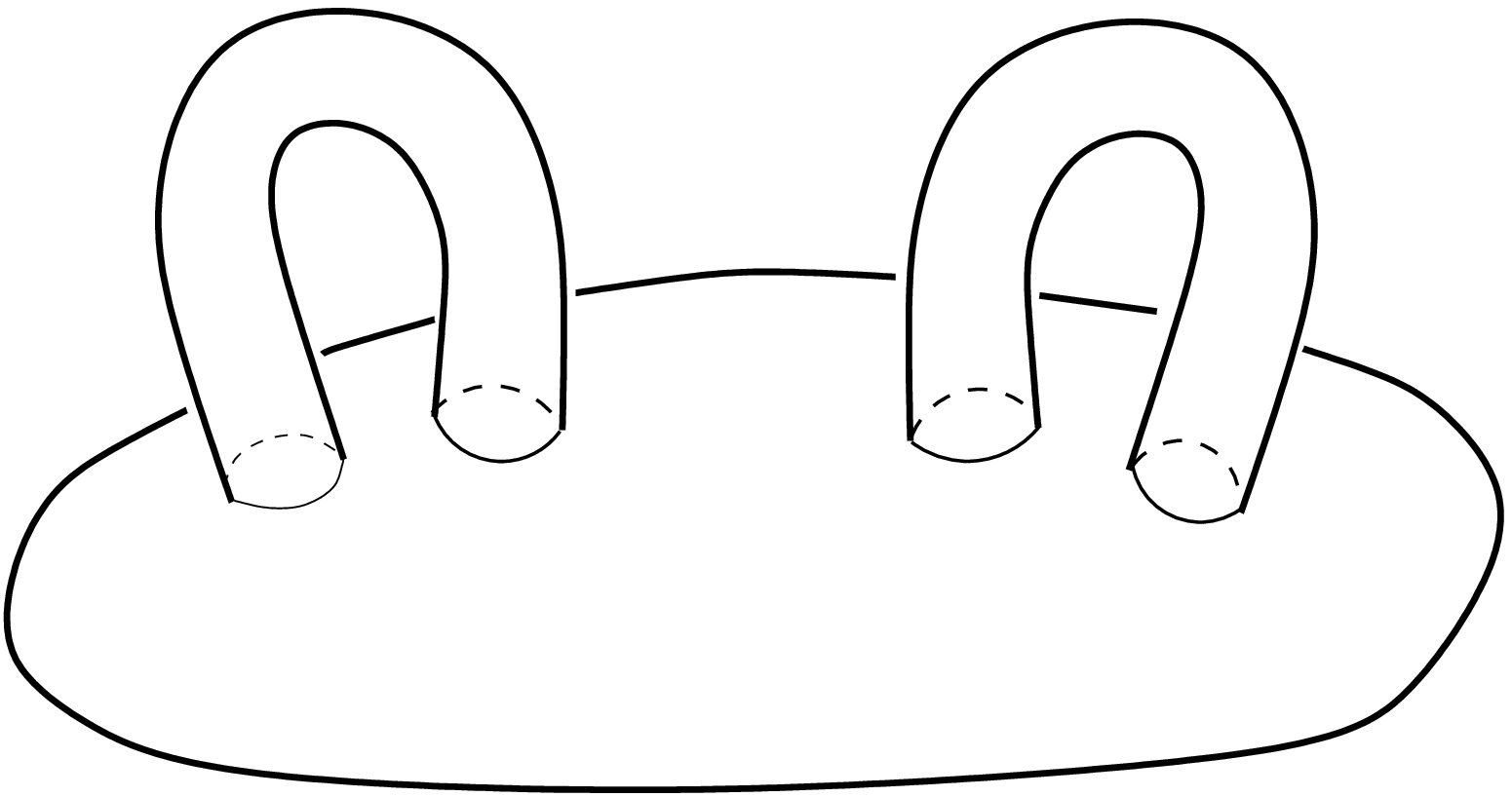}
\end{center}
which is a called the standard \emph{handlebody} of genus $g$, and whose boundary $$\Sigma_g:= \partial H_g$$
is the standard closed (oriented) surface of genus $g$.
Dually, there is an integer $g'$ such that $M'_1:= M \setminus \interior(M_1)$ is diffeomorphic to $H_{g'}$. 
Since $M_1$ and $M'_1$ share the same boundary, we must have $\Sigma_g=\Sigma_{g'}$: hence $g=g'$.
We conclude that any closed $3$-manifold  $M$ can be decomposed as
$$
M \cong H_g \cup_f  (-H_g)
$$
where $f: \Sigma_g \to \Sigma_g$ is an orientation-preserving diffeomorphism. 
Such a decomposition is called a \emph{Heegaard splitting} of $M$ of genus $g$. \hfill $\blacksquare$
\end{example}

In the rest of these notes, we restrict our attention to $3$-manifolds.

\subsection{Mapping class groups of surfaces}

The Heegaard splittings, which have been described in Example \ref{ex:Heegaard},
reveal that all closed  $3$-manifolds can be efficiently presented in terms of  diffeomorphisms of surfaces.
The following lemma adds that, being only interested in $3$-manifolds \emph{up to diffeomorphisms}, 
we only have to consider diffeomorphisms of surfaces \emph{up to isotopy}.

\begin{lemma}
Let $g\in \N$.
The (oriented) diffeomorphism type of $M_f := {H_g \cup_f  (-H_g)}$ only depends on the isotopy class of $f$.
\end{lemma}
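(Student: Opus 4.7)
The plan is to reduce the claim to the following standard extension fact: any self-diffeomorphism of $\Sigma_g$ which is isotopic to the identity extends to a self-diffeomorphism of $-H_g$ that is the identity outside any prescribed collar of $\partial(-H_g) = \Sigma_g$. Once this is available, the proposition follows by a quotient argument.

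First I would set up the isotopy. Let $(f_t)_{t \in [0,1]}$ be a smooth isotopy from $f_0 = f$ to $f_1 = f'$ and put $h_t := f' \circ f_t^{-1}$, so $(h_t)$ is a smooth isotopy from $h_0 = h := f'\circ f^{-1}$ to $h_1 = \id_{\Sigma_g}$. By the collar neighborhood theorem, fix an embedding
\[
c : \Sigma_g \times [0,1] \hookrightarrow -H_g, \qquad c(x,0) = x \in \partial(-H_g),
\]
and pick a smooth damping function $\lambda : [0,1] \to [0,1]$ with $\lambda(0)=0$ and $\lambda \equiv 1$ on a neighborhood of $1$. Define $\psi: -H_g \to -H_g$ by
\[
\psi(p) := \begin{cases} c\big(h_{\lambda(t)}(x),\, t\big) & \text{if } p = c(x,t), \\ p & \text{otherwise.} \end{cases}
\]
Since $\lambda \equiv 1$ near $t=1$, the two definitions agree and are $C^\infty$-glued on their overlap. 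The restriction to the collar is a level-preserving smooth isotopy, hence a diffeomorphism of $c(\Sigma_g \times [0,1])$, and therefore $\psi$ is an orientation-preserving diffeomorphism of $-H_g$ satisfying $\psi|_{\partial(-H_g)} = h$.

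Now I would form the diffeomorphism $\Phi := \id_{H_g} \sqcup \, \psi$ of $H_g \sqcup (-H_g)$ and check that it respects the two quotient relations. For $x \in \partial H_g$, the point $x$ is identified with $f(x) \in \partial(-H_g)$ in $M_f$; under $\Phi$, the pair $(x, f(x))$ is sent to $(x, \psi(f(x))) = (x, h(f(x))) = (x, f'(x))$, which is precisely the identification used in $M_{f'}$. Hence $\Phi$ descends to an orientation-preserving diffeomorphism $M_f \xrightarrow{\cong} M_{f'}$, as desired.

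The main obstacle — really the only genuine point — is the construction of $\psi$: one must arrange the damping function $\lambda$ so that $\psi$ is smooth across the inner boundary $c(\Sigma_g \times \{1\})$ where it meets the identity on the rest of $-H_g$. The condition $\lambda \equiv 1$ near $t=1$ handles this. The remainder of the argument is a routine verification that $\Phi$ passes to the quotient and preserves orientations.
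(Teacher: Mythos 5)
Your proof is correct and follows essentially the same route as the paper: both reduce to the fact that the diffeomorphism $f'\circ f^{-1}$ (resp. $f^{-1}\circ f'$ in the paper), being isotopic to the identity, extends over a handlebody via a collar neighborhood, and then descend the resulting diffeomorphism to the glued manifolds. The paper merely states the collar-extension step without the explicit damping construction that you spell out.
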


\begin{proof}  
For any orientation-preserving diffeomorphisms $E:H_g \to H_g$ and $f:{\Sigma_g \to \Sigma_g}$, 
we clearly have
$$
M_{f \circ E|_{\Sigma_g}} \cong  M_f \cong M_{E|_{\Sigma_g} \circ f}.
$$
Assume that $f': \Sigma_g \to \Sigma_g$ is another  orientation-preserving diffeomorphism which is isotopic to $f$.
Then $e= f^{-1} \circ f'$ is isotopic to the identity, 
and we can use a collar neighborhood of $\Sigma_g$ in $H_g$ to construct a diffeomorphism $E:H_g \to H_g$
such that $E|_{\Sigma_g}=e$. We conclude that 
$M_{f'}= M_{f\circ e}\cong  M_f.$  
\end{proof}

Thus we are led to consider the \emph{mapping class group} of the surface $\Sigma_g$, which is defined by
\begin{equation} \label{eq:MCG}
\calM(\Sigma_g) := \frac{\{\hbox{orientation-preserving diffeomorphisms $\Sigma_g \to \Sigma_g$}\} }{\hbox{ isotopy}}.
\end{equation}
  We refer to the textbooks \cite{Birman,FM} for an  exposition of mapping class groups.
For the moment, we just need to review the simplest examples 
and     give explicit generating systems for those groups.

\begin{example} \label{ex:low_genus}
  The group $\calM(\Sigma_0)$ is trivial. Besides,
through its action on the abelian group  $H_1(\Sigma_1;\Z)\simeq \Z^2$, 
the group $\calM(\Sigma_1)$ is isomorphic to $\hbox{SL}(2;\Z)$.
See the above-mentioned  textbooks, or \cite[\S 2]{Mas11} for a direct treatment of these examples.   \hfill $\blacksquare$ 
\end{example}

Let $\alpha$ be a simple closed curve  in $\Sigma_g$. 
We identify  a regular neighborhood $\hbox{N}(\alpha)$ of $\alpha$ 
 with the annulus  $S^1 \times [0,1]$, in such a way that orientations are preserved.
The \emph{Dehn twist} along $\alpha$ is the diffeomorphism 
$T_\alpha: \Sigma_g \to \Sigma_g$ defined by 
$$
T_\alpha(x) = \left\{\begin{array}{ll}
x & \hbox{ if } x \notin \operatorname{N}(\alpha)\\
\left(e^{2i\pi(\theta +r)},r\right) & \hbox{ if } x=\left(e^{2i\pi \theta},r\right) \in N(\alpha)=S^1 \times [0,1].
\end{array}\right.
$$
  Because of the choice of $\operatorname{N}(\alpha)$ and its ``parametrization'' by $S^1 \times [0,1]$,
the diffeomorphism $T_\alpha$ is only defined up to isotopy.
But the isotopy class  $[T_\alpha] \in \calM(\Sigma_g)$
only depends on the isotopy class  of the curve~$\alpha$.   
Here is the effect of $T_\alpha$ on a curve $\rho$
which crosses transversely $\alpha$ in a single point:
\begin{center}
\labellist \small \hair 2pt
\pinlabel {\textcolor{red}{$\alpha$}} [r] at 40 155
\pinlabel {\textcolor{blue}{$\rho$}} [r] at 144 24
\pinlabel {$\operatorname{N}(\alpha)$} [bl] at 278 262
\pinlabel {$\stackrel{T_{\alpha}}{\longrightarrow}$}  at 418 155
\endlabellist
\centering
\includegraphics[scale=0.3]{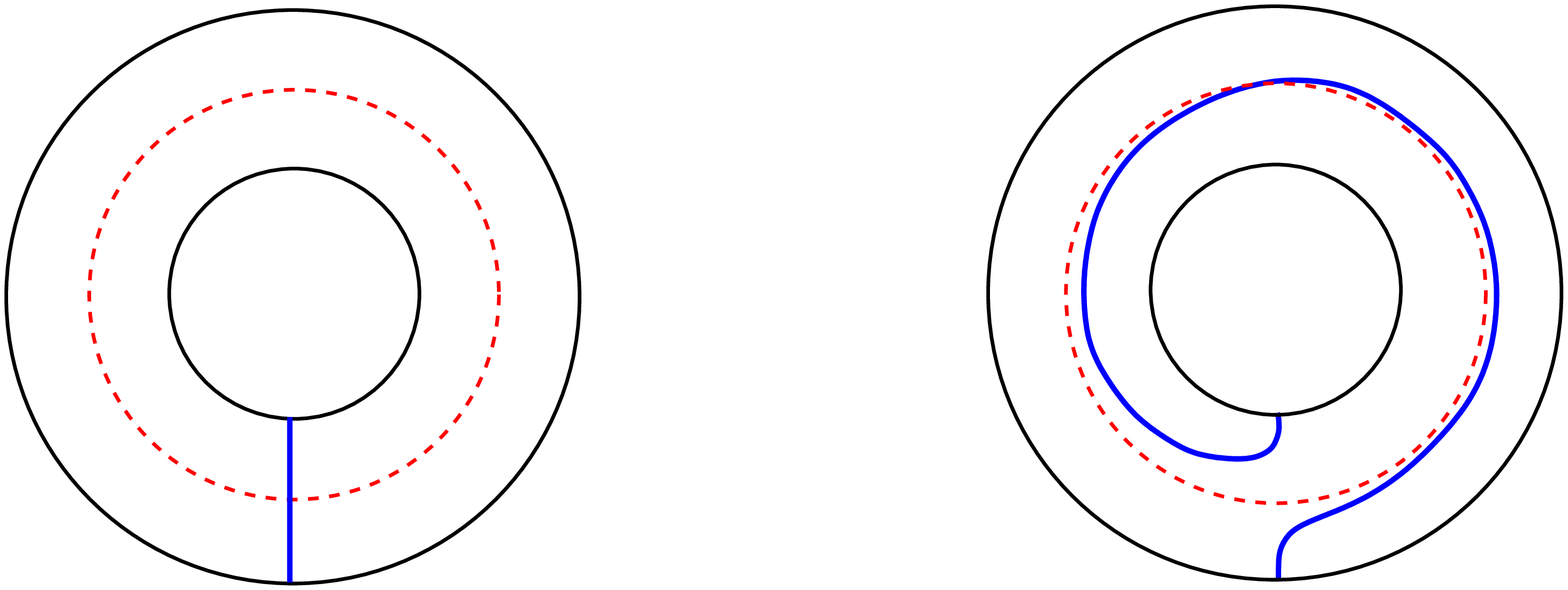}
\end{center}
\vspace{0.1cm}

\begin{theorem}[Dehn 1938]
\label{th:Dehn}
In any genus $g\geq 1$, the group  $\calM(\Sigma_g)$ is generated by finitely many Dehn twists.
\end{theorem}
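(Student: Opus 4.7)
The plan is to prove the theorem by induction on the genus $g$, using Example \ref{ex:low_genus} as the base case and showing that any mapping class can be successively ``simplified'' by Dehn twists until it reduces to a mapping class on a surface of lower genus. First I would prove the (weaker) statement that $\calM(\Sigma_g)$ is generated by Dehn twists (without controlling their number), then extract finiteness by a careful inspection of the argument.

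The base case $g=1$ is handled by Example \ref{ex:low_genus}: the two standard generators $\bigl(\begin{smallmatrix} 1 & 1 \\ 0 & 1\end{smallmatrix}\bigr)$ and $\bigl(\begin{smallmatrix} 1 & 0 \\ -1 & 1\end{smallmatrix}\bigr)$ of $\operatorname{SL}(2;\Z)$ are realized by the Dehn twists along the meridian and longitude of $\Sigma_1$, respectively. For the inductive step, the key technical ingredient is the following ``transitivity lemma'': any two non-separating simple closed curves $\alpha,\beta$ on $\Sigma_g$ lie in the same orbit under the subgroup $\langle\operatorname{Dehn twists}\rangle \subset \calM(\Sigma_g)$. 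I would establish this in two substeps. First, when $\alpha$ and $\beta$ intersect transversely at exactly one point, a direct picture shows that the composition $T_\alpha T_\beta$ maps (the isotopy class of) $\alpha$ onto $\beta$. Second, given any two non-separating simple closed curves, one argues --- by a standard surgery/connectivity argument on the graph whose vertices are isotopy classes of non-separating simple closed curves and whose edges correspond to pairs intersecting once --- that they can be joined by a finite chain of such ``once-intersecting'' pairs.

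With this lemma in hand, the induction proceeds as follows. Fix a non-separating simple closed curve $\alpha_0 \subset \Sigma_g$. Given $[f] \in \calM(\Sigma_g)$, the curve $f(\alpha_0)$ is also non-separating, so by the transitivity lemma there is a product $\tau$ of Dehn twists with $\tau f(\alpha_0) = \alpha_0$ (as isotopy classes). After isotopy, $\tau f$ can be assumed to fix a regular neighborhood of $\alpha_0$ pointwise (up to composing with one more Dehn twist along $\alpha_0$ to correct the possible orientation of the reglueing), hence it restricts to a mapping class of the compact surface $\Sigma_g \setminus \interior\operatorname{N}(\alpha_0)$, which is a genus $(g-1)$ surface with two boundary circles. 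One then invokes the inductive hypothesis (formulated for surfaces with boundary, where mapping classes are required to fix the boundary pointwise) to write this restriction as a product of Dehn twists on the cut surface. Extending these twists by the identity back to $\Sigma_g$ completes the inductive step.

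The main obstacle --- and the point requiring the most care --- is the passage from ``generated by Dehn twists'' to ``generated by \emph{finitely many} Dehn twists.'' To extract finiteness, I would observe that at each stage of the inductive procedure the curves used for the reduction can be chosen from a fixed finite list: indeed, the chain of once-intersecting curves connecting $f(\alpha_0)$ to $\alpha_0$ can be taken within a finite ``model'' network of simple closed curves on $\Sigma_g$ (once one shows that finitely many once-intersection moves already connect any two non-separating curves in the aforementioned graph, which follows from a compactness/normal form argument using that the mapping class group acts cocompactly on this graph). Combining this with finite generation for the cut surface yields a finite set of Dehn twists generating $\calM(\Sigma_g)$. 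A clean concrete output of this kind of reasoning is Lickorish's classical list of $3g-1$ curves (later refined by Humphries to $2g+1$), but the qualitative statement of the theorem only requires that \emph{some} finite such list exists.
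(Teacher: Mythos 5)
The paper itself does not prove this theorem: it records the statement with references to Dehn, Lickorish and Humphries, and then simply exhibits the explicit generating systems (Figures \ref{fig:Dehn_1_2}, \ref{fig:Dehn} and Remark \ref{rem:Lickorish}). So your proposal must be judged on its own, and its first half is the standard Lickorish/Farb--Margalit route: the move $T_\alpha T_\beta(\alpha)=\beta$ for curves meeting once, connectivity of the graph of non-separating curves with edges given by intersection one, and the genus induction obtained by cutting along $\alpha_0$. That outline is sound, with two repairs needed. First, a Dehn twist along $\alpha_0$ cannot ``correct the orientation of the regluing'': $T_{\alpha_0}$ fixes $\alpha_0$ with its orientation and preserves the two sides of $\operatorname{N}(\alpha_0)$, so it cannot fix the case where $\tau f$ reverses $\alpha_0$ (equivalently swaps the sides of the annulus); one needs instead something like the involution $(T_{\alpha_0}T_\beta T_{\alpha_0})^2$ of a one-holed torus neighborhood, where $\beta$ meets $\alpha_0$ once. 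Second, since the cut surface has genus $g-1$ but two boundary circles, the induction must genuinely be set up for compact surfaces with boundary (base cases, and the capping or Birman exact sequence arguments showing their kernels are generated by twists); you acknowledge this but it is where a substantial part of the work lies.

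The genuine gap is in the finiteness extraction, which is the actual content of the statement. Your mechanism --- that the chain of once-intersecting curves joining $f(\alpha_0)$ to $\alpha_0$ can be confined to a fixed finite ``model network'', or that boundedly many moves suffice by cocompactness --- fails: any such chain must start at $f(\alpha_0)$, which ranges over all non-separating curves and lies in no fixed finite list, and these curve graphs have infinite diameter, so no uniform bound on the number of moves exists; cocompactness of the $\calM(\Sigma_g)$-action (indeed transitivity on vertices and edges) does not bound the diameter, just as $\Z$ acts cocompactly on $\R$. The correct routes to finiteness are different: either Lickorish's explicit computation that the twist along an arbitrary (non-separating) curve is a product of twists along the fixed model curves, or the standard lemma that a group acting on a connected graph, transitively on vertices and on ordered edges, is generated by the stabilizer of one vertex together with a single element moving that vertex to an adjacent one --- applied to the non-separating curve graph, with the vertex stabilizer then handled by the genus induction. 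As written, your argument establishes generation by the infinite family of all Dehn twists, but not generation by finitely many of them.
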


Dehn's generating system \cite{Dehn} can be written explicitly. 
It consists of $2$ twists in genus $g=1$, and $5$ twists in genus $g=2$: see Figure \ref{fig:Dehn_1_2}. 
In genus $g>2$,  $\calM(\Sigma_g)$ is generated by the Dehn twists  along the $2g(g-1)$ curves shown in Figure~\ref{fig:Dehn}: 
the curves  $\alpha_i$ (blue, for all $i\in \{1,\dots,g\}$),   $\beta_i$ (red, for all $i\in \{1,\dots,g\}$),   $\delta_i$ (purple, for all $i\in \{1,\dots,g\}$),
$\gamma_{ij}$ (green, for any pair $\{i,j\}$ of two elements in $\{1,\dots,2g\}$ that are of distance at least three in the cyclic order).

\begin{figure}[h!]
\includegraphics[scale=0.45]{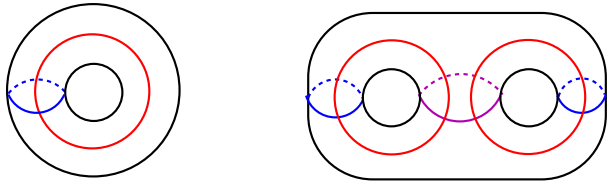} 
\caption{Dehn's generators  in genus $1$ and $2$}
\label{fig:Dehn_1_2}
\end{figure}

\begin{figure}[h!]
\includegraphics[scale=0.25]{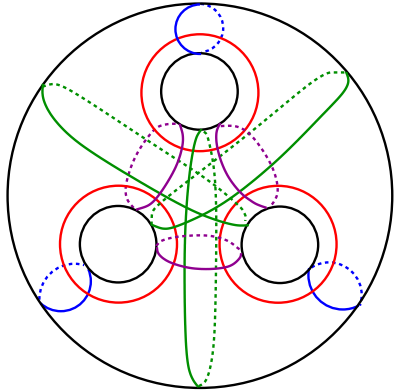} 
\caption{Dehn's generators  in genus $g>2$}
\label{fig:Dehn}
\end{figure}

In the sequel, 
we shall only need the following information about Dehn's generating system of $\calM(\Sigma_g)$:
\begin{equation}
\label{eq:m&p}
\centre{
\hbox{In genus $g>1$, the group $\calM(\Sigma_g)$ is generated by Dehn twists along }\\
\hbox{ simple closed curves, each avoiding a sub-handlebody of genus $1$ of $H_g$.}
}
\end{equation}
Here $\Sigma_g$ is regarded as the boundary of the standard handlebody~$H_g$,
and a \emph{sub-handlebody} of genus $k$ of $H_g$ is the image of $H_k$ under some diffeomorphism
$H_k\, \sharp_\partial\, H_{g-k} \cong H_g$.

\begin{remark}\label{rem:Lickorish}
In the sixties, Lickorish rediscovered and simplified Dehn's generating system of the mapping class group \cite{Lickorish}.
He proved that $\calM(\Sigma_g)$ is actually generated by the Dehn twists 
along the  simple closed curves 
$$
\alpha_1,\dots,\alpha_g,\beta_1,\dots,\beta_g,\gamma_1,\dots,\gamma_{g-1}
$$
shown below:
\vspace{0.cm}
\begin{center}
\labellist \small \hair 2pt
\pinlabel {$\blue{\alpha_1}$} [b] at 97 132
\pinlabel {$\blue{\alpha_2}$} [b] at 260 153 
\pinlabel {$\blue{\alpha_{g-1}}$} [b] at 470 153
\pinlabel {$\blue{\alpha_g}$} [b] at 636 127
\pinlabel {$\red{\beta_1}$} [t] at 90 48
\pinlabel {$\red{\beta_2}$} [t] at 261 48
\pinlabel {$\red{\beta_{g-1}}$} [t] at 462 48
\pinlabel {$\red{\beta_g}$} [t] at 628 50
\pinlabel {$\magenta{\gamma_1}$} [t] at 180 46
\pinlabel {$\magenta{\gamma_{g-1}}$} [t] at 553 43
\pinlabel {{\bf \dots }} at 366 73
\endlabellist
\centering
\includegraphics[scale=0.5]{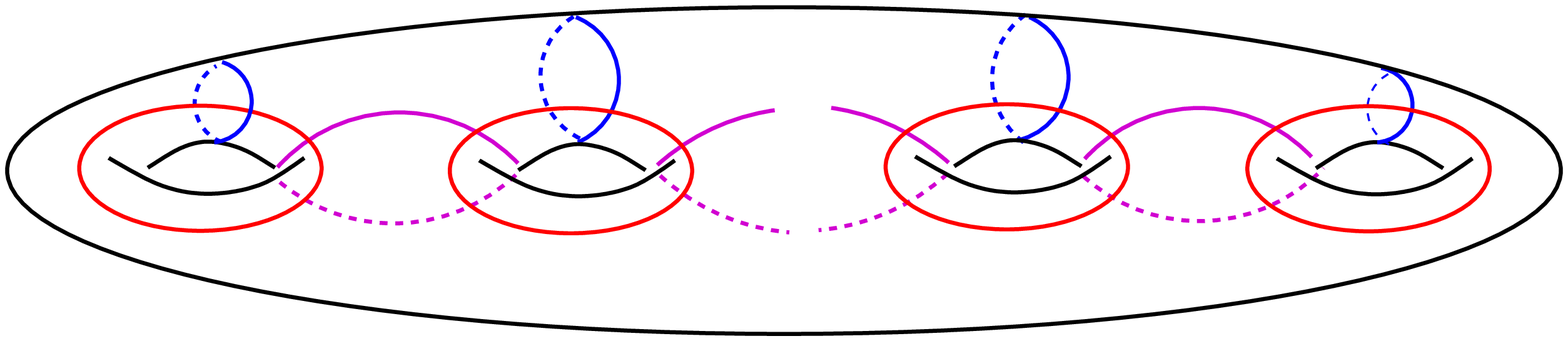}
\end{center}
Afterwards, Humphries  \cite{Humphries} showed that $2g+1$ Dehn twists are enough to generate $\calM(\Sigma_g)$:
specifically, those are the twists along
$
\beta_1,\dots, \beta_g, \gamma_1,\dots, \gamma_{g-1}, \alpha_1,\alpha_2.
$
 \hfill $\blacksquare$
\end{remark}

\subsection{Triviality of $\Omega_3$} \label{subsec:Omega_3}

Let $\calV(\varnothing)$ be the set of diffeomorphism classes of closed  $3$-manifolds.
(Recall that, unless otherwise stated, $3$-manifolds are always oriented.)

\begin{theorem}[Rochlin 1951, Thom 1951,  Wallace 1960, Lickorish 1964]   \label{th:RTWL}  
The following four statements are equivalent, and  hold true:
\begin{itemize}[label=$\diamond$] 
\item[(1\,)] we have $\Omega_3=\{0\}$, i.e$.$ any two $M, M' \in \calV(\varnothing)$ are cobordant;  
\item[(1')] any $M\in \calV(\varnothing)$ is the boundary of a compact $4$-manifold $W$;                            
\item[(2\,)] for any $ M, M' \in \calV(\varnothing)$, there is a sequence of surgeries along framed knots 
$M=M_0 \leadsto M_1 \leadsto \cdots  \leadsto M_r=M'$;
\item[(2')] for any $M  \in \calV(\varnothing)$, there is a framed link $L\subset S^3$
such that $S^3_L \cong M$.
\end{itemize}
\end{theorem}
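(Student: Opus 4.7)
My plan is to first verify the four equivalences, and then to prove (2') using the Heegaard splitting machinery (Example \ref{ex:Heegaard}) together with Dehn's Theorem \ref{th:Dehn}. I would bypass the implication (1') $\Rightarrow$ (2') as an intermediate step, preferring the direct route to (2').

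For the equivalences, (1) $\Leftrightarrow$ (1') is immediate: specializing (1) to $M' = \varnothing$ yields (1'), and conversely if $M$ bounds $W$ and $M'$ bounds $W'$ then $(-W) \sqcup W'$ is a cobordism from $M$ to $M'$. For (2) $\Leftrightarrow$ (2'), the key observations are that any surgery $V \leadsto V_K$ is reversible (by surgering along the core of the newly glued solid torus with a suitable framing), and that surgery along a framed link is equivalent to a sequence of surgeries along its components; specializing (2) to $M' := S^3$ and reversing yields (2'), and the converse direction concatenates $M \leadsto S^3 \leadsto M'$. Finally, (2') $\Rightarrow$ (1') is immediate: if $M \cong S^3_L$, then attaching a $2$-handle to $D^4$ along each component of $L$ (with its given framing) produces a compact $4$-manifold with boundary $M$, since $\partial D^4 = S^3$ and $2$-handle attachment induces an index-$2$ surgery on the boundary.

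To prove (2'), I would start from a Heegaard splitting $M \cong H_g \cup_f (-H_g)$ and observe that $S^3$ itself admits a genus-$g$ Heegaard splitting $S^3 \cong H_g \cup_{\id} (-H_g)$ for each $g \geq 0$ (by stabilizing the standard genus-$0$ splitting). By Theorem \ref{th:Dehn}, the diffeomorphism $f$ decomposes as a product $f = T_{\alpha_r}^{\epsilon_r} \circ \cdots \circ T_{\alpha_1}^{\epsilon_1}$ of Dehn twists, with signs $\epsilon_i \in \{\pm 1\}$ and simple closed curves $\alpha_i \subset \Sigma_g$. The crucial geometric step would be a Lickorish-type realization of Dehn twists as surgeries: for any simple closed curve $\alpha \subset \Sigma_g$, pushing $\alpha$ slightly into one of the handlebodies yields a knot $K_\alpha$ with a canonical ``surface framing'' induced by $\Sigma_g$, and surgery along $K_\alpha$ with framing $\mp 1$ (relative to this surface framing) is diffeomorphic to the manifold obtained by replacing the gluing map $f$ by $T_\alpha^{\pm 1} \circ f$. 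Positioning the curves $K_{\alpha_i}$ disjointly at distinct heights in a collar $\Sigma_g \times (-\varepsilon, \varepsilon) \subset H_g \subset S^3$ then produces a framed link $L \subset S^3$ for which $S^3_L \cong M$.

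The main obstacle is the Lickorish-type verification sketched above, namely that surgery with the prescribed $\mp 1$ surface framing realizes the desired Dehn twist correction to the Heegaard gluing: this requires careful bookkeeping of orientations, of the parametrization of the annular neighborhood of $\alpha$ appearing in the definition of $T_\alpha$, and of the collar used to push $\alpha$ off $\Sigma_g$. Once this verification is in hand, the simultaneous placement of the $K_{\alpha_i}$ at distinct collar heights is routine, since the surgeries live in disjoint tubular neighborhoods and do not interfere with one another, and the theorem follows.
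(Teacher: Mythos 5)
Your reductions among (1), (1'), (2) and (2') are fine, and your route to (2') is essentially Lickorish's argument, which the paper also sketches: decompose the Heegaard gluing into Dehn twists and realize each twist by a $(\mp 1)$-framed surgery (relative to the surface framing) on a push-off of the twisting curve, stacking the surgery curves at distinct heights in a collar of a Heegaard surface of $S^3$. However, one step is genuinely false: $S^3$ does \emph{not} admit a genus-$g$ splitting of the form $H_g \cup_{\id} (-H_g)$ for $g\geq 1$. The identity gluing produces the double of the handlebody, $H_g \cup_{\id} (-H_g) \cong \sharp^g (S^1\times S^2)$, whose first homology is $\Z^g \neq 0$; this is precisely the fact $M_{\id}\cong \sharp^g(S^1\times S^2)$ that the paper uses in Thom's argument. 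Stabilization does not rescue the claim: stabilizing the genus-$0$ splitting changes the gluing map at each step, so the standard genus-$g$ splitting reads $S^3 = H_g \cup_{\iota} (-H_g)$ for a definite non-identity class $\iota \in \calM(\Sigma_g)$ (already at $g=1$, the identity gluing gives $S^1\times S^2$, while $S^3$ requires the map exchanging meridian and longitude).

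The consequence is that decomposing $f$ itself into Dehn twists is the wrong bookkeeping: your collar surgeries performed in $S^3 = H_g\cup_\iota(-H_g)$ yield $H_g \cup_{f'\iota}(-H_g)$ (up to the usual flexibility of composing with diffeomorphisms extending over the handlebodies), where $f'$ is the product of the realized twists; taking $f'=f$ gives $H_g\cup_{f\iota}(-H_g)$, which in general is not $M$. The repair is immediate and keeps your strategy intact: apply Theorem \ref{th:Dehn} to $f\circ\iota^{-1}$ (or $\iota^{-1}\circ f$, depending on which side of the Heegaard surface carries the collar) rather than to $f$, and then your Lickorish-trick realization and the placement of the curves at distinct collar heights go through. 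This is exactly how the paper's proof keeps track of the discrepancy: it notes that $S^3 = M_\iota$ for some $\iota\in\calM(\Sigma_g)$ and compares both $M_f$ and $M_\iota$ to $M_{\id}=\sharp^g(S^1\times S^2)$ by surgeries, rather than pretending that $M_{\id}$ is $S^3$.
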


\begin{proof}[Proof(s)]
The equivalence between (1) and (1') is clear. 
  Assuming (1), let $M  \in \calV(\varnothing)$:
there  is a compact $4$-manifold $W^\circ$ such that $\partial W^\circ \cong (-S^3) \sqcup M$; let $W := W^\circ \cup_\partial D^4$
where $D^4$ is glued along the $S^3$ boundary component of $W^\circ$; then $\partial W \cong M$.
Assuming (1'), let $M, M' \in \calV(\varnothing)$: then $(-M)\sqcup M'  \in \calV(\varnothing)$ and 
there is a compact $4$-manifold $W$ with boundary $(-M)\sqcup M'$.   

The equivalence between (2) and (2') is also easy.  
Assuming (2), let $M\in \calV(\varnothing)$; 
there is a sequence of surgeries along framed knots 
$S^3=M_0  \leadsto \cdots  \leadsto M_r=M$; for each $i$, we can assume that the framed
knot  $K_i \subset M_i$ along which we do the surgery to get $M_{i+1}$ is disjoint from the glued solid tori that correspond
to the previous surgeries, hence we can view $K_i$ as a knot in the initial manifold $S^3$;
then the framed link $L:= K_0 \sqcup \dots \sqcup K_{r-1}$ is such that $S^3_L \cong M$.
Assuming (2'), let $M,M'\in \calV(\varnothing)$;
there is a framed link $L \subset S^3$ such that $S^3_L \cong M$; by doing the surgeries along the components of $L$ stepwisely,
we obtain a first sequence of surgeries $S^3=M_0 \leadsto  \cdots  \leadsto M_r=M$;
similarly, we find a second sequence of surgeries $S^3=M'_0 \leadsto  \cdots  \leadsto M'_{r'}=M'$;
thus, by reversing the first sequence, we get a sequence of surgeries producing $M'$ from $M$.  

The equivalence between (1) and (2) is a result of Wallace \cite{Wallace}.
Indeed Wallace proved that, in any dimension $m\geq 1$, two closed $m$-manifolds $M$ and $M'$ are cobordant 
if and only if there is a sequence
$$
M=M_0 \leadsto M_1 \leadsto \cdots  \leadsto M_r=M'
$$ 
where $M_i \leadsto M_{i+1}$ stands for a surgery of index $k_i$ and the sequence $(k_i)_i$ is not decreasing.
(In \cite{Wallace}, surgeries are called \emph{spherical modifications}.) This equivalence follows from
the existence of handle decompositions for cobordisms and the relation between surgery and attachement of handles.
Observing that, in dimension $m=3$, only surgeries of index $2$ do  matter (see Example \ref{ex:dim3}),
Wallace assumes (1) to deduce (2') thus answering a question of Bing \cite{Bing}.

Indeed, statement (1) had been proved independently by Rochlin \cite{Rochlin} and Thom \cite{Thom51,Thom52,Thom54}.
Actually, Thom gave three proofs of very different natures: let us expose the proof that came chronologically first and is sketched in \cite{Thom51}.
It uses Heegaard splittings of $3$-manifolds,
the key idea being that the subset
$$
B_g := \big\{\,  [f] \in \calM(\Sigma_g) : M_f= H_g \cup_f  (-H_g) \hbox{ bounds a compact $4$-manifold}\, \big\} 
$$
is a subgroup of the mapping class group, for every $g\in \N$:
\begin{itemize}[label=$\diamond$]
\item $1\in B_g$ because $M_{\operatorname{id}}$ is diffeomorphic to $\sharp^g (S^1 \times S^2)$
which, for instance, is the boundary of   $\sharp^g_\partial (D^2 \times S^2)$; 
\item if $f\in B_g$, then $f^{-1} \in B_g$ because $M_{f^{-1}} \cong -M_f$;
\item if $f,f'\in B_g$, then $f'f\in B_g$ because, given a compact $4$-manifold $W$ bounded by $M_f$ 
and a compact $4$-manifold $W'$ bounded by $M_{f'}$,
the $3$-manifold $M_{f'f}$ is the boundary of the $4$-manifold obtained by gluing $W$ and $W'$ along the ``left side'' handlebody $H_g$ of $M_f$ 
and the ``right side'' handlebody $-H_g$ of $M_{f'}$. 
\end{itemize}
Since any $3$-manifold has a Heegaard splitting, the triviality of $\Omega_3$ will follow from the fact that, for any $g\geq 1$,  
we have $B_g=\calM(\Sigma_g)$ or, equivalently, that each of Dehn's generators of $\calM(\Sigma_g)$ belongs to $B_g$.
This is proved by induction on $g$. In genus $g=1$, there are two generators $\tau$  (see Figure~\ref{fig:Dehn_1_2}):
the corresponding $3$-manifold $M_\tau$ is either $S^3=\partial B^4$ or $S^1\times S^2=\partial(D^2 \times S^2)$;
hence $B_1= \calM(\Sigma_1)$. Assume that $B_{g-1}= \calM(\Sigma_{g-1})$.
According to \eqref{eq:m&p}, 
each Dehn generator of $\calM(\Sigma_g)$ is a Dehn twist $\tau$ along a simple closed curve  avoiding a sub-handlebody of genus $1$ of~$H_g$;
therefore $M_\tau$ is diffeomorphic to $(S^1 \times S^2)\sharp M_h$ for some $h\in \calM(\Sigma_{g-1})$;
hence $M_\tau$ is related to $M_h$ by a surgery of index $1$, so that $M_\tau$ and $M_h$ are cobordant; 
by the induction hypothesis, $M_h$ bounds, and so does $M_\tau$; hence $\tau \in B_g$.

Being not aware of Dehn's work \cite{Dehn}, 
Lickorish re-proves in \cite{Lickorish} that $\mathcal{M}(\Sigma_g)$ is generated by finitely many Dehn twists (see Remark \ref{rem:Lickorish}),
and he shows statement (2) in a direct way. The key idea in his argument is the following:
\begin{quote}\textbf{Lickorish's trick.}
\emph{Let $U$ and $V$ be compact $3$-manifolds whose boundaries are identified.
Let  $\gamma \subset \partial V$ be a simple closed curve, and let $K\subset \interior(V)$ be the knot obtained
by slightly ``pushing'' $\gamma$. Then we have
$$
U  \cup_{\tau} (-V) \cong U \cup_{\operatorname{id}} (-V_K) 
$$
where $\tau:=T_\gamma$ is the Dehn twist along $\gamma$, 
and $V_K$ is obtained from $V$ by surgery along $K$ framed with the parallel differing
from $\gamma$ by a meridian of $K$.}
\end{quote}
This trick is easily verified using the definitions of a surgery and a Dehn twist.
Let $g\in \N$ and $f\in \calM(\Sigma_g)$. Decomposing $f$ as a product of Dehn twists (or their inverses), 
Lickorish's trick implies that  $M_f=  H_g \cup_f  (-H_g)$ can be transformed into $M_{\operatorname{id}}=\sharp^g (S^1 \times S^2)$ 
by finitely many surgeries along framed knots. The same is true about $S^3$,  
since we have  $S^3 = M_\iota$ for some $\iota \in \calM(\Sigma_g)$ and whatever $g$ is.
Hence, $M_f$ can be transformed into $S^3$ by finitely many surgeries.
\end{proof}

\begin{remark}  
 Rourke gave in \cite{Rourke} yet another proof of statement (2) of Theorem~\ref{th:RTWL},
 which is also based on the presentations of $3$-manifolds by their Heegaard splittings.
But, in contrast with Thom's and Lickorish's arguments, his proof does not need 
knowledge about the generation of the mapping class group. 
It is both tricky and elementary. \hfill $\blacksquare$  
\end{remark}

We can be more general and consider $3$-manifolds with boundary.
Let $R$ be a closed  surface, which may be disconnected.
A compact  $3$-manifold $M$ has boundary  \emph{para\-metrized} 
by $R$, if $M$ comes with a map $m:R \to M$  which is an orientation-preserving diffeomorphism onto $\partial M$.
Our convention will always be to denote the boundary parametrization with the lower-case letter.

Two  manifolds with parametrized boundary $M$ and $M'$ are considered \emph{diffeomorphic}
if there is an orientation-preserving diffeomorphism $f:M\to M'$ such that $f \circ m =m'$.
We denote by 
$
\calV(R)
$
the set of diffeomorphism classes of compact   $3$-manifolds with boundary parametrized by the surface $R$.

\begin{cor}
For any $M, M' \in \calV(R)$, there is a sequence of surgeries along framed  knots 
$M=M_0 \leadsto M_1 \leadsto \cdots  \leadsto M_r=M'$.
\end{cor}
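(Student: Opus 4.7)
The plan is to reduce to the closed case of Theorem \ref{th:RTWL}. Given $M, M' \in \calV(R)$, I would form the closed $3$-manifold $N := M \cup_R (-M')$ by gluing $M$ and $-M'$ along their common boundary parametrization. By Theorem \ref{th:RTWL}(1'), $N$ bounds a compact oriented $4$-manifold $W$, which after rounding the corner along $R \subset \partial W$ may be viewed as a $4$-dimensional cobordism from $M$ to $M'$ rel~$R$, that is, with $\partial W \cong M \cup (R \times [0,1]) \cup (-M')$.

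Next, applying Morse theory in the cobordism-with-corners setting, I would equip $W$ with a handle decomposition starting from a collar of $M \cup (R \times [0,1])$ and attaching handles only in the interior of $W$, away from the lateral boundary $R \times [0,1]$. A standard argument, analogous to the connected case used in the proof of Theorem \ref{th:RTWL}, lets us discard $0$- and $4$-handles, leaving only handles of indices $1$, $2$, and~$3$. Each $k$-handle attachment produces an index-$k$ surgery on the evolving $3$-dimensional top boundary, performed in the interior and hence preserving the parametrization of~$R$. This yields a sequence of surgeries within $\calV(R)$ transforming $M$ into $M'$.

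Finally, we invoke Example \ref{ex:dim3}: in dimension $3$, each surgery of index $1$ or $3$ may be re-expressed as a surgery of index $2$ along a framed knot, performed in the interior of the manifold and therefore compatible with the parametrized boundary. The sequence may thus be rewritten using only framed-knot surgeries, as required. The main technical ingredient is the existence of a handle decomposition of the corner-cobordism $W$; while standard via Morse theory with boundary, it is what carries the boundary parametrization through the entire surgery sequence.
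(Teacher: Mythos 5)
Your route is genuinely different from the paper's. The paper caps $M$ off with handlebodies to get a closed manifold, applies Theorem \ref{th:RTWL} to obtain a surgery link carrying the capped-off manifold to $S^3$, and then repairs two defects by adding further surgery components: the capping handlebodies are brought to a standard position by crossing and framing changes, and the boundary parametrization is corrected using the generation of $\calM(R_i)$ by Dehn twists together with Lickorish's trick. Your argument instead doubles $M$ against $M'$, fills the result by Theorem \ref{th:RTWL}(1'), and runs a handle decomposition of the filling relative to the lateral boundary $R\times[0,1]$; this avoids any input about mapping class groups, at the price of relative Morse theory for a cobordism with corners. That trade-off is legitimate, and the corner-creasing and relative handle decomposition you invoke are indeed standard (one small point: you should first arrange $W$ to be connected, discarding closed components or joining components by a $1$-handle when $R=\varnothing$, since the elimination of $0$- and $4$-handles uses connectedness of $W$ and the non-emptiness of both ends).

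There is, however, a genuine gap at the final step. It is not true that every surgery of index $1$ or $3$ can be re-expressed as a framed-knot surgery: as Example \ref{ex:dim3} itself records, an index-$3$ surgery along a \emph{separating} thickened sphere $S^2\times D^1$ disconnects the manifold, and no knot surgery can do that (the exterior of a knot in a connected $3$-manifold is connected); likewise an index-$1$ surgery is a knot surgery only when the two balls lie in a single component. Nothing in your argument prevents a $3$-handle of $W$ from being attached along a separating sphere, the resulting disconnection being undone only by a later $1$-handle; such intermediate stages leave $\calV(R)$ and cannot be rewritten as knot surgeries. To close the gap you must rearrange the handles of $W$ so that they are attached in order of increasing index (a standard fact, and precisely the content of Wallace's theorem used in the paper's proof of Theorem \ref{th:RTWL}), and then check that every intermediate level is connected: through the $1$- and $2$-handles this is clear because $M$ is connected and index-$2$ surgery preserves connectedness, and through the $3$-handles one reads the sequence backwards from the connected manifold $M'$, where the reversed moves are index-$1$ surgeries and hence keep the levels connected. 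Only then does the dichotomy of Example \ref{ex:dim3} fall on the good side, so that each remaining index-$1$ and index-$3$ surgery can be traded for a framed-knot surgery supported in the interior (hence compatible with the parametrization of $R$). With that rearrangement and connectivity argument supplied, your proof is complete.
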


\begin{proof} 
Denote by $(R_i)_i$ the family of connected components of $R$ and, for each~$i$,
fix an identification of $R_i$ with the standard surface $\Sigma_{g_i}$
  where $g_i$ is the genus of~$R_i$.
Fix in $S^3$ a copy $H$ of the disjoint union $-\sqcup_i  H_{g_i}$ of standard handlebodies. 
Then $S^3 \setminus \interior(H)$ with the obvious boundary parametrization
defines a ``preferred'' element of $\calV(R)$. 

We shall prove that $M$ can be transformed into $S^3 \setminus \interior(H)$    by surgery along a framed link $L$.
To this purpose, we consider the closed $3$-manifold
$$
\overline{M}:= M \cup_m \big( - \sqcup_{i} H_{g_i} \big).
$$
By Theorem \ref{th:RTWL}, there is a framed link $L \subset \overline{M}$ and an orientation-preserving diffeomorphism
$\phi: \overline{M}_L  \to S^3$; furthermore, we can assume that $L$ is contained in $M$ after an isotopy.
The image $H' \subset S^3$ of  $\sqcup_{i} H_{g_i} \subset \overline{M}$ by $\phi$ is a disjoint union of handlebodies.
  Of course, we have a priori $H\neq H'$.
Then, we think of $H$ and  $H'$  as regular neighborhoods in $S^3$ of some 
 knotted framed graphs $G$ and $G'$, respectively,
 of the same topological type. After finitely many ``crossing changes'' and ``framing changes", $G'$ can be transformed to $G$ 
 since they have the same
 topological type. Each of these ``crossing changes'' and  ``framing changes" can be realized by surgery along a framed trivial knot
and, after an isotopy, we can assume that each such knot does not meet the part of $S^3= \phi(\overline{M}_L)$ where the surgery along $L$ took place.
 Therefore, after addition of some components to the framed link $L$, we can assume that $H=H'$ as subsets of $S^3$. 
 Hence $\phi$ restricts to an orientation-preserving diffeomorphism $M_L \to S^3 \setminus \hbox{int}(H)$.
 This diffeomorphism may not be compatible with the boundary parametrizations of $M$ and $S^3 \setminus \hbox{int}(H)$.
 However, since $\calM(R_i)$ is generated by Dehn twists and since every Dehn twist can be realized by a surgery along a knot
 (using Lickorish's trick), 
 we can assume this compatibility at the price of adding to $L$ yet other components. We conclude
 that $M_L$ and $S^3 \setminus \hbox{int}(H)$ represent the same class in~$\calV(R)$.   
\end{proof}

\section{Surgery equivalence relations: definitions and first properties}

We have seen in \S \ref{sec:basics} that the surgery operations arising directly from differential topology 
are too general in dimension three: any two compact $3$-manifolds 
(with the same parametrized boundary, if any) can be related one to the other by such operations.
Thus, to relate $3$-manifolds in an interesting way, 
we need to consider  more restrictive modifications 
and one reasonable restriction is to require that they preserve the homology type of  $3$-manifolds. 
So, we are led to consider the subgroup of the mapping class group that acts trivially in homology.

\subsection{Torelli groups of surfaces}

Let $S$ be a compact surface with, at most, one boundary component. 
As a generalization of \eqref{eq:MCG},
the \emph{mapping class group} of $S$ is defined by
$$
\calM(S) = \left\{\begin{array}{l}
\frac{\big\{\hbox{ orientation-preserving diffeomorphisms $S \to S$}\big\} }{\hbox{ isotopy}} \quad \hbox{ if } \partial S =\varnothing, \\
\frac{\big\{\hbox{  diffeomorphisms $S \to S$ that are the id on $\partial S$}\big\} }{\hbox{ isotopy rel $\partial S$}} \quad \hbox{ if } \partial S \neq \varnothing.
\end{array}\right.
$$

\begin{definition}
The \emph{Torelli group} of $S$ is the subgroup $\calI(S)$ of $\calM(S)$ that acts trivially on $H:= H_1(S;\Z)$.  \hfill $\blacksquare$
\end{definition}

 The study of the Torelli group, from algebraic and topological viewpoints,
was initiated by Birman in her early works, in particular \cite{Bir71,Bir74}. 
Then it was developed considerably by Johnson in the eighties: see his survey \cite{Joh83b}.
Here we shall simply review a  generating system of $\calI(S)$.

\begin{remark}
  According to Example \ref{ex:low_genus}, the Torelli group is not interesting in genus $0$ and $1$:
hence we shall  assume that the genus of $S$ is at least $2$.  $\hfill$ $\blacksquare$  
\end{remark}

First of all, let us determine the action of a Dehn twist in homology. 
For this, we need the \emph{(homology) intersection form} of the surface $S$
$$
\omega: H_1(S;\mathbb{Z}) \times H_1(S;\mathbb{Z}) \longrightarrow \Z
$$ 
which is defined as follows:
if $a=[\alpha]\in H_1(S;\mathbb{Z})$ and $b=[\beta] \in H_1(S;\mathbb{Z})$ are represented 
by smooth oriented closed curves $\alpha$ and $\beta$, in transverse position, then
$$
\omega([\alpha],[\beta]) := \sum_{x\in \alpha\cap \beta} 
\left\{\begin{array}{ll} +1, & \hbox{if $(\vec{\alpha}_x, \vec{\beta}_x)$ is direct}\\ -1, & \hbox{otherwise}  \end{array} \right\}.
$$ 
  Note that  the pairing $\omega$ is bilinear, skew-symmetric and non-singular:
thus, $\omega$ is a symplectic form on $H= H_1(S;\Z)$.   

\begin{lemma}
Let $\alpha\subset S$ be a simple closed curve. The action of the Dehn twist $T_\alpha$ in homology is given by the following  formula:
\begin{equation} \label{eq:twist_homology}
\forall x\in H, \quad (T_\alpha)_*(x) = x + \omega([\alpha],x)\cdot [\alpha].
\end{equation}
\end{lemma}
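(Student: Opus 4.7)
The plan is to reduce to a geometric computation on an oriented simple closed curve that meets $\alpha$ transversely, then assemble the contribution of each intersection point into the claimed formula.

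First, I would observe that both sides of \eqref{eq:twist_homology} are $\Z$-linear in $x$, so it suffices to verify the equality on a set of generators of $H$. By general position, every homology class in $H$ can be represented by a smooth oriented simple closed curve $\beta$ that is transverse to $\alpha$. Thus the problem reduces to showing
$$
[T_\alpha(\beta)] \;=\; [\beta] \,+\, \omega([\alpha],[\beta])\cdot [\alpha]
$$
for any such $\beta$.

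Next, I would analyze $T_\alpha(\beta)$ locally. By the definition of the Dehn twist, $T_\alpha$ is the identity outside the annular neighborhood $\operatorname{N}(\alpha)=S^1\times[0,1]$, and inside it performs a full twist. Hence $T_\alpha(\beta)$ agrees with $\beta$ outside $\operatorname{N}(\alpha)$, while each arc of $\beta\cap \operatorname{N}(\alpha)$ (one arc per intersection point $x\in \alpha\cap\beta$) is replaced by an arc that makes a full loop parallel to $\alpha$ before exiting the annulus on the opposite side. Up to isotopy rel $\partial\operatorname{N}(\alpha)$, this twisted arc is homotopic to the concatenation of the original arc with a parallel copy of $\alpha$, oriented according to the sign of the intersection at $x$: the loop is traversed in the direction of $\alpha$ when $(\vec{\alpha}_x,\vec{\beta}_x)$ is direct, and against it otherwise.

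Passing to homology and summing the local contributions over all points $x\in \alpha\cap\beta$, one gets
$$
[T_\alpha(\beta)] \;=\; [\beta] \,+\, \Big(\sum_{x\in \alpha\cap\beta} \varepsilon_x\Big)\, [\alpha]
\;=\; [\beta] \,+\, \omega([\alpha],[\beta])\cdot[\alpha],
$$
which is precisely \eqref{eq:twist_homology}. The step that deserves the most care is the local picture of the twisted arc and the verification that its sign of contribution along $\alpha$ matches the sign convention used in defining $\omega$; doing this on an oriented model of the annulus $S^1\times[0,1]$, with $\beta$ entering as the straight segment $\{e^{2i\pi\theta_0}\}\times[0,1]$, makes the signs explicit and pins down the formula. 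Once this sign check is done, linearity in $x$ and the fact that $\omega$ is well-defined on homology classes finish the proof.
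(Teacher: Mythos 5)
Your argument is correct and is exactly the standard local computation that the paper alludes to when it says the formula ``is easily deduced from the definition of a Dehn twist'' (the paper gives no further details). One small inaccuracy: not every class in $H$ is representable by a \emph{simple} closed curve (non-primitive classes are not), but since you only need a generating set --- and since the same intersection-point count works just as well for arbitrary transverse closed curves --- this does not affect the proof.
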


\noindent  
In other words, $(T_\alpha)_*$ is the transvection defined by the vector $[\alpha]$ and the linear form  $\omega([\alpha],-)$.
Formula \eqref{eq:twist_homology} is easily deduced from the definition of a Dehn twist. 

Here are two immediate  consequences of the transvection formula \eqref{eq:twist_homology}:
\begin{itemize}[label=$\diamond$] 
\item[(i)] for a simple closed curve $\alpha \subset S$,  we have $T_\alpha\in \calI(S)$
if and only if we have $[\alpha]=0 \in H$ (i.e$.$ $\alpha$ is separating in $S$);  
\item[(ii)] for any  simple closed curves $\alpha, \beta$ in $S$ such that $\alpha \cap \beta=\varnothing$ and $[\alpha] = [\beta]\in H$
(i.e$.$ $\alpha$ and $\beta$ cobound a subsurface of $S$)
we have $T_{\alpha}^{-1} T_\beta  \in \calI(S)$.
\end{itemize}
Following Johnson, we call an element  $T_\alpha$ of type (i) a \emph{BSCC map} (for ``Bounding Simple Closed Curve''),
and its \emph{genus} is the genus of the subsurface of $S$ bounded by $\alpha$. (If $\partial S=\varnothing$, then
there are two such subsurfaces and we take the minimal genus of those two.).
Besides, we call  an element $T_{\alpha}^{-1} T_\beta$ of type (ii) a \emph{BP map} (for ``Bounding Pair''),
and its \emph{genus}  is  the genus of the subsurface of $S$ with boundary $\alpha \sqcup \beta$.
(If $\partial S=\varnothing$ and $[\alpha] \neq 0$, then
there are two such subsurfaces and we take the minimal genus of those two.).

The following is a combination of several works, namely \cite{Bir71,Pow78,Joh79}.

\begin{theorem}[Birman 1971, Powell 78, Johnson 1979] \label{th:BPJ}
The Torelli group $\calI(S)$ has the following generating sets, 
whose nature depends on the genus $g$ and the number $n$ of boundary component  of $S$:
\begin{center} \begin{tabular}{c|c|c| }
& $ n=0 $  & $ n=1 $  \\ \hline 
$g=2$ &  BSCC maps of genus $1$ & BSCC maps of genus 1 \&  BP maps of genus $1$ \\ \hline
$g\geq 3 $&  BP maps of genus $1$  & BP maps of genus $1$ \\ \hline
\end{tabular} \end{center}
\end{theorem}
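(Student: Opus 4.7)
The plan is to exploit the short exact sequence
$$
1 \longrightarrow \calI(S) \longrightarrow \calM(S) \stackrel{\Psi}{\longrightarrow} \operatorname{Sp}(H,\omega) \longrightarrow 1,
$$
where $\Psi$ records the symplectic action on $H = H_1(S;\Z)$, and to proceed in three steps: (a) generate $\calI(S)$ by BSCC maps and BP maps of arbitrary genus, (b) eliminate BSCC generators in favor of BP generators when $g\geq 3$, and (c) reduce the genus of BP generators to $1$.

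For step (a), I would start from a presentation of $\operatorname{Sp}(2g;\Z)$ by symplectic transvections as generators; by \eqref{eq:twist_homology} these match the images of Dehn twists under $\Psi$. Lifting each transvection generator to a Dehn twist along a non-separating simple closed curve via Theorem~\ref{th:Dehn}, every defining relation of $\operatorname{Sp}(2g;\Z)$ produces an element of $\ker\Psi = \calI(S)$, and a standard diagram chase shows that these ``relation lifts'' normally generate the Torelli group. Rewriting each relator in $\calM(S)$ using the elementary relations between Dehn twists (braid, chain, disjointness) and collecting the trivial-in-homology factors, the resulting classes can be arranged into products of BSCC twists and BP elements $T_\alpha^{-1} T_\beta$. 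This is essentially Birman's argument in \cite{Bir71}, completed for the closed case by Powell in \cite{Pow78}.

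Step (b) rests on the \emph{lantern relation}: if $a_0, a_1, a_2, a_3$ bound a four-holed sphere in $S$ and $x_{12}, x_{13}, x_{23}$ are the three standard ``diagonal'' simple closed curves in it, then $T_{a_0} T_{a_1} T_{a_2} T_{a_3} = T_{x_{12}} T_{x_{13}} T_{x_{23}}$ in $\calM(S)$. Embedding the four-holed sphere so that $a_0$ is a given separating curve while $a_1, a_2, a_3$ are non-separating (possible as soon as $g\geq 3$), the relation can be solved for $T_{a_0}$ as a product of elements of the form $T_{x_{ij}} T_{a_k}^{-1}$, each of which is a BP map. For step (c) I would induct on the genus $h$ of a BP pair $(\alpha,\beta)$: inside the subsurface cobounded by $\alpha\sqcup\beta$ one chooses auxiliary simple closed curves so as to factor $T_\alpha^{-1} T_\beta$ as a product of BP maps of strictly smaller genus, iterating down to $h=1$. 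The main obstacle lies precisely in step~(c): controlling that the successive rewritings stay within the subgroup generated by genus-$1$ BP maps requires a careful selection of auxiliary curves and is the chief technical contribution of Johnson \cite{Joh79}. The exceptional rows in the table at $g=2$ reflect exactly the failure of the lantern embedding and of the genus reduction, which is why BSCC maps of genus~$1$ cannot be eliminated there (and why BP maps of genus~$1$ must be added back when $n=1$ provides the extra boundary component needed for them to exist in a non-redundant way).
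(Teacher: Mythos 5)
The paper itself gives no proof of Theorem~\ref{th:BPJ}: it is quoted as a combination of \cite{Bir71,Pow78,Joh79}, so your sketch has to be judged against those references rather than against an argument in the text. For the rows $g\geq 3$ your outline is indeed the standard Birman--Powell--Johnson route, and its steps are sound, with a few points you should make explicit: the diagram chase gives that the relator lifts normally generate $\calI(S)$ only if the chosen Dehn-twist lifts generate all of $\calM(S)$; to pass from normal generation to generation by BSCC and BP maps you need the (easy but necessary) remark that this family is conjugation-invariant, since $f T_\alpha f^{-1}=T_{f(\alpha)}$; and in the lantern step the fact that each $T_{x_{ij}}T_{a_k}^{-1}$ (with $k\notin\{i,j\}$) is a BP map is not automatic --- it uses that the classes of $a_1,a_2,a_3$ sum to zero because $a_0$ is separating, so that $x_{ij}$ is homologous to the complementary boundary curve $a_k$, and it requires embedding the lantern in a side of $a_0$ of genus at least $2$, which is exactly where $g\geq 3$ enters. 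Also, your emphasis in step (c) is inverted: the factorization $T_\alpha T_\beta^{-1}=(T_\alpha T_\gamma^{-1})(T_\gamma T_\beta^{-1})$, with $\gamma$ a curve in the subsurface cobounded by $\alpha$ and $\beta$ splitting off genus $1$, is the easy part of \cite{Joh79}; the lantern argument and the surrounding bookkeeping are the substance.

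The genuine gap is that your three steps never prove the $g=2$ row, nor really the $n=1$ column, although both are part of the statement. Saying that the lantern embedding and the genus reduction ``fail'' at $g=2$ only explains why the $g\geq 3$ reduction does not apply; it does not establish the positive assertions of the table. For $g=2$, $n=0$ one must check that in a closed genus-$2$ surface every bounding pair consists of isotopic curves (an Euler-characteristic count: the two complementary pieces have genera summing to $1$, so one is an annulus), so the Birman--Powell generating set degenerates to BSCC twists, all of which automatically have genus $1$ there. For $g=2$, $n=1$ one must in addition express the BSCC twists of genus $2$ (twists about curves bounding a genus-$2$ subsurface, e.g$.$ boundary-parallel curves) as products of genus-$1$ BSCC and genus-$1$ BP maps; nothing in your sketch does this, and your parenthetical about the boundary component is a heuristic, not an argument. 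Finally, the Birman--Powell input you invoke concerns closed surfaces, so even for $g\geq 3$ the $n=1$ entry needs an extra step --- either rerunning the argument for a surface with one boundary component, as Johnson does, or comparing $\calI(S)$ for $n=1$ and $n=0$ via the capping homomorphism, whose kernel must then be controlled. As written, your proposal establishes (modulo the details above) only the $g\geq 3$, $n=0$ entry of the table.
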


One of the major accomplishments from Johnson's works in the 80's is the fact that the group $\calI(S)$ is finitely generated
 in genus at least $3$ \cite{Joh83a}, but we will not need this fact in these lectures.
 Note that $\calI(S)$ is not finitely generated in genus~$2$~\cite{MM}.

\subsection{Torelli twists in $3$-manifolds} \label{subsec:Torelli_twists}

We fix a closed surface $R$, which may be disconnected.

\begin{definition}
Let $M \in \calV(R)$, let  $S \subset \interior(M)$  be a  compact surface with one boundary component
and let  $s\in \calI(S)$. The $3$-manifold obtained from $M$ by a \emph{Torelli twist} along $S$ with $s$ is  
\begin{equation}\label{eq:Torelli_surgery}
M_{s} := \big(M \setminus  \interior \operatorname{N}(S) \big) \cup_{\tilde s }  \operatorname{N}(S) 
\end{equation}
where  $\operatorname{N}(S)$ is  a regular neighborhood of $S$ in $M$ identified to $S\times [-1,1]$,
and $\tilde s$ is the self-diffeomorphism of $\partial(S\times [-1,1])$
 given by $s$ on $S \times \{1\}$ and  the identity elsewhere. 
With the  obvious boundary parametrization $m_s:R \to M_s$ induced by~$m$, we get $M_s \in \calV(R)$.
\hfill $\blacksquare$
\end{definition}

\noindent
Equivalently, $M_s$ is obtained by cutting open $M$ along $S$ and gluing back with~$s$:\\[0.cm]
\begin{center}
\labellist \small \hair 2pt
\pinlabel {$M$}  at 30 30
\pinlabel {$M_s$}  at 540 30
\pinlabel {$S$} [u] at 144 60
\pinlabel {$\leadsto$}  at 400 100
\pinlabel {$s$}  at 648 100
\endlabellist
\centering
\includegraphics[scale=0.35]{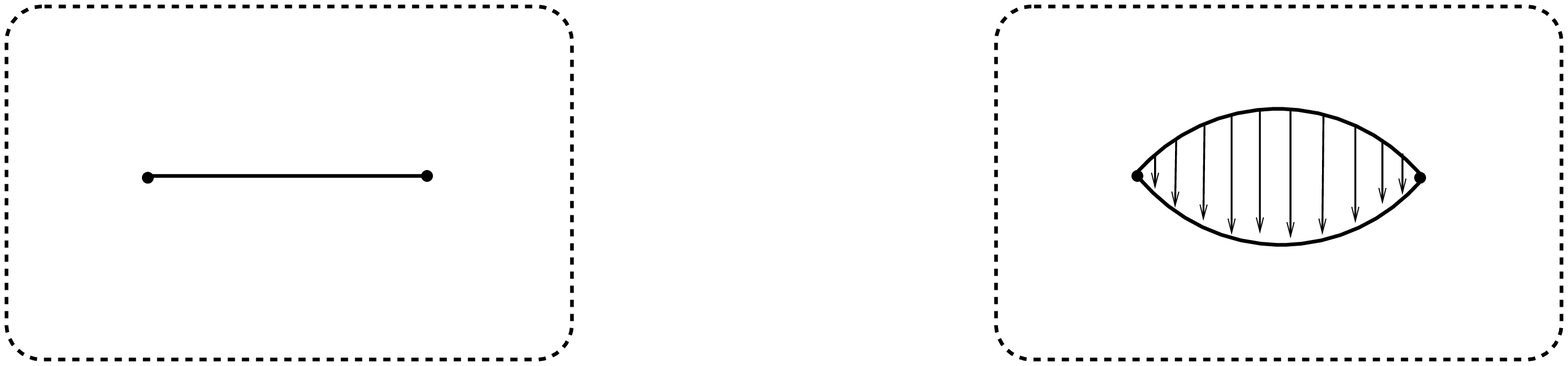}
\end{center}

\begin{definition}
Let $M, M' \in \calV(R)$. We say that $M$ and $M'$ are \emph{Torelli--equivalent}
if there is a compact surface $S\subset \interior(M)$ and an $s\in \calI(S)$ such that $M_s\cong M'$. \hfill $\blacksquare$ 
\end{definition}

\begin{lemma} \label{lem:equivalence}
The Torelli--equivalence is a non-trivial equivalence relation on $\calV(R)$.
\end{lemma}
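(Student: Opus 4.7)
The plan is to verify the three axioms of an equivalence relation and then exhibit a homological obstruction witnessing non-triviality.

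Reflexivity and symmetry are formal. For reflexivity, pick any embedded disk $D\subset\interior(M)$; since $\calI(D)$ is trivial (Alexander's trick), taking $s=\operatorname{id}$ gives $M_s\cong M$. For symmetry, the surface $S$ still sits in $M_s$ as the core $S\times\{0\}$ of the re-glued $\operatorname{N}(S)$, and $s^{-1}\in\calI(S)$ because the Torelli group is a subgroup of $\calM(S)$; performing the twist of $M_s$ along this copy of $S$ by $s^{-1}$ reverses the regluing and recovers $M$.

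Transitivity is the main obstacle. Given $M\leadsto M'$ via $(S_1,s_1)$ and $M'\leadsto M''$ via $(S_2,s_2)$, I would combine the two twists into a single one on $M$ by a boundary connect sum. Since the first twist only modifies $\operatorname{N}(S_1)$, the complement $M\setminus\operatorname{N}(S_1)$ is canonically identified with a submanifold of $M'$. After placing $S_1,S_2$ in transverse position in $M'$, I would clear the finitely many intersection circles by the standard tubing construction (surgering $S_2$ around $S_1$), producing a surface $S_2'\subset M'\setminus\operatorname{N}(S_1)$ that is isotopic to $S_2$ in $M'$ and carries a class $s_2'\in\calI(S_2')$ with the same twist effect as $s_2$. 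Then $S_1$ and $S_2'$ are disjoint compact surfaces in $M$, and their boundary connect sum $S:=S_1\natural S_2'$ along a band in the complement is a connected surface with one boundary component. Define $s$ to be $s_1$ on $S_1$, $s_2'$ on $S_2'$, and the identity on the band. Since boundary connect sum induces $H_1(S)\cong H_1(S_1)\oplus H_1(S_2')$ and $s_*=(s_1)_*\oplus(s_2')_*=\operatorname{id}$, we have $s\in\calI(S)$, and by construction $M_s\cong M''$. The delicate point is to perform the tubing step without disturbing the supports of $s_1$ and $s_2$, which is a standard $3$-manifold topology verification.

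For non-triviality, I would first show that any Torelli twist preserves integral homology. The Mayer--Vietoris sequence attached to the decomposition $M=(M\setminus\interior\operatorname{N}(S))\cup\operatorname{N}(S)$ is controlled by the regluing map $\tilde s_*$ on $H_*(\partial\operatorname{N}(S))$; the boundary $\partial\operatorname{N}(S)$ is the double of $S$, on which $H_1$ splits as $H_1(S)\oplus H_1(S)$ and $\tilde s_*$ acts as $s_*\oplus\operatorname{id}=\operatorname{id}$ by definition of the Torelli group. Hence $H_*(M_s;\Z)\cong H_*(M;\Z)$. To conclude, fix any $M\in\calV(R)$ (which exists by the preceding corollary) and compare it with the internal connect sum $M\sharp(S^1\times S^2)\in\calV(R)$: the two have first homologies differing by a copy of $\Z$, and are therefore not Torelli--equivalent.
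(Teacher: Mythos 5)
Your reflexivity, symmetry and non-triviality arguments are correct and essentially those of the paper: the paper also rules out triviality by showing that a Torelli twist induces an isomorphism on homology (via the Mayer--Vietoris argument you sketch), and your explicit pair $M$ versus $M\sharp(S^1\times S^2)$ is a legitimate way to finish. The genuine gap is in transitivity, at the one step that carries the real content: making the second surface disjoint from the re-glued neighbourhood of the first. You propose to remove the intersection circles of $S_1$ and $S_2$ by tubing, i.e. by surgering $S_2$ along $S_1$, and you assert that the resulting surface $S_2'$ is isotopic to $S_2$ and carries a class $s_2'$ with the same twist effect. Neither assertion holds: surgering a surface along its intersection circles changes its topological type in general (it is not an isotopy), and once $S_2'$ is a different surface there is no canonical way to transport $s_2\in\calI(S_2)$ to some $s_2'\in\calI(S_2')$, let alone to guarantee that twisting along $S_2'$ with $s_2'$ produces the same manifold. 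So the sentence you wave off as ``a standard $3$-manifold topology verification'' is exactly where the argument breaks down.

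The fix is simpler than tubing and is the paper's argument: no modification of $S_2$ is needed, only an isotopy. Since $S_2$ is a compact surface with boundary, it deformation retracts onto a $1$-dimensional complex and hence sits in a regular neighbourhood of a graph; the re-glued piece $\operatorname{N}(S_1)\subset M'$ is a handlebody, i.e. also a regular neighbourhood of a graph. By general position, two graphs in a $3$-manifold can be made disjoint by an ambient isotopy, and shrinking $\operatorname{N}(S_1)$ towards its spine then moves $S_2$ entirely off $\operatorname{N}(S_1)$, carrying the class $s_2$ along unchanged since the surface is only displaced, not altered. After this isotopy $S_2$ lies in $M\setminus \interior\operatorname{N}(S_1)$, hence can be viewed inside $M$ disjointly from $S_1$, and your boundary-connected-sum step $T=S_1\natural S_2$, $t=s_1\natural s_2\in\calI(T)$, with the check on $H_1(T)\cong H_1(S_1)\oplus H_1(S_2)$, then goes through exactly as you wrote it.
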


\begin{proof} 
The Torelli-equivalence  is clearly reflexive and symetric as a relation in $\calV(R)$.
We verify the transivity by considering a first Torelli twist $M \leadsto M_s=M'$ along $S\subset M$
and a second one $M' \leadsto M'_{s'}$ along $S' \subset M'$. 
Since $S'$ deformation retracts onto a $1$-dimensional subcomplex 
and since the part $\operatorname{N}(S)  \subset M_s$ of the decomposition \eqref{eq:Torelli_surgery} is a handlebody
which also retracts to a $1$-dimensional subcomplex, we can isotope $S'$ in $M'$ 
so that it lies in the part $M \setminus \interior \operatorname{N}(S) \subset M_s$ of the decomposition \eqref{eq:Torelli_surgery}.
Hence we can view $S'$ as a  subsurface of $M$, disjoint from $S$. We attach to $S \sqcup S'$ a $1$-handle, inside $M$, to get 
a larger subsurface $T := S \sharp_\partial S'$ of $M$. We have $t := s \sharp_\partial s' \in \calI(T)$ and $M''\cong M_t$.
Hence $M''$ is Torelli-equivalent to $M$.

To prove that the Torelli--equivalence is a non-trivial relation, 
we observe that a Torelli twist $M \leadsto M_s$ induces a unique isomorphism in homology
such that the following diagram is commutative:
\begin{equation} \label{eq:psi_s}
\xymatrix{
 H_1(M; \Z) \ar@{-->}[rr]_-\simeq^-{\psi_s} &&  H_1(M_s;\Z) \\
 & \ar@{->>}[lu]^-{\operatorname{incl}_*} H_1 \big(M \setminus \interior \hbox{N}(S);\Z\big)  \ar@{->>}[ru]_-{\operatorname{incl}_*}   & 
}
\end{equation}
(The unicity follows from the surjectivity of the homomorphism $\operatorname{incl}_*$ 
induced by the inclusion $M \setminus \interior \hbox{N}(S) \hookrightarrow M$, 
and the existence is justified using the Mayer--Vietoris theorem.)
Hence two manifolds in $\calV(R)$ with different homology types can not be Torelli--equivalent.
\end{proof}

We now  give another description of the Torelli--equivalence.
Let $M\in \calV(R)$. A \emph{$Y$-graph} in $M$ is a surface $G \subset \interior (M)$ consisting of one ``node'', three ``edges'' 
and three ``leaves''  as shown on the left side of  Figure~\ref{fig:Y-graph}. The regular neighborhood of $G$ is a handlebody of genus $3$, 
inside which $G$ can be replaced by the 6-component  framed link  shown on the right side of  Figure \ref{fig:Y-graph}
(using the blackboard framing convention):  
to get this link, the node of $G$ is replaced by one copy
of the borromean rings, and each leaf of $G$ becomes a knot ``clasping'' one of those three rings.
We define
$
M_G 
$
to be the $3$-manifold obtained from $M$ by surgery along this framed link, and we call the move $$M \leadsto M_G$$ a \emph{$Y$-surgery}.
  This operation is equivalent to the ``\emph{borromean surgery}'' move that Matveev considered in \cite{Matveev}. 
Under this form, this operation was introduced by Goussarov~\cite{Go99} and Habiro \cite{Hab00} 
as part of a much larger package which is now known as ``\emph{clasper  calculus}'': see \S \ref{subsec:claspers} below.   
 
\begin{figure}[h!]
{\labellist \small \hair 0pt 
\pinlabel {$\longrightarrow$} at 460 130
\pinlabel {node} at 217 122
\pinlabel {edge} at 125 170
\pinlabel {leaf} at 110 290
\endlabellist}
\includegraphics[scale=0.3]{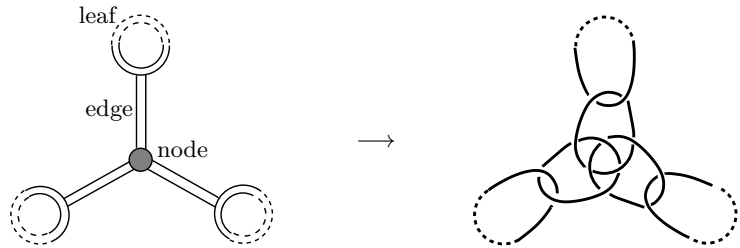}
\caption{A $Y$-graph and the associated framed link} \label{fig:Y-graph}
\end{figure}

\begin{proposition} \label{prop:Y_Y}
Two manifolds $M,M' \in \calV(R)$ are Torelli--equivalent if, and only if, 
there is a sequence of $Y$-surgeries $M=M_0 \leadsto M_1 \leadsto \cdots  \leadsto M_r=M'$.
\end{proposition}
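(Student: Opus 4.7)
The equivalence involves two implications, which I would handle separately.

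\emph{Every $Y$-surgery is a Torelli twist.} Given a $Y$-graph $G\subset \operatorname{int}(M)$, its regular neighborhood $N:=\operatorname{N}(G)$ is a genus-$3$ handlebody, and the associated six-component framed link lies in $\operatorname{int}(N)$. Hence $M_G$ is obtained from $M$ by replacing $N$ with a new compact $3$-manifold $N'$ having identical boundary $\partial N'=\partial N\cong \Sigma_3$, and in fact $N'$ is again a genus-$3$ handlebody (this can be verified directly from the Kirby picture of the borromean link). Any two handlebodies of the same genus sharing a boundary identification differ by a self-diffeomorphism $\phi$ of $\Sigma_3$, so $M_G\cong (M\setminus \operatorname{int}(N))\cup_{\phi}N$. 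To put this in the Torelli-twist form of Definition~\eqref{eq:Torelli_surgery}, I would take $S$ to be a pushed-in copy of $\partial N$ minus a small open disk --- a genus-$3$ surface with one boundary component inside $\operatorname{int}(M)$ --- and argue that $\phi$ acts trivially on $H_1(\Sigma_3;\Z)$ so that it lifts to an $s\in\calI(S)$ realizing $M_G\cong M_s$. The homological triviality of $\phi$ is the only real content and follows either from the fact that the linking matrix of the borromean surgery link vanishes, or from the commutative diagram~\eqref{eq:psi_s} which shows a $Y$-surgery induces an isomorphism in $H_1$.

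\emph{Every Torelli twist is a sequence of $Y$-surgeries.} Let $S\subset \operatorname{int}(M)$ with $s\in\calI(S)$. Write $s=s_1\cdots s_k$ as a product of generators provided by Theorem~\ref{th:BPJ}, namely BSCC maps of genus $1$ and BP maps of genus $1$. Since the same embedded $S$ sits in each intermediate $3$-manifold, the twist $M\leadsto M_s$ factors as a chain of Torelli twists $M\leadsto M_{s_1}\leadsto (M_{s_1})_{s_2}\leadsto\cdots\leadsto M_s$, each one along $S$ by a single generator. Thus it suffices to realise one such generator twist by a $Y$-surgery. The key reduction is that any such generator is supported on a genus-$1$ subsurface $F\subset S$ (with one or two boundary components), so the Torelli twist is compactly supported in $\operatorname{N}(F)$, and I may replace $S$ by $F$. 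It then remains to exhibit, for each of the two generator types, an explicit $Y$-graph $G\subset \operatorname{N}(F)\cup \operatorname{collar}$ such that the $Y$-surgery produces exactly the local cut-and-reglue by that generator: the three leaves of $G$ should be chosen parallel to the BSCC curve (respectively, to the two BP curves), and the clasping with the borromean node produces the transvection of formula~\eqref{eq:twist_homology} ``up to homology'' while actually realising the Dehn twist(s). This explicit identification is essentially Matveev's original observation \cite{Matveev} reformulated in terms of $Y$-graphs.

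\emph{Principal obstacle.} The conceptual subtlety lies in the second direction, where making the local matching ``Torelli generator $=$ $Y$-surgery'' precise requires a small Kirby/clasper calculation inside $\operatorname{N}(F)$ --- exactly the kind of manipulation that motivates the clasper calculus alluded to in the introduction. One must keep careful track of framings, orientations, and the boundary condition $\tilde s=\id$ on $\partial S\times[-1,1]$; the transitivity mechanism of Lemma~\ref{lem:equivalence} (taking boundary-connect-sums of supports, or disjoint unions of $Y$-graphs) then lets us concatenate local identifications into the global statement on $\calV(R)$.
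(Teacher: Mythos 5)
Your overall skeleton (Lickorish's trick, the generators of Theorem~\ref{th:BPJ}, a local identification inside a handlebody neighborhood) is close to the paper's, but both directions as you present them have genuine gaps. In the direction ``Torelli twist $\Rightarrow$ $Y$-surgeries'', by working with $S$ as given you must realize twists by genus-$1$ \emph{BSCC} maps as well as genus-$1$ BP maps. The identification you defer to (Matveev's observation, \cite[Lemma 5.1]{GGP}, \cite[Fig.~6.2]{Mas07}) is precisely the equivalence between a $Y$-surgery and a Torelli twist by a genus-$1$ \emph{BP} map; it does not cover the BSCC case, and your prescription ``three leaves parallel to the BSCC curve'' is not justified (such leaves are null-homologous in $\operatorname{N}(F)$, and since the Torelli group of a one-holed genus-$1$ surface is infinite cyclic, generated by that very BSCC twist, no rewriting supported in $\operatorname{N}(F)$ is available). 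The missing idea is the paper's stabilization step: boundary-connect-sum $S$ inside $M$ with an auxiliary subsurface $U$ so that its genus becomes at least $3$; then Theorem~\ref{th:BPJ} generates $\calI(S)$ by genus-$1$ BP maps alone, and the single equivalence ``genus-$1$ BP twist $=$ $Y$-surgery'' (via Lickorish's trick and the cited lemma) settles both implications at once.

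In the direction ``$Y$-surgery $\Rightarrow$ Torelli twist'', your homological argument is incomplete and partly circular. First, that the surgered neighborhood $N'$ is again a genus-$3$ handlebody is not a light verification: it is essentially the Kirby/clasper computation of \cite[Lemma 5.1]{GGP}, i.e.\ the real content you are trying to bypass. Second, even granting this, an arbitrary diffeomorphism $N'\to N$ determines the regluing class $\phi\in\calM(\Sigma_3)$ only up to composition with boundary restrictions of diffeomorphisms of the handlebody, and the vanishing of the linking matrix gives only that $\phi_*$ preserves the Lagrangian $L_N\subset H_1(\Sigma_3;\Z)$ --- not that $\phi_*=\id$. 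To conclude you must correct $\phi$ by a handlebody diffeomorphism, using that the handlebody group realizes the full stabilizer of $L_N$ in $\operatorname{Sp}(6,\Z)$; none of this is in your sketch. The appeal to \eqref{eq:psi_s} is circular: that diagram describes the effect on $H_1(M)$ of a Torelli twist, which is exactly what you are trying to show the $Y$-surgery to be.
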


\begin{proof}[Sketch of the proof]
In the definition of a Torelli twist $M \leadsto M_s$ along $S \subset M$, we can
assume that the genus of $S$ is  arbitrary high: indeed,  we can always take the boundary-connected sum of $S$ 
with another subsurface $U$  of $M$ (with $\partial U \cong S^1$) and extend $s$  by the identity to a diffeomorphism of $S \sharp_\partial U$.
Besides, we know from Theorem \ref{th:BPJ} that $\calI(S)$ is generated by BP maps of genus $1$ 
if the genus of $S$ is at least $3$.
Hence it is enough to show that a $Y$-surgery is equivalent to a Torelli twist $M \leadsto M_s$ 
defined by a BP map  $s$  of genus~$1$.

Using Lickorish's twist, we see that $M_s \cong M_{A \sqcup B}$ where $A \sqcup B$ is the $2$-component link in $M$
 given by the two curves $\alpha \sqcup \beta \subset S$ that define the BP map $s$, with the appropriate framings.
 Then the rest of the argument consists in showing that surgery along $A \sqcup B$ is equivalent to the surgery along a $6$-component
 framed link defining a $Y$-surgery: this is explained in \cite[Lemma 5.1]{GGP} or \cite[Fig$.$ 6.2]{Mas07}.
\end{proof}

\begin{remark} \label{rem:blink}
 A \emph{blink}  of genus $h$ in a compact $3$-manifold $M$ is a compact surface $B\subset \interior(M)$ of genus $h$ with two boundary components
$\partial B = B^+ \sqcup (-B^-)$: the knot $B^\pm$ is framed with the parallel given by the curve $\partial \hbox{N}(B^\pm) \cap B$ 
and corrected by the meridian $\pm \mu(B^\pm)$.
Surgeries along blinks have been considered in \cite{Hilden,Matveev} and \cite{GL97}, where the term ``blink'' was coined.
As in the proof of  Proposition~\ref{prop:Y_Y}, we deduce from Lickorish's trick  that  surgery along a blink
is equivalent to a Torelli twist with a BP map of  the same genus.
 Thus two manifolds $M,M' \in \calV(R)$ are Torelli--equivalent if, and only if, one can find
 a disjoint union $B= \sqcup_i B_i$ of blinks in $M$ such that $M_B \cong M'$.  
\hfill $\blacksquare$
\end{remark}

Finally, we give another description of the Torelli--equivalence in terms of Heegaard splittings.
However, we only formulate this description for the two instances of a surface $R$ that we shall consider later: 
\begin{itemize}[label=$\diamond$] \label{page:R}
\item[(i)] $R=\varnothing$: then $\calV(R)$ consists of closed $3$-manifolds;
\item[(ii)] $R= \partial(\Sigma \times [-1,1])$ where $\Sigma$ is a compact surface with $\partial \Sigma \cong S^1$:
then $\calV(R)$ consists of cobordisms (with ``vertical'' boundary) from $\Sigma$ to $\Sigma$.
\end{itemize}

The notion of ``Heegaard splitting''  in the case (i) has been seen in Example~\ref{ex:Heegaard},
and it can be reformulated  as follows. A \emph{Heegaard splitting} of genus $g$  of a closed $3$-manifold $M$ is a decomposition $M = M_- \cup M_+$
where $M_-$ and $M_+$ are two copies of the  handlebody $H_g$ in $M$ such that $M_-\cap M_+= \partial M_\pm$
(which is called the \emph{Heegaard surface}).

Likely, the notion of ``Heegaard splitting'' in the case (ii) is defined as follows. Let $M$ be a cobordism from $\Sigma$ to $\Sigma$.
We set $\partial_\pm M := m(\Sigma\times \{\pm 1\} )$, and we denote a collar neighborhood of $\partial_-M$ 
(resp$.$ $\partial_+ M$) simply by $\partial_- M \times [-1,0]$ (resp. $\partial_+ M \times [0,1]$).
A \emph{Heegaard splitting} of $M$  of \emph{genus} $g$ is a decomposition 
$
M = M_- \cup M_+,
$
where $M_-$ is obtained from $\partial_- M \times [-1,0]$ by adding $g$ $1$-handles along $\partial_- M \times \{0\}$, 
$M_+$ is obtained from $\partial_+ M \times [0,1]$ by adding $g$ $1$-handles along $\partial_+ M \times \{0\}$,
and we have $M_-\cap M_+ = \partial M_- \cap \partial M_+$ (which is called the \emph{Heegaard surface}).
The existence of Heegaard splittings in this situation (cobordisms with ``vertical'' boundary) is again an application of Morse theory.

\begin{proposition} \label{prop:Heegaard}
Assume that $R$ is of one of the above  types (i) and (ii).
 Two manifolds $M,M' \in \calV(R)$  are Torelli--equivalent if, and only if, there is a Heegaard splitting $M=M_- \cup M_+$
 with Heegaard surface $S$ and an $s\in \calI(S)$ such that $M' \cong M_- \cup_s M_+$.
\end{proposition}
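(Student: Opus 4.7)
The plan is to prove both implications of the equivalence. For the ``if'' direction, we realise the modification of a Heegaard splitting as a Torelli twist along a slight perturbation of the Heegaard surface. For the ``only if'' direction, after reducing to the case of a single twist via the transitivity argument of Lemma~\ref{lem:equivalence}, we extend the surface carrying the Torelli twist to a full Heegaard surface of $M$.

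For the ``if'' direction, start from a Heegaard splitting $M=M_-\cup M_+$ with Heegaard surface~$S$ and $s\in\calI(S)$. In case (i), $S$ is closed; up to isotopy we may assume that $s$ fixes pointwise a small disk $D\subset S$. Pushing $S^\circ:=S\setminus\interior(D)$ slightly off $S$ into $M_+$ realises it as a subsurface $S^\circ\subset\interior(M)$ with connected boundary, and since the inclusion induces an isomorphism $H_1(S^\circ;\Z)\to H_1(S;\Z)$, the restriction $s|_{S^\circ}$ lies in $\calI(S^\circ)$. Because $s$ is the identity on $D$, the Heegaard modification via $s$ leaves a neighbourhood of $D$ unchanged and thus coincides with the Torelli twist along $S^\circ$ with $s|_{S^\circ}$; in symbols, $M_{s|_{S^\circ}}\cong M_-\cup_s M_+$. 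In case (ii), $S$ already has connected boundary $\partial S=\partial\Sigma\times\{0\}$ on the vertical part of $\partial M$, on which $s$ is the identity; pushing $\partial S$ slightly into $\interior(M)$ along the vertical collar yields a subsurface $S'\subset\interior(M)$ and an element $s'\in\calI(S')$ with $M_{s'}\cong M_-\cup_s M_+$.

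For the ``only if'' direction, suppose $M$ and $M'$ are Torelli--equivalent. The composition argument of Lemma~\ref{lem:equivalence} allows us to assume a single Torelli twist $M_s\cong M'$ along $S\subset\interior(M)$ with $s\in\calI(S)$. The key step, addressed below, is to produce a Heegaard splitting $M=M_-\cup M_+$ whose Heegaard surface $T$ contains $S$ as a subsurface, i.e.\ $\partial S$ is a separating simple closed curve in $\interior(T)$ with $S$ on one side. Granted this, define $t\in\calM(T)$ by setting $t=s$ on $S$ and $t=\id$ on $T\setminus S$. Writing $i\colon S\hookrightarrow T$ and $j\colon T\setminus S\hookrightarrow T$ for the inclusions, the identities $t\circ i=i\circ s$ and $t\circ j=j$ yield $t_*\circ i_*=i_*$ (using $s_*=\id$) and $t_*\circ j_*=j_*$; since Mayer--Vietoris for $T=S\cup(T\setminus S)$ shows that $H_1(T;\Z)=\operatorname{Im}(i_*)+\operatorname{Im}(j_*)$, we conclude $t\in\calI(T)$. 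Finally, since $t$ is the identity on $T\setminus S$, the Heegaard modification $M_-\cup_t M_+$ coincides with the Torelli twist $M_s$.

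The main obstacle is the extension of $S$ to a Heegaard surface of $M$. The approach is Morse-theoretic: fix a regular neighbourhood $\operatorname{N}(S)\cong S\times[-1,1]$ of $S$ in $M$ and construct a Morse function $f\colon M\to\R$ that, on $\operatorname{N}(S)$, coincides with the projection $S\times[-1,1]\to[-1,1]$ (with $f$ constant on each horizontal piece $\partial_\pm M$ and equal to the second projection on the vertical boundary in case (ii)). Standard Morse-theoretic rearrangement of critical values away from $\operatorname{N}(S)$ (see~\cite{Milnor2}) allows us to assume that all critical points of index at most $1$ lie below level~$0$ and those of index at least~$2$ above. The resulting self-indexing handle decomposition of $M$ produces a Heegaard splitting with Heegaard surface $T=f^{-1}(0)$, which contains $S=S\times\{0\}$ by construction.
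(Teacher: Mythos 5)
Your two outer reductions are fine and correspond to what the paper treats as immediate: the ``if'' direction by puncturing the Heegaard surface (isotoping $s$ to fix a disk) and restricting, and, in the ``only if'' direction, extending $s$ by the identity to $t\in\calM(T)$ and checking $t\in\calI(T)$ via Mayer--Vietoris (this argument is correct, since $H_1(T;\Z)=\operatorname{Im}(i_*)+\operatorname{Im}(j_*)$ and $s_*=\id$). As you say, the whole content is the step you isolate: given $S\subset\interior(M)$ with one boundary component, produce a Heegaard splitting of $M$ whose surface contains $S$ up to isotopy. The paper does this step differently: it takes a handle decomposition of the complement $\tilde M:=M\setminus\interior\operatorname{N}(S)$, viewed as a cobordism from $\varnothing$ to $\partial\operatorname{N}(S)$, with a single $0$-handle, then $1$-handles, then $2$-handles; it sets $M_-$ to be the $0$/$1$-handle part and $M_+:=\bigl(\tilde M\setminus\interior(M_-)\bigr)\cup\operatorname{N}(S)$, the $2$-handles of $\tilde M$ being reinterpreted as dual $1$-handles attached to the handlebody $\operatorname{N}(S)$; it then isotopes $S$ onto $\partial\operatorname{N}(S)$ and off the attaching regions of those $1$-handles, hence into the Heegaard surface.

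The weak point of your version is the sentence invoking ``standard Morse-theoretic rearrangement of critical values away from $\operatorname{N}(S)$''. The rearrangement theorem of \cite{Milnor2} operates on cobordisms bounded by level sets and modifies $f$ along trajectories of a gradient-like vector field; but $\operatorname{N}(S)$ is not a union of level sets of $f$, and trajectories issued from the critical points you wish to move may traverse $\operatorname{N}(S)$ (entering through $S\times\{-1\}$ and leaving through $S\times\{+1\}$), in which case the standard modification changes $f$ on $\operatorname{N}(S)$ and $S$ need no longer lie in a regular level of the new function. So the relative statement you need --- rearrangement keeping $f$ equal to the projection on $\operatorname{N}(S)$, or at least keeping $S$ inside a regular level surface --- is not a quotable result; it is exactly where the work lies, and as written it is asserted rather than proved. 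In addition, for $f^{-1}(0)$ to be a genuine Heegaard surface you must ensure that both sides are connected handlebodies (e.g. arrange a single index-$0$ and a single index-$3$ critical point), and in case (ii) the boundary conditions require the extra care indicated in \cite[Lemma 2.1]{MM13}. The cleanest repair is to argue relatively from the start, as the paper does: decompose the complement of $\operatorname{N}(S)$ and dualize its $2$-handles, rather than trying to control a globally defined Morse function on $\operatorname{N}(S)$ during rearrangement.
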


\begin{proof}
We only prove the proposition in the case (i), the case (ii) being similar and a little bit more technical (see \cite[Lemma 2.1]{MM13} for instance).
It is enough to show that, given a closed $3$-manifold~$M$ and a surface $E \subset M$ with one boundary component,
we can always find a Heegaard splitting $M=M_-\cup M_+$ whose Heegaard surface contains  a subsurface that is isotopic to $E$ in $M$.

Let $\hbox{N}(E)$ be a regular neighborhood of $E$ in $M$ and set $\tilde M := M \setminus \interior \hbox{N}(E)$.
Viewing $\tilde M$ as a cobordism from $\varnothing$ to $ \partial   \hbox{N}(E)$, we can find a handle decomposition
$$
\tilde M_0 \subset \tilde M_1  \subset \tilde M_2 = \tilde M
$$
where $\tilde M_0$ consists of a single $0$-handle, $\tilde M_1$ is obtained from $\tilde M_0$ by attaching $1$-handles
and $\tilde M_2$ is obtained from $\tilde M_1$ by attaching $2$-handles. The latter can be viewed, dually, as $1$-handles attached
to $\hbox{N}(E)$ inside $M$. Hence there is a Heegaard splitting $M=M_- \cup M_+$ where
$$
M_- := \tilde M_1 \quad \hbox{and} \quad M_+ := \big(\tilde M_2 \setminus \interior(\tilde M_1) \big) \cup \hbox{N}(E).
$$
Observe that $E$ can be isotoped  in $\hbox{N}(E)$ onto $\partial \hbox{N}(E)$; furthermore, since $E$ deformation retracts onto
a $1$-dimensional subcomplex, we can next isotope it in $\partial \hbox{N}(E)$ to make it disjoint 
from the attaching locus of the $1$-handles attached to $ \hbox{N}(E)$. Thus we have isotoped $E$ to 
a subsurface of the Heegaard surface.
\end{proof}

\subsection{Filtrations on the Torelli groups} \label{subsec:filtrations}

We will define surgery equivalence relations for $3$-manifolds which are much stronger than the Torelli--equivalence
and arise from certain filtrations of the Torelli group.

To define these filtrations, we first recall that the \emph{lower central series} of a group~$G$ 
is the decreasing sequence of subgroups 
\begin{equation} \label{eq:LCS}
G=\Gamma_1 G \supset \Gamma_2 G \supset \Gamma_3 G \supset \cdots
\end{equation}
that are defined inductively by $\Gamma_{i+1}G:= \left[\Gamma_i G,G\right]$ for all $i\geq 1$.
Let $S$ be a compact surface with one boundary component, and fix a base-point $\star\in \partial S$.
The canonical action of $\calI(S)$ on the fundamental group $\pi:= \pi_1(S,\star)$ induces, 
for every integer $k\geq 1$, a group homomomorphism
\begin{equation} \label{eq:rho_k}
\rho_k : \calI(S) \longrightarrow \Aut(\pi/\Gamma_{k+1} \pi)
\end{equation}
since $\Gamma_{k+1} \pi$ is a characteristic subgroup of $\pi$.
Defining $J_k  \calI(S)  := \ker(\rho_k)$ for every~$k\geq 1$, we get a filtration of the Torelli group
$$
\calI(S) = J_1 \calI(S)  \supset J_2 \calI(S)  \supset J_3 \calI(S) \supset \cdots 
$$ 
which is nowdays  refered to as the \emph{Johnson filtration} of $\calI(S)$. 
  The study of the Johnson filtration on its whole started in Morita's seminal work \cite{Mor93},
and it is still  an active field of research. (See \cite{Satoh} for a survey.)  

\begin{example} \label{ex:Johnson}
Johnson made a deep study of  the second term of the filtration
$$
\calK(S) :=  J_2 \calI(S) 
$$
in \cite{Joh85a,Joh85b}, so much that this group is called the \emph{Johnson subgroup} (or the \emph{Johnson kernel}).
In particular, Johnson proved that $\calK(S)$ is generated by BSCC maps. \hfill $\blacksquare$
\end{example}

One of the main reasons to be interested in this filtration is that it has a trivial intersection
$$
\bigcap_{k\geq 1} J_k \calI(S) = \{1\}
$$
as can be easily checked from the following two classical facts: 
\begin{itemize}[label=$\diamond$]
\item[(i)] (Baer 1928) the canonical action of $\calI(S)$ on $\pi$ is faithful \cite{Baer}; \label{page:Baer}
\item[(ii)] (Magnus 1937) the lower central series of  $\pi$  has a trivial intersection, because $\pi$ is free  \cite{Magnus}.
\end{itemize}
  Thus, one  of the main objectives of the study of the Johnson filtration is to fully understand its associated graded, namely
$$
\hbox{Gr}^J\, \calI(S) = \bigoplus_{k\geq 1} \frac{J_k \calI(S)}{J_{k+1} \calI(S)}.
$$  
Another interesting feature of the Johnson filtration is that it is \emph{strongly central} in the sense that
\begin{equation} \label{eq:N-series}
\forall k,l \in \N^*, \quad \big[J_k \calI(S), J_l \calI(S) \big] \subset J_{k+l} \calI(S)
\end{equation}
  (see \cite[Prop. 4.1]{Mor93}).  Consequently, the commutator operation in the group $\calI(S)$ 
induces a Lie ring structure on $\hbox{Gr}^J \calI(S)$, which opens the door to Lie-theoretical methods in the study of $\calI(S)$. 
 (Again, see \cite{Satoh} for a survey.) 

The Johnson filtration has also been much studied in relation with the lower central series 
$ \calI(S) =\Gamma_1  \calI(S)  \supset \Gamma_2  \calI(S) \supset \Gamma_3  \calI(S) \supset \cdots$ of the Torelli group.
Indeed, \eqref{eq:N-series} implies that the latter is contained in the former:
\begin{equation} \label{eq:inclusion}
\forall k \in \N^*, \quad \Gamma_k \calI(S) \subset J_k \calI(S).
\end{equation}  
The associated graded of the lower central series of the Torelli group
\begin{equation} \label{eq:Gr_Gamma}
\hbox{Gr}^\Gamma \, \calI(S) = \bigoplus_{k\geq 1} \frac{\Gamma_k \calI(S)}{\Gamma_{k+1} \calI(S)}
\end{equation}
has been determined  with rational coefficients by Hain \cite{Hain}, as part of the stronger result
of identifying the Malcev Lie algebra of $\calI(S)$. 
For a comparison between $\hbox{Gr}^\Gamma \, \calI(S)\otimes \Q$ and $\hbox{Gr}^J \, \calI(S) \otimes \Q$ in low degrees,
see \cite{Mor98,MSS}.  

\begin{remark}
 \label{rem:Hain}
  Hain also obtained in  \cite{Hain} that the inclusion reciprocal to \eqref{eq:inclusion} is not true: 
specifically, there is no $d\in \N^*$ such that $J_d \calI(S) \subset \Gamma_3 \calI(S)$.   \hfill $\blacksquare$
\end{remark}

The above paragraphs only give a brief and limited overview of what is known about the Johnson filtration and the lower central series
of the Torelli group. We conclude this subsection with an  informal “comparison table” between those two filtrations:

\begin{center}
{\small
\begin{tabular}{c|c|c}
& lower central series  $\big(\Gamma_k \calI(S)\big)_k$ &  {Johnson filtration} $\big(J_k \calI(S)\big)_k$   \\ \hline
trivial intersection? & yes & yes \\ \hline
testing elements ? &  given $h\in \calI(S)$ and $k\in \N^*$, & given $h\in \calI(S)$ and $k\in \N^*$, \\
& it is hard to decide & it is easy to decide \\
& whether  $h \in \Gamma_k \calI(S)$ &    whether  $h \in J_k \calI(S)$ \\ 
& unless $k$ is small (say $k\leq 3$)  & using  ``Johnson homomorph.''\\ \hline
explicit generators ? & it is easy to deduce an explicit &  it seems difficult  to construct   \\
& generating syst.  in any degree $k$ &an explicit generating  syst. \\
& from a generating syst. of  $\calI(S)$  & in  a given degree $k$ \\ \hline
finitely generated? & yes, in any degree $k$:   & yes, in any degree $k$:  \\
& if $g\geq 3$ for $k=1$ \cite{Joh83a}  &  if $g\geq 3$ for $k=1$ \cite{Joh83a}  \\
& if $g \geq 4$ for $k = 2$ \cite{EH,CEP} &  if $g \geq 4$ for $k = 2$ \cite{EH,CEP}   \\
&  if $g \geq 2k-1 $ for $k\geq 3$ \cite{CEP}  &   if $g \geq 2k-1 $ for $k\geq 3$ \cite{CEP}
\end{tabular}
}
\end{center}

\subsection{Stronger surgeries in $3$-manifolds} \label{subsec:Y_and_J}

We are now in position to introduce two families of surgery equivalence relations that refine the Torelli--equivalence.
We fix a closed surface $R$, which may be disconnected.

\begin{definition}
Let $k\in \N^*$.
Two $3$-manifolds $M,M' \in \calV(R)$ are  \emph{$J_k$-equivalent} (resp$.$  \emph{$Y_k$-equivalent})
if $M'$ can be obtained from $M$ by a Torelli twist $M\leadsto M_{s}$ along a surface $S\subset \hbox{int}(M)$
with an $s\in J_{k} \calI(S)$ (resp$.$ an  $s\in \Gamma_{k} \calI(S)$).  \hfill $\blacksquare$ 
\end{definition}

Of course, the $J_1$-equivalence and $Y_1$-equivalence are just the same as the Torelli-equivalence.

\begin{lemma}
For every $k\in \N^*$, the {$J_k$-equivalence} (resp.  the {$Y_k$-equivalence}) is an equivalence relation in $\calV(R)$.
\end{lemma}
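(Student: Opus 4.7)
The plan is to mirror the proof of Lemma \ref{lem:equivalence}, adding the extra check that the filtrations $J_k \calI(-)$ and $\Gamma_k \calI(-)$ behave well under extension by identity across a boundary-connected sum. Reflexivity holds because $\id_S$ lies in every term of both filtrations (so $M = M_{\id}$). Symmetry is immediate because $J_k \calI(S)$ and $\Gamma_k \calI(S)$ are subgroups of $\calI(S)$, hence closed under inversion, and we have $(M_s)_{s^{-1}} \cong M$.

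The main step is transitivity. Given a first twist $M \leadsto M_s = M'$ along $S \subset M$ with $s$ in $J_k \calI(S)$ (resp.\ $\Gamma_k \calI(S)$), and a second twist $M' \leadsto (M')_{s'} = M''$ along $S' \subset M'$ with $s'$ in $J_k \calI(S')$ (resp.\ $\Gamma_k \calI(S')$), I would repeat verbatim the isotopy argument of Lemma~\ref{lem:equivalence} to position $S'$ as a subsurface of $M$ disjoint from $S$. I would then form $T := S \sharp_\partial S' \subset M$ and set
$$
t := (s \sharp_\partial \id_{S'}) \circ (\id_{S} \sharp_\partial s') \in \calM(T),
$$
so that $M_t \cong M''$.

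What remains is to show that the extension-by-identity homomorphisms $\calI(S) \to \calI(T)$ and $\calI(S') \to \calI(T)$ send the $k$-th term of the respective filtrations into $J_k \calI(T)$ (resp.\ $\Gamma_k \calI(T)$), so that $t$ lies in the correct filtration level as a product of two such elements. For the $Y_k$-case this is automatic: any group homomorphism $\phi : G \to H$ satisfies $\phi(\Gamma_k G) \subset \Gamma_k H$. For the $J_k$-case I would use that the boundary-connected sum induces a canonical subgroup inclusion $\pi_1(S,\star) \hookrightarrow \pi_1(T,\star)$ (identifying $\pi_1(T) \cong \pi_1(S) * \pi_1(S')$), which carries $\Gamma_{k+1}\pi_1(S)$ into $\Gamma_{k+1}\pi_1(T)$ by functoriality. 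Hence an element $s \in J_k \calI(S)$, extended by the identity on $S'$, fixes modulo $\Gamma_{k+1}\pi_1(T)$ all the generators of $\pi_1(T)$ coming from $\pi_1(S)$ as well as those coming from $\pi_1(S')$; since an automorphism that fixes a generating set modulo a characteristic subgroup induces the identity on the quotient, the extension lies in $J_k \calI(T)$.

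No serious obstacle is anticipated: the geometric part of the argument is exactly that of Lemma~\ref{lem:equivalence}, and the algebraic content reduces to the observation that both filtrations are functorial under the boundary-connect-sum inclusions of mapping class groups. The only mildly delicate point is making sure that, in the $J_k$-case, the Johnson filtration of the \emph{larger} surface is controlled by that of the smaller surfaces, which is what the free-product decomposition of $\pi_1(T)$ provides.
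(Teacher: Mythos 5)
Your proof is correct and follows essentially the same route as the paper: the geometric part is the isotopy/boundary-connected-sum argument from the proof of the Torelli-equivalence lemma, and the algebraic checks coincide with the paper's, namely the decomposition $s\sharp_\partial s'=(s\sharp_\partial \id)\circ(\id\sharp_\partial s')$ together with functoriality of the lower central series for the $Y_k$-case, and the free-product decomposition of $\pi_1(S\sharp_\partial S')$ for the $J_k$-case.
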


\begin{proof}
  We come back to the proof of Lemma \ref{lem:equivalence}, using the same notations.

If we have $s\in J_k \calI(S)$ and $s'\in J_k \calI(S')$, then $s\sharp_\partial s'$ belongs to $J_k \calI(S \sharp_\partial S')$
as can be checked from the fact that $\pi_1(S \sharp_\partial S')$ 
is the free product of $\pi_1(S )$ and $\pi_1(S')$. This proves the transitivity of the $J_k$-equivalence.

If we have $s\in \Gamma_k \calI(S)$ and $s'\in \Gamma_k \calI(S')$, then $s\sharp_\partial s'$ belongs to $\Gamma_k \calI(S \sharp_\partial S')$
as follows from the fact that $s\sharp_\partial s'  = (s\sharp_\partial \operatorname{id}) \circ (  \operatorname{id} \sharp_\partial s' )$.
This proves the transitivity of the $Y_k$-equivalence.  
\end{proof}

\begin{remark}
  Proposition \ref{prop:Heegaard} can also be refined to reformulate the {$J_k$-equivalence} (resp$.$  the {$Y_k$-equivalence}) 
in terms  of Heegaard splittings.   \hfill $\blacksquare$ 
\end{remark}

We deduce from \eqref{eq:inclusion} the following ``ladder'' of equivalence relations:
$$
\begin{array}{cccccccccccc}
Y_1 & \Longleftarrow & Y_2 & \Longleftarrow & Y_3 & \Longleftarrow & \cdots & Y_k &  \Longleftarrow & Y_{k+1} & \Longleftarrow & \cdots  \\
  \parallel & & \Downarrow & & \Downarrow && &  \Downarrow & & \Downarrow && \\
J_1 & \Longleftarrow & J_2 & \Longleftarrow & J_3 & \Longleftarrow & \cdots & J_k &  \Longleftarrow & J_{k+1} & \Longleftarrow & \cdots 
\end{array}
$$
  Note that the converse of the implication ``$Y_k \Rightarrow J_k$'' is not true.
Specifically, there is no $d \in \N^*$ such that ``$J_d \Rightarrow  Y_3$'': 
this can be easily deduced from Hain's result mentioned in Remark \ref{rem:Hain}.

After $Y_1=J_1$, the next equivalence relation to consider is the $J_2$-equivalence. Let us give an alternative description
in terms of surgeries along knots. Given $M \in \calV(R)$ and a null-homologous knot $K\subset \interior(M)$, 
there is a unique parallel $\rho_0(K) \subset \partial \hbox{N}(K) $ that is null-homologous in $M\setminus K$: for any $n\in \Z$, 
the knot $K$ is said to be \emph{$n$-framed} if it is equipped with the unique parallel $\rho_n(K)$ 
that represents the homology class $n[\mu(K)] + [\rho_0(K)] \in H_1\big(  \partial \hbox{N}(K);\Z\big)$.
(Here, we fix an orientation of $K$, we orient $\rho_0(K)$ compatibly with $K$ and orient $\mu(K)$ with the right-hand rule 
using the orientation of $M$.) Following Cochran, Gerges and Orr \cite{CGO}, we say that 
an $M\in \calV(R)$ is \emph{$2$-surgery equivalent} to an $M'\in \calV(R)$ if there is a finite sequence 
$$
M=M_0 \leadsto M_1 \leadsto \cdots  \leadsto M_r=M' \label{page:2-surgery}
$$
of surgeries along null-homologous $(\pm 1)$-framed knots.

\begin{proposition} \label{prop:J_2}
The $J_2$-equivalence is the same as the $2$-surgery equivalence.
In particular, the $2$-surgery equivalence is an equivalence relation in $\calV(R)$.
\end{proposition}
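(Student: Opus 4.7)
My plan is to combine Johnson's theorem that $J_2\calI(S) = \calK(S)$ is generated by BSCC maps (recalled in Example~\ref{ex:Johnson}) with Lickorish's trick (recalled in the proof of Theorem~\ref{th:RTWL}) in order to exchange a BSCC Torelli twist for a $\pm 1$-framed surgery along a null-homologous knot, and conversely.

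For the direction ``$J_2$-equivalent $\Rightarrow$ $2$-surgery equivalent'', I would take $M' = M_s$ for some $S \subset \interior(M)$ and $s \in \calK(S)$, and decompose $s = t_n \cdots t_1$ as a word in BSCC twists $t_i = T_{\gamma_i}^{\pm 1}$ with each $\gamma_i$ separating in $S$. Since each $t_i$ is supported in a neighborhood of $S$, which persists across the intermediate manifolds, the twist $M \leadsto M_s$ can be realized stepwise as a sequence of BSCC Torelli twists on the same $S$. Lickorish's trick converts step $i$ into surgery along a knot $K_i$ obtained by pushing $\gamma_i$ off $S$, framed by the $S$-parallel of $\gamma_i$ corrected by $\pm \mu(K_i)$. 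Because $\gamma_i$ is separating, it bounds a subsurface of $S$ whose small push-off is a Seifert surface for $K_i$; hence $K_i$ is null-homologous in the relevant intermediate manifold, the $S$-parallel coincides with the Seifert parallel of $K_i$, and the resulting framing is exactly $\rho_{\pm 1}(K_i)$.

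For the reverse direction, it is enough to realize a single surgery $M \leadsto M_K$ along a null-homologous $\pm 1$-framed knot as a $J_2$-move. I would start from a Seifert surface $F \subset \interior(M)$ for $K$ and consider a regular neighborhood $N(F) \subset M$; its boundary is the double $DF$ of $F$ along $K$, in which the Seifert-framed parallel $\gamma$ of $K$ appears as a separating simple closed curve. To satisfy the ``one boundary component'' requirement in the definition of a Torelli twist, I remove a small open disk from $DF$ away from $\gamma$ to obtain a compact surface $S \subset \interior(M)$ with a single boundary component, on which $\gamma$ remains separating. Applying Lickorish's trick in reverse, the Torelli twist $M \leadsto M_{T_\gamma^{\pm 1}}$ coincides with $M \leadsto M_K$; since $T_\gamma^{\pm 1} \in \calK(S) = J_2\calI(S)$, this exhibits $M$ and $M_K$ as $J_2$-equivalent. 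The ``in particular'' clause of the proposition is then immediate from the fact, proved in the preceding lemma, that $J_2$-equivalence is an equivalence relation.

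The main technical points to watch are: (i) after the disk excision, one must verify that $\gamma$ remains a separating simple closed curve in the bordered surface $S$, so that $T_\gamma \in \calK(S)$; and (ii) in both directions, the parallel of the push-off knot furnished by Lickorish's trick must be correctly identified with the Seifert parallel, so that the meridian correction ``$\pm \mu$'' in the trick produces precisely the $\pm 1$-framing demanded by the definition of $2$-surgery equivalence and not some other integer shift. Both points are resolved by taking $\gamma$ to be the Seifert-framed parallel of $K$ on $\partial N(F)$ (for direction~2) and by observing that the push-off into $N(S)$ of a separating subsurface of $S$ bounded by $\gamma$ is a Seifert surface for the resulting pushed-off knot (for direction~1).
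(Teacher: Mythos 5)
Your proposal is correct and follows essentially the same route as the paper: Johnson's theorem that $J_2\calI(S)=\calK(S)$ is generated by BSCC maps combined with Lickorish's trick in one direction, and, in the other, a Seifert surface for the null-homologous knot whose punctured neighborhood boundary carries the separating curve $\gamma$ with $T_\gamma^{\pm1}\in J_2\calI(S)$, plus transitivity of the $J_2$-equivalence. Your extra care with the framing identification (Seifert parallel versus the $\pm\mu$ correction) only makes explicit what the paper's proof leaves implicit.
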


\begin{proof}
Assume that $M,M'$ are $J_2$-equivalent: then there is a surface $S \subset \interior(M)$ and an $s\in J_2 \calI(S)$
such that $M'\cong M_s$. According to what has been mentioned in Example \ref{ex:Johnson}, $s$ decomposes as a product of BSCC maps (or their inverses). 
Thus, by considering parallel copies of $S$, we find a  finite sequence
$$
M=M_0 \leadsto M_1 \leadsto \cdots  \leadsto M_r=M'
$$
where each move $M_i \leadsto M_{i+1}$ is a Torelli twist defined by a BSCC map (or its inverse). By Lickorish's trick,
such a move can be interpreted as a surgery along a null-homologous $(\pm 1)$-framed knot.
So $M$ is $2$-surgery equivalent to $M'$.

Assume now that $M$ is $2$-surgery equivalent to $M'$. We wish to prove that $M$ and $M'$ are $J_2$-equivalent.
By transitivity of $J_2$, we can assume that $M'$ is obtained from $M$ 
by a single surgery along a null-homologous $(\pm 1)$-framed knot $K\subset M$.
There is a \emph{Seifert surface} for $K$ in $M$, i.e. a compact surface $\Sigma$ such that $\partial \Sigma =K$.
The regular neighborhood $\hbox{N}(\Sigma)$ is a handlebody, in which $K$ can be viewed 
as a push-off of a bouding simple closed curve $\gamma \subset \partial \hbox{N}(\Sigma)$. Then, by Lickorish's trick,
$M'=M_K$ is diffeomorphic to $\big(M\setminus \interior \hbox{N}(\Sigma)\big) \cup_\tau  \hbox{N}(\Sigma)$
where $\tau:=T_\gamma$. Hence $M'$ is the result of the Torelli twist $M \leadsto M_s$ along the surface $S$ 
obtained from $ \partial \hbox{N}(\Sigma)$ by cutting a small open disk,  with $s:= T_\gamma\in J_2 \calI(S)$.
\end{proof}

\begin{remark} \label{rem:boundary_links}
  A \emph{boundary link}  in a compact $3$-manifold $M$ is a framed link $L=\sqcup_i L_i$ 
for which there exists a compact surface $S= \sqcup_i S_i \subset \interior(M)$ with as many connected components 
as $L$, such that $\partial S_i =L_i$ and the parallel of $L_i$ differs from the curve $\partial \hbox{N}(L_i) \cap S_i$ by $\pm \mu(L_i)$.
Surgeries along boundary links have been considered in \cite{Matveev,GL97,CGO}, for instance.
The argument used in the proof of Proposition~\ref{prop:J_2}  shows that  surgery along a boundary link
is equivalent to the simultaneous realization of Torelli twists by BSCC maps on pairwise-disjoint surfaces.
 Thus two manifolds $M,M' \in \calV(R)$ are $J_2$-equivalent if, and only if, one can find
 a  boundary link $L$ in $M$ such that $M_L \cong M'$.  
\hfill $\blacksquare$
\end{remark}

In general, Cochran, Gerges and Orr make  in \cite{CGO} the following definition for any integer $k\geq 2$.

\begin{definition} \label{def:k-surg}
A manifold $M\in \calV(R)$ is \emph{$k$-surgery equivalent} to an $M'\in \calV(R)$ if there is a finite sequence 
$$
M=M_0 \leadsto M_1 \leadsto \cdots  \leadsto M_r=M'
$$
where each move $M_i \leadsto M_{i+1}$ is the surgery along a $(\pm 1)$-framed knot $K_i$ that is trivial in $\Gamma_k \pi_1(M_i)$.
\hfill  $\blacksquare$
\end{definition}

It turns out that the $k$-surgery equivalence is indeed an equivalence relation \cite[Cor. 2.2 \& Prop. 2.3]{CGO}.
But  $k$-surgery equivalence is very different from $J_k$-equivalence in higher degree $k$:
while the former is rather well understood, the latter still remains unexplored (see \S \ref{subsec:higher}). 
In fact, since one does not know explicit generating systems for the Johnson filtration,
it seems that one does not know generators for the $J_k$-equivalence relation for $k>2$.

\subsection{Clasper calculus} \label{subsec:claspers}

In contrast with the $J_k$-equivalence, explicit generators are known for the $Y_k$-equivalence:
these are defined  in terms of ``surgeries'' along 
certain framed graphs, and generalize in degree $k>1$ the $Y$-surgeries that have been recalled in \S \ref{subsec:Torelli_twists}.
These surgery techniques were developed independently by Goussarov \cite{Go99,Go00} and Habiro \cite{Hab00}.

We give a very brief overview of those techniques, using Habiro's terminology and conventions.
Let $M\in \calV(R)$. A {\em graph clasper}  in $M$ is a (possibly disconnected) compact surface $G \subset \interior(M)$, 
which is decomposed into {\em leaves}, {\em nodes} and {\em edges}.  
Leaves are copies of the annulus $S^1 \times D^1$  and nodes are copies of the disc $D^2$. 
Edges are $1$-handles (i.e$.$ copies of $D^1 \times D^1$)  connecting those leaves and nodes;
the \emph{ends} of an edge constitute the attaching locus of the $1$-handle (i.e$.$ $S^0 \times D^1$).
There are two rules to respect in the attachment: each leaf receives exactly one end of an edge,
and each node receives exactly three ends of edges. The \emph{degree} of $G$ is the number of  its nodes.
The \emph{shape} of $G$ is the abstract graph, whose vertices have valency 1 or 3, 
onto which $G$ deformation retracts after deletion of all of its leaves.

\begin{example}
Graph claspers of degree $0$ (and shape $\textsf{I}$) are called \emph{basic claspers} and consist of only one edge and two leaves:
$$
\includegraphics[scale=0.3]{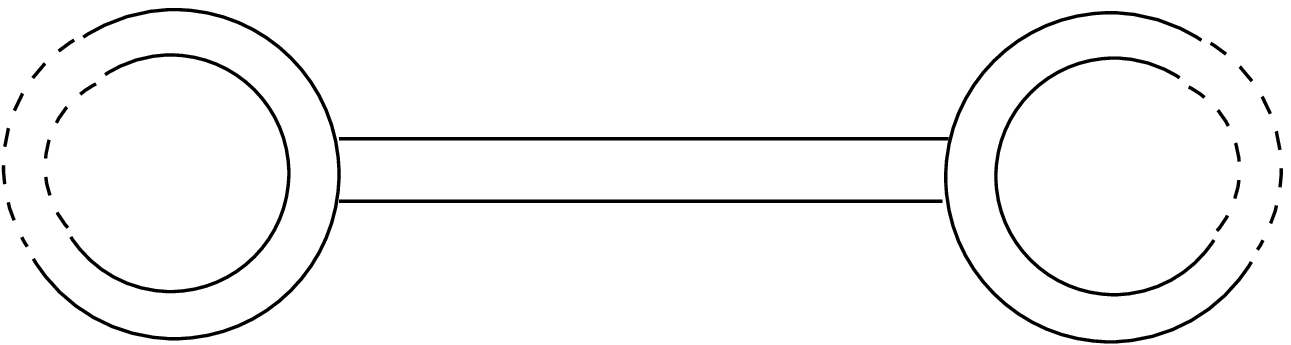}
$$
A connected graph clasper of degree $1$  (and shape $\textsf{Y}$) is a $Y$-graph, as shown in Figure \ref{fig:Y-graph}.
Here is an example of a connected graph clasper of degree $3$:
$$
\includegraphics[scale=0.25]{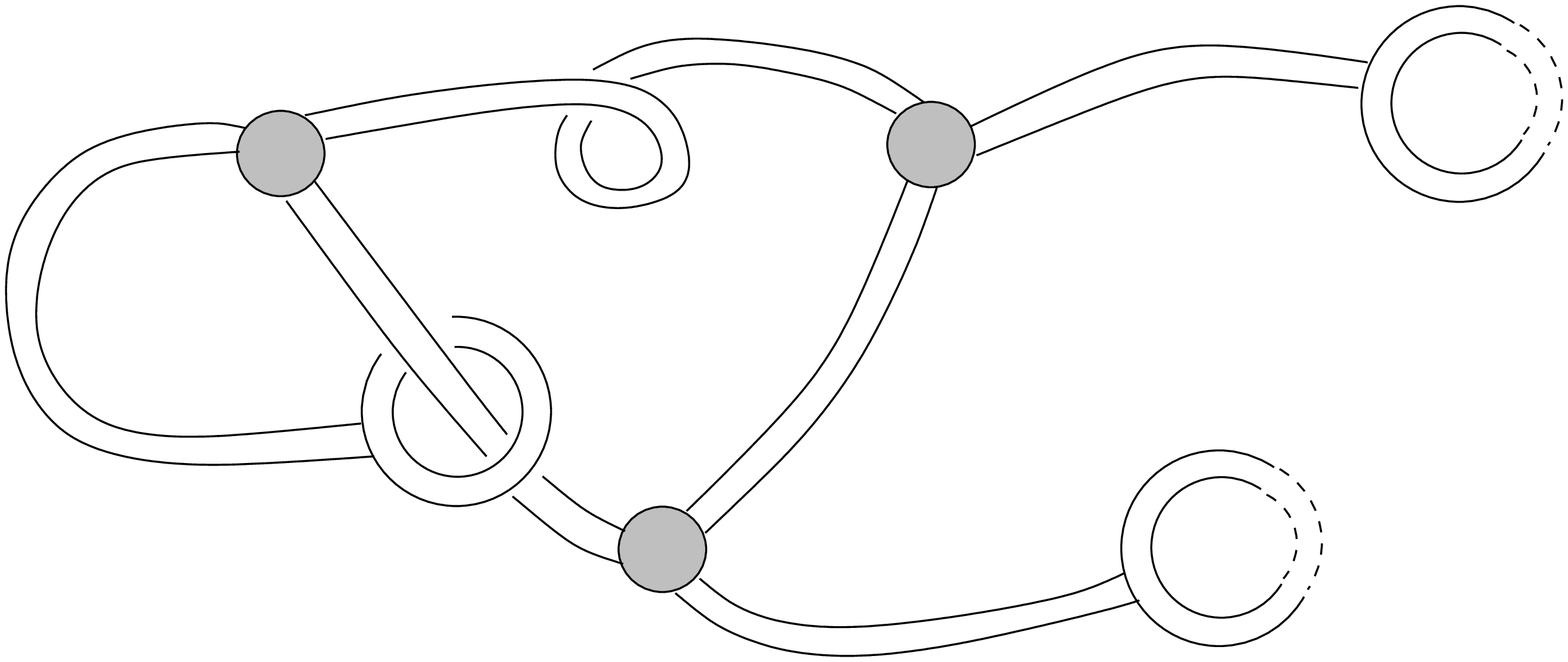}
$$
\hfill $\blacksquare$
\end{example}

Surgery along a graph clasper $G\subset \interior(M)$ is defined as follows.  
We first replace each node with three leaves in a ``Borromean rings'' fashion: 
\begin{center}
{\labellist \small \hair 0pt 
\pinlabel {$\longrightarrow$} at 409 143
\endlabellist}
\includegraphics[scale=0.3]{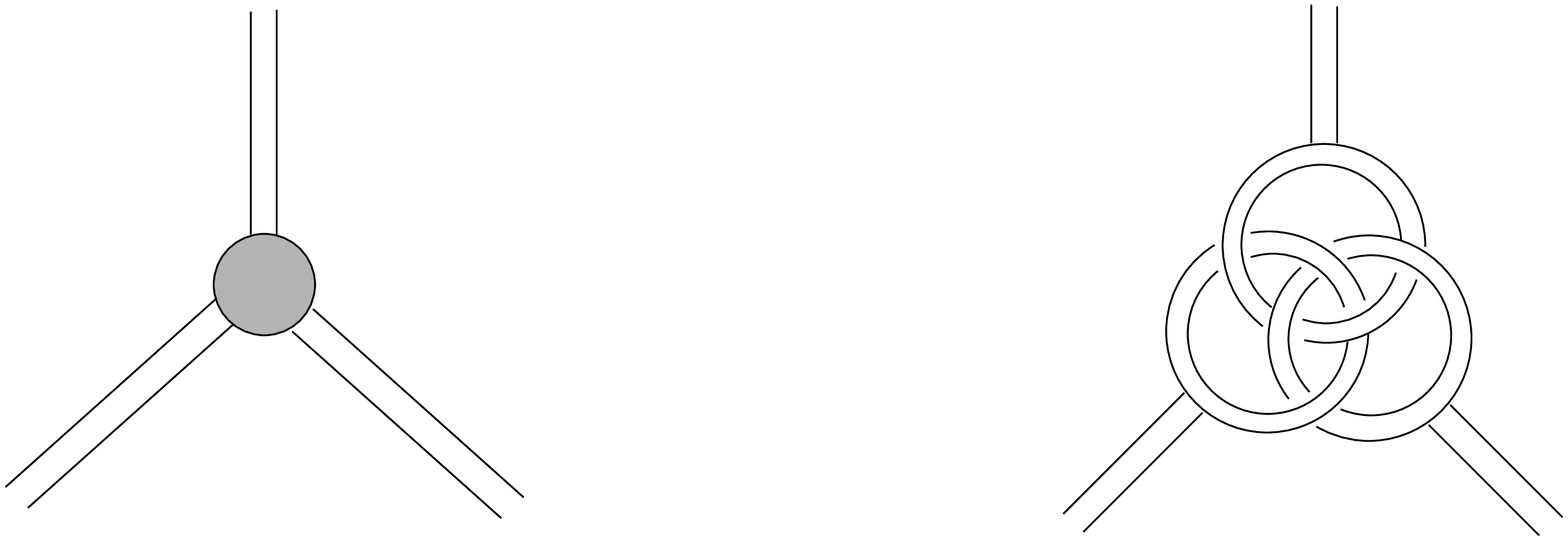}
\end{center}
This results in a disjoint union of basic claspers,   
which we replace by $2$-component framed links  as follows:
\begin{center}
{\labellist \small \hair 0pt 
\pinlabel {$\longrightarrow$} at 460 50
\endlabellist}
\includegraphics[scale=0.35]{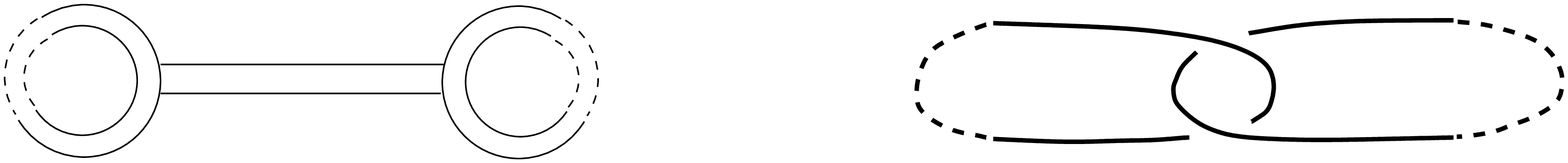}
\end{center}
(For instance, if we start from a $Y$-graph $G$, then we recover the $6$-component framed link shown in Figure  \ref{fig:Y-graph}.)
Then, the \emph{surgery} $M \leadsto M_G$ along  $G$ is defined as the surgery along the resulting framed link in $M$,
and we have the following generalization  of Proposition \ref{prop:Y_Y}:

\begin{proposition}[Habiro 2000] \label{prop:Y_clasper}
For any integer $k\geq 1$, the $Y_k$-equivalence relation is generated by surgeries along  connected graph claspers of degree $k$.
\end{proposition}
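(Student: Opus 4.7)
The plan is to prove both inclusions by induction on $k$, with Proposition \ref{prop:Y_Y} providing the base case $k=1$. Denote by $\stackrel{k}{\sim}$ the equivalence relation on $\calV(R)$ generated by surgeries along connected graph claspers of degree $k$; the goal is to show that $\stackrel{k}{\sim}$ coincides with the $Y_k$-equivalence.

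For the direction $\stackrel{k}{\sim} \Rightarrow Y_k$, I would consider a single surgery $M \leadsto M_G$ along a connected graph clasper $G$ of degree $k \geq 2$. First, engulf $G$ in a compact subsurface $S \subset \interior(M)$ with one boundary component (built from a regular neighborhood of $G$ with a small open disc removed), so that $M_G = M_s$ for some $s \in \calM(S)$. The key tool is then the node-splitting identity of clasper calculus: cutting $G$ at an edge joining two nodes (which exists since $k \geq 2$) produces two connected claspers $G', G''$ of degrees $k', k''$ with $k' + k'' = k$, and modulo $Y_{k+1}$-equivalence the surgery along $G$ realizes the commutator of the surgeries along $G'$ and $G''$. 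By the induction hypothesis these correspond to elements of $\Gamma_{k'} \calI(S)$ and $\Gamma_{k''} \calI(S)$, so their commutator lies in $\Gamma_k \calI(S)$, while the $Y_{k+1}$-error is absorbed in $\Gamma_{k+1} \calI(S) \subset \Gamma_k \calI(S)$; hence $s \in \Gamma_k \calI(S)$.

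Conversely, for $Y_k \Rightarrow \stackrel{k}{\sim}$, given a Torelli twist $M \leadsto M_s$ with $s \in \Gamma_k \calI(S)$, one can write $s$ as a product of iterated $k$-fold commutators of elements $t_1, \dots, t_k \in \calI(S)$. Each $t_i$ is realized, via the base case, as the effect of surgery along a disjoint union of $Y$-graphs in $M$. The crucial step is then to show, via the grafting rule of clasper calculus, that if disjoint graph claspers $G_1, G_2$ of degrees $k_1, k_2$ realize $t_1, t_2$ respectively, then the mapping-class commutator $[t_1, t_2]$ is realized up to $Y_{k_1+k_2+1}$-equivalence by surgery along a connected clasper of degree $k_1+k_2$ obtained by joining $G_1$ and $G_2$ with a new edge through a fresh node. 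Iterating this grafting produces a connected graph clasper of degree $k$ whose surgery realizes $s$ modulo a $Y_{k+1}$-equivalence, and the remaining correction is then handled by the same inductive scheme applied one degree higher.

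The main obstacle is the rigorous verification of the node-splitting and grafting identities, which form the technical heart of clasper calculus and provide the dictionary between the Lie bracket on $\hbox{Gr}^\Gamma \calI(S)$ and the combinatorial joining of claspers. Both reduce to explicit Kirby-type manipulations on the framed links obtained from the clasper conventions of Figure \ref{fig:Y-graph}, together with a careful inductive control of the position in the $Y_k$-filtration. A complete treatment, including the precise statements of the moves needed and their framed-link proofs, is carried out in Habiro's original work.
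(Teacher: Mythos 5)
The paper does not actually prove this proposition: it is quoted from Habiro, and the proof is deferred to \cite{Hab00} and to the appendix of \cite{Mas07}. Your outline follows essentially the strategy of those references (induction on $k$ with Proposition \ref{prop:Y_Y} as base case, and a commutator/clasper dictionary in both directions), so the overall approach is the right one; but, beyond the openly acknowledged deferral of the clasper-calculus identities, there are concrete missteps. In the direction (degree-$k$ clasper surgery $\Rightarrow Y_k$): cutting an edge joining two nodes splits $G$ into two connected pieces only when that edge is a bridge of the shape; for graph claspers with loops you must first apply the edge-cutting move $(\mathcal{O}_0)$ to reduce to tree claspers, otherwise the splitting step fails. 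Also, the ``$Y_{k+1}$-error'' is an equivalence of manifolds and need not be a twist supported on the same surface $S$, so it is not literally ``absorbed in $\Gamma_{k+1}\calI(S)$''; what actually saves you is that $\Gamma_{k+1}\calI \subset \Gamma_k\calI$ together with the (already established) transitivity of the $Y_k$-equivalence, and the honest statement you need is an identity for the regluing mapping class $s$ itself, not merely for the surgered manifold.

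In the direction ($Y_k \Rightarrow$ generated by degree-$k$ clasper surgeries) there are two more serious points. First, the grafting as you describe it --- joining $G_1$ and $G_2$ ``with a new edge through a fresh node'' --- is combinatorially wrong: a node must receive exactly three edge-ends and counts one toward the degree, so inserting a node would produce degree $k_1+k_2+1$ (or an ill-formed clasper); the correct operation realizing the bracket joins a leaf of $G_1$ to a leaf of $G_2$ by an edge, creating no new node. Second, this direction must exhibit an honest finite sequence of degree-$k$ clasper surgeries relating $M$ to $M_s$, whereas your plan leaves a $Y_{k+1}$-discrepancy at each step, to be ``handled by the same inductive scheme applied one degree higher'': as stated this is an infinite regress. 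To close it you need either exact, error-free realizations of commutators by clasper surgeries (this is how the cited proofs proceed), or the additional nontrivial lemma that surgery along a connected graph clasper of degree $k+1$ can itself be realized by surgeries along connected claspers of degree exactly $k$; neither is supplied in your sketch, and this is precisely where the substance of Habiro's argument lies.
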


\noindent
  See  \cite{Hab00}, and the appendix of \cite{Mas07} for a proof. Note that the $Y_k$-equivalence appears in
the works of Goussarov and Habiro under different names: it is named  ``$(k-1)$-equivalence'' in \cite{Go99}
and ``$A_k$-equivalence'' in \cite{Hab00}.

There exists a \emph{clasper calculus}, which has been developed in \cite{Go00,Hab00,GGP}.
This calculus can be regarded as a braided version of  the commutator calculus in groups or,
to be more accurate, an instance of the braided Hopf-algebraic calculus. In the setting of \cite{Hab00},
there is a notion of ``clasper'', which is more general than the above notion of ``graph clasper'', 
and there are 12 ``moves'' which can be applied to claspers without changing the diffeomorphism types of the resulting manifolds.

Thanks to Proposition \ref{prop:Y_clasper}, this clasper calculus
can be used to show that certain operations $G \leadsto G'$ on  graph claspers will not change the $Y_\ell$-equivalence class
of the resulting manifold, for  $\ell$ large enough depending on the degrees of the components of $G$ and the nature of the operation.
Thus, these operations  are very useful tools to study
sets of $Y_k$-equivalence classes up to $Y_\ell$-equivalence for some~${\ell>k}$.

  Here are some instances of such operations on graph claspers, taking place in a manifold  $M \in \calV(R)$ which we fix from now on:\\
\begin{itemize}[label=$\diamond$]
\item[$(\mathcal{O}_0)$] \textbf{Cutting an edge.} 
 Any graph clasper $G$ can be transformed to a graph clasper $G'$ (of the same degree, but not the same shape)
 by insertion of a Hopf link of two leaves at the middle of an edge:
\begin{center}
{\labellist \small \hair 0pt 
\pinlabel {$G$} at 260 70
\pinlabel {$G'$} at 740 70
\pinlabel {$\cong$} at 430 40
\endlabellist}
\includegraphics[scale=0.35]{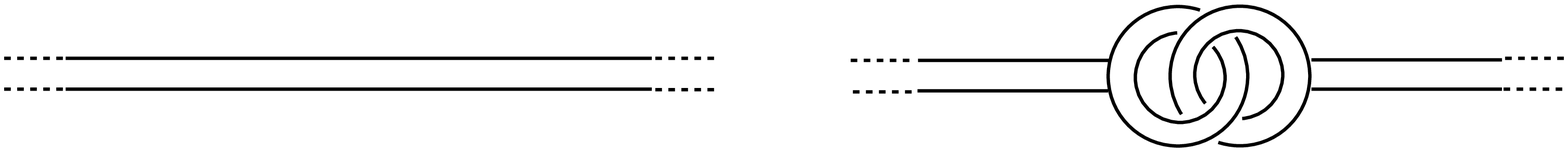}
\end{center}
(In fact,  this operation is Habiro's ``Move 2'' \cite{Hab00}.)\\
\item[$(\mathcal{O}_1)$] \textbf{Developing a node.} \label{page:dev_node}
 Any graph clasper $G$ of degree $k+1$, showing one node incident to two leaves, 
 can be transformed to a graph clasper $G'$ of degree~$k$ by the following transformation:
\begin{center}
{\labellist \small \hair 0pt 
\pinlabel {$G$} at 200 70
\pinlabel {$G'$} at 620 70
\pinlabel {$\cong$} at 370 110
\endlabellist}
\includegraphics[scale=0.42]{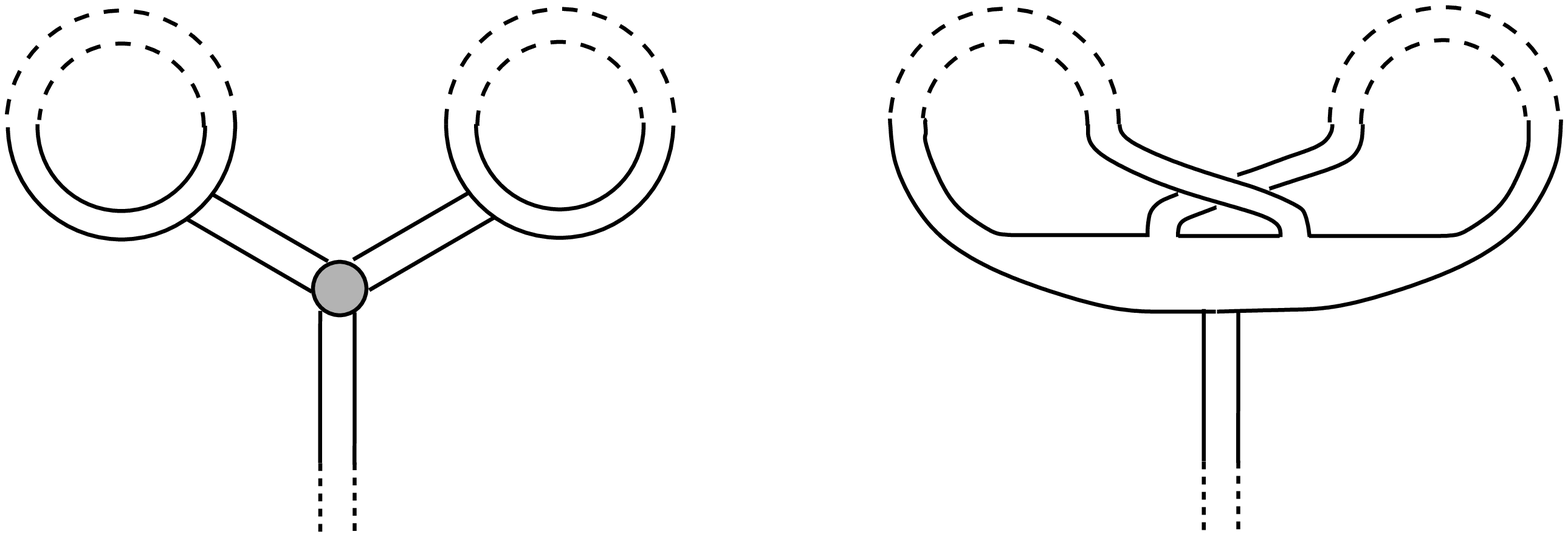}
\end{center}
(In fact,  this operation is essentially Habiro's ``Move 9'' \cite{Hab00}.)\\
\item[$(\mathcal{O}_2)$] \textbf{Sliding an edge.} If $G$ is a connected graph clasper of degree $k$ in $M$ 
and if $G'$ is obtained from $G$ by sliding one of its edges along
a disjoint framed knot $K$, then we have $M_G \sim_{Y_{k+1}} M_{G'}$:
\begin{center}
{\labellist \small \hair 0pt 
\pinlabel {$G$} at 320 140
\pinlabel {$K$} at 200 90
\pinlabel {$G'$} at 900 140
\pinlabel {$\sim_{Y_{k+1}}$} at 510 90
\endlabellist}
\includegraphics[scale=0.3]{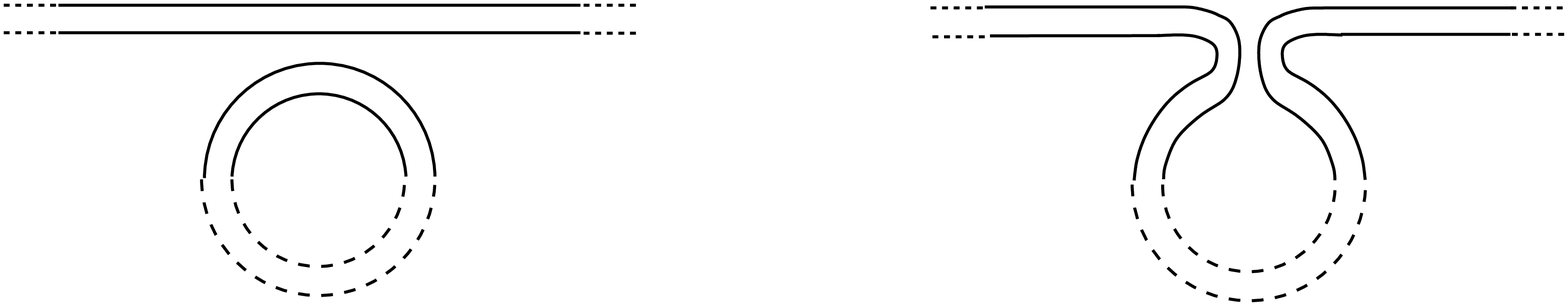}
\end{center}
\item[$(\mathcal{O}_3)$] \textbf{Cutting a leaf.} If $G$ is a connected graph clasper of degree $k$ in $M$ with a leaf $L$ decomposed as $L=L_1 \sharp L_2$,
then  we have $M_G \sim_{Y_{k+1}} M_{G_1 \sqcup G_2}$ where $G_i$ is $G$ with the leaf $L$ replaced by the ``half-leaf'' $L_i$
and $G_1 \sqcup G_2$ is a  disjoint union of $G_1$ and $G_2$:
\begin{center}
{\labellist \small \hair 0pt 
\pinlabel {$G$} at 190 70
\pinlabel {$L$} at 160 144
\pinlabel {$G_1$} at 545 60
\pinlabel {$G_2$} at 720 60
\pinlabel {$L_1$} at 525 185
\pinlabel {$L_2$} at 740 185
\pinlabel {$\sim_{Y_{k+1}}$} at 400 110
\endlabellist}
\includegraphics[scale=0.3]{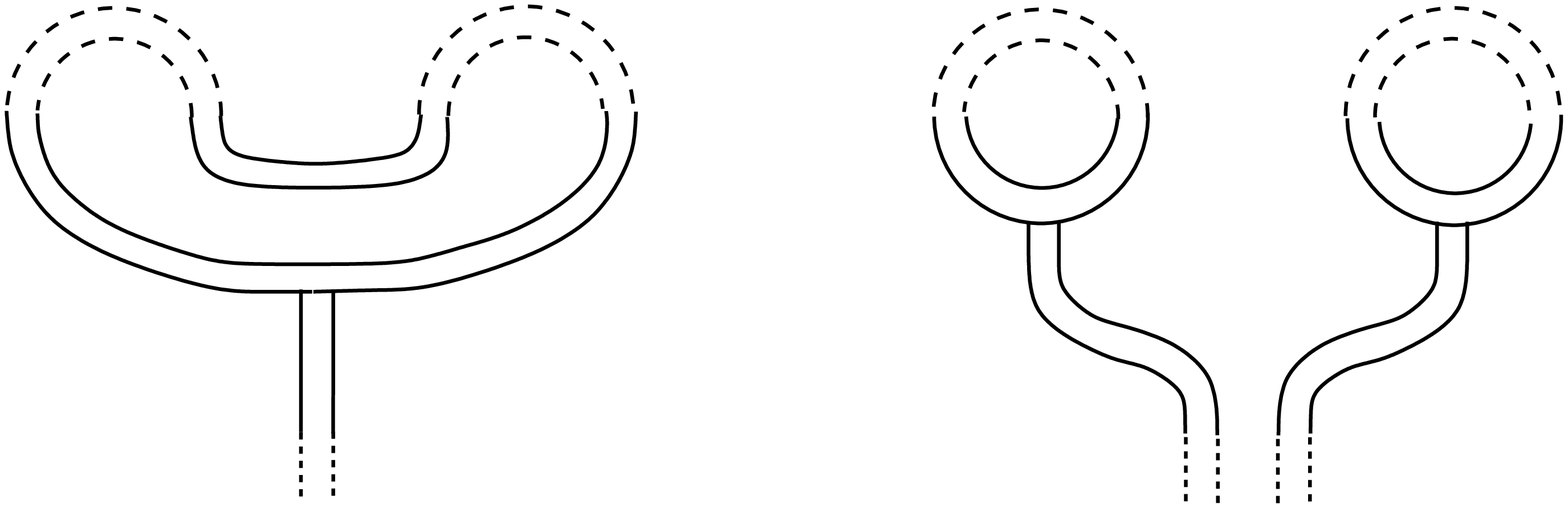}
\end{center}
\item[$(\mathcal{O}_4)$] \textbf{Crossing a leaf with a leaf.}
If $G_1 \sqcup G_2$ is the disjoint union of two connected graph claspers in $M$ of degrees $k_1$ and $k_2$, respectively,
and if $G'_1 \sqcup G'_2$ is obtained from $G_1 \sqcup G_2$ by crossing a leaf of $G_1$ with a leaf of~$G_2$,
then we have  $M_{G_1 \sqcup G_2} \sim_{Y_{k_1+k_2}} M_{G'_1 \sqcup G'_2}$.\\
\item[$(\mathcal{O}_5)$] \textbf{Half-twisting an edge.}
If $G$ is a connected graph clasper of degree $k$ in $M$ and if $G^-$ is obtained from $G$ by adding a half-twist to an edge,
then there is a disjoint union $G \sqcup G^-$ of $G$ and $G^-$ in $M$ such that $M_{G \sqcup G^-} \sim_{Y_{k+1}} M$.
\end{itemize}

\begin{remark}
References for the above operations on graph claspers include \cite{Hab00} (in the case of links instead of $3$-manifolds),
\cite{Go00}, \cite{GGP}, \cite{Gar02}, \cite[\S E]{Oht02} and \cite{Mas05}. \hfill $\blacksquare$
\end{remark}

In the rest of this subsection, we outline the \emph{general strategy} to  study the $Y_\ell$-equivalence relations
using the above techniques of clasper calculus.
So, let us assume that we have been able  to classify the $Y_{k}$-equivalence relation 
on $\calV(R)$ for some $k\geq 1$, and that we now wish to classify
the $Y_{k+1}$-equivalence on  a specific  $Y_{k}$-equivalence class $$ \calV_0 \subset \calV(R).$$ 
For this, we fix a $3$-manifold $V \in \calV_0$ and we consider the free abelian group $\Z\!\cdot\! C_k$ generated by the set
$$
C_k := \big\{ \hbox{connected graph claspers in $V$ of degree $k$}\big\}\big/ \hbox{isotopy}.
$$ 
Then we consider the map \label{page:method}
$$
\psi_k: \Z\!\cdot\! C_k \longrightarrow \frac{\calV_0}{Y_{k+1}} ,
\quad \sum_i \varepsilon_i G_i \longmapsto \big[V_{(\sqcup_i G_i^{\varepsilon_i})}\big].  
$$
where, for a family $(G_i)_i$ of connected graph claspers of degree $k$ in $V$ weighted by a family of signs $(\varepsilon_i)_i$,
we choose an \emph{arbitrary} disjoint union  $\sqcup_i G_i^{\varepsilon_i}$  of the graph claspers $G^{\varepsilon_i}_i$ 
using the convention that  $G_i^-:=(G_i \hbox{\small\, with an half-twist on a edge})$ and $G_i^+:=G_i$.
That $\psi_k$ is well-defined follows from the operations $(\mathcal{O}_2),(\mathcal{O}_4),(\mathcal{O}_5)$. 

Let us show that  $\psi_k$ is surjective.
Any $M\in \calV_0$ is $Y_k$-equivalent to $V$ and, so, by Proposition \ref{prop:Y_clasper},
there is a sequence $V=M_0 \leadsto M_1  \leadsto \cdots \leadsto M_r =M$ where each move 
$M_i \leadsto M_{i+1}$ is either a surgery along a connected graph clasper of degree~$k$, or the inverse of such a surgery;
furthermore, thanks to  $(\mathcal{O}_0)$, we can assume that each graph clasper involved in the sequence is tree-shaped. 
Now, any surgery $W \leadsto W_T$  along a tree-shaped graph clasper $T$ in a $3$-manifold $W$ has the following properties:
\begin{itemize}[label=$\diamond$]
\item it is reversible, in the sense that  there is a  graph clasper $I$ (of the same shape as $T$) in $W_T$, such that $(W_T)_I \cong W$;
\item there is a $t\in \calM(\partial \hbox{N}(T))$ such that $W_T \cong \left(W \setminus \interior  \hbox{N}(T)\right) \cup_t \hbox{N}(T)$,
hence any graph  clasper in $W_T$ can be isotoped into the subset $W \setminus \interior  \hbox{N}(T)$ of~$W_T$.
\end{itemize}
It follows that there exists a disjoint union $G=\sqcup_i G_i$ of (tree-shaped) connected graph claspers of degree $k$ in $V$ such that $V_G \cong M$.
We deduce that $\psi_k(\sum_i G_i) = M$.

Thus, we would like to understand the equivalence relation $\sim$
on $ \Z\!\cdot\! C_k$ such that the map  $\psi_k$ factorizes to a bijection on the quotient set:
$$
\xymatrix{
 \Z\!\cdot\! C_k  \ar@{->>} [d] \ar@{->>}[rr]^-{\psi_k} && {\displaystyle \frac{\calV_0}{Y_{k+1}} }\\
{\displaystyle \frac{ \Z\!\cdot\! C_k}{\sim}  }  \ar@{-->>}[rru]_-{\overline{\psi_k}}^-\simeq   & &
}
$$
For instance, it follows from $(\mathcal{O}_2)$ that we must have $G' \sim G^\pm$ for any graph claspers
$G$ and $G'$ in $M$ which have the same shape and the same leaves.
Besides, there are other instances of the relation $\sim$ that deal with  leaves 
and result from $(\mathcal{O}_0), (\mathcal{O}_1)$ and $(\mathcal{O}_3)$. 
Finally, using other operations  on graph claspers (not in the above list), we obtain other instances of the relation $\sim$
that do not affect the leaves but change the shape: one such example is the so-called ``\emph{IHX relation}''. 
Once we have a candidate for the relation $\sim$, 
the difficulty is then to show the injectivity of the resulting map $\overline{\psi_k}: {\Z\!\cdot\! C_k}/{\sim} \to {\calV_0}/{Y_{k+1}}$.
This is proved by finding sufficiently enough topological invariants on $\calV(R)$  --- or, at least, on  its subset $\calV_0$ --- 
that are unchanged by $Y_{k+1}$-surgery and constitute a left-inverse $Z_k$ to $\overline{\psi_k}$
when they are conveniently assembled all together:
$$
\xymatrix{
{\displaystyle \frac{ \Z\!\cdot\! C_k}{\sim}  }  \ar@{->>}[r]^-{\overline{\psi_k}}  \ar@/_0.8cm/[rr]^-{\id} &  {\displaystyle \frac{\calV_0}{Y_{k+1}} } \ar[r]^-{Z_k}
& {\displaystyle \frac{ \Z\!\cdot\! C_k}{\sim}  } 
}
$$  

At the end of this process, we  conclude that $\overline{\psi_k}$ is injective and, so, bijective, thus obtaining a combinatorial description
of the quotient set ${\calV_0}/{Y_{k+1}}$, and concluding that the invariant $Z_k$ classifies the $Y_{k+1}$-equivalence relation on $\calV_0$.

\begin{remark}  
In all the few situations that the author knows,  the relation $\sim$ on  $\Z\!\cdot\! C_k$ happens to be always defined by a subgroup of $\Z\!\cdot\! C_k$:
hence ${\calV_0}/{Y_{k+1}}$ has a structure of abelian group, although ${\calV_0}$ may not have (a priori)  a natural operation. 
If  ${\calV_0}$  does have a natural operation compatible with the ${Y_{k+1}}$-equivalence and if we know  that ${\calV_0}/{Y_{k+1}}$
 inherits a structure of abelian group, it is often much easier to carry on the above process with rational coefficents
 in order to get a combinatorial description of the vector space $\big({\calV_0}/{Y_{k+1}}\big) \otimes \Q$.  
\hfill $\blacksquare$
\end{remark}  

The above ``general strategy'', to study inductively  the $Y_\ell$-equivalence relations by clasper calculus, will be mentioned in the next sections in a few examples.

\subsection{Other kinds of surgeries}

To conclude this section, we mention yet other  surgery equivalence relations.
Some of them are just alternative descriptions of the relations that have been introduced in the previous subsections,
but  other ones are quite different. We fix a closed surface $R$, which may be disconnected.\\

\begin{enumerate}
\item \textbf{LP surgeries.}  
A \emph{homology handlebody} of \emph{genus} $g$ is a compact $3$-manifold $C'$ with the same homology  type as  $H_g$;
 the \emph{Lagrangian} of $C'$ is  the kernel $L_{C'}$ of the homomorphism 
$H_1(\partial C';\Z) \to H_1(C';\Z)$ induced by the inclusion $\partial C'\hookrightarrow C'$:
this is a Lagrangian subgroup of $H_1(\partial C';\Z)$ with respect to the intersection form.
Following Auclair and Lescop \cite{AL}, we call  \emph{LP-pair}  a couple $\mathsf{C}=(C',C'')$  of
two homology handlebodies whose boundaries are identified $\partial C' = \partial C''$  in such a way that $L_{C'}=L_{C''}$.
(The acronym ``LP'' is for ``Lagrangian-Preserving''.)
Given an $M \in \calV(R)$ and an LP pair $\mathsf{C}=(C',C'')$ such that $C'\subset M$,
one can replace in $M$ the submanifold $C'$ by $C''$ to obtain a new $3$-manifold
$$
M_{\mathsf{C}} := \left( M\setminus \hbox{int} (C') \right) \cup_\partial C''.
$$
The move $M\leadsto M_{\mathsf{C}}$ in $\calV(R)$ is called an \emph{LP-surgery}. 

A Torelli twist $M \leadsto M_s$ can be interpreted as an LP-surgery since a regular neighborhood
of the surface $S\subset M$  is a handlebody. Conversely, an LP-surgery can be realized by finitely many $Y$-surgeries
because, for any LP pair $\mathsf{C}$, the  homology handlebodies $C'$ and $C''$ are Torelli--equivalent.  
(See Remark \ref{rem:HH} below.) 
Therefore, LP-surgery equivalence is the same as Torelli--equivalence.

There is also a rational version of the LP-surgery using $H_1(-;\Q)$ instead  of $H_1(-;\Z)$, which has been considered by Moussard \cite{Mous12}.
However, rational  LP-surgery equivalence is coarser than Torelli--equivalence as a relation.\\

\item \textbf{Torelli surgeries.} Let $M\in \calV(R)$, let  $C \subset M$ be  a handlebody and  let $c\in \calI(\partial C)$.
 Following  Kuperberg and Thurston \cite{KT}, we say that
 $$
 M_c := \big( M \setminus \interior(C) \big) \cup_c C
 $$
 is obtained from $M$ by a \emph{Torelli surgery} along $C$. Clearly, a Torelli surgery can be realized by a Torelli twist
 (by choosing a small open disk $D\subset \partial C$ 
 and isotoping $c$ so that it fixes $D$ pointwisely); conversely,
 a Torelli twist can be realized by a Torelli surgery (because a regular neighborhood
of a  surface with non-empty boundary is a handlebody). 
Thus, the $Y_k$-equivalence and $J_k$-equivalence relations can be reformulated in terms of Torelli surgeries.\\

\item \textbf{Lagrangian Torelli surgeries.}  Let $C$ be a handlebody. The \emph{Lagrangian Torelli group} of 
$S:= \partial C \setminus  \hbox{(small open disk)}$ is defined by
$$
\qquad \qquad \calI^L(S) := \Big\{ f \in \calM(S) : f_*(L_C) \subset L_C \hbox{ and $f_*$ is the $\id$ on $\frac{H_1(S;\Z)}{L_C}$}  \Big\}
$$
where $L_C$ is the Lagrangian  of $C$. A \emph{Lagrangian Torelli surgery} is defined in a way similar to a Torelli surgery
 using the Lagrangian Torelli group instead of the Torelli group. Clearly, a Lagrangian Torelli surgery is a special case of an LP surgery:
 therefore, the equivalence relation defined by Lagrangian Torelli surgeries is again the Torelli--equivalence.
 
 Nevertheless, following Faes \cite[\S A]{Faes_thesis}, 
 we can define a new family of equivalence relations on $\calV(R)$ by considering the following  filtration 
 on the Lagrangian Torelli group of a handlebody $C$. Let $\mathbb{L}_C$ denote the kernel
 of the homomorphism $p:\pi_1(S) \to \pi_1(C)$ induced by the inclusion $S \hookrightarrow C$ and consider, for any integer $k\geq 1$, the subset
 $$
 L_k \calI^L(S)  :=  \big\{ f \in \calI^L(S) : p f_*(\mathbb{L}_C) \subset \Gamma_{k+1} \pi_1(C) \big\}
 $$
 of the mapping class group of $S$.
 According to Levine \cite{Lev01,Lev06}, the filtration 
 $$
  \calI^L(S)  = L_1 \calI^L(S)  \supset   L_2 \calI^L(S) \supset   L_3 \calI^L(S)  \supset \cdots
 $$
 is a  decreasing sequence of subgroups of the Lagrangian Torelli group,  which contains the Johnson filtration of the Torelli group $\calI(S)$.
 But, in contrast with the latter, the  intersection of the former is not trivial: 
its intersection is the subgroup  of $\calI^L(S)$ consisting of all diffeomorphisms that extend to the full handlebody $C$;
hence this intersection is irrelevant for Lagrangian Torelli surgeries. 

Thus, it is interesting to consider the following relation for any $k\in \N^*$:
we say that $M,M' \in \calV(R)$ are  \emph{$L_k$-equivalent}  if $M'$ can be obtained from $M$ by a Lagrangian Torelli surgery $M\leadsto M_{c}$ 
along a handlebody $C\subset \hbox{int}(M)$ with a $c\in L_{k} \calI(S)$. 
Clearly, we have ``$J_k \Rightarrow L_k$'' for any $k\geq 1$.
We have already mentioned  the equality of relations $L_1=J_1$,
and it follows essentially from Levine's results that $L_2=J_2$. 
However, the $L_3$-equivalence is strictly weaker than the $J_3$-equivalence as a relation \cite[\S A]{Faes_thesis}.
\end{enumerate}

\section{Surgery equivalence relations: their characterization} \label{sec:charac}

In this section, we review several results from the middle 1970's to nowadays,
which provide characterizations of  the $J_k$-equivalence,
the $Y_k$-equivalence, and
 the $k$-surgery equivalence
relations  in terms of topological invariants (for some or all values of $k\in \N^*$).

\subsection{Two case studies to consider} \label{subsec:two_cases}

Let $R$ be a compact surface. The problem of characterizing  surgery equivalence relations in $\calV(R)$ 
is very much dependent on the choice of $R$. 
So we shall restrict ourselves to the two cases that we have already mentioned on page~\pageref{page:R}:
\begin{center}
(i) $R=\varnothing$; \quad (ii) $R= \partial(\Sigma \times [-1,1])$ where $\Sigma$ is a compact surface with $\partial \Sigma \cong S^1$.
\end{center}

Actually, in case (i), our interest in the set of closed $3$-manifolds $\calV(\varnothing)$ will quickly specialize to the class 
$$
\mathcal{S} := \big\{ M \in  \calV(\varnothing) : H_*(M;\Z)\simeq H_*(S^3;\Z) \big\}
$$ 
of \emph{homology $3$-spheres}.
Of course, this is a strong restriction but, as we shall see, $\mathcal{S}$ is still a very rich set-up for studying surgery equivalence relations.

\begin{remark}
The set $\mathcal{S}$ with the connected sum operation $\sharp$ is a monoid, whose neutral element is $S^3$. \hfill $\blacksquare$
\end{remark}

Similarly, in case (ii), 
our interest in the set  $\calV\big(\partial(\Sigma \times [-1,1])\big)$ of  cobordisms  will be restricted to the subset $\mathcal{IC}(\Sigma)$
of \emph{homology cylinders} over $\Sigma$.
Those are cobordisms $(C,c)$ from $\Sigma$ to $\Sigma$ such that the boundary parametrizations
$$
c_+ := c\vert_{\Sigma \times \{+1\}}: \Sigma \longrightarrow C \quad \hbox{and} \quad c_- := c\vert_{\Sigma \times \{-1\}}: \Sigma  \longrightarrow C
$$
 induce isomorphisms in homology and satisfy $c_{+,*}=c_{-,*}: H_1(\Sigma;\Z) \to H_1(C;\Z)$:
$$
\labellist
\scriptsize\hair 2pt
 \pinlabel {$C$}  at 92 128
 \pinlabel {$c_+$} [r] at 91 222
 \pinlabel {$c_-$} [r] at 90 38
 \pinlabel {$\Sigma$} [r] at 2 255
 \pinlabel {$\Sigma$} [r] at 3 4
\endlabellist
\centering
\includegraphics[scale=0.28]{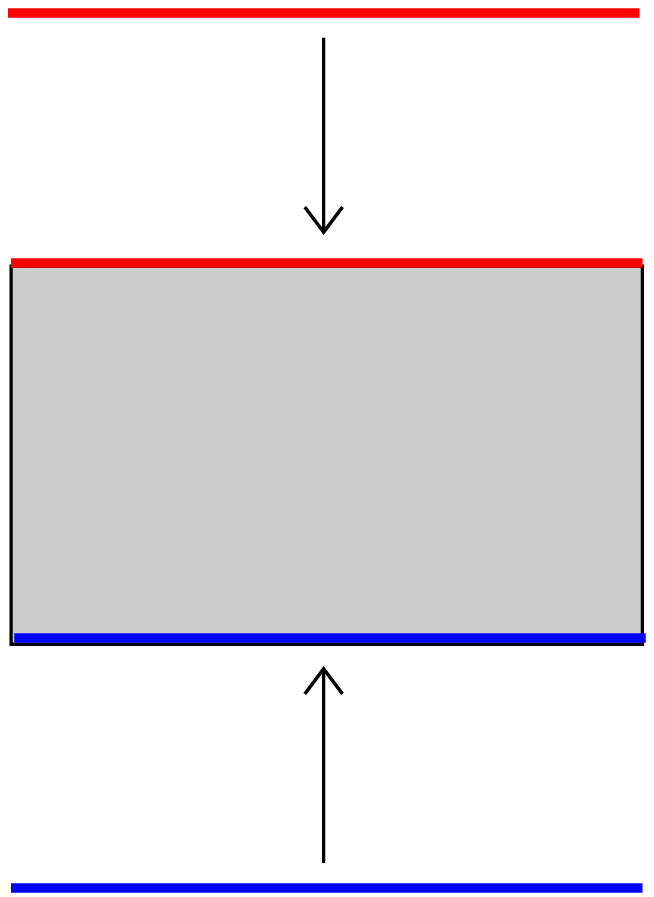}
$$
Two  cobordisms $(C,c)$ and $(D,d)$ from $\Sigma$ to $\Sigma$
can be \emph{multiplied} by gluing $D$ \lq\lq on the top of\rq\rq{} $C$, 
using the boundary parametrizations $d_-$ and $c_+$ to identify $d_-(\Sigma)$ with $c_+(\Sigma)$:
$$
\centre{\labellist
\scriptsize\hair 2pt
 \pinlabel {$C$}  at 92 128
 \pinlabel {$c_+$} [r] at 91 222
 \pinlabel {$c_-$} [r] at 90 38
 \pinlabel {$\Sigma$} [r] at 2 255
 \pinlabel {$\Sigma$} [r] at 3 4
\endlabellist
\centering
\includegraphics[scale=0.28]{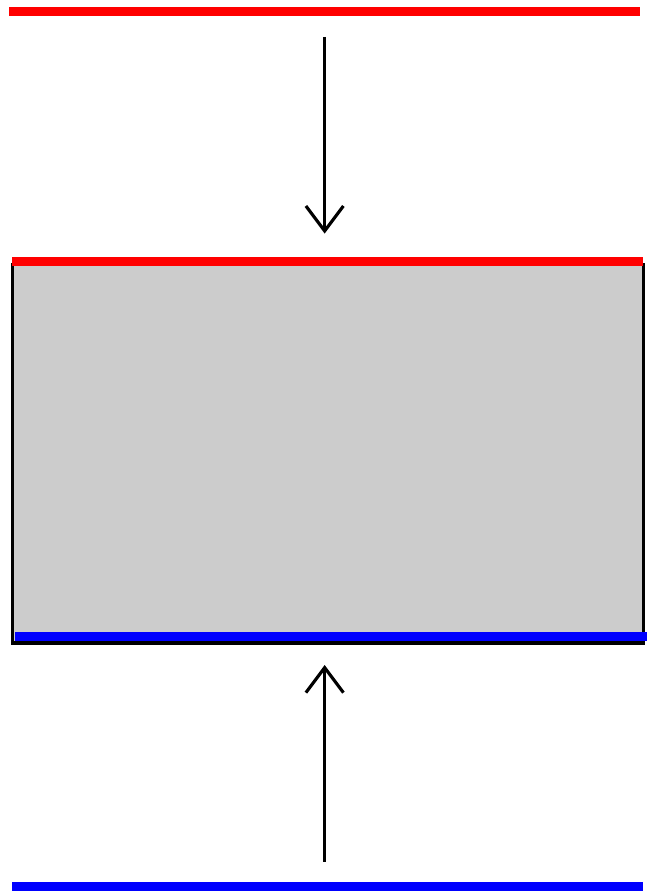}} \quad \circ \quad  
\centre{\labellist
\scriptsize\hair 2pt
 \pinlabel {$D$}  at 92 128
 \pinlabel {$d_+$} [r] at 91 222
 \pinlabel {$d_-$} [r] at 90 38
 \pinlabel {$\Sigma$} [r] at 2 255
 \pinlabel {$\Sigma$} [r] at 3 4
\endlabellist
\includegraphics[scale=0.28]{cobordism}} 
\quad := \quad 
\centre{\centering
\labellist
\scriptsize\hair 2pt
 \pinlabel {$\Sigma$} [r]  at 3 362
 \pinlabel {$\Sigma$} [r] at 2 4
 \pinlabel {$D$}  at 93 232
 \pinlabel {$C$}  at 92 126
 \pinlabel {$c_{-}$} [r] at 92 37
 \pinlabel {$d_+$} [r] at 91 331
\endlabellist
\centering
\includegraphics[scale=0.28]{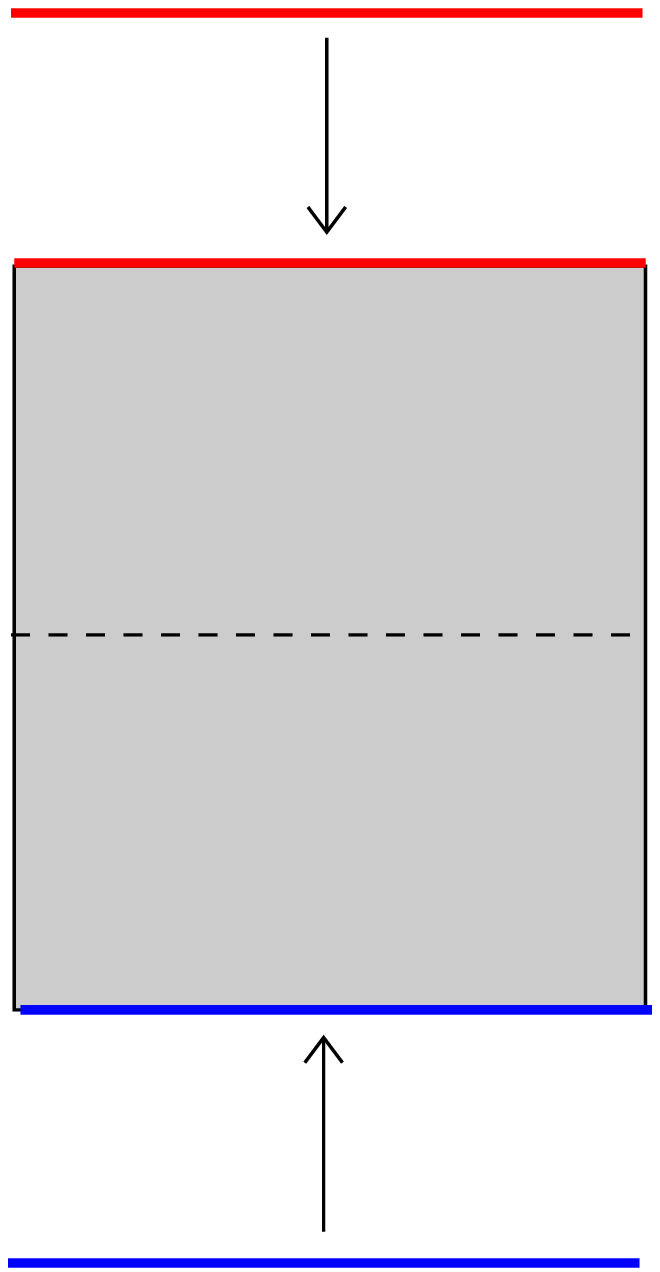}}
$$
It is easily checked that $C \circ D \in \mathcal{IC}(\Sigma)$ if $C,D \in \mathcal{IC}(\Sigma)$.
Hence  the set $\mathcal{IC}(\Sigma)$  with this operation $\circ$ is a monoid,
whose neutral element is  the trivial cylinder $U:= \Sigma \times [-1,1]$ (with the obvious boundary parametrization).

\begin{proposition}
The ``mapping cylinder'' construction defines a monoid homomorphism 
$\operatorname{cyl}: \calI(\Sigma) \to \mathcal{IC}(\Sigma)$, 
which is injective and surjective onto the group  of invertible elements of $\mathcal{IC}(\Sigma)$.
\end{proposition}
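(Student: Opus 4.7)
The plan is to decompose the claim into four verifications: (a) $\operatorname{cyl}$ is well-defined on isotopy classes; (b) it is a monoid homomorphism; (c) it is injective; (d) its image coincides with the unit group $\mathcal{IC}(\Sigma)^\times$. First I fix conventions: for $f \in \calI(\Sigma)$, set $\operatorname{cyl}(f) := \Sigma \times [-1,1]$ with boundary parametrization $c_-(x) := (x,-1)$, $c_+(x) := (f(x),+1)$, and the identity on the vertical boundary $\partial\Sigma \times [-1,1]$. Since $f$ fixes $\partial\Sigma$ pointwise the parametrization is continuous, and since $f_* = \id$ on $H_1(\Sigma;\Z)$ we have $c_{+,*} = c_{-,*}$, so $\operatorname{cyl}(f) \in \mathcal{IC}(\Sigma)$. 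Any isotopy of $f$ rel $\partial\Sigma$ lifts to a self-diffeomorphism of $\Sigma \times [-1,1]$ that is the identity on the bottom and vertical boundary, providing an isomorphism of homology cylinders; this settles (a). For the homomorphism property (b), stacking $\operatorname{cyl}(g)$ atop $\operatorname{cyl}(f)$ via the identification $c_{+,f}(x) \sim c_{-,g}(x)$, and then applying the diffeomorphism $(y,s)\mapsto (f(y),s)$ to the upper half, identifies the glued cobordism with $\operatorname{cyl}(f\circ g)$.

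For injectivity (c), it suffices to show that $\operatorname{cyl}(h) \cong U$ implies $h \simeq \id$ in $\calM(\Sigma)$. Unwrapping the definitions, such an isomorphism is a diffeomorphism $\Phi: \Sigma \times [-1,1] \to \Sigma \times [-1,1]$ that fixes $\partial_0 := \Sigma\times\{-1\}\cup\partial\Sigma\times[-1,1]$ pointwise and restricts to $h^{-1}$ on $\Sigma \times \{+1\}$; this is exactly a \emph{pseudo-isotopy} from $\id_\Sigma$ to $h^{-1}$. The classical theorem that pseudo-isotopy equals isotopy for a compact surface --- namely, any such $\Phi$ is isotopic rel $\partial_0$ to a level-preserving diffeomorphism $\hat\Phi(x,t) = (\phi_t(x),t)$ --- then produces an isotopy $(\phi_t)_t$ from $\id$ to $h^{-1}$ rel $\partial\Sigma$, whence $h \simeq \id$.

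For (d), one direction is immediate: $\operatorname{cyl}(f)\circ\operatorname{cyl}(f^{-1}) = \operatorname{cyl}(\id) = U$ (and symmetrically), so $\operatorname{cyl}(f)$ is a unit. Conversely, let $C$ be invertible with inverse $D$; an isomorphism $C\circ D \cong U$ provides an embedding of $C$ into $U = \Sigma\times[-1,1]$ extending the identity on the bottom and with complement $D$. The essential geometric step is to identify $C$, as a homology cylinder, with the standard product $\Sigma\times[-1,1]$: once this is done, $c_+$ takes the form $(f(x),+1)$ for some $f \in \calM(\Sigma)$, and the homology cylinder condition forces $f \in \calI(\Sigma)$, giving $C \cong \operatorname{cyl}(f)$. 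To produce the product structure, observe that the separating surface $T \subset U$ (the image of $c_+(\Sigma)$ under the embedding) is incompressible, two-sided, and homologically parallel to a level surface. Waldhausen's theorem on incompressible surfaces in $\Sigma\times I$ then forces $T$ to be ambient isotopic to a level surface $\Sigma\times\{t_0\}$, and this isotopy carries $C$ onto $\Sigma\times[-1,t_0]\cong\Sigma\times[-1,1]$.

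The principal obstacle is this last geometric step: showing that an invertible homology cylinder is diffeomorphic to the trivial cylinder requires genuine 3-manifold topology and relies on Waldhausen's rigidity results, either through parallelism of incompressible surfaces in a product or, equivalently, through rigidity of Haken manifolds homotopy-equivalent to $\Sigma$. The other steps are essentially formal consequences of the definitions once one accepts the classical pseudo-isotopy-equals-isotopy theorem for surfaces.
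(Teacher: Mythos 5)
Parts (a)--(c) of your proposal are correct, and (a)--(b) match the paper's (very terse) treatment. For injectivity the paper argues via Baer's theorem: an isomorphism $\operatorname{cyl}(h)\cong U$ forces $h$ to be homotopic to the identity rel $\partial\Sigma$ (a $\pi_1$-argument, using that the bottom inclusion $\Sigma\hookrightarrow\Sigma\times[-1,1]$ is a homotopy equivalence), and homotopy rel $\partial\Sigma$ implies isotopy rel $\partial\Sigma$. Your appeal to ``pseudo-isotopy implies isotopy'' for compact surfaces is a valid substitute --- for surfaces that theorem is itself usually deduced from the same homotopy-implies-isotopy statement --- so you are simply using a slightly heavier classical input to reach the same conclusion.

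The genuine gap is in (d), precisely at the sentence where you ``observe'' that $T=c_+(\Sigma)$ is incompressible in $U\cong C\circ D$. This is not an observation: the homology cylinder condition only controls $H_1$, not $\pi_1$, and in general the boundary parametrizations of a homology cylinder need not be $\pi_1$-injective (an essential \emph{separating} curve on $c_+(\Sigma)$ could bound an embedded disk in $C$ without violating any homological condition). Incompressibility is exactly the point where invertibility must enter the geometric argument, and your sketch never uses it. Concretely, by the loop theorem a compressing disk for $T$ may be taken embedded with interior disjoint from $T$, hence contained in $C$ or in $D$; ruling this out amounts to showing that $c_{+,*}:\pi_1(\Sigma)\to\pi_1(C)$ and $d_{-,*}:\pi_1(\Sigma)\to\pi_1(D)$ are injective. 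With the paper's composition convention, these injectivities come from the \emph{other} relation $D\circ C\cong U$, whose top and bottom boundary inclusions are homotopy equivalences factoring through $C$ and $D$ respectively; your argument only ever invokes $C\circ D\cong U$, i.e.\ a one-sided inverse, which does not suffice. Two lesser points: Waldhausen's horizontal-or-vertical theorem for surfaces in $I$-bundles also requires $\partial$-incompressibility (or a separate treatment of boundary compressions, e.g.\ by capping $\partial\Sigma\times[-1,1]$ off with $D^2\times[-1,1]$ and using the closed case), and ``homologically parallel to a level surface'' plays no role in that theorem; moreover the straightening isotopy should be taken rel the bottom and vertical boundary so that the parametrizations survive and $C\cong\operatorname{cyl}(f)$ as an element of $\mathcal{IC}(\Sigma)$. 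For comparison, the paper does not prove this step at all: it defers the determination of the image of $\operatorname{cyl}$ to \cite[Prop$.$ 2.4]{HM12}, so your Waldhausen route is a reasonable reconstruction of the standard argument, but as written its decisive step is missing.
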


\begin{proof}[About the proof]
  A diffeomorphism $f\colon \Sigma \to \Sigma$ defines a cobordism $\operatorname{cyl}(f)$ 
from $\Sigma$ to $\Sigma$
whose underlying $3$-manifold is the trivial cylinder $U$
and whose boundary parametrization  ${\partial (\Sigma \times [-1,1])} \to {\partial U }$ is
given by $f$ on the top surface $\Sigma \times \{+1\}$ and by the identity elsewhere. 
Clearly, the diffeomorphism class of $\operatorname{cyl}(f)$ only 
depends on the isotopy class of $f$ and, obviously, $\operatorname{cyl}(f)$ is a homology cylinder
if $f$ induces the identity in homology. 

Thus we obtain a map $\operatorname{cyl}: \calI(\Sigma) \to \mathcal{IC}(\Sigma)$.
Clearly it is multiplicative,
and it is injective for the following reason: two diffeomorphisms $\Sigma \to \Sigma$ are isotopic rel $\partial \Sigma$
if and only if they are homotopic rel $\partial \Sigma$, by the classical result of Baer  \cite{Baer} 
that we have already alluded to at page \pageref{page:Baer}.
The image of  $\operatorname{cyl}$ is determined  in \cite[Prop$.$ 2.4]{HM12}, for instance. 
\end{proof}

The following is easily checked.

\begin{proposition}
 The map 
$\iota:\mathcal{S} \to \mathcal{IC}(\Sigma)$  defined by $\iota( M):=  M \sharp U$ 
is an injection of monoids, and it is an isomorphism for $\Sigma=D^2$.
\end{proposition}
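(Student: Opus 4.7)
The plan is to verify in turn that $\iota$ lands in $\mathcal{IC}(\Sigma)$, is multiplicative, is injective, and is surjective when $\Sigma = D^2$.

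First, since $M\in \mathcal{S}$ has trivial reduced homology, Mayer--Vietoris applied to the decomposition $M\sharp U = (M\setminus \interior B^3)\cup_{S^2}(U\setminus \interior B^3)$ yields $H_*(M\sharp U)\cong H_*(U)$. The boundary parametrization of $\iota(M)$ is inherited from that of $U$ (the connected sum being performed in the interior) and induces the same isomorphism in homology as $U$ does, so $\iota(M)\in\mathcal{IC}(\Sigma)$. The equality $\iota(S^3)=U$ is clear. For multiplicativity, the trivial cylinder satisfies $U\circ U\cong U$ by a rescaling of the $[-1,1]$-factor, and the two connected-sum balls (one in each copy of $U$) can be isotoped into disjoint interior regions, so $\iota(M)\circ \iota(M')\cong M\sharp M' \sharp U = \iota(M\sharp M')$.

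For injectivity, I introduce the ``closing'' map $C\mapsto \widehat{C}$ from $\mathcal{IC}(\Sigma)$ to $\calV(\varnothing)$, defined by $\widehat{C}:=C\cup_\partial (-U)$, where the gluing uses the boundary parametrizations. Any isomorphism of homology cylinders $C\cong C'$ extends by the identity on $-U$ to a diffeomorphism $\widehat{C}\cong\widehat{C'}$, so closing is well-defined on diffeomorphism classes. Since $\Sigma$ has genus $g$ and one boundary component, $U=\Sigma\times[-1,1]$ is a handlebody of genus $2g$ (it deformation retracts onto a wedge of $2g$ circles, namely the product of a spine of $\Sigma$ with $\{0\}$), and its double $\widehat{U}$ is $\sharp^{2g}(S^1\times S^2)$. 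Connected sum with $M$ commutes with closing, so $\widehat{\iota(M)}\cong M\sharp \sharp^{2g}(S^1\times S^2)$. If $\iota(M)\cong \iota(M')$, the two closures agree; since a homology sphere has no $S^1\times S^2$ prime factor (as $H_1=0$), the Kneser--Milnor prime decomposition theorem lets me cancel the $2g$ copies of $S^1\times S^2$ on each side, yielding $M\cong M'$.

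When $\Sigma=D^2$, the cylinder $U=D^2\times[-1,1]$ is a $3$-ball, so for any $C\in \mathcal{IC}(D^2)$ the closing $\widehat{C}=C\cup_{S^2} D^3$ has the homology of $S^3$ by Mayer--Vietoris, i.e.\ $\widehat{C}\in\mathcal{S}$. A direct check shows $\iota(\widehat{C})\cong C$: indeed $\widehat{C}\sharp U \cong \widehat{C}\setminus \interior B^3$, and the removed ball can be taken to be the $D^3$ just glued in, recovering $C$ with its boundary parametrization. Combined with injectivity, this makes $\iota$ a monoid isomorphism in this case. The main technical difficulty throughout is the careful tracking of boundary parametrizations: the multiplicative identification and the identification $\widehat{\iota(M)}\cong M\sharp \widehat{U}$ must both be made as parametrized manifolds, which requires choosing the rescaling $U\circ U\cong U$ compatibly with the top- and bottom-parametrizations and isotoping the connected-sum balls entirely in the interior.
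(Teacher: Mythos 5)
Your argument is correct, and it is in fact more detailed than the paper, which offers no proof at all (the proposition is introduced with ``The following is easily checked''). Your verification of well-definedness and multiplicativity is the expected routine check, and your ``closing'' construction $C \mapsto C \cup_\partial (-U)$ is exactly the closure $\wideparen{M}$ that the paper itself introduces later (in the definition of the Birman--Craggs homomorphism), so your route is very much in the spirit of the text. The one genuinely non-formal ingredient you import is the Kneser--Milnor uniqueness of prime decompositions, used to cancel the $2g$ summands $S^1 \times S^2$ from $\wideparen{\iota(M)} \cong M \,\sharp\, \sharp^{2g}(S^1 \times S^2)$; this is legitimate (homology spheres have no $S^1\times S^2$ prime factors, and uniqueness holds for closed oriented $3$-manifolds up to orientation-preserving diffeomorphism), and some cancellation statement of this kind is unavoidable for injectivity, so this is a reasonable price. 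Two small points deserve one explicit sentence each if you write this up: first, the identification $U \cong \Sigma \times [-1,1]$ as a genus-$2g$ handlebody and $\wideparen{U} \cong \sharp^{2g}(S^1\times S^2)$ as its double is exactly right; second, in the surjectivity step for $\Sigma = D^2$, the final matching of boundary parametrizations (absorbing the discrepancy $c^{-1}\circ f\circ u$ on $\partial C \cong S^2$ into a collar of $\partial C$) rests on Smale's theorem that every orientation-preserving diffeomorphism of $S^2$ is isotopic to the identity, together with uniqueness of embedded balls up to ambient isotopy — you flag this as ``careful tracking of boundary parametrizations'' but it is worth naming the facts that make the adjustment possible.
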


Thus, the monoid of homology cylinders $ \mathcal{IC}(\Sigma)$ can be viewed as a simultaneous generalization
of the Torelli group $ \mathcal{I}(\Sigma)$ and the monoid $\mathcal{S}$.

\subsection{Characterization of the Torelli--equivalence}

The most fundamental result is the characterization of the Torelli--equivalence, 
which has been obtained for closed $3$-manifolds  by Matveev \cite{Matveev}. To state his result, we recall that 
the \emph{linking number} 
\begin{equation} 
\operatorname{Lk}(K,L) \in \Q
\end{equation}
of two disjoint oriented knots $K,L$ in a closed $3$-manifold $M$ is defined  when $K$ and $L$ are rationally null-homologous:
let $n\in \N^*$  be such that $n[K]= 0 \in H_1(M;\Z)$ and let $\Sigma \subset M$ be a surface transverse to $L$ 
such that $\partial \Sigma$ consists of $n$ parallel copies of the knot $K$; then 
\begin{equation}  \label{eq:Lk}
\operatorname{Lk}(K,L) := \frac{1}{n} \Sigma \bullet L
\end{equation}
where $\Sigma \bullet L \in \Z$ denotes the algebraic intersection number.
It can be verified that  the class of $\operatorname{Lk}(K,L)$ modulo $1$ only depends on the integral homology classes of $K$ and $L$. 
Hence we get a map
$$
\lambda_M: \hbox{Tors}\, H_1(M;\Z) \times  \hbox{Tors}\, H_1(M;\Z) \longrightarrow \Q/\Z, \  ([K], [L]) \longmapsto \big(\operatorname{Lk}(K,L)\!\!\!\!  \mod 1\big)
$$
which is called the \emph{(torsion) linking pairing} of $M$ and is one of the eldest
invariants of closed $3$-manifolds \cite[\S 77]{ST}.  
The map $\lambda_M$ is bilinear, symmetric and non-singular (see \cite[Lemma 6.7]{Mas11}, for instance).

\begin{theorem}[Matveev 1987] \label{th:Matveev}
Two manifolds $M,M' \in \calV(\varnothing)$ are Torelli--equivalent if, and only if, there is an isomorphism 
$\psi: H_1(M;\Z) \to H_1(M';\Z)$ such that the following diagram is commutative:
\begin{equation} \label{eq:triangle}
\xymatrix{
\operatorname{Tors}\, H_1(M;\Z) \times  \operatorname{Tors}\, H_1(M;\Z)  \ar[rr]^-{\lambda_M}  \ar[d]_-{\psi \times \psi}^-\simeq&& \Q/\Z \\
\operatorname{Tors}\, H_1(M';\Z) \times  \operatorname{Tors}\, H_1(M';\Z)  \ar[rru]_-{\lambda_{M'} } && }
\end{equation}
\end{theorem}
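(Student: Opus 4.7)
The proof splits naturally into the necessity and sufficiency of the stated criterion, treated separately.

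For necessity, I would reduce to a single Torelli twist $M \leadsto M_s$ along a surface $S$ with $s \in \calI(S)$, and show that the canonical isomorphism $\psi = \psi_s: H_1(M;\Z) \to H_1(M_s;\Z)$ supplied by Lemma~2.10 preserves the torsion linking pairing. Given disjoint knots $K, L \subset M$ representing torsion classes, I would isotope them into the complement $M \setminus \interior \operatorname{N}(S)$, so that they represent the corresponding classes $\psi([K]), \psi([L])$ in $H_1(M_s;\Z)$ as well. Picking $n \in \N^*$ with $n[K] = 0$ and a surface $\Sigma \subset M$ with $\partial \Sigma = nK$, transverse to $S$, the intersection $\Sigma \cap S$ is a disjoint union of closed curves $\alpha_1, \dots, \alpha_r \subset S$. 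Since $s_* = \id$ on $H_1(S;\Z)$, each $s(\alpha_i)$ is homologous to $\alpha_i$ in $S$, so cutting $\Sigma$ along these curves, regluing through $s$, and correcting by suitable $2$-chains in $S \times [-1,1]$ yields a surface $\Sigma' \subset M_s$ with $\partial \Sigma' = nK$. Because $L$ lies outside $\operatorname{N}(S)$, one has $\Sigma \bullet L = \Sigma' \bullet L$, so $\operatorname{Lk}_M(K,L) \equiv \operatorname{Lk}_{M_s}(K,L)$ modulo $1$.

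Sufficiency is the harder direction. I would attack it through surgery presentations and Proposition~2.12 which identifies Torelli--equivalence with equivalence under $Y$-surgeries. Starting from presentations $M = S^3_L$ and $M' = S^3_{L'}$ (Theorem~1.11), the linking matrices of $L$ and $L'$ present the pairs $(H_1(M), \lambda_M)$ and $(H_1(M'), \lambda_{M'})$ respectively. The plan is to show that if $\psi$ intertwines the two linking pairings, then $L$ and $L'$ can be matched, after stabilization and modification, by operations realizable as Torelli twists. The key ingredients are: (a) the classical algebraic classification of non-singular linking forms on finite abelian groups up to stable equivalence by hyperbolic summands, (b) the fact that Kirby handle-slides and insertion/removal of split $\pm 1$-framed unknots correspond to operations that stay within a Torelli--equivalence class (using Lickorish's trick and that $S^3_U \cong S^3$ for a $\pm 1$-framed unknot $U$), and (c) the additional flexibility of $Y$-surgeries given by clasper calculus (\S 2.5), which allows the realization of the remaining changes of surgery presentation. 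Assembling these, one transforms $L$ into a ``standard'' form depending only on $(H_1, \lambda)$, and any two manifolds sharing this standard form are by construction connected by Torelli twists.

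The main obstacle is step~(c) above: extracting from the algebraic classification a manageable list of elementary moves on surgery presentations and verifying that each is induced by a Torelli twist. An alternative route, following Matveev, avoids the algebraic detour by a more direct topological argument: one produces explicit Borromean (equivalently, $Y$-)surgeries realizing ``elementary'' automorphisms of the linking form, thereby showing that Torelli--equivalence classes in $\calV(\varnothing)$ are in bijection with isomorphism classes of pairs $(H_1, \lambda)$.
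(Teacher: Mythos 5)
Your necessity argument is essentially the paper's: isotope $K,L$ off $\operatorname{N}(S)$, and use the fact that the regluing map acts trivially in homology to transport a $2$-chain bounding $nK$ from $M$ into $M_s$ without changing its intersection with $L$. The paper arranges $\Sigma$ to meet $\operatorname{N}(S)$ in meridional disks and caps off inside the reglued handlebody, whereas you correct by $2$-chains in $S\times[-1,1]$; both versions work, since the linking number mod $1$ only needs a $2$-chain, not an embedded surface. Your sufficiency direction also starts along the paper's route: surgery presentations $M=S^3_L$, $M'=S^3_{L'}$, the algebraic fact that an isomorphism of the torsion linking pairings forces the linking matrices to become isomorphic after stabilization by $(\pm1)$-summands (Kneser--Puppe, Kyle, Wall, Durfee -- note the statement you need is about lifting the isomorphism of discriminant pairings to a stable equivalence of the integral lattices, not merely a classification of linking forms on finite groups), and the realization of stabilizations and base changes by Kirby moves, which leave the surgered manifold unchanged.

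However, there is a genuine gap exactly where you acknowledge "the main obstacle": once you have arranged $\operatorname{Lk}(L)=\operatorname{Lk}(L')$, you still must pass from two framed links in $S^3$ with identical linking matrices to a chain of Torelli twists relating $S^3_L$ and $S^3_{L'}$, and neither "the flexibility of clasper calculus" nor your fallback of "realizing elementary automorphisms of the linking form by Borromean surgeries" does this. The difficulty is not about automorphisms at all: two links can have the same linking matrix (so no change of basis is needed) and yet differ arbitrarily in the knotting and higher linking of their components. The paper closes this gap with a specific knot-theoretic input, the theorem of Murakami and Nakanishi that two framed links with the same linking matrix are related by isotopies and $\Delta$-moves, together with the fact that a $\Delta$-move is realized by surgery along a $Y$-graph in the link complement; these $Y$-surgeries then translate into $Y$-surgeries on $M$, which by Proposition~\ref{prop:Y_Y} give Torelli--equivalence. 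Without this ingredient (or an equivalent substitute), your sufficiency argument does not go through; with it, your outline becomes the paper's proof.
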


\begin{proof}[Sketch of proof]
Assume that $M$ and $M'$ are Torelli--equivalent. 
Hence there is a Torelli twist $M \leadsto M_s$ along a surface $S \subset M$ such that $M_s\cong M'$.
This surgery induces an isomorphism $\psi:=\psi_s$ in homology, as described by \eqref{eq:psi_s}. 
Using the notations of \eqref{eq:Lk} and setting $x:=[K]$ and $y:=[L]$,  we have
$$
\lambda_M(x,y ) =  \frac{1}{n} \Sigma \bullet L \!\!\!\mod 1.
$$
Since the handlebody $\hbox{N}(S) = S \times [-1,1]$ deformation retracts onto a $1$-dimensional subcomplex,
we can isotope $K$ and $L$ in $M$ to make them disjoint from $\hbox{N}(S)$: hence, as subsets of $M \setminus \interior \hbox{N}(S)$,
$K$ and $L$ can also be regarded as knots in $M_s=M'$; so we have $\psi(x) =[K]$ and $\psi(y) =[L]$ in $H_1(M';\Z)$.
Furthermore, we can isotope $\Sigma$ so that it cuts  the handlebody $\hbox{N}(S)$ tranversely along meridional disks  of $\hbox{N}(S)$:
in particular, the boundary $\partial \Sigma^\circ \subset \partial \hbox{N}(S)$ of  $\Sigma^\circ:= \Sigma \cap \big( M \setminus \interior \hbox{N}(S)\big)$ is null-homologous in~$\hbox{N}(S)$.
Recall that $M'$ is obtained from $M \setminus  \interior \hbox{N}(S)$ by re-gluing $\hbox{N}(S)$ using a diffeomorphism
$\tilde s:\partial \hbox{N}(S) \to \partial \hbox{N}(S)$ that acts trivially in homology: 
hence $\partial \Sigma^\circ$ is still null-homologous  in the re-glued handlebody of $M'$,
so that $\Sigma^\circ \subset  M \setminus  \interior \hbox{N}(S)$ can be completed inside the re-glued handlebody 
to get a  surface $\Sigma' \subset M'$ satisfying $\partial \Sigma' = n K$. We conclude that
$$
\lambda_{M'}(\psi(x),\psi(y))= \Big(\frac{1}{n} \Sigma' \bullet L \!\!\!\mod 1\Big) =  \Big(\frac{1}{n} \Sigma \bullet L \!\!\!\mod 1\Big) = \lambda_M(x,y).
$$

Assume now that there is an isomorphism $\psi$ in homology satisfying \eqref{eq:triangle}. 
According to Theorem \ref{th:RTWL}, $M$ has a \emph{surgery presentation} in $S^3$:
i$.$e$.$, there is an $n$-component framed link $L\subset S^3$ such that $M=S^3_L$.
We now recall the way of computing $\lambda_M$ from the \emph{linking matrix} of $L$, 
which is the $n\times n$ matrix
$$
\operatorname{Lk}(L) := \big(\operatorname{Lk}(L_i,L_j)\big)_{i,j}.
$$
(Here we have choosen an orientation for each component $L_i$ of $L$, and the linking number $\operatorname{Lk}(L_i,L_j)$
is an integer because $H_1(S^3;\Z)$ is trivial; by convention, $\operatorname{Lk}(L_i,L_i):= \operatorname{Lk}(L_i,\rho(L_i))$
is the linking number of $L_i$ and its parallel $\rho(L_i)$.) 

Let $H:= \Z^n$, let $f:H \times H \to \Z$ be the symmetric bilinear map
whose matrix in the canonical basis $(e_i)_i$ is $\operatorname{Lk}(L)$, and let $\widehat{f}: H \to \Hom(H,\Z)$ be the adjoint of~$f$.
We consider the symmetric bilinear form
$$
\lambda_f: G_f \times G_f \longrightarrow \Q/\Z
$$
defined on the finite abelian group $G_f := \hbox{Tors} \big(\operatorname{Coker} \widehat{f}\,\big)$ by
$$
\forall \{u\}, \{v\} \in G_f \subset \frac{\Hom(H,\Z)}{\widehat{f}(H)}, \quad
\lambda_f\left(\{u\}, \{v\}\right) := \big(  f_\Q \left(\widehat{u},\widehat{v} \right)\!\!\!  \mod 1 \big)
$$
where $f_\Q$ is the extension of $f$ to rational coefficients
and where $\widehat{u},\widehat{v}$ are antecedents of $u_\Q,v_\Q: H\otimes \Q \to \Q$ by the adjoint
$\widehat{f}_\Q: H\otimes \Q \to \Hom(H\otimes \Q,\Q)$. It is easily verified that $\lambda_f$ is non-singular. 

This algebraic construction from the matrix $\hbox{Lk}(L)$
has the following topological interpretation in terms of the $4$-manifold $W_L$ obtained from $D^4$ by attaching $2$-handles along $L$:
\begin{itemize}[label=$\diamond$]
\item  $H\simeq H_2(W_L;\Z)$  and $-f$ then corresponds to  the  intersection form of $W_L$;
\item hence $ \hbox{Coker} \widehat f \simeq H_1(M;\Z)$ and  $-\lambda_f$ then corresponds to $\lambda_M$.
\end{itemize}
We proceed similarly with $M'$ to get a  symmetric bilinear form $f'$ on a finitely-generated free abelian group  $H'$. 
By assumption, we have $(G_f,\lambda_f) \simeq (G_{f'},\lambda_{f'})$ 
and it follows from  early works  in knot theory \cite{KP,Kyle} and algebra \cite{Wall72,Durfee}
that the pairs $(H,f)$ and $(H',f')$ are \emph{stably equivalent},
meaning that there exist integers $n_\pm,n'_\pm\geq 0$ such that 
$$
(H,f) \oplus (\Z,+1)^{\oplus n_+}   \oplus (\Z,-1)^{\oplus n_-} \simeq (H',f') \oplus (\Z,+1)^{\oplus n'_+}   \oplus (\Z,-1)^{\oplus n'_-}.
$$
The direct sum with $(\Z,\pm 1)$ can be realized, at the level of surgery presentations, by the disjoint union with the $(\pm 1)$-framed unknot,
and this does not change the $3$-manifold after surgery.  Besides, an automorphism of $H$ 
can be decomposed into finitely many ``elementary'' automorphisms which, in terms of the basis $(e_i)_i$ of $H$, 
are given by the operations $e_i \leftrightarrow e_j$,
$e_i \mapsto -e_i$ or $(e_i,e_j) \mapsto (e_i+e_j,e_j)$;  these ``elementary'' automorphisms  can be realized, at the level of surgery presentations, by
the renumbering $i\leftrightarrow j$ of the components of $L$, the change of orientation  $L_i \mapsto -L_i$ or the operation $(L_i, L_j)\mapsto (L_i\sharp L_j,L_j)$,
respectively. All these elementary operations on links (which constitute the so-called ``Kirby calculus'' \cite{Kirby})
do not affect the $3$-manifold after surgery : it is obvious for the first two operations and, for the third operation,
 it is justified by sliding the attaching locus of a $2$-handle of $W_L$ along another $2$-handle.  
 
Therefore,  we can assume without restriction of generality  
that $M$ and $M'$ are  presented by surgery in $S^3$ along framed links with the same linking matrix:
$$\operatorname{Lk}(L)= \operatorname{Lk}(L').$$
 Then a result of Murakami \& Nakanishi \cite{MN} asserts that $L$ and $L'$ are related one to the other, 
 by isotopies and finitely many local moves of the following type:
\begin{equation}
\begin{array}{c} {\labellist \small \hair 0pt 
\pinlabel {$\longleftrightarrow$} at 360 140
\endlabellist
\includegraphics[scale=0.25]{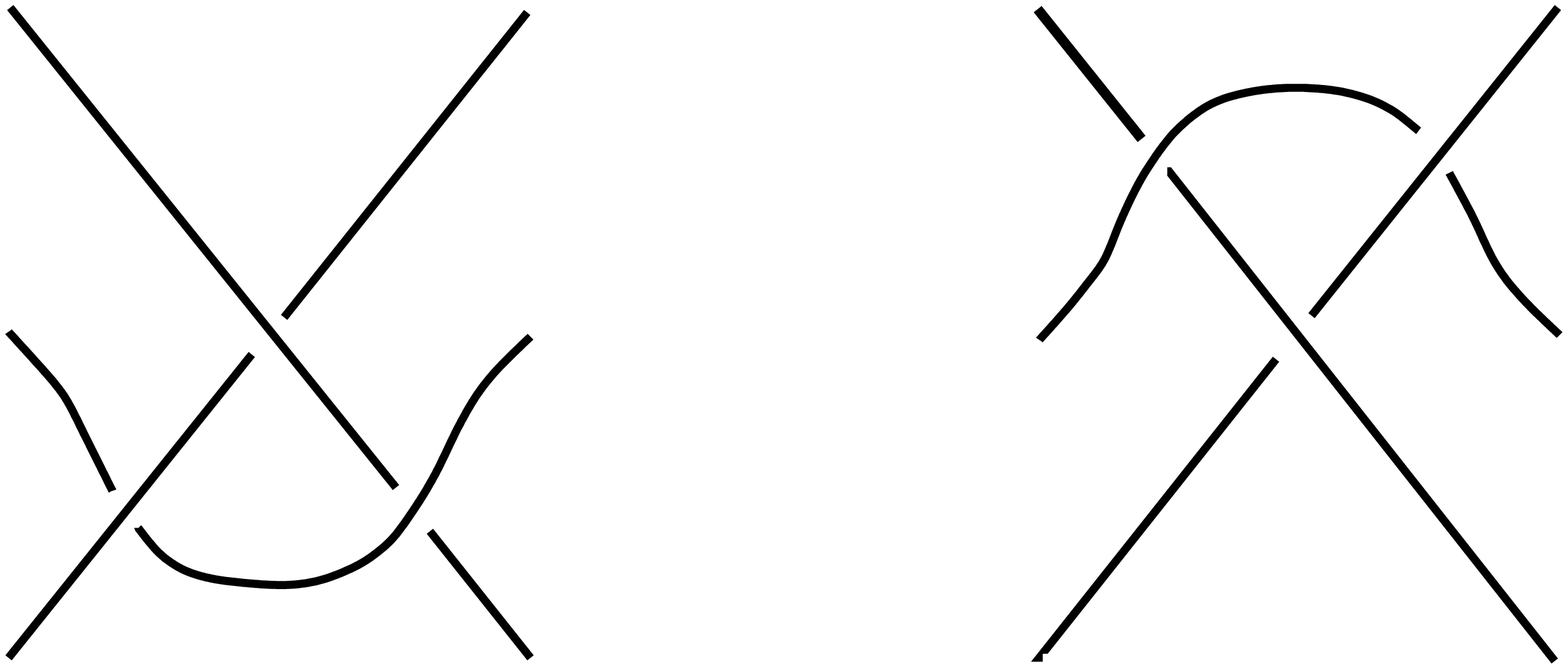}} \end{array}
\end{equation}
Such a local move (called a \emph{$\Delta$-move} in \cite{MN}) can be realized by surgery along a $Y$-graph: see \cite[Fig. 34 (b)]{Hab00}, for instance.
 We conclude that, up to diffeomorphisms, $M$ and $M'$ are related one to the other by finitely many $Y$-surgeries. Hence they are Torelli--equivalent.
\end{proof}

\begin{remark} \label{rem:YY}
  The proof of Theorem \ref{th:Matveev} given in \cite{Matveev} is not  detailed, 
and the knot-theoretical ingredient  in terms of linking matrices  \cite{MN}  is actually posterior to  \cite{Matveev}.
By refining this proof, \cite{Mas03-} and \cite{DM} extend Theorem~\ref{th:Matveev}  
 to the setting of $3$-manifolds  with spin and complex spin structures, respectively:
these extensions  involve quadratic forms which refine the linking pairing and depend on the (complex) spin structures.
See also~\cite{Mous15} for a detailed proof of  Matveev's theorem and additional contents.    \hfill $\blacksquare$
\end{remark}

As an immediate consequence of Theorem \ref{th:Matveev}, we obtain the following result about $\mathcal{S}$
which dates back to \cite{Bir74} and is proved there with Heegaard splittings.
The formulation in terms of blinks (see Remark~\ref{rem:blink}) appears in  \cite{Hilden}.
 
\begin{cor}[Birman 1974] \label{cor:S}
Any homology $3$-sphere is Torelli--equivalent to $S^3$.
\end{cor}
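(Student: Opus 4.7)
The plan is to invoke Matveev's theorem (Theorem \ref{th:Matveev}) and observe that, for homology $3$-spheres, its hypothesis is vacuously satisfied. Concretely, given any $M \in \mathcal{S}$, I would first note that by definition $H_*(M;\Z)\simeq H_*(S^3;\Z)$, so in particular $H_1(M;\Z)=0=H_1(S^3;\Z)$. Hence the zero map provides a (necessarily unique) isomorphism $\psi\colon H_1(M;\Z)\to H_1(S^3;\Z)$.

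Next I would address the commutativity of the triangle \eqref{eq:triangle}. Since $H_1(M;\Z)$ and $H_1(S^3;\Z)$ are both trivial, their torsion subgroups vanish, and the linking pairings $\lambda_M$ and $\lambda_{S^3}$ are both the unique map from the one-point set $\{0\}\times\{0\}$ to $\Q/\Z$, namely the zero map. Therefore the diagram \eqref{eq:triangle} commutes trivially with $\psi=0$.

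The conclusion follows by direct application of Theorem \ref{th:Matveev}: $M$ and $S^3$ are Torelli--equivalent. There is no real obstacle here; the entire content of the corollary is the absence of any homological obstruction once Matveev's theorem is in hand. If one preferred a proof independent of Theorem \ref{th:Matveev} (as in Birman's original Heegaard-splitting argument \cite{Bir74}), the strategy would instead be to fix a Heegaard splitting $M=H_g\cup_f(-H_g)$, observe that the homology-sphere condition forces $f_*$ to act trivially in $H_1(\Sigma_g;\Z)$ after composition with a standard gluing, i.e.\ $f$ can be arranged to lie in $\calI(\Sigma_g)$, and then compare with the corresponding splitting of $S^3$ using Proposition \ref{prop:Heegaard}; but the route via Matveev is by far the shortest.
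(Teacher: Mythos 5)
Your proof is correct and is exactly the argument the paper intends: it presents Corollary \ref{cor:S} as an immediate consequence of Theorem \ref{th:Matveev}, since a homology $3$-sphere has trivial $H_1$, so the torsion linking pairings are vacuous and the isomorphism $\psi$ exists trivially. Your aside on Birman's original Heegaard-splitting route is accurate but not needed here.
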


By refining the proof of Theorem \ref{th:Matveev},
we can also prove the following refinement of  Corollary \ref{cor:S} which generalizes \cite{MN}.

\begin{cor} \label{cor:SL}
Let $M,M' \in \mathcal{S}$ 
and let $L\subset M, L'\subset M'$ be framed oriented $n$-component links.
The pairs $(M,L)$ and $(M',L')$ are Torelli--equivalent if, and only~if, we have $\operatorname{Lk}(L) = \operatorname{Lk}(L')$.
\end{cor}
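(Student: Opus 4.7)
The plan is to follow the structure of the proof of Theorem \ref{th:Matveev} above, while carrying the framed link along throughout. The forward implication adapts the Seifert--surface argument used there to show that a Torelli twist preserves the (torsion) linking pairing, upgrading it to an equality of honest linking numbers in the homology $3$-sphere setting. For the converse, I first reduce to $M=M'=S^3$ by means of Corollary \ref{cor:S}, and then invoke the Murakami--Nakanishi theorem \cite{MN} together with the realisation of $\Delta$-moves by $Y$-surgeries, exactly as in the last step of the proof of Theorem \ref{th:Matveev}.

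For the direct implication, by transitivity it suffices to handle a single Torelli twist $M \leadsto M_s$ along a surface $S \subset \interior(M)$ with $s \in \calI(S)$. Since $\operatorname{N}(S)$ deformation retracts onto a $1$-complex, I isotope $L$ to lie in $M \setminus \interior \operatorname{N}(S)$ and thereby view it, through the obvious identification, as a framed oriented link of $M_s$. For each pair of indices $(i,j)$, where $\operatorname{Lk}(L_i,L_i)$ is to be read as the self-linking determined by the framing, I choose a Seifert surface $\Sigma \subset M$ for $L_i$ meeting $\partial \operatorname{N}(S)$ transversely, and set $\Sigma^\circ := \Sigma \cap (M \setminus \interior \operatorname{N}(S))$. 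The compact surface $\Sigma \cap \operatorname{N}(S)$ exhibits $\Sigma \cap \partial \operatorname{N}(S)$ as null-homologous in $\operatorname{N}(S)$, and since $\tilde s$ acts trivially in $H_1(\partial \operatorname{N}(S))$ (because $s \in \calI(S)$), the same cycle remains null-homologous in the re-glued handlebody of $M_s$. Hence $\Sigma^\circ$ can be completed inside that handlebody to a Seifert surface $\Sigma' \subset M_s$ for $L_i$. After isotoping $L_j$ (or a parallel of $L_i$, in the case $i=j$) out of $\operatorname{N}(S)$, all intersections occur in the subset $M \setminus \interior \operatorname{N}(S)$ common to $M$ and $M_s$, which gives
\begin{equation*}
\operatorname{Lk}_{M_s}(L_i,L_j) \;=\; \Sigma' \bullet L_j \;=\; \Sigma^\circ \bullet L_j \;=\; \Sigma \bullet L_j \;=\; \operatorname{Lk}_M(L_i,L_j).
\end{equation*}

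For the converse, Corollary \ref{cor:S} provides a finite sequence of Torelli twists transforming $M$ into $S^3$, and the surfaces defining them can be isotoped to be disjoint from $L$ (again using the handlebody retraction argument). The same reduction applied to $M'$ yields two framed oriented $n$-component links $\widetilde L, \widetilde L' \subset S^3$ whose linking matrices coincide with $\operatorname{Lk}(L) = \operatorname{Lk}(L')$, by the forward implication just proved. The Murakami--Nakanishi theorem \cite{MN}, invoked in the proof of Theorem \ref{th:Matveev}, then asserts that $\widetilde L$ and $\widetilde L'$ are related by ambient isotopies and finitely many $\Delta$-moves; each $\Delta$-move is local and supported in a ball meeting no link component, hence preserves all framings. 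Since a $\Delta$-move is realised by surgery along a $Y$-graph supported in the link complement --- i.e., by a Torelli twist along a subsurface disjoint from the link --- we conclude that $(S^3,\widetilde L)$ and $(S^3,\widetilde L')$ are Torelli--equivalent as pairs, and composing with the two Torelli equivalences provided by Corollary \ref{cor:S} yields the Torelli--equivalence of $(M,L)$ and $(M',L')$.

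The step requiring most care is the forward direction, and specifically the verification that, after isotopy, the Seifert surface $\Sigma$ can be chosen to meet $\partial \operatorname{N}(S)$ in curves whose image under $\tilde s$ still bounds in the re-glued handlebody. This is precisely the technical point already resolved in the proof of Theorem \ref{th:Matveev}: the intersection $\Sigma \cap \operatorname{N}(S)$ itself bounds its own boundary in $\operatorname{N}(S)$, and the homological triviality of $\tilde s$ then propagates this vanishing to the twisted handlebody, which is all that is needed.
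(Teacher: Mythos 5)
Your proposal is correct and takes essentially the approach the paper intends: the paper gives no separate proof of this corollary, indicating only that it follows by refining the proof of Theorem \ref{th:Matveev}, and your argument is exactly that refinement (Seifert-surface argument carried through the twist for the forward direction; reduction to $S^3$ via Corollary \ref{cor:S}, then \cite{MN} with $\Delta$-moves realized by $Y$-surgeries in the link complement for the converse). One cosmetic slip: the ball supporting a $\Delta$-move does meet the link (in three strands), so framings are preserved not because the move avoids the link but because, as you say next, the move is realized by a twist along a $Y$-graph disjoint from the framed link, and the already-proved forward implication shows the self-linking integers are unchanged.
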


Let $\Sigma$ be a compact surface with $\partial \Sigma \cong S^1$. We now turn to homology cylinders over $\Sigma$
(whose definition has been given in \S \ref{subsec:two_cases}).
The following, which appears in \cite{Hab00}, states that  $\mathcal{IC}(\Sigma)$ constitutes a Torelli--equivalence class.

\begin{proposition}[Habiro 2000] \label{prop:Habiro}
Any homology cylinder  over $\Sigma$ is Torelli--equivalent to the trivial cylinder $U=\Sigma \times [-1,1]$.
\end{proposition}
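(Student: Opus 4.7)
The plan is to reduce the statement to Matveev's theorem for closed $3$-manifolds via a capping-off construction. By the corollary following Theorem~\ref{th:RTWL}, there exists a framed link $L$ in the interior of $U := \Sigma \times [-1,1]$ such that $U_L \cong C$ in $\mathcal{V}(R)$, where $R := \partial U$. To close up the cobordism, I would embed $\Sigma$ in $S^3$ as a Seifert surface of an unknotted circle (possible since $\partial \Sigma \cong S^1$), thereby identifying $U$ with a regular neighborhood of $\Sigma \subset S^3$; the complement $K := S^3 \setminus \operatorname{int}(U)$ is then a handlebody, and with the induced boundary identifications we have $S^3 = U \cup_u K$. Setting $\hat{C} := C \cup_c K$, we obtain a closed $3$-manifold diffeomorphic to $S^3_L$.

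Since $C$ is a homology cylinder, the homology condition $c_{+,*} = c_{-,*}$ on the boundary parametrizations matches that of $U$, and a Mayer--Vietoris computation for $\hat{C} = C \cup K$ readily yields that $\hat{C}$ is a homology $3$-sphere. By Corollary~\ref{cor:S} combined with Proposition~\ref{prop:Y_Y}, $\hat{C}$ can be transformed into $S^3$ by a finite sequence of $Y$-surgeries; following the proof of Theorem~\ref{th:Matveev}, such a sequence is produced by applying to the surgery link $L \subset S^3$ a combination of Kirby-type moves (stabilizations by $(\pm 1)$-framed unknots and handle slides, which do not alter the diffeomorphism type of the surgered manifold) followed by $\Delta$-moves (each realized by a $Y$-surgery).

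The crucial observation is that every such move can be performed within $U \subset S^3$: stabilizations fit in small balls chosen inside $U$; handle slides between two components of $L \subset U$ can be carried out using bands drawn in the connected open set $U$; and $\Delta$-moves are local operations confined to small balls around chosen crossings of $L$, which may be isotoped into $U$. Reversing the resulting sequence gives a sequence of $Y$-surgeries contained entirely in the cobordism $U$ that transforms $U$ into $U_L \cong C$, and Proposition~\ref{prop:Y_Y} then yields that $C$ and $U$ are Torelli-equivalent.

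The main obstacle is precisely this locality verification: ensuring that each Kirby move and each $\Delta$-move prescribed by Matveev's argument can be confined to $U$ without ever meeting the complementary piece $K$. Intuitively this is clear since links and local moves are one-dimensional or take place in small balls, with no obstruction to being pushed off a codimension-$0$ submanifold; but a fully rigorous treatment requires tracking the Kirby calculus and the Murakami--Nakanishi step inside the proof of Theorem~\ref{th:Matveev} one operation at a time.
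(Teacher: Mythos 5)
Your set-up is sound (the presentation $C\cong U_L$ with $L\subset\interior(U)$, the capping of $C$ by the complementary handlebody $K$, and the Mayer--Vietoris argument that $\hat C$ is a homology sphere), but the step you flag as a "locality verification" is not a routine technicality: it is the actual mathematical content, and the intuition you offer for it is wrong. Matveev's argument, via Murakami--Nakanishi, produces $\Delta$-moves \emph{together with ambient isotopies in $S^3$} relating the stabilized link to a $(\pm1)$-framed unlink. To conclude anything about $C$ versus $U$ \emph{rel boundary}, it is not enough to push into $U$ the small balls supporting the individual moves: the whole sequence --- every intermediate link and every connecting isotopy --- must stay inside $U$. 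This cannot be arranged in general. Isotopies in $U$ and $\Delta$-moves (being supported in balls) preserve the class of each link component in $H_1(U;\Z)$, whereas the link $L$ with $U_L\cong C$ typically has components that are essential in $H_1(U;\Z)$ (already so when $C$ is the mapping cylinder of a BP map, where the surgery curves are the two parallel non-separating curves), while the target unlink lies in a ball. Hence no sequence of $\Delta$-moves and isotopies confined to $U$ can relate them; one would have to interleave handle slides inside $U$, and then one is no longer "confining" the output of Murakami--Nakanishi but proving a new relative (rel-$\partial$, inside a handlebody) version of Matveev's theorem, which is essentially the proposition itself. A second, related problem: even if all the $Y$-graphs could be placed in $C\subset\hat C$, knowing that surgery on them turns $\hat C$ into $S^3$ does not give $C\sim U$ unless the resulting diffeomorphism is known to respect the splitting along $K$ and its boundary parametrization --- data your argument never tracks.

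The paper's proof deals with exactly this point by keeping the capping data in the picture as a framed link: $C$ is capped off by $2$-handles along $c_-(\alpha_i)$, $c_+(\beta_j)$ and a $3$-handle, the closures of the co-cores form a $2g$-component framed link $G$ in the resulting homology sphere with linking matrix $\left(\begin{smallmatrix}0&I\\ I&0\end{smallmatrix}\right)$ independent of $C$, and one invokes Corollary~\ref{cor:SL} --- the refinement of Matveev's theorem for \emph{pairs} (homology sphere, framed link), which is precisely the relative statement your argument is missing --- rather than the absolute Corollary~\ref{cor:S}. Since the Torelli twists furnished by Corollary~\ref{cor:SL} avoid the link, they transfer to its complement, that is to $C$ itself, giving $C\sim_{Y_1}U$. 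If you replace your appeal to Corollary~\ref{cor:S} by Corollary~\ref{cor:SL} applied to a framed link recording your capping of $C$ by $K$, your approach essentially becomes the paper's.
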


\begin{proof}[Sketch of the proof]
  Fix a system of \emph{meridians and parallels} in the surface $\Sigma$, i.e$.$ 
a system of  simple oriented closed curves $(\alpha_1,\dots,\alpha_g,\beta_1,\dots,\beta_g)$ 
having the following intersection pattern:
$$
\labellist \small \hair 0pt 
\pinlabel {\textcolor{blue}{$\alpha_1$}} [l] at 310 105
\pinlabel {\textcolor{blue}{$\alpha_g$}} [l] at 568 65
\pinlabel {\textcolor{red}{$\beta_1$}} [b] at 216 80
\pinlabel {\textcolor{red}{$\beta_g$}} [b] at 472 69
\pinlabel {$\circlearrowleft$} at 520 30
\endlabellist
\includegraphics[scale=0.4]{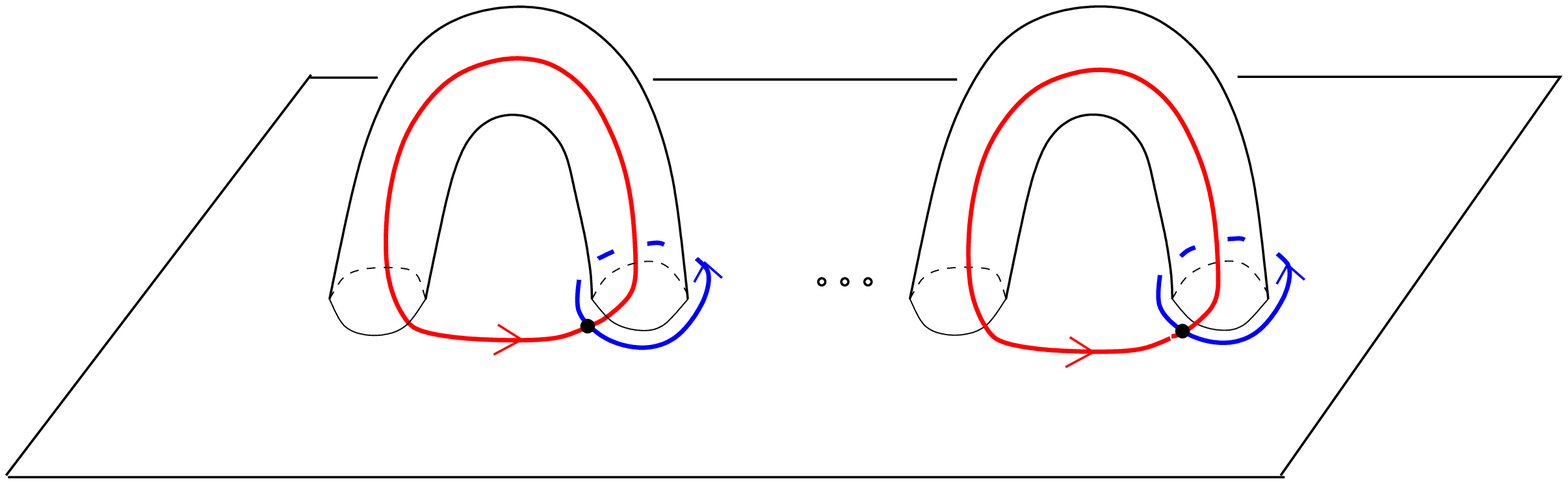}
$$
Let $(C,c)\in \mathcal{IC}(\Sigma)$: recall that $C$ is viewed as a cobordism from the ``top'' surface $\partial_+ C := c_+(\Sigma)$
to the ``bottom'' surface $\partial_- C := c_-(\Sigma)$.
By gluing one $2$-handle along each curve $c_-(\alpha_i)$ on  $\partial_-C$
and one $2$-handle along each curve $c_+(\beta_j)$ on  $\partial_+C$,
the homology cylinder $C$ is turned into a homology $3$-ball~$C'$. 
Next, by adding a $3$-handle to $C'$, we get a homology $3$-sphere $\wideparen{C'}$.
Each $2$-handle $D^2 \times D^1$ has a \emph{co-core}, which is the image of $\{0\} \times D^1$ after attachment of the $2$-handle:
hence the above procedure has also produced a framed oriented $(2g)$-component tangle 
$(\gamma_1^+,\dots, \gamma_g^+,\gamma_1^-,\dots, \gamma_g^-)$ in $C'$, which is called a \emph{bottom-top tangle}.
Now, we can connect the two extremities of each component $\gamma_j^+$ (resp$.$~$\gamma_i^-$)  
by a small arc on the ``top''  (resp$.$ ``bottom'') boundary of  $C'$ 
to get an oriented framed knot $G_j^+$ (resp$.$~$G_i^-$) in $\wideparen{C'}$.
It can be deduced from the equality $c_{+,*}=c_{-,*}: H_1(\Sigma;\Z) \to H_1(C;\Z)$
that the linking matrix of the framed oriented link
$
G:=(G_1^+,\dots,G_g^+, G_1^-,\dots,G_g^-)
$
is 
$$
\operatorname{Lk}(G)= \begin{pmatrix}
0_g & I_g \\ I_g & 0_g
\end{pmatrix}
$$
so that, in particular, it does not depend on $C\in \mathcal{IC}(\Sigma)$.

If we apply the above constructions to the trivial cylinder $U$ instead of $C$, we obtain  $U'\cong D^3$ and, inside  $\wideparen{U'}\cong S^3$,
we obtain a link $T$ with $\operatorname{Lk}(T)=\operatorname{Lk}(G)$.
It follows then from Corollary \ref{cor:SL} that the pair $(\wideparen{C'},G)$ is Torelli--equivalent to  $(\wideparen{U'},T)$
and, therefore, $C$ is Torelli--equivalent to $U$.
We refer to \cite[Cor. 7.7]{CHM} for a more general result and more detailed arguments.  
\end{proof}

\begin{remark} \label{rem:HH}
  Recall that $H_k$ is the standard handlebody of genus $k$, with boundary~$\Sigma_k$.
A manifold $C\in \calV(\Sigma_k)$ is a \emph{homology handlebody} of \emph{genus $k$} if it has the same homology type as $H_k$.
Using the same method of proof as for Proposition~\ref{prop:Habiro}, 
we can show the following characterization due to Habegger \cite{Habegger}:
 two homology handlebodies $C',C''$ of genus $k$ are Torelli--equivalent  if, and only if, 
they have the same Lagrangians:
$$
\ker\big(c'_*: H_1(\Sigma_k;\Z) \longrightarrow  H_1( C';\Z)  \big)  = \ker\big(c''_*: H_1(\Sigma_k;\Z) \longrightarrow  H_1( C'';\Z) \big) 
$$
See also  Auclair \& Lescop \cite[Lemma 4.11]{AL}. \hfill $\blacksquare$  
\end{remark}

\subsection{Characterization of $J_k$ and $Y_k$ at low $k$ for closed manifolds}

The $J_1$-equivalence on $\calV(\varnothing)$ being perfectly understood  thanks to Theorem \ref{th:Matveev},
we now turn to the  $J_2$-equivalence.
Recall from Proposition \ref{prop:J_2} that the  $J_2$-equivalence coincides with
the $2$-surgery equivalence.  The latter has been characterized in \cite{CGO}.

In addition to the linking pairing $\lambda_M$ of a closed $3$-manifold $M$, 
the characterization of the $2$-surgery equivalence involves the cohomology ring of $M$.
It follows from Poincar\'e duality that all the (co)homology groups of $M$ are determined by $H_1(M;\Z)$.
Furthermore,  the cohomology ring $H^*(M;\Z_r)$ is determined for any $r\in \N$ by the \emph{triple-cup product form}
$$
u^{(r)}_M: H^1(M;\Z_r) \times H^1(M;\Z_r) \times H^1(M;\Z_r) \longrightarrow \Z_r,
$$
which is the trilinear and skew-symmetric form defined  by 
$$
\forall x,y,z \in H^1(M;\Z_r), \quad
u^{(r)}_M(x,y,z) :=  \big\langle x \cup y \cup z, [M] \big\rangle \ \in \Z_r .
$$

It turns out that all these forms can be encoded by a single invariant:
the \emph{abelian (oriented)  homotopy type} of $M$, which is defined as the homology class
\begin{equation} \label{eq:mu_ab}
\mu_{1}(M) := f_*([M]) \ \in H_3\big(H_1(M)\big).
\end{equation}
Here, homology groups are taken with $\Z$-coefficients,
$f:M \to K(H_1(M),1)$ is a continuous map in an Eilenberg--MacLane space 
that induces the canonical homomorphism $\pi_1(M) \to H_1(M)$ at the level of $\pi_1$,
and the homology of the space $ K(H_1(M),1)$ is identified to the homology of the (abelian) group $H_1(M)$.

\begin{theorem}[Cochran--Gerges--Orr 2001] \label{th:CGO}
Let $M,M' \in \calV(\varnothing)$. The following three statements are equivalent:
\begin{enumerate}
\item $M$ and $M'$ are $J_2$-equivalent;
\item  there is an isomorphism  $\psi: H_1(M;\Z) \to H_1(M';\Z)$ 
such that $\lambda_M$ corresponds to  $\lambda_{M'}$ through $\psi$, and  
$u^{(r)}_{M'}$ corresponds to $u^{(r)}_{M}$ through $\operatorname{Hom}(\psi, \Z_r)$ for all $r\in \N$;
\item  there is an isomorphism  $\psi: H_1(M;\Z) \to H_1(M';\Z)$ 
such that the induced map $\psi_*: H_3\big(H_1(M;\Z)\big) \to H_3\big(H_1(M';\Z)\big)$
maps $\mu_{1}(M)$ to $\mu_{1}(M')$.
\end{enumerate}
\end{theorem}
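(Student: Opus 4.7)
The plan is to establish the algebraic equivalence $(2) \Leftrightarrow (3)$ first, then handle the two topological implications $(1) \Rightarrow (3)$ (routine) and $(2) \Rightarrow (1)$ (the technical heart) separately. For $(2) \Leftrightarrow (3)$: since $H_1(M;\Z)$ is a finitely generated abelian group, the homology $H_3(H_1(M);\Z)$ and the cohomology ring $H^*(K(H_1(M),1);\Z_r)$ are explicitly computable (via Künneth and the Bockstein spectral sequence). Given a classifying map $f\colon M \to K(H_1(M),1)$, one has $u^{(r)}_M(x,y,z) = \langle \widetilde{x}\cup\widetilde{y}\cup\widetilde{z},\,\mu_1(M)\rangle$ for canonical universal lifts $\widetilde{x},\widetilde{y},\widetilde{z}$, and the torsion linking pairing $\lambda_M$ is extracted from $\mu_1(M)$ via Bockstein operations on $K(H_1(M),1)$. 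A universal-coefficient computation shows conversely that the collection of all triple cup products (for every $r\in\N$) together with $\lambda_M$ suffices to recover $\mu_1(M)\in H_3(H_1(M))$, so that $(2)$ and $(3)$ carry identical information.

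For $(1) \Rightarrow (3)$: by Proposition \ref{prop:J_2} it suffices to verify invariance of $\mu_1$ under a single surgery along a null-homologous $(\pm 1)$-framed knot $K\subset M$. The surgery cobordism $W := (M\times I)\cup_{K\times\{1\}}(D^2\times D^2)$ satisfies $H_1(W;\Z) \cong H_1(M;\Z) \cong H_1(M_K;\Z)$, because the $\pm 1$ framing together with the nullhomology of $K$ ensures that attaching the $2$-handle contributes nothing to $H_1$. The classifying map $f_M$ thus extends to $F\colon W \to K(H_1(M),1)$, exhibiting $(M,f_M)$ and $(M_K, f_{M_K}\circ\psi)$ as bordant in $\Omega_3(K(H_1(M),1))$. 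Since $\Omega_3=0$ by Theorem \ref{th:RTWL}, the Atiyah--Hirzebruch spectral sequence yields $\Omega_3(K(H,1))\cong H_3(H;\Z)$, so the two classifying maps induce equal elements in $H_3(H_1(M))$; that is, $\psi_*\mu_1(M) = \mu_1(M_K)$.

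For $(2) \Rightarrow (1)$: since the linking pairings agree, Matveev's Theorem \ref{th:Matveev} yields a sequence of Torelli twists, which by Proposition \ref{prop:Y_Y} can be realized as a sequence of $Y$-surgeries $M = M_0 \leadsto \cdots \leadsto M_r = M'$. I would convert this into a sequence of null-homologous $(\pm 1)$-framed knot surgeries using clasper calculus. Each $Y$-surgery may change the triple cup products in a way that is explicitly computable from the mod-$r$ linking numbers of the three leaves of the $Y$-graph against Seifert surfaces for the homology classes the leaves represent. Because, by hypothesis, the cumulative change over the whole sequence vanishes, one can exploit the operations $(\mathcal{O}_0)$--$(\mathcal{O}_5)$ of \S\ref{subsec:claspers}, together with the IHX-type relations, to cancel the $Y$-surgeries in pairs modulo moves whose net effect is achievable by null-homologous $(\pm 1)$-framed knot surgeries, i.e.\ modulo $J_2$-equivalence.

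The main obstacle is precisely this last step: translating the global algebraic identity $\psi_*\mu_1(M) = \mu_1(M')$ into a local, clasper-theoretic cancellation procedure. An alternative route --- closer in spirit to the original argument of Cochran, Gerges and Orr --- is to present $M$ and $M'$ as surgeries on framed links in $S^3$ with equal linking matrix (possible by Corollary \ref{cor:SL}), observe that the triple cup products are governed by Milnor-type triple linking invariants of the surgery link, and show that condition $(2)$ forces the two links to be related by ``band-pass'' moves, each realizable by a surgery along a null-homologous $(\pm 1)$-framed knot.
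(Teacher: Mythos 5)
Your implication $(1)\Rightarrow(3)$ is correct, and it is essentially the paper's own route: the trace $W$ of a $(\pm 1)$-framed surgery along a null-homologous knot is a cobordism on which both inclusions induce isomorphisms on $H_1$, and then $\Omega_3\big(K(H,1)\big)\cong H_3(H;\Z)$ (a consequence of Theorem \ref{th:RTWL} via the Atiyah--Hirzebruch spectral sequence) gives $\psi_*\mu_1(M)=\mu_1(M_K)$. The trouble is the way back. Your claimed equivalence $(2)\Leftrightarrow(3)$ is only half argued: extracting $\lambda_M$ and the forms $u^{(r)}_M$ from $\mu_1(M)$ by (co)homology operations is indeed the easy direction $(3)\Rightarrow(2)$, but the converse --- that the whole family of triple cup products together with the torsion linking pairing determines the class $\mu_1(M)\in H_3\big(H_1(M;\Z)\big)$ up to isomorphism --- is one of the two genuinely hard points of \cite{CGO}; it requires a careful analysis of $H_3$ of a finitely generated abelian group and is not disposed of by invoking ``a universal-coefficient computation''.

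More seriously, your proof of $(2)\Rightarrow(1)$ stops exactly where the real work begins, as you acknowledge. After Matveev's theorem (Theorem \ref{th:Matveev}) you only know that $M$ and $M'$ are $Y_1$-equivalent; the operations $(\mathcal{O}_0)$--$(\mathcal{O}_5)$ of \S\ref{subsec:claspers} control manifolds up to $Y_\ell$-equivalence and give no mechanism for trading a sequence of $Y$-surgeries for surgeries along null-homologous $(\pm 1)$-framed knots, nor for converting the global identity $\psi_*\mu_1(M)=\mu_1(M')$ into local cancellations; the alternative sketch (equal linking matrices plus Milnor-type triple linking data force band-pass equivalence of the surgery links) is likewise an unproved link-theoretic assertion. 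The argument of Cochran--Gerges--Orr proceeds instead through a fourth condition: $(4)$ there is a cobordism $W$ from $M$ to $M'$ on which both inclusion-induced maps on $H_1$ are isomorphisms. The implications $(1)\Leftrightarrow(4)$ are the easy ones (handle decompositions of $W$ versus $2$-surgeries), and the hard step $(3)\Rightarrow(4)$ consists in starting from a bordism over $K\big(H_1(M),1\big)$ --- which exists precisely because the $\mu_1$'s agree and $\Omega_3\big(K(H,1)\big)\cong H_3(H)$ --- and surgering its interior until both inclusions become $H_1$-isomorphisms. As it stands, your proposal establishes only the easy implication $(1)\Rightarrow(3)$ and leaves both hard implications open.
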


\begin{proof}[About the proof]  
In fact, the results of \cite{CGO} give a fourth, equivalent condition: 
\begin{quote}(4){\it \ there is a cobordism $W$ from $M$ to $M'$ 
such that the maps $\hbox{incl}_*: H_1(M;\Z) \to H_1(W;\Z)$ and  $\hbox{incl}_*: H_1(M';\Z) \to H_1(W;\Z)$
induced by the inclusions are isomorphisms.} \end{quote}
Some of the implications are not too difficult to prove, like 
\begin{itemize}[label=$\diamond$]
\item $(1) \Rightarrow (4)$ and $(4) \Rightarrow (1)$ working with the formulation of the $J_2$-equivalence in terms of $2$-surgeries;
\item $(4)  \Rightarrow (3)$ using  the canonical map $\Omega_3\big(K(H_1(M),1)\big) \to H_3\big(H_1(M)\big)$ defined on the 
third cobordism group relative to $K(H_1(M),1)$;
\item $(3)  \Rightarrow (2)$ using that the forms $\lambda_M$ and $u^{(r)}_M$
 are defined by (co)homology operations, which also exist in the category of groups.
\end{itemize}
Some other implications like $(2) \Rightarrow (3)$ and $(3) \Rightarrow (4)$ are much more involved.
We recommend the reading of \cite{CGO} where techniques of low-dimensional topology, differential topology
and algebraic topology intertwine in a rich manner.  
\end{proof}

As an immediate consequence of Theorem \ref{th:CGO}, we obtain the following result about $\mathcal{S}$.
  It appeared priorly in \cite{Matveev}, in its formulation with boundary links (see Remark~\ref{rem:boundary_links}).  

\begin{cor}[Matveev 1987] \label{cor:J_2}
Any homology $3$-sphere is $J_2$-equivalent to $S^3$.
\end{cor}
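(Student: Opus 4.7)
The plan is to derive the corollary as an immediate consequence of Theorem \ref{th:CGO} applied with $M' := S^3$. Since $M$ is a homology $3$-sphere, we have $H_1(M;\Z) = 0 = H_1(S^3;\Z)$, so the unique (trivial) homomorphism $\psi\colon H_1(M;\Z) \to H_1(S^3;\Z)$ is an isomorphism. The linking pairings $\lambda_M$ and $\lambda_{S^3}$ are bilinear forms on the torsion of trivial groups, hence vacuously correspond under $\psi\times\psi$; similarly, by the universal coefficient theorem, $H^1(M;\Z_r) = 0 = H^1(S^3;\Z_r)$ for every $r\in \N$, so the triple-cup product forms $u^{(r)}_M$ and $u^{(r)}_{S^3}$ are forms on trivial groups and vacuously correspond. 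Thus condition (2) of Theorem \ref{th:CGO} is satisfied, and $M$ is $J_2$-equivalent to $S^3$. Equivalently, one can use condition (3): since $H_1(M) = 0$, one has $H_3(H_1(M)) = H_3(0) = 0$, so $\mu_1(M) = 0 = \mu_1(S^3)$ and the condition holds automatically.

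Since the corollary is a direct application of Theorem \ref{th:CGO}, there is essentially no obstacle: the genuine work has been absorbed into the proof of that theorem (in particular, into the implications $(2)\Rightarrow(3)\Rightarrow(4)\Rightarrow(1)$). The special feature of homology $3$-spheres that makes the argument trivial is that the entire ``linear data'' classifying $J_2$-equivalence lives on $H_1(-;\Z)$, which vanishes in this case.

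For historical perspective, one could instead prove the corollary directly along Matveev's original lines, bypassing Theorem \ref{th:CGO}. By Proposition \ref{prop:J_2} and Remark \ref{rem:boundary_links}, $J_2$-equivalence is generated by surgeries along boundary links. The task would then reduce to showing that every homology $3$-sphere admits a surgery presentation in $S^3$ along a boundary link; this would be the main step, and it requires a knot-theoretic refinement of Theorem \ref{th:RTWL} ensuring that the surgery link can be chosen to have vanishing total linking form and to bound a system of disjoint Seifert surfaces. That argument is independent of Theorem \ref{th:CGO} but is strictly more involved than the one-line derivation above.
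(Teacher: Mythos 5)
Your proposal is correct and matches the paper's own treatment: the paper presents this corollary precisely as an immediate consequence of Theorem \ref{th:CGO}, since all the classifying data ($\lambda_M$, the forms $u^{(r)}_M$, or equivalently $\mu_1(M)$) live on $H_1(M;\Z)$, which vanishes for a homology $3$-sphere. Your closing remark about Matveev's original boundary-link route is consistent with what the paper says about the history, but is not needed for the proof.
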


\noindent
  Although the first publication of Corollary \ref{cor:J_2} seems to be \cite{Matveev},
it appears that the result was known from Johnson as early as 1977 \cite{Joh_to_Bir}.
It has been reproved  (in its formulation with $2$-surgeries)  by Casson in order to give a  surgery description of his invariant  \cite{Casson}.  

Morita \cite{Mor89} gave yet another proof of  Corollary \ref{cor:J_2} using Heegaard splittings.
By extending Morita's techniques and after long computations, Pitsch obtained the following in  \cite{Pi08}:

\begin{theorem}[Pitsch 2008] \label{th:Pitsch}
Any homology $3$-sphere is $J_3$-equivalent to $S^3$.
\end{theorem}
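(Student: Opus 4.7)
The plan is to follow the Heegaard-theoretic strategy of Thom (in Theorem \ref{th:RTWL}) and of Morita (who reproved Corollary \ref{cor:J_2} along these lines). First, I would make explicit the $J_3$-analogue of Proposition \ref{prop:Heegaard} alluded to in Remark 2.20: two closed $3$-manifolds are $J_3$-equivalent if and only if they can be written as $H_g \cup_f (-H_g)$ and $H_g \cup_{f'} (-H_g)$ with $f^{-1} f' \in J_3\calI(\Sigma_g)$ for some common Heegaard surface. Then, in imitation of the subgroup $B_g$ from Thom's proof, I would introduce, for every $g\geq 1$, the subset
$$
P_g := \big\{[f] \in \calM(\Sigma_g) : M_f \sim_{J_3} S^3\big\}
$$
and verify that it is a subgroup of $\calM(\Sigma_g)$: stability under inversion follows from the diffeomorphism $M_{f^{-1}} \cong -M_f$ together with $S^3 \cong -S^3$, while stability under composition is obtained by concatenating twists by elements of $J_3\calI$ along parallel copies of the Heegaard surface (as in the proof of Lemma \ref{lem:equivalence}).

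Next, I would reduce the task to a question purely inside the Johnson kernel. Corollary \ref{cor:J_2}, in the Heegaard-splitting reformulation employed by Morita \cite{Mor89}, says that every $\Z$-homology $3$-sphere admits a Heegaard splitting whose gluing map lies in $\calK \cdot \sfH_g$, where $\calK = J_2 \calI(\Sigma_g)$ is the Johnson kernel and $\sfH_g$ denotes the subgroup of $\calM(\Sigma_g)$ generated by diffeomorphisms that extend to one or the other of the two handlebodies of the splitting. Since modifying the gluing map by an element of $\sfH_g$ does not change the diffeomorphism type of $M_f$, the theorem reduces to proving: every element of $\calK$ can be written modulo $J_3 \calI(\Sigma_g)$ as a product of elements coming from $\sfH_g$. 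Equivalently, the image of $\sfH_g \cap \calK$ in $\calK / J_3 \calI(\Sigma_g)$ exhausts all of $\calK / J_3 \calI(\Sigma_g)$.

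The main obstacle lies in this last step. The quotient $\calK / J_3 \calI(\Sigma_g)$ injects, via the second Johnson homomorphism $\tau_2$, into an explicit $\Sp(2g;\Z)$-module built from $H = H_1(\Sigma_g;\Z)$, so the problem becomes a concrete one concerning the image of $\sfH_g \cap \calK$ under this map. By Johnson's generation result (Example \ref{ex:Johnson}), $\calK$ is generated by BSCC maps; hence I would compute $\tau_2$ explicitly on each such generator, and then try to cancel the value obtained by an appropriate diffeomorphism extending to one of the handlebodies of the splitting. Showing that such a cancellation is always possible for gluing maps arising from $\Z$-homology $3$-spheres amounts to a delicate algebraic computation on $\Lambda^3 H/H$ and is the computational heart of Pitsch's argument; this is where I would expect the proof to require the most work.
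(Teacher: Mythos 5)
First, a point of comparison: the paper does not prove this statement at all --- it quotes it from Pitsch \cite{Pi08} (``extending Morita's techniques and after long computations''), and later notes that it can also be recovered, in genus $0$, from the $Y_3$-classification of homology cylinders of \cite{MM13}. So your proposal is to be measured against Pitsch's strategy, which it indeed outlines; but as written it has two genuine gaps. The first is structural: your set $P_g=\{[f]\in\calM(\Sigma_g): M_f\sim_{J_3}S^3\}$ is not a subgroup. Since $J_3$-equivalence implies Torelli--equivalence, it preserves the homology type, and $M_{\id}\cong\sharp^g(S^1\times S^2)$ is not a homology sphere, so $1\notin P_g$ for $g\geq 1$. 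Moreover, closure under composition does not follow from the argument of Lemma \ref{lem:equivalence}: that argument compares $M$, $M_s$ and $(M_s)_{s'}$, i.e$.$ successive twists performed in one manifold, whereas here you must relate the three different manifolds $M_f$, $M_{f'}$ and $M_{f'f}$. Thom's proof that $B_g$ is a subgroup uses the gluing of two bounding $4$-manifolds along a handlebody, and that construction has no analogue for the property ``$\sim_{J_3}S^3$''. The correct framework (and Pitsch's) is instead to fix the standard gluing $\iota$ with $S^3=M_\iota$, write every homology sphere as $H_g\cup_{\iota\phi}(-H_g)$ with $\phi\in\calK$ (Morita), and then modify $\phi$ only within its double coset under the two handlebody-extending subgroups (one of them conjugated by $\iota$), together with stabilization of the splitting; note also that multiplying the gluing by an arbitrary element of the subgroup \emph{generated} by the two handlebody groups, as in your reduction, does change the diffeomorphism type in general.

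The second gap is the decisive one: the step you explicitly defer (``the computational heart'') is the entire content of the theorem. What has to be proved is that $\tau_2(\phi)$, for $\phi\in\calK$ arising as above, can be cancelled by the $\tau_2$-values of Johnson-kernel elements extending to one or the other handlebody, and in Pitsch's argument this only works after stabilizing the Heegaard splitting and a delicate analysis of the resulting obstruction; your formulation (``the image of $\sfH_g\cap\calK$ in $\calK/J_3\calI(\Sigma_g)$ exhausts all of it'') both overstates what is needed (it ignores the double-coset constraint) and omits the stabilization that makes the cancellation possible, and it is nowhere established. A smaller but telling slip: the target of $\tau_2$ is not $\Lambda^3H/H$ --- that is the target of the \emph{first} Johnson homomorphism $\tau_1$; $\tau_2$ takes values in a different symplectic module (computed by Morita), and it is there that Pitsch's ``long computations'' take place. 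In short, your text is a reasonable road map toward Pitsch's proof, but the subgroup device it substitutes for his double-coset-plus-stabilization analysis does not work, and the key cancellation argument is missing.
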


\label{page:JJJ}
In a very recent paper \cite{Faes22}, Faes proved the next step for  $\mathcal{S}$.
But, in contrast with Pitsch's proof  of Theorem \ref{th:Pitsch},
his arguments require the classification of the $Y_k$-equivalence on~$\mathcal{S}$ for $k\in \{2,3,4\}$, 
which was obtained by Habiro \cite{Hab00}.

\begin{theorem}[Faes 2022]  \label{th:Faes}
Any homology $3$-sphere is $J_4$-equivalent to $S^3$.
\end{theorem}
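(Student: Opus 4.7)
The plan is to derive the theorem from Habiro's classification of the $Y_k$-equivalence on $\mathcal{S}$ for $k \in \{2,3,4\}$, combined with Pitsch's theorem (Theorem \ref{th:Pitsch}). The starting observation is that the implication $Y_4 \Rightarrow J_4$, together with the compatibility of the connected sum operation with both equivalences on homology spheres, yields a canonical surjective homomorphism of abelian groups
$$
\pi : \mathcal{S}/Y_4 \twoheadrightarrow \mathcal{S}/J_4.
$$
Hence proving the theorem is equivalent to showing that $\pi$ is the zero homomorphism, and since $\mathcal{S}/Y_4$ is finitely generated by Habiro's results, this amounts to a check on a finite list of generators.

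First I would invoke Habiro's presentation of the group $\mathcal{S}/Y_4$ through clasper calculus, following the general strategy sketched in \S \ref{subsec:claspers}: generators are represented by classes $[S^3_G]$, where $G$ is a connected graph clasper in $S^3$ of degree $1$, $2$, or $3$, subject to the Jacobi-type relations that arise from the operations $(\mathcal{O}_0)$--$(\mathcal{O}_5)$ and from other standard moves of clasper calculus. The second step is to show, for each such generator, that the manifold $S^3_G$ is $J_4$-equivalent to $S^3$. For degrees $1$ and $2$, Pitsch's theorem already provides a $J_3$-equivalence and the task reduces to upgrading it to a $J_4$-equivalence; this can be done by tracking the realizing element of $\calI(S)$ through the third Johnson homomorphism $\rho_3$ and, if necessary, compensating its image by further surgeries that do not leave the same $Y_4$-class.

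The main obstacle is the degree-$3$ case, which is precisely what goes beyond Pitsch's result. Indeed, a degree-$3$ connected clasper surgery is realized by a Torelli twist with an element of $\Gamma_3\calI(S) \subset J_3\calI(S)$, and this element may be highly nontrivial modulo $J_4\calI(S)$. The strategy here would be to use the operations of clasper calculus to rewrite such a surgery, modulo $Y_4$-equivalence, as a composition of Torelli twists whose images under $\rho_3$ cancel out in the relevant module; this is the step in which the explicit description of $\mathcal{S}/Y_4$ becomes essential, since it tells us precisely which degree-$3$ classes need to be ``killed'' in $\mathcal{S}/J_4$ and modulo which relations. Once every generator of $\mathcal{S}/Y_4$ has been shown to lie in the kernel of $\pi$, one concludes that $\mathcal{S}/J_4 = \{[S^3]\}$ and the theorem follows.
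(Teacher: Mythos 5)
The paper itself gives no proof of this theorem: it is quoted from \cite{Faes22}, with only the indication that Faes's argument uses Habiro's classification of the $Y_k$-equivalences on $\mathcal{S}$ for $k\in\{2,3,4\}$ (Theorem \ref{th:Y_34}), i.e$.$ the strategy ``use $Y_k$ to understand $J_k$''. Your general frame --- pass to the surjection $\mathcal{S}/Y_4 \twoheadrightarrow \mathcal{S}/J_4$ and kill finitely many generators --- is consistent with that strategy, but the sketch mislocates the difficulty and omits the key idea, so there is a genuine gap. By Theorem \ref{th:Y_34}, both the $Y_3$- and the $Y_4$-equivalence on $\mathcal{S}$ are classified by the Casson invariant, so $\mathcal{S}/Y_4 \cong \Z$ via $\lambda$. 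Consequently your ``main obstacle'', the degree-$3$ generators, is vacuous: surgery on $S^3$ along a connected graph clasper of degree $3$ does not change $\lambda$, hence the resulting manifold is already $Y_4$-equivalent --- a fortiori $J_4$-equivalent --- to $S^3$, with nothing to prove. The whole content of the theorem sits instead on the single generator of $\mathcal{S}/Y_4$, a homology sphere with $\lambda=\pm 1$ (realized by low-degree claspers, e.g$.$ the Poincar\'e sphere).

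Since $\lambda$ separates that generator from $S^3$, any chain of $J_4$-twists joining them must contain a twist by an element of $J_4\calI(S)$ which \emph{changes} the Casson invariant; equivalently, one must show that the core of the Casson invariant (Morita's homomorphism on the Johnson kernel, cf$.$ \cite{Mor89,Mor91,MM13}) does not vanish on the fourth term of the Johnson filtration and realizes a jump of $\lambda$ by $\pm 1$. Your proposal contains no mechanism producing such an element. ``Compensating by further surgeries that do not leave the same $Y_4$-class'' only composes the realizing mapping class with elements whose twists preserve the $Y_4$-class and hence preserve $\lambda$; whether the resulting obstruction in $J_3\calI(S)/J_4\calI(S)$ can be killed by such modifications is exactly the nontrivial computation (the analogue, one level up, of Pitsch's work for Theorem \ref{th:Pitsch}) that constitutes Faes's proof, and it cannot be dismissed as bookkeeping. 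Likewise, rewriting a clasper surgery ``modulo $Y_4$-equivalence'' can never help for a class with $\lambda\neq 0$, because every such rewriting preserves the Casson invariant. (A minor additional slip: for $s\in J_3\calI(S)$ the image under $\rho_3$ is trivial by definition, so the relevant obstruction is measured by $\rho_4$, i.e$.$ the third Johnson homomorphism, not by $\rho_3$.) Until you exhibit an explicit element of $J_4\calI(S)$ whose associated twist changes $\lambda$ by $\pm1$, the argument does not establish the theorem.
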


Hence we now return to the family of  $Y_k$-equivalence relations
and, for this purpose, we  review a few $3$-manifold invariants whose nature is very different from the linking pairing or the cohomology ring.
  Recall that the set of \emph{spin structures} on an $n$-manifold $V$ (with $n\geq 2$) 
is defined in terms of its bundle $FV$ of oriented frames  $\hbox{GL}_+(\R;n)\!\! \hookrightarrow E(FV) \stackrel{p}{\longrightarrow} V$ by 
$$
\operatorname{Spin}(V) := \left\{ \sigma \in  H^1(E(FV);\Z_2) : \sigma\vert_{\operatorname{fiber}} \neq  0 \in  H^1(\hbox{GL}_+(\R;n);\Z_2) \right\}.
$$
When it is non-empty (i.e$.$ when the second Stiefel--Whitney class $w_2(V)\in H^2(V; \Z_2)$ vanishes),
the set $\operatorname{Spin}(V)$ is  an affine space over $H^1(M;\Z_2)$, 
the action being given by $x\cdot \sigma = \sigma +p^*(x)$ for any $x\in H^1(M;\Z_2)$ and $\sigma \in \hbox{Spin}(M)$.  

Any closed 3-manifold $M$ has a trivial tangent bundle and, so, it admits spin structures.
Given $\sigma \in \operatorname{Spin}(M)$,  the \emph{Rochlin invariant} of $(M,\sigma)$ is defined by
$$
R_M (\sigma) := \hbox{sgn}(W ) \mod 16
$$
where $W$ is  a compact 4-manifold bounded by $M$  to which $\sigma$  extends,
and  $\hbox{sgn}(W)$ denotes the signature of its intersection form on $H_2(W;\Z)$.  
That $R_M (\sigma)$ is well-defined follows from the vanishing of $\Omega_3^{\operatorname{Spin}}$  (a refinement of Theorem \ref{th:RTWL}),
the fact (due to Rochlin) that the signature of a spinable closed $4$-manifold is divisible by~$16$,
and the fact (due to Novikov) that the signature is additive under full-boundary gluing. 
(See \cite{Kirby} for these classical results on $4$-dimensional topology.) 
Hence there is a map $R_M:\operatorname{Spin}(M) \to \Z_{16}$ attached to any closed $3$-manifold $M$.  

Besides, according to \cite{LL,MS}, we can associate to any $\sigma\in \hbox{Spin}(M)$
a \emph{quadratic form} over the linking pairing $\lambda_M$, which means a map 
$q_{M,\sigma}: \hbox{Tors}\, H_1(M;\Z) \to \Q/\Z$ satisfying
$$
\forall x,y \in  \hbox{Tors}\, H_1(M;\Z), \quad q_{M,\sigma}(x+y) = q_{M,\sigma}(x) + q_{M,\sigma}(y) +\lambda_M(x,y).
$$  
Hence there is also a map $q_M:\hbox{Spin}(M) \to \hbox{Quad}(\lambda_M)$ whose target is the set of quadratic forms over $\lambda_M$. 
(This is the refinement of the linking pairing that has been evoked in Remark \ref{rem:YY}.)  

We can now state the characterization of $Y_2$ on $\calV(\varnothing)$ given in \cite{Mas03}.

\begin{theorem}[Massuyeau 2003] \label{th:Y_2}
Two manifolds $M,M' \in \calV(\varnothing)$ are $Y_2$-equivalent if, and only if, there is an isomorphism 
$\psi: H_1(M;\Z) \to H_1(M';\Z)$ and a bijection $\Psi: \operatorname{Spin}(M') \to \operatorname{Spin}(M)$ satisfying the following:
\begin{enumerate}
\item $\lambda_M$ corresponds to  $\lambda_{M'}$ through $\psi$, and  
$u^{(r)}_{M'}$ corresponds to $u^{(r)}_{M}$ through $\operatorname{Hom}(\psi, \Z_r)$ for any $r\in \N$;
\item $R_{M'}$ corresponds to $R_M$ through $\Psi$;
\item $\psi$ and $\Psi$ are compatible in the sense that $\Psi$ is affine over $\operatorname{Hom}(\psi,\Z_2)$
and we have the commutative diagram:
$$
\xymatrix{
\operatorname{Spin}(M) \ar[r]^-{q_M} & \operatorname{Quad}\left(\lambda_M\right)\\
\operatorname{Spin}(M') \ar[u]^-{\Psi}_-\simeq \ar[r]_-{q_{M'}} & 
\operatorname{Quad}\left(\lambda_{M'}\right). \ar[u]_{\psi^*}^-\simeq
}
$$
\end{enumerate}
\end{theorem}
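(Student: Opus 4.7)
The plan is to establish the two implications separately. For the forward direction, by Proposition \ref{prop:Y_clasper} the $Y_2$-equivalence is generated by surgeries $M \leadsto M_G$ along connected graph claspers $G$ of degree $2$, so it suffices to associate to each such surgery a pair $(\psi_G, \Psi_G)$ verifying conditions (1)--(3) and to check that these pairs compose well under concatenation. Since the inclusion $\Gamma_2 \calI(S) \subset J_2 \calI(S)$ from \eqref{eq:inclusion} gives \lq\lq$Y_2 \Rightarrow J_2$\rq\rq, Theorem \ref{th:CGO} already supplies an isomorphism $\psi_G: H_1(M;\Z) \to H_1(M_G;\Z)$ satisfying (1); this $\psi_G$ is nothing but the instance $\psi_s$ of diagram \eqref{eq:psi_s} when we view the $Y_2$-surgery as a Torelli twist. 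It remains to build $\Psi_G$ and verify (2) and (3).

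To construct $\Psi_G$, I would realize the $Y_2$-surgery as the boundary of a smooth $4$-dimensional cobordism $W$ obtained from $M\times[0,1]$ by attaching $2$-handles along the framed link presentation of $G$ (Figure \ref{fig:Y-graph}). Since leaves of a graph clasper are null-homologous and the linking matrix of the associated framed link is even, $W$ admits spin structures; any $\sigma \in \operatorname{Spin}(M_G)$ then extends (uniquely up to the action of $H^1(W;\Z_2)$) through $W$, and the restriction to $M$ defines $\Psi_G(\sigma)$. This makes $\Psi_G$ affine over $\operatorname{Hom}(\psi_G,\Z_2)$, and the naturality of the quadratic-form construction $q_{(-)}$ under spin cobordism yields the commutative square of (3). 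A direct signature computation for $W$, combined with Rochlin's divisibility by $16$ for the signature of a closed spin $4$-manifold, gives $\operatorname{sgn}(W) \equiv 0 \pmod{16}$, which is precisely the invariance (2) for a single clasper surgery.

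For the converse implication, the plan is to apply the general strategy of \S\ref{subsec:claspers}. By Theorem \ref{th:CGO} applied to (1), the manifolds $M$ and $M'$ are already $J_2$-equivalent, hence Torelli-equivalent by Theorem \ref{th:Matveev}. Let $\calV_0$ be their common Torelli class, take $V := M$ as basepoint, and consider the surjection
$$
\overline{\psi_1}: \frac{\Z \!\cdot\! C_1}{\sim} \twoheadrightarrow \frac{\calV_0}{Y_2}
$$
produced by the general method, where $C_1$ is the set of isotopy classes of $Y$-graphs in $V$ and $\sim$ is the equivalence relation generated by the clasper moves $(\mathcal{O}_0)$--$(\mathcal{O}_5)$ together with the IHX relation. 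A standard analysis in the spirit of \cite{GGP} identifies ${\Z \!\cdot\! C_1}/{\sim}$ with an explicit finitely generated abelian group controlled by $\Lambda^3 H_1(M;\Z)$ together with a $2$-torsion correction coming from the half-twist operation $(\mathcal{O}_5)$.

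The main obstacle is proving injectivity of $\overline{\psi_1}$, which amounts to producing a left inverse
$$
Z_1: \frac{\calV_0}{Y_2} \longrightarrow \frac{\Z \!\cdot\! C_1}{\sim}
$$
assembled from the invariants in (1)--(3). The family $\{u^{(r)}_M\}_{r\in\N}$ of triple-cup products is easily seen to be well-defined on $\calV_0/Y_2$ and to detect the image in the $\Lambda^3 H_1$-component, which accounts for the \lq\lq abelian\rq\rq\ part of the classification. The spin-theoretic package $(\Psi, R_{M}, q_M)$ is then used to define the remaining coordinates of $Z_1$: via (3), the value $R_M(\Psi(\sigma))-R_{M'}(\sigma)$ mod $16$ is a well-defined function of the difference of $Y_2$-classes, and a careful computation of its variation under a single $Y$-surgery detects the $2$-torsion piece of ${\Z \!\cdot\! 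C_1}/{\sim}$. The delicate point, and where I expect the argument to require real work, is verifying that these invariants are \emph{sufficient}: no non-trivial element of $\calV_0/Y_2$ with all $u^{(r)}_M$ and all Rochlin values equal to those of $M$ (compatibly with a fixed spin bijection) is allowed to survive. Once this is established, $Z_1 \circ \overline{\psi_1}=\id$ forces $\overline{\psi_1}$ to be bijective, and the theorem follows.
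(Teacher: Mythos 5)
Your forward direction contains a genuine gap. The construction of $\Psi_G$ rests on two false premises: the leaves of a graph clasper are \emph{arbitrary} framed annuli in $M$ --- they need not be null-homologous and their framings need not be even --- so the trace cobordism $W$ obtained by attaching $2$-handles along the associated framed link need not be spin, and a spin structure on $M_G$ need not extend over it. Worse, nothing in your argument actually uses the hypothesis that $G$ has degree $2$: as written, it would apply verbatim to a $Y$-graph (degree $1$) and would ``prove'' that the Rochlin function is invariant under $Y_1$-equivalence, which is false --- by Corollary \ref{cor:S} the Poincar\'e sphere is Torelli-equivalent (i.e$.$ $Y_1$-equivalent) to $S^3$, yet their Rochlin invariants differ. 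Relatedly, Rochlin's divisibility theorem concerns \emph{closed} spin $4$-manifolds and does not give $\operatorname{sgn}(W)\equiv 0 \pmod{16}$ for a cobordism with boundary; the signature of $W$ is that of the linking matrix of the surgery link and is not divisible by $16$ in general. The paper proceeds differently: the bijection $\Psi_s$ is the canonical one defined for \emph{any} Torelli twist by restricting spin structures to the exterior $M\setminus \interior \operatorname{N}(S)$ (as in \cite{Mas03-}); the cohomology rings and the Rochlin function genuinely do vary under a single $Y$-surgery, and one computes this variation explicitly, observes that it vanishes when the $Y$-graph has a $0$-framed null-homologous leaf, and then reduces a degree-$2$ clasper surgery to that situation via $(\mathcal{O}_1)$ and Proposition \ref{prop:Y_clasper}. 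Some such use of the degree-$2$ hypothesis is indispensable, and your sketch never makes it.

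Two further remarks. Invoking Theorem \ref{th:CGO} to obtain condition (1) in the forward direction is logically admissible but both unnecessary and contrary to the paper, whose proof is explicitly independent of \cite{CGO}; moreover you would still have to check that the isomorphism furnished by that theorem is the geometric $\psi_s$ of \eqref{eq:psi_s}, since it is this specific map that must satisfy the compatibility (3) with $\Psi$. Your converse direction coincides with the paper's plan (the ``general strategy'' of \S\ref{subsec:claspers} with $k=1$), and you correctly locate the substance in the injectivity of $\overline{\psi_1}$; but that is precisely where all the work of \cite{Mas03} lies, so as it stands your proposal defers rather than supplies the proof there as well.
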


\begin{proof}[About the proof]  
Assume  a Torelli twist $M \leadsto M_s$ along a surface $S \subset M$ such that $M_s\cong M'$.
This surgery induces an isomorphism $\psi_s$ in homology as we have seen at \eqref{eq:psi_s}. 
Furthermore, as shown in~\cite{Mas03-}, the surgery  $M \leadsto M_s$ induces a canonical bijection
$\Psi_s:\operatorname{Spin}(M_s) \to \operatorname{Spin}(M)$, which is affine over 
$$
\Hom(\Psi_s,\Z_2):  \Hom(H_1(M_s),\Z_2)\simeq H^1(M_s;\Z_2) \to H^1(M;\Z_2) \simeq \Hom(H_1(M),\Z_2)
$$
Specifically, it is the unique map that fits into the following commutative diagram:
$$
\xymatrix@!0 @R=1cm @C=4cm  {
\operatorname{Spin}(M)\ \ \ \ar@{>->}[rd]^{\hbox{\footnotesize incl}^*} &\\
& \operatorname{Spin}\big(M\setminus \interior \hbox{N}(S) \big)\\
\operatorname{Spin}(M_s) \ \ \ \ \ar@{>->}[ru]_{\hbox{\footnotesize incl}^*}   \ar@{-->}[uu]^{\Psi_s}_-\simeq &
}
$$
 We have seen in the proof of Theorem \ref{th:Matveev} that the linking pairing is preserved by the Torelli twist $M \leadsto M_s$,
  but this is not true anymore neither for the cohomology ring or for the Rochlin function.
 Nonetheless, we can explicitly compute how those two invariants change after a single $Y$-surgery,
 and thus observe that there is no variation if the $Y$-graph has a $0$-framed null-homologous leaf: 
 hence, using the operation $(\mathcal{O}_1)$ at page \pageref{page:dev_node}, we see that there is no variation
 by surgery along a connected graph clasper of degree $2$. Using Proposition \ref{prop:Y_clasper}, we deduce
 that the isomorphism class of the triplet
 (linking pairing, cohomology rings, Rochlin function) is invariant under $Y_2$-equivalence.

To prove the converse,  we apply  the ``general strategy''  by clasper calculus, 
which has been  sketched on page \pageref{page:method} (with $k:=1$).
Thus, although Theorem~\ref{th:CGO} and Theorem~\ref{th:Y_2} show  similarities in their statements,
their proofs are very different and logically independent.   
\end{proof}

As an immediate consequence of Theorem \ref{th:Y_2}, 
we obtain the following result for homology $3$-spheres which appeared priorly in \cite{Hab00}.
Note that an $M\in \mathcal{S}$ has a unique spin structure $\sigma_0$, and it turns out that $R(M,\sigma_0)$ can only be 0 or $8$ modulo~16:
in this case, the \emph{Rochlin invariant} of $M$ refers to  $R(M,\sigma_0)/8 \in \Z_2$.

\begin{cor}[Habiro 2000] \label{cor:Y_2}
Two homology $3$-spheres are $Y_2$-equivalent if, and only if, they have the same Rochlin invariant.
\end{cor}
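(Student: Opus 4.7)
The plan is to derive the corollary as a direct specialization of Theorem~\ref{th:Y_2} to the class $\mathcal{S}$ of homology $3$-spheres, by checking that every invariant involved in Theorem~\ref{th:Y_2} trivialises except for the Rochlin invariant. No extra ingredient should be needed: the work has already been done in Theorem~\ref{th:Y_2}.

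First, I would list the simplifications. For any $M\in\mathcal{S}$, we have $H_1(M;\Z)=0$, so $\operatorname{Tors} H_1(M;\Z)=0$ and the linking pairing $\lambda_M$ is the empty pairing. Since $H^1(M;\Z_r)=0$ for every $r\in\N$, all triple-cup-product forms $u^{(r)}_M$ vanish. Because $H^1(M;\Z_2)=0$ and $\operatorname{Spin}(M)\neq\varnothing$, the set $\operatorname{Spin}(M)$ is the singleton $\{\sigma_0\}$, and the quadratic form $q_{M,\sigma_0}$ is the zero map from the trivial group. The same holds for $M'$.

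Next, I would verify the three conditions of Theorem~\ref{th:Y_2}. There is a unique isomorphism $\psi\colon H_1(M;\Z)\to H_1(M';\Z)$ (the zero map between trivial groups) and a unique bijection $\Psi\colon\operatorname{Spin}(M')\to\operatorname{Spin}(M)$, sending $\sigma'_0\mapsto\sigma_0$. Condition~(1) holds vacuously: both $\lambda_M,\lambda_{M'}$ and all $u^{(r)}_M,u^{(r)}_{M'}$ are trivial. Condition~(3) also holds vacuously, since $\operatorname{Hom}(\psi,\Z_2)$ is the zero map between trivial groups and the quadratic forms $q_M,q_{M'}$ vanish on trivial domains. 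Thus condition~(2) --- namely $R_{M'}(\sigma'_0)=R_M(\sigma_0)\in\Z_{16}$ --- is the only substantive requirement. As recalled just before the statement of the corollary, $R_M(\sigma_0),R_{M'}(\sigma'_0)\in\{0,8\}\subset\Z_{16}$, so this equality is equivalent to the equality of the Rochlin invariants in $\Z_2$.

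Combining these two remarks with Theorem~\ref{th:Y_2} yields the corollary: $M$ and $M'$ are $Y_2$-equivalent if and only if they share the same Rochlin invariant. There is no genuine obstacle here; the only care needed is to be explicit about the uniqueness of $\psi$ and $\Psi$ so that conditions~(1) and~(3) become visibly vacuous, and condition~(2) visibly reduces to the claimed characterisation.
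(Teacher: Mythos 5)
Your proposal is correct and follows exactly the route the paper intends: the corollary is stated there as an immediate specialization of Theorem~\ref{th:Y_2} to homology $3$-spheres, where the linking pairing, cup products and quadratic forms trivialise, the spin structure is unique, and $R_M(\sigma_0)\in\{0,8\}$ reduces condition~(2) to equality of Rochlin invariants in $\Z_2$. Nothing is missing.
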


The paper \cite{Hab00} also contains the characterization of $Y_3$ and $Y_4$. To state this, 
let us recall that the \emph{Casson invariant} $$\lambda(M) \in \Z$$ of an $M \in \mathcal{S}$
is an integral lift of the Rochlin invariant $R(M,\sigma_0)/8 \in \Z_2$.
  In some sense, $\lambda(M)$ is defined to count the number of conjugacy classes of irreducible representations of $\pi_1(M)$
in the Lie group $\hbox{SU}(2)$ using a Heegaard splitting of~$M$ \cite{Casson}.
Casson also provided  a surgery formula for  $\lambda$ in terms of the Alexander polynomial of knots, 
which makes this invariant very computable: see, for instance, the textbook \cite{Saveliev}.
By means of this surgery formula, Morita could prove that $\lambda$ behaves like a ``quadratic'' function on the Torelli group \cite{Mor89,Mor91},
and Lescop generalized Morita's result in a broader situation  \cite{Lescop}
(namely, Walker's extension of the Casson invariant to \emph{rational} homology $3$-spheres). 
This quadraticity of $\lambda$ is an expression of its property to be  a finite-type invariant of degree $2$ (see \S \ref{subsec:higher} below),
and this is  precisely the property of $\lambda$ that is needed for the following result.

\begin{theorem}[Habiro 2000] \label{th:Y_34}
Two homology $3$-spheres are $Y_3$-equivalent (resp., $Y_4$-equivalent) 
if, and only if, they have the same Casson invariant.
\end{theorem}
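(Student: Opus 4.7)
The plan is to apply the general strategy of clasper calculus from~\S\ref{subsec:claspers} in three steps: (1) establish the $Y_3$-invariance of $\lambda$; (2) prove that $\lambda$ induces a bijection between $\mathcal{S}/Y_3$ and $\Z$; (3) deduce that $Y_3$- and $Y_4$-equivalence coincide on $\mathcal{S}$, which together with (1) yields the $Y_4$-statement.

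For Step~1, I would use that $\lambda$ is a finite-type invariant of degree exactly~$2$ in the Goussarov-Habiro sense: in Morita's formulation~\cite{Mor89,Mor91}, this is precisely the quadraticity of $\lambda$ along the Torelli group, combined with the Casson surgery formula. By Proposition~\ref{prop:Y_clasper}, $Y_3$-invariance amounts to $\lambda(V_G) = \lambda(V)$ whenever $V \in \mathcal{S}$ and $G \subset V$ is a connected degree-$3$ graph clasper. Iterating the operation $(\mathcal{O}_1)$, the surgery $V \leadsto V_G$ can be written, modulo $Y_4$, as an alternating sum indexed by the subsets of a family of three disjoint $Y$-graphs derived from $G$; such an alternating sum is annihilated by any finite-type invariant of degree $\leq 2$, giving the claim.

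For Step~2, by Corollary~\ref{cor:Y_2} each Rochlin class $\calV_0 \subset \mathcal{S}$ is a single $Y_2$-equivalence class; fix a base point $V \in \calV_0$. The general strategy provides a surjection $\psi_2 : \Z\!\cdot\! C_2 \twoheadrightarrow \calV_0/Y_3$, where $C_2$ denotes the set of isotopy classes of connected degree-$2$ graph claspers in $V$. Using $(\mathcal{O}_0)$ one restricts to tree shapes, the only possibility in degree $2$ being the $\mathsf{H}$-shape. The fact that every knot in the homology sphere $V$ is null-homologous then permits, via $(\mathcal{O}_3)$--$(\mathcal{O}_5)$ together with the IHX-type consequence of $(\mathcal{O}_1)$, the normalisation of every $\mathsf{H}$-clasper to an integer multiple (modulo $Y_3$) of one ``standard'' clasper $H_0$ with unknotted, $0$-framed, borromean-configured leaves. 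Thus $\psi_2$ factors through $\Z$. A direct Casson-surgery-formula computation yields $\lambda(V_{H_0}) - \lambda(V) = \pm 1$; combined with Step~1, this identifies $\lambda$ with a bijection $\calV_0/Y_3 \xrightarrow{\sim} \lambda(V) + \Z$.

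For Step~3, by Step~1 the Casson invariant descends to $\mathcal{S}/Y_4$, so in view of Step~2 and the implication $Y_4 \Rightarrow Y_3$ it remains to prove $Y_3 = Y_4$ on $\mathcal{S}$. I fix a $Y_3$-class $\calV_0^{(3)} \subset \mathcal{S}$ with base $V$ and study the map $\psi_3 : \Z\!\cdot\! C_3 \to \calV_0^{(3)}/Y_4$. After $(\mathcal{O}_0)$-reduction to tree-shaped degree-$3$ claspers (the only tree shape being the ``caterpillar''), IHX-normalisation of the leaf-attachments, and standardisation of the leaves via $(\mathcal{O}_3)$--$(\mathcal{O}_5)$ (again exploiting null-homologicity), every connected degree-$3$ clasper becomes $Y_4$-equivalent to an integer combination of finitely many ``standard'' degree-$3$ claspers. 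The task is to show, by explicit clasper cancellations, that each such standard surgery on $V$ is $Y_4$-trivial --- equivalently, that the combinatorial space of closed Jacobi diagrams of degree $3$ (modulo IHX and AS, and with trivial colouring, since $H_1(V)=0$) vanishes. This forces $\psi_3 \equiv 0$, whence $Y_3 = Y_4$ on $\mathcal{S}$, and Step~2 then supplies the $Y_4$-classification.

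The main obstacle will be the clasper-calculus work in Steps~2 and~3, namely the claim that the ``shape $\times$ colouring'' complexity of a connected degree-$k$ graph clasper in a homology sphere collapses modulo $Y_{k+1}$ to a rank-one group for $k=2$ and to zero for $k=3$. The delicate inputs are the combined use of $(\mathcal{O}_3)$--$(\mathcal{O}_5)$ on the leaves (via null-homologicity) and of $(\mathcal{O}_1)$ on the nodes (producing the IHX relation), together with the Casson-Morita surgery formula to pin down the degree-$2$ generator.
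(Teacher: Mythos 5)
Your outline is correct, but it is not the route this survey takes. Here Theorem \ref{th:Y_34} is not proved directly: it is attributed to \cite{Hab00} and recovered as a consequence of Theorem \ref{th:Y_ZHS} (finite-type invariants of degree at most $d$ classify $Y_{d+1}$-equivalence on $\mathcal{S}$) combined with the identification of \emph{all} finite-type invariants of homology $3$-spheres in degrees $\leq 3$ — namely constants, the Rochlin invariant ($=\lambda \bmod 2$) and $\lambda$ itself, with nothing new in degree $3$; the only topological input stressed in the text is that $\lambda$ is of finite type of degree $2$ (Morita's quadraticity via Casson's surgery formula \cite{Mor89,Mor91}). What you propose instead is the direct computation of the graded quotients — $\calV_0/Y_3\simeq \Z$ detected by $\lambda$ on each Rochlin class (using Corollary \ref{cor:Y_2}), and $Y_3=Y_4$ on $\mathcal{S}$ — which is essentially the clasper-calculus proof of \cite{Hab00} (see also \cite{GGP,Mas05}) underlying the low-degree cases of Theorem \ref{th:Y_ZHS}. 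Your route buys self-containedness (no appeal to the general finite-type classification), at the price of the normalization lemmas you defer; the survey's route is a two-line deduction once Theorem \ref{th:Y_ZHS} and the enumeration of low-degree invariants are granted.

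Two refinements to your sketch. In Step 1 you do not need the alternating-sum manipulation with three $Y$-graphs: surgery along a connected degree-$3$ graph clasper is a Torelli twist by an element of $\Gamma_3\calI(S)$, so Proposition \ref{prop:Y_FTI} (via Lemma \ref{lem:LCS_I}) immediately gives invariance of any degree-$\leq 2$ invariant under $Y_3$-equivalence. In Step 2, besides the normalization to $n\cdot H_0$, you also need additivity of $\lambda$ over disjoint degree-$2$ claspers to get $\lambda(V_{nH_0})=\lambda(V)\pm n$; this again follows from an $I$-adic estimate (the product of two elements of $I^2$ lies in $I^3$, which a degree-$2$ invariant kills). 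Finally, the genuinely hard deferred points are exactly where the published proofs spend their effort: the reduction, in a homology sphere and modulo $Y_3$, of every $\mathsf{H}$-clasper to a multiple of one standard generator, and the \emph{integral} vanishing of $Y_3\mathcal{S}/Y_4$ — the parity argument (no closed trivalent graph has an odd number of vertices) only disposes of the leafless part, and the reduction of leaves produces half-twist and special-leaf corrections whose cancellation via $(\mathcal{O}_5)$ and its companions must be checked, since a priori they could contribute $2$-torsion.
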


The characterization of $Y_3$ (and, a fortiori, $Y_4$) in the general case of closed $3$-manifolds 
does not seem to appear in the literature.
Neither is the characterization of $J_3$ (and, a fortiori, $J_4$).

\begin{remark} \label{rem:two_problems}  
At this point of our discussion, it is important to focus on the nature of the results that we have presented so far for closed $3$-manifolds.
Each of them is concerned with a certain  surgery equivalence relation $\sim$  and states that
$$
\forall M, M'\in \calV(\varnothing), \quad M \sim M' \ \Longleftrightarrow \  I(M) \simeq I(M')
$$
where $I:\calV(\varnothing) \to A$ is a certain ``package'' of algebraic invariants 
with values in an appropriately-defined set where there is a notion of isomorphism $\simeq$.
But such a \emph{characterization} of $\sim$ is not yet a \emph{classification} result, since it continues with two other problems:
\begin{itemize}[label=$\diamond$]
\item \emph{Realization:} Does one know what is the image of $I$ in $A$?
\item \emph{Isomorphism:} Is the isomorphism problem solved in $A$?
\end{itemize}
So,  let us reconsider the above characterizations of surgery  equivalence relations under this new angle:\\
\begin{tabular}{c|c|c|c|}
& Torelli--equivalence  & $J_2$-equivalence & $Y_2$-equivalence \\ \hline
Characterization & Theorem \ref{th:Matveev} & Theorem \ref{th:CGO}  & Theorem \ref{th:Y_2} \\ \hline
Realization problem & solved \cite{Wall63} & solved \cite{Sul75,Tur84} & solved \cite{Tur84} \\ \hline
Isomorphism problem &  solved \cite{Wall63,KK}  & unknown?&  unknown?  \\ \hline
\end{tabular}\vspace{0.2cm}

Wall  showed that any non-singular bilinear pairing on a finite abelian group can be realized
as the linking pairing of a closed $3$-manifold \cite{Wall63}. 
He also gave a partial description (by generators and relations) of the abelian monoid
of isomorphism classes of such pairings  (where the operation is the direct sum $\oplus$).
His work has been completed later on by Kawauchi \& Kojima \cite{KK}.

Sullivan proved in \cite{Sul75} that any trilinear  alternate form on a finitely-generated free abelian group
can be realized as the triple-cup product form of a closed $3$-manifold: 
it is interesting to note that, in the middle of the 70's and in order to prove this result, Sullivan was already using a surgery operation
equivalent to the $Y$-surgery.

There exist several kinds of relations between the linking pairing, the triple-cup product forms and the Rochlin function.
For instance, the triple-cup product forms $u_M^{(r)}$ and $u_M^{(s)}$ with coefficients in $\Z_r$
and $\Z_s$, respectively, are related in an obvious way if $r$ divides $s$.
But there are also other, more delicate, relations: for instance, 
the third ``discrete'' differential of the Rochlin function $R_M$ is given by~$u_M^{(2)}$.
In fact, Turaev  described in \cite{Tur84} all such possible relations, 
and he thus solved the realization problem for the triplet  (linking pairing, cohomology rings, Rochlin function).
However, since the isomorphism problem for trilinear skew-symmetric forms does not seem to be solved 
(even for coefficients in  $\Q$), there is currently no procedure to decide (in general)
whether two closed $3$-manifolds are $J_2$-equivalent. 
Consequently, the same applies to the $Y_2$-equivalence relation.    \hfill $\blacksquare$
\end{remark}

\subsection{Characterization of $J_k$ and $Y_k$ at low $k$ for homology cylinders} 

We now consider the case of homology cylinders
over a compact  surface $\Sigma$ (with one boundary component).

We start with some generalities about the structure added 
by the sequence of $Y_k$-equivalence relations on the monoid $\mathcal{IC}(\Sigma)$.
For every $k\in \N^*$, denote by $Y_k \mathcal{IC}(\Sigma)$ the subset of homology cylinders
that are $Y_k$-equivalent to the trivial cylinder $U$. Hence, we get a decreasing sequence
$$
 \mathcal{IC}(\Sigma) = Y_1 \mathcal{IC}(\Sigma) \supset Y_2 \mathcal{IC}(\Sigma) 
 \supset Y_3 \mathcal{IC}(\Sigma) \supset \cdots 
$$
of submonoids, which is called the \emph{$Y$-filtration}. 
Goussarov \cite{Go00} and Habiro \cite{Hab00} proved that, for any integer $k\geq 0$,
the quotient monoid 
$$
\frac{\mathcal{IC}(\Sigma)}{Y_{k+1}} 
$$
is a group and, that, for any  integers $i,j \geq 1$, the inclusion
$$
\left[\frac{Y_i \mathcal{IC}(\Sigma)}{Y_{k+1}} , \frac{Y_{j} \mathcal{IC}(\Sigma)}{Y_{k+1}} \right] \subset \frac{Y_{i+j} \mathcal{IC}(\Sigma)}{Y_{k+1}}
$$
holds true in that group.
In particular, ${Y_k\mathcal{IC}(\Sigma)}/{Y_{k+1}}$  is an abelian group for all $k\geq 1$, and the direct sum of abelian groups
$$
\hbox{Gr}^Y \mathcal{IC}(\Sigma) := \bigoplus_{k\geq 1} \frac{Y_k\mathcal{IC}(\Sigma)}{Y_{k+1}}
$$
has the structure of a graded Lie ring. The following is easily checked.

\begin{proposition}
The ``mapping cylinder'' construction $\operatorname{cyl}: \mathcal{I}(\Sigma) \to \mathcal{IC}(\Sigma)$ induces
a morphism of graded Lie rings 
$
\operatorname{Gr}( \operatorname{cyl}): \operatorname{Gr}^\Gamma \mathcal{I}(\Sigma) \to  \operatorname{Gr}^Y \mathcal{IC}(\Sigma).
$
\end{proposition}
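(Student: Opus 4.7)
The plan is to pass through three standard steps: $\operatorname{cyl}$ respects the two filtrations, the induced maps on the associated gradeds are additive, and they preserve the brackets. The last two are essentially formal consequences of $\operatorname{cyl}$ being a monoid homomorphism into the group of invertible elements of $\mathcal{IC}(\Sigma)$; the real content lies in step~1.

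First, I would check that $\operatorname{cyl}(\Gamma_k \calI(\Sigma)) \subset Y_k \mathcal{IC}(\Sigma)$ for every $k\geq 1$. Given $f \in \Gamma_k \calI(\Sigma)$, the cobordism $\operatorname{cyl}(f)$ has the same underlying $3$-manifold as the trivial cylinder $U = \Sigma \times [-1,+1]$ and differs from it only by the $f$-twist on the top boundary. This modification can be realized inside $U$ by cutting along a parallel copy $\Sigma \times \{t_0\}$ (for some $t_0 \in (-1,+1)$) and regluing using $f$, i.e.\ by the Torelli twist of $U$ along a copy of $\Sigma$ determined by $f$. (The minor point that $\partial \Sigma$ lies on $\partial U$ rather than strictly in its interior is handled either by slightly pushing the surface into the interior of $U$, or by appealing to the reformulation of the $Y_k$-equivalence in terms of Torelli surgeries along a handlebody regular neighborhood, as recalled in \S\ref{subsec:Y_and_J}.) Since $f\in\Gamma_k\calI(\Sigma)$, this Torelli twist is by definition a $Y_k$-move, hence $\operatorname{cyl}(f) \sim_{Y_k} U$.

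Second, the paragraph preceding the proposition recalls that $\mathcal{IC}(\Sigma)/Y_{k+1}$ is a group for every $k$, and that it satisfies the commutator inclusion $[Y_i\mathcal{IC}(\Sigma)/Y_{k+1},\, Y_j\mathcal{IC}(\Sigma)/Y_{k+1}] \subset Y_{i+j}\mathcal{IC}(\Sigma)/Y_{k+1}$. Composing $\operatorname{cyl}$ with the projection therefore gives a monoid homomorphism from the group $\calI(\Sigma)$ to a group, hence a group homomorphism $\calI(\Sigma) \to \mathcal{IC}(\Sigma)/Y_{k+1}$. By step~1 it sends $\Gamma_k \calI(\Sigma)$ into $Y_k\mathcal{IC}(\Sigma)/Y_{k+1}$ and it kills $\Gamma_{k+1} \calI(\Sigma)$; thus it descends, for every $k$, to a group homomorphism
$$
\operatorname{Gr}_k(\operatorname{cyl}): \frac{\Gamma_k \calI(\Sigma)}{\Gamma_{k+1} \calI(\Sigma)} \longrightarrow \frac{Y_k\mathcal{IC}(\Sigma)}{Y_{k+1}\mathcal{IC}(\Sigma)},
$$
which is automatically additive since the target is abelian. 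These maps assemble into the desired graded morphism $\operatorname{Gr}(\operatorname{cyl})$.

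Finally, for $f\in \Gamma_i\calI(\Sigma)$ and $g\in \Gamma_j\calI(\Sigma)$ we have $[f,g]\in \Gamma_{i+j}\calI(\Sigma)$, and since $\operatorname{cyl}$ is a group homomorphism modulo $Y_{i+j+1}$ we get $\operatorname{cyl}([f,g]) = [\operatorname{cyl}(f),\operatorname{cyl}(g)]$ in $\mathcal{IC}(\Sigma)/Y_{i+j+1}$. By step~1 applied in degree $i+j$ and by the Goussarov--Habiro inclusion $[Y_i,Y_j]\subset Y_{i+j}$, both members actually lie in $Y_{i+j}\mathcal{IC}(\Sigma)/Y_{i+j+1}$, so reading this equality in the $(i+j)$-th piece of the associated graded yields
$$
\operatorname{Gr}_{i+j}(\operatorname{cyl})\bigl([f,g]\bigr) = \bigl[\operatorname{Gr}_i(\operatorname{cyl})(f),\ \operatorname{Gr}_j(\operatorname{cyl})(g)\bigr],
$$
which is precisely the Lie bracket compatibility. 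The only non-bookkeeping step is step~1, and even there the main obstacle is just the careful (but standard) identification of $\operatorname{cyl}(f)$ with a Torelli twist on $U$ despite $\Sigma$ having non-empty boundary.
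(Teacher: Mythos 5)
Your proposal is correct: the paper offers no written proof (it states the proposition as ``easily checked''), and your argument is precisely the intended routine verification --- the only substantive point, namely that $\operatorname{cyl}(f)$ for $f\in\Gamma_k\calI(\Sigma)$ is a single Torelli twist of the trivial cylinder $U$ along a pushed-in copy of $\Sigma$ (with $f$ arranged to be supported away from a collar of $\partial\Sigma$), hence lies in $Y_k\mathcal{IC}(\Sigma)$, is handled correctly. Steps 2 and 3 are then the formal consequences of $\operatorname{cyl}$ being multiplicative together with the Goussarov--Habiro facts that $\mathcal{IC}(\Sigma)/Y_{k+1}$ is a group and $[Y_i,Y_j]\subset Y_{i+j}$, exactly as the paper intends.
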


\noindent  
Thus the  ``Lie algebra of homology cylinders''  $\hbox{Gr}^Y \mathcal{IC}(\Sigma)$ is highly related to
the ``Torelli Lie algebra''  $ \operatorname{Gr}^\Gamma \mathcal{I}(\Sigma)$, which has been reviewed at \eqref{eq:Gr_Gamma}.
We refer to the works \cite{Hab00,GL05,Habegger,CHM,HM09,MM13,NSS20,NSS21};
see also the end  of \S \ref{subsec:higher} in this connection.

In this subsection, we only deal with the low-degree parts of $\operatorname{Gr}^Y \mathcal{IC}(\Sigma)$.
We start with the characterization of the $Y_2$-equivalence, which needs two invariants of homology cylinders.
The first invariant is the action of $\mathcal{IC}(\Sigma)$ on the second nilpotent quotient $\pi/ \Gamma_3 \pi$
of $\pi=\pi_1(\Sigma,\star)$. Indeed, as observed in \cite{GL05},
the group homomorphism \eqref{eq:rho_k} can be extended (for any $k\in \N^*$) to a monoid homomorphism:
$$
\xymatrix{
\calI(\Sigma)\ar[d]_{\operatorname{cyl}}  \ar[r]^-{\rho_k}& \operatorname{Aut}\big(\pi/ \Gamma_{k+1} \pi\big) \\
\mathcal{IC}(\Sigma)  \ar@{-->}[ru]_-{\rho_k} & 
}
$$
The second invariant of homology cylinders that we need is the \emph{Birman--Craggs homomorphism},
which originates from constructions of Birman \& Craggs \cite{BC} on the Torelli group and was studied by Johnson \cite{Joh80}.
In our setting, the most efficient way to define it is as follows:
$$
\beta:  \mathcal{IC}(\Sigma) \longrightarrow \operatorname{Map}(\operatorname{Spin}(\Sigma), \Z_2), \
M \longmapsto \frac{1}{8} {R_{\wideparen{M}}}
$$
Here, we associate to any $M\in \mathcal{IC}(\Sigma)$ the closed $3$-manifold
\begin{equation} \label{eq:closure}
\wideparen{M} := M \cup_{m} (- \Sigma \times [-1,1]),
\end{equation}
we identify $\operatorname{Spin}(\Sigma)$  to $\operatorname{Spin}(\wideparen{M})$
via the map $m_\pm:  \Sigma \to M \hookrightarrow \wideparen{M}$,
and we use the fact that the Rochlin function $R_{\wideparen{M}}$ takes values in $\{0,8\}$ (because $H_1(\wideparen{M};\Z)$ is torsion-free).
The following  is a generalization of Corollary~\ref{cor:Y_2} in genus $g>0$.

\begin{theorem}[Habiro 2000, Massuyeau--Meilhan 2002] \label{th:Y2_cyl}
Two homology cylinders $M,M'$ are $Y_2$-equivalent if, and only if, 
$\beta(M)=\beta(M')$ and $\rho_2(M)=\rho_2(M')$.
\end{theorem}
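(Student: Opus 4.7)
My plan is to prove the two implications separately. The forward implication will follow from invariance results already available in the paper, while the backward implication will require an implementation of the clasper calculus strategy outlined on page \pageref{page:method}.

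\textbf{Forward direction.} Suppose $M \sim_{Y_2} M'$. By Proposition \ref{prop:Y_clasper}, it suffices to treat a single move $M' = M_G$ along a connected graph clasper $G$ of degree $2$. Since the surgery is supported in $\operatorname{int}(M)$, it may equally be performed inside the closure $\wideparen{M}$, so $\wideparen{M} \sim_{Y_2} \wideparen{M'}$ as closed $3$-manifolds. Theorem \ref{th:Y_2} then yields $R_{\wideparen{M}} = R_{\wideparen{M'}}$ under the identifications of spin structures induced by the boundary parametrizations, giving $\beta(M) = \beta(M')$. For $\rho_2$, the argument reduces to showing $Y_2 \mathcal{IC}(\Sigma) \subset \ker(\rho_2)$: realize the move as a Torelli twist $M \leadsto M_s$ with $s \in \Gamma_2 \calI(S) \subset J_2 \calI(S)$, and observe that since $s$ acts trivially on $\pi_1(S)/\Gamma_3$, a Seifert--Van Kampen analysis of the decomposition $M = (M \setminus \operatorname{int} \operatorname{N}(S)) \cup \operatorname{N}(S)$ transfers this triviality to the induced endomorphism of $\pi/\Gamma_3 \pi$.

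\textbf{Backward direction.} Since $\mathcal{IC}(\Sigma)/Y_2$ is an abelian group (Goussarov--Habiro) through which both $\rho_2$ and $\beta$ factor by the forward implication, it suffices to show that the induced homomorphism
\[
\overline{(\rho_2,\beta)}\colon \mathcal{IC}(\Sigma)/Y_2 \ \longrightarrow \ \Aut\bigl(\pi/\Gamma_3 \pi\bigr) \times \operatorname{Map}\bigl(\operatorname{Spin}(\Sigma), \Z_2\bigr)
\]
is injective. I follow the general strategy from page \pageref{page:method} with $k = 1$ and $\calV_0 = \mathcal{IC}(\Sigma)$, which is legitimate by Proposition \ref{prop:Habiro}. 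Any class in $\mathcal{IC}(\Sigma)/Y_2$ is then of the form $\psi_1\bigl(\sum_i \varepsilon_i G_i\bigr)$ for $Y$-graphs $G_i$ in the trivial cylinder $U$. Using operations $(\mathcal{O}_0)$--$(\mathcal{O}_5)$ together with the IHX and AS relations, I reduce each $G_i$ modulo the relation $\sim$ to a ``model'' $Y$-graph whose leaves realize a prescribed triple of homology classes in $H := H_1(\Sigma;\Z)$, so that $\Z\cdot C_1/\!\sim$ admits an explicit, finitely generated description.

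\textbf{Main obstacle.} The technical core is the construction of a left inverse to $\overline{\psi_1}$ from $(\rho_2, \beta)$. This calls for two local computations. First, the effect on $\rho_2$ of a single $Y$-surgery along a model $Y$-graph whose leaves carry homology classes $h_1, h_2, h_3 \in H$ should be captured by a ``first Johnson-type homomorphism'' $\tau_1$ valued in $\Lambda^3 H$, and should agree, when restricted to $\operatorname{cyl}(\calI(\Sigma))$, with the classical Johnson homomorphism on $\calI(\Sigma)/\calK(\Sigma)$. Second, the effect on $\beta$ should be a Birman--Craggs-type boolean polynomial in the leaf data, whose value on a spin structure is read off from a quadratic refinement of the intersection form. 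Once both pieces are in place, showing that $(\tau_1, \beta)$ separates all non-trivial classes in $\Z\cdot C_1/\!\sim$ amounts to identifying $\Z\cdot C_1/\!\sim$ with a subquotient of $\Lambda^3 H \oplus \operatorname{Map}(\operatorname{Spin}(\Sigma), \Z_2)$ on which the combined map is injective. This algebraic check, together with the identification of its image with that of $\overline{(\rho_2,\beta)}$, will complete the proof.
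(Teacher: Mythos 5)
Your plan follows essentially the same route as the paper, which itself only sketches the argument and refers to \cite{MM03}: invariance of $(\rho_2,\beta)$ under degree-$2$ clasper surgery for the forward direction, and, for the converse, the clasper-calculus strategy of page \pageref{page:method} with $k=1$, with the quotient $\mathcal{IC}(\Sigma)/Y_2$ detected by a first Johnson-type homomorphism in $\Lambda^3 H$ together with Birman--Craggs boolean data. The ``main obstacle'' you isolate (the effect of a model $Y$-surgery on $\rho_2$ and $\beta$, and the injectivity of $\overline{\psi_1}$) is exactly the technical content carried out in \cite{MM03}, so your outline is consistent with the paper's proof.
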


\begin{proof}[About the proof]
  This characterization is announced in \cite{Hab00} and proved in \cite{MM03}.
It preceded Theorem~\ref{th:Y_2} and uses the same techniques for its proof.
Note that the situation of homology cylinders is simpler than the situation of closed manifolds for two reasons:
the first homology groups of homology cylinders are torsion-free (hence there is no linking pairing to deal with),
 and  they come with a natural parametrization by an abelian group independent of the manifold (namely $H_1(\Sigma;\Z)$).  
\end{proof}

\begin{remark}
Actually, the results in \cite{MM03} give an explicit computation of the abelian group $ \mathcal{IC}(\Sigma)/Y_2$
and, thanks to Johnson's computation of the abelianized Torelli group \cite{Joh85b},
 this  implies that the degree $1$ part
$$
\operatorname{Gr}_1( \operatorname{cyl}): \mathcal{I}(\Sigma)/[\mathcal{I}(\Sigma),\mathcal{I}(\Sigma)] 
\to \mathcal{IC}(\Sigma)/Y_2
$$
of the ``mapping cylinder'' construction is an isomorphism. \hfill $\blacksquare$
\end{remark}

To state now the characterization of the $Y_3$-equivalence, we need still more invariants.
On the one hand, we  fix an embedding $j:\Sigma \to S^3$ such that $j(\Sigma)$ is a Heegaard surface of $S^3$
(deprived of a small open disk), and we identify $ \operatorname{N}(j(\Sigma))$ with $\Sigma \times [-1,1]$ via $j$. 
Then the Casson invariant induces  a map
$$
\lambda_j: \mathcal{IC}(\Sigma) \longrightarrow \Z, \ 
M \longmapsto  \lambda\big((S^3 \setminus \interior(\Sigma \times [-1,1])) \cup_m M \big),
$$
which constitutes an invariant of homology cylinders. It depends on the choice of $j$, of course,
but this dependency can be  managed as Morita did in the case of the Torelli group \cite{Mor91}.
On the other hand, we can consider the homology $H_1(M,\partial_-M; \Z[H])$
of $M$ relative to its ``bottom'' boundary'' $\partial_-M= m_-(\Sigma)$,  with coefficients
twisted by $m_{\pm,*}^{-1}: H_1(M;\Z) \to H:=H_1(\Sigma;\Z)$; the order of this $\Z[H]$-module
$$
\Delta(M,\partial_-M) := \hbox{ord} \ H_1(M,\partial_-M; \Z[H]) \ \in \Z[H]
$$
is a relative version of the Alexander polynomial.
With this definition, $\Delta(M,\partial_-M)$ is only defined up to multiplication
by a unit of the ring $\Z[H]$, i.e$.$ an element of $\pm H$;
however, by using Turaev's refinement of the Reidemeister torsion, this indeterminacy can be fixed.
Next, it is possible to ``expand'' $\Delta(M,\partial_-M)$ 
as an element of (the degree completion of) the symmetric algebra $S(H)$, 
and to keep only the degree $2$ part of that expansion:
$$
\alpha(M) \in S^2(H).
$$
The following is a generalization of Theorem \ref{th:Y_34} in genus $g>0$.

\begin{theorem}[Massuyeau--Meilhan 2013] \label{th:Y3_cyl}
Two homology cylinders $M,M'$ are $Y_3$-equivalent if, and only if, we have
$\lambda_j(M)=\lambda_j(M')$, $\rho_3(M)=\rho_3(M')$ and $\alpha(M)=\alpha(M')$.
\end{theorem}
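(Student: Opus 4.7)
The plan is to follow the general strategy outlined on page~\pageref{page:method} (with $k:=2$), splitting the argument into two parts: first, showing that $\rho_3$, $\lambda_j$ and $\alpha$ are invariants of $Y_3$-equivalence; second, showing that together they form a complete system of invariants on a single $Y_2$-equivalence class. For the first part, the equality $\rho_3(M)=\rho_3(M_G)$ under surgery along a connected graph clasper $G$ of degree $3$ follows from the standard fact that a $Y_k$-surgery induces the identity on $\pi/\Gamma_{k+1}\pi$, applied with $k=3$. The invariance of $\lambda_j$ reflects that Casson's invariant is a finite-type invariant of degree $2$ (in the sense of \S\ref{subsec:higher}), hence vanishes as a relative invariant on surgeries along degree-$3$ graph claspers. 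For $\alpha$, one argues similarly that the degree-$2$ coefficient of the expansion of $\Delta(M,\partial_- M)$ is a finite-type invariant of homology cylinders of degree at most $2$, a suitable relative version of the classical fact that the Alexander polynomial is of finite type for knots.

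For the converse, assume $\rho_3(M)=\rho_3(M')$, $\lambda_j(M)=\lambda_j(M')$ and $\alpha(M)=\alpha(M')$. Since $\pi/\Gamma_3\pi$ is a canonical quotient of $\pi/\Gamma_4\pi$, we automatically have $\rho_2(M)=\rho_2(M')$; and the Birman--Craggs homomorphism $\beta$ can be recovered from $\lambda_j$ reduced modulo $2$ evaluated at the spin structures on $\wideparen{M}$ induced from $\Sigma$. Theorem~\ref{th:Y2_cyl} then guarantees that $M$ and $M'$ are $Y_2$-equivalent, reducing the problem to classifying $Y_3$-equivalence within a fixed $Y_2$-class. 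Applying the procedure of page~\pageref{page:method} with $V:=M$ yields a surjection
$$
\psi_2 \colon \Z\cdot C_2 \longrightarrow [M]_{Y_2}/Y_3,
$$
where $C_2$ denotes the set of isotopy classes of connected graph claspers of degree $2$ in $M$. The relations needed to convert $\psi_2$ into a bijection include AS (from half-twisting an edge), IHX (from a standard three-term move on graphs), and multilinearity of leaves (from cutting a leaf and developing a node). Modulo these, $\Z\cdot C_2$ becomes a combinatorial module $\mathcal{A}_2(H)$ of connected degree-$2$ Jacobi-type diagrams with external legs colored by $H=H_1(\Sigma;\Z)$, which splits into a tree-part (the ``H'' diagram with four legs) and a loop-part (the ``theta'' diagram with no legs and the ``wheel'' with two legs).

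To conclude, one builds a left-inverse $Z_2 \colon [M]_{Y_2}/Y_3 \to \mathcal{A}_2(H)$ assembled from the three given invariants: the tree-part is read off by the Johnson-type component of $\rho_3$, the theta-part by $\lambda_j$ (Casson's invariant pairs non-trivially with the theta graph, as in the closed case), and the wheel-part by $\alpha$, whose $S^2(H)$-output matches precisely the two external legs of the wheel. Verifying $Z_2\circ\overline{\psi_2}=\operatorname{id}$ reduces to explicit computations of $\rho_3$, $\lambda_j$ and $\alpha$ on surgery along each shape of degree-$2$ graph clasper, together with a dimension check comparing $\mathcal{A}_2(H)$ with the known rational structure of $Y_2\mathcal{IC}(\Sigma)/Y_3$.

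The main obstacle is the treatment of $\alpha$: while the $\rho_3$ and $\lambda_j$ pieces fit into machinery already developed by Morita and Habiro, the identification of the degree-$2$ coefficient of the relative Alexander polynomial with the wheel component of $\mathcal{A}_2(H)$ will require a precise surgery formula for Reidemeister torsion under $Y$-type modifications, and careful handling of the torsion's unit ambiguity via Turaev's refinement. This is the essential new ingredient beyond the genus-zero counterpart Theorem~\ref{th:Y_34}, and where the core technical difficulty lies.
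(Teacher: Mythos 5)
Your outline reproduces the paper's general strategy (invariance of the triple under $Y_3$, then classification inside a $Y_2$-class via the clasper surgery map $\psi_2$ and a left inverse $Z_2$), and the ``only if'' half is fine as a sketch. The gaps are in the ``if'' half. First, the reduction to a single $Y_2$-equivalence class is not justified: you claim that the Birman--Craggs function $\beta$ is recovered from $\lambda_j$ modulo $2$, but $\lambda_j(M)$ is a single integer (the Casson invariant of the homology sphere obtained from one fixed Heegaard embedding $j$), whereas $\beta(M)$ is a whole function $\operatorname{Spin}(\Sigma)\to\Z_2$ built from the Rochlin invariants of the closure $\wideparen{M}$, which is not a homology sphere once the genus is positive; one value modulo $2$ cannot determine it. That $(\rho_3,\alpha,\lambda_j)$ does determine the $Y_2$-class is true, but it is part of what must be proved (in \cite{MM13} it comes out of the computation of $\mathcal{IC}(\Sigma)/Y_3$, using among other things Johnson's identification of the cubic part of $\beta$ with the mod-$2$ first Johnson homomorphism \cite{Joh80} and mod-$2$ relations with $\alpha$ and the Casson-type invariants), so Theorem \ref{th:Y2_cyl} cannot be invoked as you do.

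Second, the heart of the argument --- identifying $Y_2\mathcal{IC}(\Sigma)/Y_3$ and building $Z_2$ --- is treated too coarsely or left open. Declaring that AS, IHX and multilinearity of leaves reduce $\Z\!\cdot\! C_2$ to a diagram space $\mathcal{A}_2(H)$, and then performing ``a dimension check against the known rational structure'', can at best give injectivity of $\overline{\psi_2}$ after tensoring with $\Q$; the theorem is an integral statement, and the group $Y_2\mathcal{IC}(\Sigma)/Y_3$ computed in \cite{MM13} is not a lattice of $H$-colored Jacobi diagrams: it contains $2$-torsion, reflecting the fact that (as already in degree $1$, compare \cite{MM03}) the class of a clasper surgery modulo $Y_{k+1}$ is not determined by the $H$-classes of its leaves alone, since framing and mod-$2$ data survive. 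A rational count loses exactly this information and with it the injectivity you need. Finally, you correctly isolate the hardest point --- matching $\alpha$, the degree-$2$ part of the relative Alexander polynomial, with the $2$-wheel component under clasper surgery --- but you leave it as an acknowledged obstacle. In the actual proof this is not an optional refinement: it is resolved by taking for $Z_2$ the degree-$2$ part of the LMO homomorphism \cite{CHM,HM09}, whose behaviour under $Y_2$-surgery is known and whose tree, theta and wheel components package $\rho_3$, $\lambda_j$ and $\alpha$ simultaneously. As written, your plan reproduces the outline of the proof but leaves unproved, or incorrectly justified, precisely the steps where its content lies.
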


\begin{proof}[About the proof]
 This theorem is proved in \cite{MM13}.
An important step  in the proof consists  in identifying the abelian  group $Y_2 \mathcal{IC}(\Sigma)/Y_3$
and, for this,   the ``general strategy'' by clasper calculus (see page \pageref{page:method}) is applied (with $k:=2$).
But, the difficulty is to assemble all three invariants that are expected 
to characterize the $Y_3$-equivalence  (namely $\lambda_j,\rho_3,\alpha$)
into a single homomorphism $Z_2$ defined on $Y_2 \mathcal{IC}(\Sigma)/Y_3$.
This role of ``unifying invariant'' is played by the degree $2$ part of the LMO homomorphism $Z$ \cite{CHM,HM09},
 whose behaviour under $Y_2$-surgery is well-understood. (See also the end of \S \ref{subsec:higher} in this connection.)  
\end{proof}

It is also explained in \cite{MM13} how to deduce from Theorem \ref{th:Y2_cyl} and Theorem  \ref{th:Y3_cyl}
characterizations of the  $J_2$-equivalence and $J_3$-equivalence, respectively. Specifically,
$J_2$ is classified by $\rho_2$ and $J_3$ is classified by the couple $(\rho_3,\alpha)$.
In genus $g=0$, Theorem \ref{th:Pitsch} is thus recovered with a completely different proof than \cite{Pi08}.
Besides, the same strategy of proof (i.e., use $Y_k$ to understand $J_k$) is used in \cite{Faes22} for proving Theorem \ref{th:Faes}.

\begin{remark}
  Nozaki, Sato and Suzuki  \cite{NSS20}  have determined the abelian group $Y_3 \mathcal{IC}(\Sigma)/Y_4$.
Their description too involves  a ``clasper surgery'' map $\psi_k$ of the type described on page \pageref{page:method} (with $k:=3$),
and their arguments involve some (reductions of) higher-degree parts of the LMO homomorphism $Z$.
It still remains to deduce from their result a characterization of the $Y_4$-equivalence relation  on the full monoid~$\mathcal{IC}(\Sigma)$.   \hfill $\blacksquare$
\end{remark}

\begin{remark}
  In contrast with the case of closed $3$-manifolds, 
the above characterizations  of  $Y_k$-equivalence and $J_k$-equivalence relations   for homology cylinders 
do  not lead to  ``isomorphism problems''  of the type mentioned in Remark \ref{rem:two_problems}. 
\hfill $\blacksquare$  
\end{remark}

\subsection{Characterization in higher degrees} \label{subsec:higher}

To conclude, we now survey  what is known about the characterization in arbitrary high degrees
of the three main families of  relations that have been considered in these notes: namely
the $k$-surgery equivalence, the $J_k$-equivalence  and the $Y_k$-equivalence.

First of all, we consider the family of $k$-surgery equivalence relations on $\calV(\varnothing)$.
We start with an easy observation.

\begin{proposition} \label{prop:triviality}
Any  homology $3$-sphere $M$  is $k$-surgery equivalent to $S^3$, for every $k\geq 1$.
\end{proposition}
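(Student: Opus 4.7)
The plan is to combine two ingredients already established in the text: Matveev's theorem that any homology $3$-sphere is $J_2$-equivalent to $S^3$ (Corollary~\ref{cor:J_2}), and the identification of $J_2$-equivalence with $2$-surgery equivalence (Proposition~\ref{prop:J_2}). The extra input needed to pass from $k=2$ to arbitrary $k$ is the observation that the fundamental group of a homology $3$-sphere is perfect, so that the lower central subgroup $\Gamma_k\pi_1$ collapses to all of $\pi_1$ for every $k$.

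First I would apply Corollary~\ref{cor:J_2} together with Proposition~\ref{prop:J_2} to produce a finite sequence
$$
M = M_0 \leadsto M_1 \leadsto \cdots \leadsto M_r = S^3
$$
where each move $M_i \leadsto M_{i+1}$ is surgery along a null-homologous $(\pm 1)$-framed knot $K_i \subset M_i$. Next I would prove by induction on $i$ that each intermediate $M_i$ still belongs to $\mathcal{S}$: the base case is the hypothesis on $M$, and for the inductive step a standard Mayer--Vietoris argument shows that a $(\pm 1)$-framed surgery along a null-homologous knot $K$ in a $\Z$-homology $3$-sphere $N$ kills exactly the generator $\pm[\mu(K)]$ of $H_1(N \setminus K;\Z) \cong \Z$ (recall that $[\rho_0(K)]=0$ in $H_1(N\setminus K;\Z)$ by definition), so the resulting manifold again has vanishing $H_1$.

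Finally, since each $M_i \in \mathcal{S}$ has trivial first homology, its fundamental group is perfect: $\pi_1(M_i) = [\pi_1(M_i),\pi_1(M_i)] = \Gamma_2\pi_1(M_i)$, and an immediate induction yields $\Gamma_k\pi_1(M_i) = \pi_1(M_i)$ for every $k \geq 1$. Hence the condition that the conjugacy class of $K_i$ lie in $\Gamma_k\pi_1(M_i)$ is automatically satisfied, and the very same sequence of surgeries simultaneously witnesses the $k$-surgery equivalence of $M$ with $S^3$ for every~$k$.

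The proof is essentially a repackaging of already-established facts, so no genuine obstacle arises; the only point deserving some care is verifying the inductive preservation of the homology $3$-sphere condition, so that the perfectness argument applies uniformly along the whole sequence.
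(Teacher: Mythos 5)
Your proposal is correct and follows essentially the same route as the paper's proof: invoke Corollary~\ref{cor:J_2} (via Proposition~\ref{prop:J_2}) to get a chain of $(\pm 1)$-framed surgeries along null-homologous knots, and then use perfectness of $\pi_1$ of a homology $3$-sphere to see that $\Gamma_k\pi_1(M_i)=\pi_1(M_i)$, so the same chain is a $k$-surgery sequence for every $k$. The only difference is that you spell out the Mayer--Vietoris check that each intermediate $M_i$ remains a homology $3$-sphere, a point the paper leaves implicit.
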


\begin{proof}
 By Corollary \ref{cor:J_2}, there is a sequence
$$
S^3=M_0 \leadsto M_1 \leadsto \cdots  \leadsto M_r=M
$$
where each move $M_i \leadsto M_{i+1}$ is a $(\pm1)$-framed surgery along a knot $K_i$ in a homology $3$-sphere $M_i$.
Since $\pi_1(M_i)$ has  trivial abelianization, we have $\pi_1(M_i) = \Gamma_k \pi_1(M_i)$ for all $k\geq 1$:
hence the move $M_i \leadsto M_{i+1}$ can be viewed as a $k$-surgery for every $k\geq 1$.
\end{proof}

Nevertheless, as was shown in \cite{CGO},
the family of $k$-surgery relations is very interesting for $3$-manifolds that are homologically non-trivial.
Following Turaev~\cite{Tur82}, we define the \emph{$k$-th nilpotent (oriented) homotopy type} of a closed $3$-manifold $M$ as
$$
\mu_{k}(M) := f_*([M]) \ \in H_3\left(\frac{\pi_1(M)}{\Gamma_{k+1} \pi_1(M)};\Z\right)
$$
where $f:M \to K\big(\pi_1(M)/\Gamma_{k+1} \pi_1(M),1\big)$ is a continuous map in an Eilenberg--MacLane space 
inducing the canonical homomorphism $\pi_1(M) \to \pi_1(M)/\Gamma_{k+1} \pi_1(M)$ at the level of $\pi_1$.
(Of course, for $k:=1$, we recover what we called  in \eqref{eq:mu_ab} the ``abelian homotopy type'' of $M$.)

One can view $\mu_{k}(M)$ as an approximation of the (oriented) homotopy type of $M$
since, according to \cite{Thomas,Swarup}, the latter is encoded by $\pi_1(M)$ and the image of 
the fundamental class $[M]$ in $H_3\big(\pi_1(M);\Z\big)$. 
Then we have the following generalization of the equivalence (1)$\Leftrightarrow$(3) in Theorem \ref{th:CGO}.

\begin{theorem}[Cochran--Gerges--Orr 2001] \label{th:CGO-high}
Let $k\in \N^*$.
Two closed $3$-manifolds $M$ and $M'$ are $(k+1)$-surgery equivalent if, and only if, there is an isomorphism
$\psi: \pi_1(M)/ \Gamma_{k+1} \pi_1(M) \longrightarrow \pi_1(M')/ \Gamma_{k+1} \pi_1(M')$ 
mapping $\mu_k(M)$ to $\mu_{k}(M')$.
\end{theorem}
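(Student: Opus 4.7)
The plan is to imitate the strategy used for Theorem~\ref{th:CGO} by introducing an auxiliary \emph{cobordism condition}:

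\smallskip
\noindent
(4) there exists a compact oriented $4$-manifold $W$ with $\partial W \cong (-M)\sqcup M'$ such that both inclusions $M \hookrightarrow W$ and $M' \hookrightarrow W$ induce isomorphisms on the nilpotent quotients $\pi_1(\cdot)/\Gamma_{k+1}\pi_1(\cdot)$.
\smallskip

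\noindent
I would then prove $(1)\Leftrightarrow (4)$ and $(4)\Leftrightarrow$ (the statement of the theorem), with the converse of the latter being the crux.

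For $(1)\Rightarrow(4)$, the trace $W_K$ of a single $(k{+}1)$-surgery $M \leadsto M_K$ along $K$ is $M\times [0,1]$ with a $2$-handle attached along $K$; since $[K]\in \Gamma_{k+1}\pi_1(M)$, van Kampen's theorem implies that $\pi_1(M)/\Gamma_{k+1} \to \pi_1(W_K)/\Gamma_{k+1}$ is an isomorphism, and the dual description of $W_K$ (turning the $2$-handle upside-down) gives the same for the inclusion of $M_K$. Concatenating traces yields the desired $W$. For $(4)\Rightarrow(1)$, I would take a handle decomposition of $W$ relative to $M\times [0,1]$, cancel $1$/$2$-handle pairs using the $\pi_1$ hypothesis, and convert framings to $\pm 1$ by the standard Kirby trick of inserting a meridional $(\pm 1)$-framed unknot. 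Reading each handle attachment as a surgery on an intermediate $3$-manifold $M_i$, the $\pi_1$ condition forces the attaching circle to lie in $\Gamma_{k+1}\pi_1(M_i)$, as required.

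The implication $(4)\Rightarrow$(existence of $\psi$) is immediate: let $\psi$ be the composition of the two $\pi_1$-isomorphisms, set $G:=\pi_1(W)/\Gamma_{k+1}\pi_1(W)$, and let $f:W\to K(G,1)$ be the classifying map; since $\partial W=(-M)\sqcup M'$, one reads $\psi_*\mu_k(M)=\mu_k(M')$ in $H_3(G;\Z)$. The converse is the main obstacle. Given $\psi$, set $G:=\pi_1(M)/\Gamma_{k+1}\pi_1(M)$ and use $\psi$ to transport the classifying map $M\to K(G,1)$ to one $M'\to K(G,1)$; the hypothesis $\psi_*\mu_k(M)=\mu_k(M')$ becomes $[M]-[M']=0$ in $H_3(G;\Z)$. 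From $\Omega_1=\Omega_2=0$, the Atiyah--Hirzebruch spectral sequence identifies $\Omega_3(K(G,1))\xrightarrow{\sim} H_3(G;\Z)$, so there is a singular bordism $(W_0,\tilde f)$ from $(M,f)$ to $(M',f')$ over $K(G,1)$.

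The final step, which I anticipate as the technical heart of the argument, is to modify $W_0$ by interior surgery so that $\tilde f_*:\pi_1(W_0)\to G$ induces an isomorphism on $H_1$ and a surjection on $H_2$; \emph{Stallings' theorem} on the lower central series then upgrades this to an isomorphism on $\pi_1/\Gamma_{k+1}$, giving condition (4). Achieving the $H_1$/$H_2$ conditions is a standard but delicate $4$-dimensional surgery exercise (adding $1$-handles to make $\tilde f_*$ surjective on $\pi_1$, then $2$-handles to kill the excess in $H_1(W_0)$), and all of these modifications must be done in the interior of $W_0$ so as to preserve the boundary identification with $(-M)\sqcup M'$. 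The main obstacle, therefore, is not any single step in isolation but the combination of (i) transforming the abstract algebraic data $\psi$ into a topological bordism via the Atiyah--Hirzebruch edge map, and (ii) using Stallings' theorem to reduce the control of $\pi_1/\Gamma_{k+1}$ to the much more tractable conditions on $H_1$ and $H_2$.
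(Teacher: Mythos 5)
Your architecture is the natural higher-degree version of the scheme that the paper sketches for Theorem \ref{th:CGO} (the paper itself gives no proof of Theorem \ref{th:CGO-high}; it quotes \cite{CGO}), and much of it is sound: the auxiliary condition (4), the easy implication from (4) to the existence of $\psi$, and the passage from $\psi$ to a singular bordism over $K(G,1)$ via $\Omega_3(K(G,1))\cong H_3(G;\Z)$ (note that injectivity of this edge map also uses $\Omega_3=0$, i.e.\ Theorem \ref{th:RTWL}, not only $\Omega_1=\Omega_2=0$). Your last step can in fact be simplified and made correct at the same time: since $G:=\pi_1(M)/\Gamma_{k+1}\pi_1(M)$ is a finitely generated nilpotent, hence finitely presented, group, the kernel of $\pi_1(W_0)\to G$ is normally generated by finitely many elements, and surgering circles representing them in the interior of $W_0$ makes $\pi_1(W_0)\cong G$; as $\Gamma_{k+1}G=1$, condition (4) follows immediately, with no appeal to Stallings. (By contrast, ``making $H_2$ surjective by interior surgery'' is not really available: surgery on circles only creates $H_2$-classes that die in $H_2(G)$.)

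The genuine gap is the implication $(4)\Rightarrow(1)$, which is the technical heart of the theorem and which you dispatch in one sentence. First, the $1$- and $3$-handles cannot simply be ``cancelled''; they must be traded for $2$-handles, and the naive trade (surgering the circle dual to a $1$-handle) changes $\pi_1(W)$ and can destroy condition (4); preserving (4) requires correcting each such circle by an element pushed in from $M$, using surjectivity of $\pi_1(M)/\Gamma_{k+1}\to\pi_1(W)/\Gamma_{k+1}$. More seriously, once $W$ is built from $M\times[0,1]$ by $2$-handles only, the hypothesis does \emph{not} ``force the attaching circle to lie in $\Gamma_{k+1}\pi_1(M_i)$'': each attaching circle is null-homotopic in $W$, hence trivial in $\pi_1(W)/\Gamma_{k+1}$, but to conclude that it lies in $\Gamma_{k+1}\pi_1(M_i)$ you need the intermediate inclusion to induce an \emph{injection} $\pi_1(M_i)/\Gamma_{k+1}\to\pi_1(W)/\Gamma_{k+1}$, and condition (4) says nothing about intermediate stages. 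Arranging this stage by stage (stabilizations, handle slides, and homological control of the Stallings--Dwyer type) is precisely where Cochran--Gerges--Orr do the real work, and it is absent from your proposal. A smaller gap of the same nature occurs in $(1)\Rightarrow(4)$: reading the trace upside-down gives a $2$-handle attached to $M_K\times[0,1]$ along the dual knot, and you need that dual knot to lie in $\Gamma_{k+1}\pi_1(M_K)$; this is the nontrivial symmetry of the relation (the fact, cited after Definition \ref{def:k-surg}, that $k$-surgery equivalence is an equivalence relation \cite{CGO}), so it is citable, but it is not delivered by the phrase ``turning the $2$-handle upside-down''.
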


\noindent  
Although the realization problem for nilpotent homotopy types of $3$-manifolds has been (formally) solved in \cite{Tur82},
it seems to be  really difficult to classify the $k$-surgery equivalence relations,
especially because the third homology groups of finitely-generated nilpotent groups do not seem to be well understood.
Yet, Cochran, Gerges \& Orr have been able to apply Theorem~\ref{th:CGO-high} in one particular case:
using a good knowledge \cite{IO} of the third homology group of finitely-generated  free-nilpotent groups,
they prove that a closed $3$-manifold $M$ is $k$-surgery equivalent to $\sharp^m (S^1 \times S^2)$
if, and only if, we have $H_1(M;\Z)\simeq \Z^m$ and all Massey products  of $M$ of length $\leq 2k-1$ vanish.
(For $k:=2$, this is an instance of the equivalence ``(1)$\Leftrightarrow$(2)'' in Theorem \ref{th:CGO}.)

Here is another consequence of  Theorem~\ref{th:CGO-high}, which does not seem to have been observed before.

 \begin{cor} \label{cor:J_k-surg}
 Let $M,M' \in \calV(\varnothing)$ and let $k\geq 2$ be an integer.
 If $M$ and $M'$ are $J_{2k-2}$-equivalent, then they are $k$-equivalent.
 \end{cor}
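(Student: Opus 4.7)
The plan is to invoke Theorem~\ref{th:CGO-high} with its index $k$ replaced by $k-1$: it reduces the corollary to showing that any two $J_{2k-2}$-equivalent closed $3$-manifolds $M,M'$ admit an isomorphism
$$
\psi:\pi_1(M)/\Gamma_k\pi_1(M)\longrightarrow \pi_1(M')/\Gamma_k\pi_1(M')
$$
sending $\mu_{k-1}(M)$ to $\mu_{k-1}(M')$. By transitivity of the $J_{2k-2}$-equivalence, I may assume that $M'=M_s$ is obtained from $M$ by a single Torelli twist along a surface $S\subset \operatorname{int}(M)$ with $s\in J_{2k-2}\calI(S)$.

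Since $2k-2\geq k-1$, the element $s$ also lies in $J_{k-1}\calI(S)$, hence acts as the identity on $\pi_1(S)/\Gamma_k\pi_1(S)$. Applying Seifert--van Kampen to the decompositions $M=(M\setminus \operatorname{int}\operatorname{N}(S))\cup \operatorname{N}(S)$ and $M_s=(M\setminus \operatorname{int}\operatorname{N}(S))\cup_{\tilde s}\operatorname{N}(S)$, and observing that the two resulting pushout presentations impose the same relations modulo $\Gamma_k$, one obtains a canonical isomorphism $\psi$ that extends to $\pi_1/\Gamma_k$ the $H_1$-isomorphism $\psi_s$ of \eqref{eq:psi_s}.

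To match $\mu_{k-1}$, I would construct a $4$-dimensional cobordism $W$ from $M$ to $M_s$ such that both inclusions induce isomorphisms $\pi_1(M)/\Gamma_k\pi_1(M)\stackrel{\simeq}{\to}\pi_1(W)/\Gamma_k\pi_1(W)\stackrel{\simeq}{\leftarrow}\pi_1(M_s)/\Gamma_k\pi_1(M_s)$ compatible with $\psi$. Once $W$ is in place, a classifying map $W\to K(\pi_1(W)/\Gamma_k,1)$ extending the boundary classifying maps realizes $\mu_{k-1}(M)$ and $\mu_{k-1}(M_s)$ as the images of a common relative class $[W,\partial W]$ in $H_3(\pi_1(W)/\Gamma_k;\Z)$, which yields $\psi_*\mu_{k-1}(M)=\mu_{k-1}(M_s)$.

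The main obstacle is producing $W$ with the claimed $\pi_1/\Gamma_k$-behavior; it is also the source of the numerical bound $2k-2$. A natural candidate is the $4$-dimensional trace of a Lickorish-style surgery presentation of $s$, obtained by factoring $s$ into Dehn twists and converting each twist into a $2$-handle attachment via Lickorish's trick. However, the individual Dehn twist factors of a generic $s\in J_{2k-2}\calI(S)$ typically lie only in $\calI(S)$, so the corresponding attaching circles need not be trivial in $\pi_1(M)/\Gamma_k$. To benefit from the full strength of the hypothesis $s\in J_{2k-2}$, one invokes the strong centrality \eqref{eq:N-series}: modulo $J_{2k-1}\calI(S)$, the element $s$ can be written as a product of iterated $(k-1)$-fold commutators of Torelli elements, and the corresponding nested Lickorish-surgery presentation can be rearranged by handle slides so that every $2$-handle attaches along a circle whose class lies in $\Gamma_k\pi_1(M)$; the residual term in $J_{2k-1}\calI(S)\subset J_{2k-2}\calI(S)$ is then absorbed by induction on $k$, using the present corollary in smaller degrees together with the case $Y_1=J_1$ as base. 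Executing this Lie-theoretic rebracketing while preserving the pair $(M,M_s)$ up to diffeomorphism is the technical heart of the argument.
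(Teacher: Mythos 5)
Your reduction — invoking Theorem~\ref{th:CGO-high} in degree $k-1$, passing to a single Torelli twist $M\leadsto M_s$ with $s\in J_{2k-2}\calI(S)$, and producing the isomorphism $\psi$ on $\pi_1/\Gamma_k$ by Seifert--Van Kampen — agrees with the paper. The gap lies exactly at what you call the technical heart. Your mechanism for controlling $\mu_{k-1}$ rests on the claim that, modulo $J_{2k-1}\calI(S)$, any $s\in J_{2k-2}\calI(S)$ is a product of iterated $(k-1)$-fold commutators of Torelli elements, i.e$.$ essentially $J_{2k-2}\calI(S)\subset \Gamma_{k-1}\calI(S)\cdot J_{2k-1}\calI(S)$. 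The strong centrality \eqref{eq:N-series} gives only the inclusion $[\,\cdot\,,\cdot\,]\subset J_{i+j}$, not any converse generation statement; the relation between the Johnson filtration and the lower central series in the other direction is precisely what is \emph{not} understood (cf$.$ Remark~\ref{rem:Hain}, and the paper's remark that no explicit generators of $J_k$ are known for $k>2$), so this claim is unsupported and cannot serve as the engine of the proof. Moreover, your plan to ``absorb the residual term in $J_{2k-1}\calI(S)$ by induction on $k$ in smaller degrees'' is not well-founded: that residual term lies \emph{deeper} in the Johnson filtration, not in a lower-degree instance of the corollary, so the induction has no base to land on; and the handle-slide rearrangement making all attaching circles lie in $\Gamma_k\pi_1(M)$ is asserted rather than argued.

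The paper bypasses all of this. It compares $\mu_{k-1}(M)$ and $\mu_{k-1}(M_s)$ directly via the variation formula \eqref{eq:mu_variation}, which expresses the difference as $\iota_*\varphi_{s,*}^{-1}\bigl(\mu_{k-1}(\operatorname{tor}(s))\bigr)$, where $\operatorname{tor}(s)$ is the mapping torus of $s$ (this is the content of \cite{Mas12}, generalizing \cite{GL05} and \cite{Heap}). The map $f\mapsto \mu_j(\operatorname{tor}(f))$ is the $j$-th Morita homomorphism on $J_j\calI$, and Heap's theorem \cite{Heap} identifies its kernel with $J_{2j}\calI$; taking $j=k-1$, the hypothesis $s\in J_{2(k-1)}\calI(S)$ gives $\mu_{k-1}(\operatorname{tor}(s))=0$, hence $\psi_*\mu_{k-1}(M)=\mu_{k-1}(M_s)$, and Theorem~\ref{th:CGO-high} concludes. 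This is exactly where the bound $2k-2$ comes from, and it is the ingredient your proposal would need to replace; as written, the proposal does not prove the statement.
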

 
\begin{proof}
Let $j\in \N^*$ and assume a Torelli twist $M \leadsto M_s$ along a surface $S \subset M$ with an $s\in J_{j} \calI(S)$.
 The Seifert--Van Kampen theorem shows the existence of a unique isomorphism
$$
\psi_s: \pi_1(M)/\Gamma_{j+1} \pi_1(M) \stackrel{\simeq}{\longrightarrow}
\pi_1(M_s)/\Gamma_{j+1} \pi_1(M_s) 
$$
that fits into the commutative diagram:
$$
\xymatrix{
&{\frac{\pi_1\left(M \setminus \interior \operatorname{N}(S) \right)}
{\Gamma_{j+1}\pi_1\left(M \setminus \interior \operatorname{N}(S) \right)}} 
\ar@{->>}[ld] \ar@{->>}[rd] & \\
{\frac{\pi_1(M)}{\Gamma_{j+1} \pi_1(M)}} \ar@{-->}[rr]_-\simeq^-{\psi_s}
& & {\frac{\pi_1(M_s)}{\Gamma_{j+1} \pi_1(M_s)}}.
}
$$
In order to compare $\mu_j(M)$ and $\mu_j(M_s)$ via $\psi_s$, 
we consider the \emph{mapping torus} of~$s$ which, with the notation \eqref{eq:closure}, can be defined as
$$
\operatorname{tor}(s) := \wideparen{\operatorname{cyl}(s)}
$$
where $\operatorname{cyl}(s)\in \mathcal{IC}(S)$ denotes the mapping cylinder of $s$.
This is a closed $3$-manifold whose $j$-th nilpotent fundamental group
can be identified to that of $S$ by the isomorphism
$$
 \varphi_s: \pi_1(S)/\Gamma_{j+1} \pi_1(S)
\stackrel{\simeq}{\longrightarrow}  \pi_1(\operatorname{tor}(s))/ \Gamma_{j+1} \pi_1(\operatorname{tor} (s))
$$
that is induced by the inclusion $S= S \times 1 \hookrightarrow \operatorname{tor}(s)$.
Besides, the inclusion $S \hookrightarrow M$ induces a homomorphism
$$
\iota: \pi_1(S)/\Gamma_{j+1} \pi_1(S) \longrightarrow \pi_1(M)/\Gamma_{j+1} \pi_1(M).
$$
Then, a simple homological computation
in a singular $3$-manifold that contains the three of $M$, $M_s$ and $\operatorname{tor}(s)$ shows that 
\begin{equation} \label{eq:mu_variation}
\psi_{s,*}^{-1 }\big(\mu_j(M_s)\big) = \mu_j(M) + \iota_* \varphi^{-1}_{s,*}\big(\mu_j\big(\operatorname{tor}(s)\big) \big).
\end{equation}
This variation formula for the $j$-th nilpotent homotopy type is established
in the introduction of \cite{Mas12}, generalizing \cite[Theorem 2]{GL05} and \cite[Theorem 5.2]{Heap}.

The same formula shows that, given a compact surface $\Sigma$ with $\partial \Sigma \cong S^1$, 
the following map is a group homomorphism:
$$
M_j:J_j \calI(\Sigma) \longrightarrow H_3 \left(\frac{\pi_1(\Sigma)}{\Gamma_{j+1} \pi_1(\Sigma)}; \Z  \right),
\ f  \longmapsto \mu_j\big( \operatorname{tor}(f) \big).
$$
This is essentially the \emph{$j$-th Morita homomorphism}, introduced in \cite{Mor93}
as a refinement of the ``$j$-th Johnson homomorphism''. As shown by Heap in \cite{Heap},
the kernel of $M_j$ is $J_{2j} \calI(\Sigma)$. Therefore, if $M'$ is the result of a Torelli twist $M\leadsto M_s$
with an $s\in J_{2(k-1)}\calI(S)$, we have $ \mu_{k-1}\big( \operatorname{tor}(s)\big)=0$.
So, we conclude thanks to~\eqref{eq:mu_variation}
that $M$ and $M'$ are $k$-surgery equivalent.
 \end{proof}
 
 \begin{remark}
It would be interesting to have a direct proof of Corollary \ref{cor:J_k-surg}, 
which would apply to $\calV(R)$ for any compact surface $R$. 
Indeed, surgery along a connected graph clasper of degree $2k-2$ can always be realized
as a sequence of three $k$-surgeries (see \cite[Fig. 3]{MM03} for $k=2$): therefore, 
by  Proposition~\ref{prop:Y_clasper}, the $Y_{2k-2}$-equivalence is stronger than the $k$-surgery equivalence \cite{Hab00}.
Given that  ``$Y_{2k-2} \Rightarrow J_{2k-2}$'', 
it is likely that  Corollary \ref{cor:J_k-surg} is true in  $\calV(R)$ for any $R$.
 $\hfill$ $\blacksquare$
\end{remark}  

The following question now arises for the family of $J_k$-equivalence relations: 
can we expect a result analogous  to Theorem~\ref{th:CGO-high}?
This seems to be currently out of reach, as revealed already  by the case of homology $3$-spheres.
Indeed, the methods for proving  the triviality of the $J_3$-equivalence (resp$.$, $J_4$-equivalence) in \cite{Pi08}  (resp$.$, in \cite{Faes22}) 
 seem to be hard to adapt to arbitrary high degrees.

 \begin{remark}
So, in view of Proposition~\ref{prop:triviality}, we can  hardly imagine a kind of converse to Corollary \ref{cor:J_k-surg}. \hfill $\blacksquare$
 \end{remark}

In contrast with the $J_k$-equivalence, we know (at least, theoretically) how to characterize the $Y_k$-equivalence relation
 in any degree $k\geq 1$ by means of a certain family of topological invariants of $3$-manifolds.
In the sequel, we fix a compact surface $R$ and  a $Y_1$-equivalence class $\calV_0$ in $\calV(R)$.

\begin{definition} \label{def:FTI}
Let $A$ be an abelian group.
A map $F: \calV_0 \to A$
is a \emph{finite-type invariant} of \emph{degree} at most $d$ if, for any $M \in \calV_0$, 
for any pairwise-disjoint compact surfaces $S_0, S_1,\dots, S_d\subset \interior(M)$ with $\partial S_i \cong S^1$,
and for all $s_0 \in \calI(S_0), s_1 \in \calI(S_1),\dots, s_{d}  \in \calI(S_{d})$, we have
$$
\sum_{J \subset \{0,1,\dots,d\} } (-1)^{\vert J \vert} \cdot F(M_{J}) =0 \in A
$$ 
where $M_J\in \calV_0$ is obtained from $M$ by twist along  $\sqcup_{j\in J}S_j$ with $\sqcup_{j\in J}s_j$.  \hfill $\blacksquare$
\end{definition}

\begin{remark}
  The notion of ``finite-type invariants'' for homology $3$-spheres has been introduced by Ohtsuki in \cite{Oht96},
as an analogue of the notion of ``Vassiliev invariants'' for knots and links in $S^3$.
This notion has been extended and studied  by Cochran \& Melvin \cite{CM}, 
who considered arbitrary $3$-manifolds. In this Ohtsuki--Cochran--Melvin theory, the basic operation
is the $2$-surgery instead of the Torelli~twist. 

The rich interplay between the theory of finite-type invariants
and the study of mapping class groups was firstly considered by Garoufalidis \& Levine \cite{Gar96,GL97,GL98,GL96}.
Next, came the ``clasper calculus'' of Goussarov and Habiro  \cite{Go00,Hab00},
which offered very efficient tools to study and enumerate  finite-type invariants.
Their works also revealed that the Torelli twist
(or any equivalent type of modification, like the $Y$-surgery or the borromean surgery) 
is the appropriate operation to  define finite-type invariants as we did in Definition \ref{def:FTI}.

We refer to \cite{GGP,Hab00} for a comparison of the various notions of finite-type invariants:
they  happen to be all equivalent one to the other  for homology $3$-spheres (up to some degree rescalings), 
but they are \emph{not} equivalent for arbitrary $3$-manifolds.~$\blacksquare$
\end{remark}

In order to explain the relationship between finite-type invariants and the $Y_k$-equivalence relations,
we need a little bit of algebraic context. Let $G$ be an arbitrary group, and denote its \emph{group ring} by 
$
\Z[G],
$
 which is the abelian group freely generated by the set $G$
and has the multiplication inherited from the group operation of $G$. The \emph{augmentation ideal} of $G$ is
$$
I:= I_G = \ker\big(\varepsilon:  \Z[G] \longrightarrow \Z \big)
$$ 
where the \emph{augmentation} $\varepsilon$ is the ring homomorphism mapping any $g\in G$ to $1 \in \Z$.
The \emph{$I$-adic filtration} of $\Z[G]$ is the  sequence $\Z[G]=I^0 \supset I=I^1 \supset I^2 \supset \cdots$
defined by the powers of $I$. The following classical fact relates this to  the lower central series \eqref{eq:LCS} of $G$.
 
\begin{lemma} \label{lem:LCS_I}
Let $k\in \N^*$. For any $g\in \Gamma_k G$, we have $(g-1) \in I^k$.
\end{lemma}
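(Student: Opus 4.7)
The plan is to prove this classical fact by induction on $k$, exploiting the multiplicativity of the augmentation together with a commutator identity in the group ring.

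For the base case $k=1$, we have $\Gamma_1 G = G$, and for any $g \in G$ the element $g-1$ lies in $I$ by the definition of the augmentation ideal, since $\varepsilon(g-1) = 1-1 = 0$.

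For the inductive step, assume the statement holds at level $k$ and let $g \in \Gamma_{k+1} G = [\Gamma_k G, G]$. Such an element is a finite product of commutators $[a,b] = aba^{-1}b^{-1}$ with $a \in \Gamma_k G$ and $b \in G$, together with their inverses. The first step is to check that the set $\{g \in G : g-1 \in I^{k+1}\}$ is closed under products and inverses, using the identities
\[
g_1 g_2 - 1 = (g_1-1)g_2 + (g_2-1), \qquad g^{-1} - 1 = -(g-1)g^{-1},
\]
which reduce the problem to showing $[a,b] - 1 \in I^{k+1}$ for a single commutator.

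The key step is the algebraic identity
\[
[a,b] - 1 = \bigl((a-1)(b-1) - (b-1)(a-1)\bigr) \cdot a^{-1} b^{-1},
\]
obtained by multiplying $aba^{-1}b^{-1} - 1$ on the right by $ba$ and rewriting $ab - ba = (a-1)(b-1) - (b-1)(a-1)$. By the induction hypothesis $(a-1) \in I^k$, while $(b-1) \in I$, so each of the two products in the bracket lies in $I^{k+1}$; multiplying by the unit $a^{-1}b^{-1} \in \Z[G]$ keeps the result in $I^{k+1}$. This completes the induction.

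I do not anticipate any real obstacle: the argument is a standard and short manipulation in the group ring. The only point requiring a bit of care is getting the commutator identity in the correct form and then bookkeeping the closure of the relevant subset of $G$ under products and inverses, so that one only has to verify the commutator case.
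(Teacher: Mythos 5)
Your proof is correct and follows essentially the same route as the paper: induction on $k$, reduction to a single commutator via elementary group-ring identities for products and inverses, and the same key identity $[a,b]-1 = \bigl((a-1)(b-1)-(b-1)(a-1)\bigr)a^{-1}b^{-1}$ combined with the fact that $I^{k+1}$ is a two-sided ideal. The only cosmetic difference is that the paper treats inverses by noting $\Gamma_{k+1}G$ is generated by commutators $[x,y]$ and $[y,x]$ with $y\in\Gamma_k G$, whereas you use the formula $g^{-1}-1=-(g-1)g^{-1}$; both are fine.
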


\begin{proof}
The statement is obviously true for $k=1$.
Next, for any $k\in \N^*$, an element of $\Gamma_{k+1} G$ is (by definition)
a product of commutators of the form $[x,y]$ or $[y,x]$ where $x\in G$ and $y\in \Gamma_k G$.
Besides, we have the following identities in $\Z[G]$, for any $g,h \in G$:
\begin{eqnarray*}
g h -1 &=& \left( (g-1) - ( h^{-1} - 1)\right) \cdot h \\
{[g,h]} - 1 &=&  \left((g-1)(h-1) - (h-1)(g-1)\right)g^{-1} h^{-1}.
\end{eqnarray*}
Hence the statement is justified by an induction on $k\geq 1$.
\end{proof}

We can now prove the following.

\begin{proposition} \label{prop:Y_FTI}
Let $M,M'\in \calV_0$ and let $d\in \N$.
If $M$ and $M'$ are $Y_{d+1}$-equivalent, then $F(M)=F(M')$ for any finite-type invariant $F: \calV_0 \to A$ of degree at most $d$.
\end{proposition}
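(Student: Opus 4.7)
The plan is to linearize Torelli twists on a fixed surface $S\subset \interior(M)$ into an action of the group ring $\Z[\calI(S)]$, and then exploit Lemma~\ref{lem:LCS_I} to pass from the lower central series of $\calI(S)$ to the augmentation ideal of $\Z[\calI(S)]$. By transitivity of the $Y_{d+1}$-equivalence, it is enough to prove $F(M) = F(M_s)$ for a single Torelli twist $M \leadsto M_s$ along some surface $S$ with $s \in \Gamma_{d+1}\calI(S)$.

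Fix such a pair $(M,S)$, and extend $F$ to a $\Z$-linear map $\tilde{F}\colon \Z[\calI(S)] \to A$ by $\tilde{F}(g) := F(M_g)$ for $g \in \calI(S)$. The core step is to show that $\tilde{F}$ vanishes on $I^{d+1}$, where $I\subset\Z[\calI(S)]$ is the augmentation ideal. To this end, identify $\operatorname{N}(S)$ with $S \times [-1,1]$ and consider $d+1$ pairwise-disjoint parallel copies $S^{(i)} := S \times \{t_i\}$ for some choice $-1 < t_0 < t_1 < \cdots < t_d < 1$. The key geometric observation is that, given $g_0, \ldots, g_d \in \calI(S)$ (viewed as elements of $\calI(S^{(i)})$ via the canonical identifications $S^{(i)}\cong S$), the simultaneous Torelli twist of $M$ along $\sqcup_{j \in J} S^{(j)}$ by $\sqcup_{j \in J} g_j$ is diffeomorphic to the single Torelli twist of $M$ along $S$ by the ordered product $\overrightarrow{\prod}_{j \in J} g_j$ in $\calI(S)$. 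Applying Definition~\ref{def:FTI} to these $d+1$ surfaces therefore yields
$$
\tilde{F}\bigl((1 - g_d)(1 - g_{d-1}) \cdots (1 - g_0)\bigr) = 0.
$$
As the $g_i$ range over $\calI(S)$, such products span $I^{d+1}$ as a $\Z$-module, whence $\tilde{F}|_{I^{d+1}} = 0$.

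To conclude, I invoke Lemma~\ref{lem:LCS_I}: for $s \in \Gamma_{d+1}\calI(S)$ one has $(s - 1) \in I^{d+1}$, and therefore
$$
F(M_s) - F(M) \ = \ \tilde{F}(s) - \tilde{F}(1) \ = \ \tilde{F}(s - 1) \ = \ 0,
$$
which is the desired invariance.

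The main obstacle is the geometric assertion underlying the reduction to the augmentation ideal: one must carefully verify that simultaneous twisting along a stack of parallel copies of $S$ inside $\operatorname{N}(S)$ is equivalent, up to diffeomorphism rel $\partial M$, to a single Torelli twist on $S$ by the ordered composition of the individual twist diffeomorphisms. This amounts to a bookkeeping exercise: cutting $\operatorname{N}(S)\cong S\times[-1,1]$ along the disjoint copies $S^{(i)}$ produces a stack of slabs $S\times[t_{i-1},t_i]$, and the successive regluings by the $g_i$'s accumulate on the top face into the single regluing by $\overrightarrow{\prod} g_j$, while leaving the complement of $\operatorname{N}(S)$ untouched. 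Once this identification is granted (with its inherent choice of stacking order, which is immaterial for the final conclusion since the span of the resulting products still exhausts $I^{d+1}$), the remainder of the argument is the formal passage between the lower central series and the $I$-adic filtration provided by Lemma~\ref{lem:LCS_I}.
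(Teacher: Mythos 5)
Your argument is correct and follows essentially the same route as the paper's proof: extend $F$ by linearity to $\Z[\calI(S)]$, show that the finite-type condition forces vanishing on $I^{d+1}$, and conclude with Lemma \ref{lem:LCS_I} applied to $s\in\Gamma_{d+1}\calI(S)$. The only difference is that you make explicit, via the stack of parallel copies of $S$ inside $\operatorname{N}(S)$, why the elements $(1-g_d)\cdots(1-g_0)$ are annihilated --- a point the paper asserts without detail --- and that justification is sound.
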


\begin{proof}
Assume that $M\leadsto M_s\cong M'$ by a Torelli twist along $S \subset \interior(M)$ with $s\in \Gamma_{d+1} \calI(S)$.
Consider the map $f: \calI(S) \to A$ defined by $f(u) := F(M_u)$ and extend it by additivity to
$$
f: \Z\big[  \calI(S)  \big] \longrightarrow A.
$$
The fact that $F$ is of finite type of degree at most $d$ implies that $f$ vanishes on all elements of
the form $(s_0-1)(s_1-1)\cdots (s_d-1)$ with $s_0,s_1,\dots, s_d \in \calI(S)$. Since those elements generate $I^{d+1}$
addivitely, we have $f(I^{d+1})=0$. We conclude using the fact that $(s-1)\in I^{d+1}$ by Lemma \ref{lem:LCS_I}.
\end{proof}

If Proposition \ref{prop:Y_FTI} had a converse, then we would get (at least, theoretically)
 a characterization of the $Y_k$-equivalence relation. Indeed,  the converse is true for 
 the class $\calV_0:= \mathcal{S}$.
 
 \begin{theorem}[Habiro 2000] \label{th:Y_ZHS}
 Any two homology $3$-spheres are $Y_{d+1}$-equivalent if, and only if, 
 they are not distinguished by finite-type invariants of degrees at most~$d$.
 \end{theorem}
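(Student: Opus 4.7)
The forward direction is the content of Proposition \ref{prop:Y_FTI}. For the converse, the plan is to establish that the tautological projection
\[
\tau_{d+1} \colon \mathcal{S} \longrightarrow \mathcal{S}/Y_{d+1}
\]
is itself a finite-type invariant of degree at most $d$, with values in the abelian group $\mathcal{S}/Y_{d+1}$. Granting this, the converse is immediate: if $M, M' \in \mathcal{S}$ are not separated by any finite-type invariant of degree $\leq d$, then in particular they agree under $\tau_{d+1}$, hence $M \sim_{Y_{d+1}} M'$.

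The first subtask is to confirm that $\mathcal{S}/Y_{d+1}$ is an abelian group. The monoid structure descends from $(\mathcal{S}, \sharp)$ because clasper surgeries localize in disjoint submanifolds, and commutativity is inherited from~$\sharp$. The delicate point is invertibility modulo $Y_{d+1}$, which follows from the reversibility of surgery along tree-shaped graph claspers mentioned in \S\ref{subsec:claspers}: given $M \in \mathcal{S}$, one writes $M \sim_{Y_{d+1}} S^3_G$ for $G$ a disjoint union of tree-shaped claspers (possible by the edge-cutting operation $(\mathcal{O}_0)$), and sets $M^{-} := (S^3)_{G^{-}}$ where $G^{-}$ is the ``inverse'' family of tree claspers; then $M \sharp M^{-} \sim_{Y_{d+1}} S^3$.

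The heart of the argument is then to verify the finite-type condition: for any $M \in \mathcal{S}$ and pairwise disjoint compact surfaces $S_0, \dots, S_d \subset \interior(M)$ with $\partial S_i \cong S^1$ and $s_j \in \calI(S_j)$, one must prove
\[
\sum_{J \subset \{0, \dots, d\}} (-1)^{|J|}\, [M_J] \;=\; 0 \;\in\; \mathcal{S}/Y_{d+1}.
\]
I would reinterpret this alternating sum in the monoid ring $\Z[\mathcal{S}/Y_{d+1}]$ as the iterated product of differences $([M_{s_0}] - [M])([M_{s_1}] - [M]) \cdots ([M_{s_d}] - [M])$ applied to $[M]$. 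Via clasper calculus, each single Torelli twist $M \leadsto M_{s_j}$ differs from the identity by a class lying in the ``augmentation ideal'' of $\Z[\mathcal{S}/Y_{d+1}]$ associated to the $Y$-filtration, so that a $(d+1)$-fold product of such differences falls in the $(d+1)$-st power of that ideal; this power corresponds, via the Hopf-algebraic commutator calculus, to surgeries along graph claspers of degree $\geq d+1$, and hence vanishes modulo $Y_{d+1}$. This is the monoid-theoretic analog of Lemma \ref{lem:LCS_I}, with the $Y$-filtration playing the role of the lower central series.

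The main obstacle is to make this last reduction precise. Rigorously identifying the $(d+1)$-st power of the augmentation ideal of $\Z[\mathcal{S}/Y_{d+1}]$ with the span of connected sums by $Y_{d+1}$-surgeries requires the full Hopf-algebraic clasper calculus of Goussarov \cite{Go00} and Habiro \cite{Hab00}, in which the operations $(\mathcal{O}_0)$--$(\mathcal{O}_5)$---especially edge-sliding $(\mathcal{O}_2)$ and leaf-crossing $(\mathcal{O}_4)$, which trade commutators of low-degree claspers for higher-degree claspers---provide the geometric incarnation of the algebraic commutator identities. Once this dictionary is in place, the theorem follows by the elementary implication outlined above.
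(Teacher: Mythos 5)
Your overall strategy for the converse --- showing that the projection $\mathcal{S} \to \mathcal{S}/Y_{d+1}$ is itself a finite-type invariant of degree at most $d$ with values in the Goussarov--Habiro group $\mathcal{S}/Y_{d+1}$, the forward direction being Proposition \ref{prop:Y_FTI} --- is indeed the standard route, and note that the paper itself gives no proof of Theorem \ref{th:Y_ZHS} but refers to \cite{Hab00} and \cite{Mas05}. However, two of your steps do not hold as written. For invertibility of $\mathcal{S}/Y_{d+1}$: placing the half-twisted claspers $G^-$ in a separate ball and forming $M \sharp M^-$ only yields $M \sharp M^- \sim_{Y_2} S^3$, since $(\mathcal{O}_5)$ applies to a \emph{parallel} copy of each $G_i$ and bringing the far-away copies next to their partners via $(\mathcal{O}_2)$ and $(\mathcal{O}_4)$ is itself only controlled modulo $Y_{\deg+1}$; obtaining an inverse modulo $Y_{d+1}$ requires an induction up the $Y$-filtration, and the fact that the quotients are groups is a theorem of Goussarov \cite{Go00} and Habiro \cite{Hab00} that should be invoked rather than re-derived in one line.

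The more serious gap is at the heart of your argument: the alternating sum $\sum_{J}(-1)^{|J|}[M_J]$ is \emph{not} the product $([M_{s_0}]-[M])\cdots([M_{s_d}]-[M])$ in the monoid ring $\Z[\mathcal{S}/Y_{d+1}]$. That product expands into classes of connected sums such as $[M_{s_0}\sharp M_{s_1}\sharp M \sharp \cdots]$, which are unrelated to the simultaneous-twist manifolds $M_J$: a Torelli twist performed \emph{inside} $M$ is not multiplication by a fixed element of $(\mathcal{S},\sharp)$. The identity you want would follow if one knew that, modulo higher terms of the $Y$-filtration, surgery along a connected graph clasper of degree $k$ in a homology sphere amounts to connected sum with a fixed element of $Y_k\mathcal{S}$ (here the hypothesis $H_*(M)\simeq H_*(S^3)$ is essential, since it makes all leaves null-homologous), and if one could propagate this through an induction on the filtration while controlling the error terms. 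That statement, not the formal analogue of Lemma \ref{lem:LCS_I}, is the substance of Habiro's theorem; saying that ``once the dictionary is in place the theorem follows by the elementary implication'' conceals exactly this point. Indeed, the paper's own discussion of the Goussarov--Habiro Conjecture shows that the purely formal group-ring reduction, applied to homology cylinders, is an instance of the Dimension Subgroup Problem and only yields the weak version with a loss of degree \cite{Mas07}; the sharp statement is open there, so the sharp bound for $\mathcal{S}$ cannot be a formal consequence of clasper calculus alone and must exploit the special features of homology spheres. For a complete argument you should follow \cite{Mas05}, or the knot case carried out in \cite{Hab00}.
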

 
 \noindent
    Thus, Corollary \ref{cor:Y_2} and Theorem \ref{th:Y_34} are  proved by identifying
all (the few) finite-type invariants of homology $3$-spheres of degrees 1, 2 and 3.
 
 \begin{proof}[About the proof of Theorem \ref{th:Y_ZHS}]
 The theorem is announced in \cite{Hab00} and it is proved there in the analogous case of knots in $S^3$.
 See \cite{Mas05} for a proof, which involves clasper calculus.
 \end{proof}  

Let $\Sigma$ be a compact surface with one boundary component, and consider now 
the class $\calV_0 := \mathcal{IC}(\Sigma)$ of homology cylinders over $\Sigma$.
Except in the case $\Sigma=D^2$, it is not known whether the converse to Proposition~\ref{prop:Y_FTI}
holds true for $\mathcal{IC}(\Sigma)$.\\[-0.2cm]

\noindent
\textbf{Goussarov--Habiro Conjecture (GHC).}
\emph{Let $d\in \N^*$. Any two homology cylinders over $\Sigma$ are $Y_{d+1}$-equivalent if, and only if, 
 they are not distinguished by finite-type invariants of degree at most~$d$.}\\[-0.2cm]
 
Currently, the GHC is only known to be true up to degree $d=4$,
the most recent result in this direction being obtained in \cite{NSS21}.
By comparing Lemma \ref{lem:LCS_I} to Proposition \ref{prop:Y_FTI}, 
we see that the GHC is an analogue of the following  problem in group theory,
which can be stated for any group $G$.\\

\noindent
\textbf{Dimension  Subgroup Problem (DSP).}
\emph{Let $k\in \N^*$.  Determine the gap between $\Gamma_{k}  G$  and $(1+I^k) \cap G$ in $\Z[G]$.}\\[-0.2cm]

\noindent
It had been conjectured during a long time that the inclusion $\Gamma_{k}  G \subset(1+I^k) \cap G$ 
should be an equality, until Rips found the first counter-example for $k=4$ and a finite $2$-group $G$ \cite{Rips}.

In fact, the DSP can be generalized 
replacing the lower central series of $G$ by any series
$G=N_1G \supset N_2 G \supset N_3 G \supset \cdots$ of subgroups which is strongly central 
(i.e$.$ $[N_iG,N_j G] \subset N_{i+j}G$ for all $i,j\in \N^*$), and by replacing the $I$-adic filtration by an appropriate filtration of $\Z[G]$.
Furthermore, some versions of the DSP can be formulated in the group algebra $\mathbb{F}[G]$
for any commutative field $\mathbb{F}$, rather than in the group ring $\mathbb{Z}[G]$,
and these versions  of the problem have an explicit solution whose nature depends on the characteristic of $\mathbb{F}$.
(See, for instance, the monograph~\cite{Passi}.)

It is observed in \cite{Mas07} that some results of Goussarov \cite{Go00} and Habiro \cite{Hab00} 
about the $Y$-filtration on $\mathcal{IC}(\Sigma)$ can be interpreted as follows:
the GHC in degree $d$ is  an instance of the DSP for the group $G:= \mathcal{IC}(\Sigma)/Y_{d+1}$.
Thus,   analogues of the GHC for finite-type invariants with values  in commutative fields are obtained in  \cite{Mas07},
and the following weak version of the GHC is then derived:

\begin{theorem}[Massuyeau 2007]
Let $d\in \N^*$.  There exists an integer $D$, depending on $d$ and the topological type of $\Sigma$, with the following property: 
 if  two homology cylinders are not distinguished by finite-type invariants of degree at most~$D$,
then they are $Y_{d+1}$-equivalent.
\end{theorem}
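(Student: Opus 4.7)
The strategy is the one hinted at just above the statement: reduce the theorem to a classical dimension-subgroup statement for the group $G := \mathcal{IC}(\Sigma)/Y_{d+1}$. The Goussarov--Habiro results recalled in \S \ref{subsec:higher} guarantee that $G$ is a group and that the filtration induced on it by the images of the submonoids $Y_k\mathcal{IC}(\Sigma)$ is strongly central with abelian successive quotients; in particular $G$ is nilpotent of class at most $d$.

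The heart of the proof is to encode the finite-type filtration on $\mathcal{IC}(\Sigma)$ in terms of the augmentation ideal $I$ of $\mathbb{Z}[G]$. A key input from clasper calculus (\S \ref{subsec:claspers}) is that any $Y_k$-surgery on a homology cylinder $M$ can be rewritten as a monoidal composition $M\circ V$ where $V$ is an explicit ``$Y$-cylinder'' lying in $Y_k\mathcal{IC}(\Sigma)$ and supported in a collar of the top boundary. Using this, and adapting the line of argument sketched in the proof of Proposition \ref{prop:Y_FTI}, a map $F\colon \mathcal{IC}(\Sigma) \to A$ is of finite type of degree at most $D$ if and only if its $\mathbb{Z}$-linear extension vanishes on the ``surgery ideal'' $\mathcal{J}_{D+1} \subset \mathbb{Z}[\mathcal{IC}(\Sigma)]$ generated by the $(D+1)$-fold products $(V_0-1)\cdots(V_D-1)$ with $V_i\in \mathcal{IC}(\Sigma)$. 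Under the canonical ring surjection $\pi\colon \mathbb{Z}[\mathcal{IC}(\Sigma)] \twoheadrightarrow \mathbb{Z}[G]$, one verifies $\pi(\mathcal{J}_{D+1}) = I^{D+1}$. Hence the hypothesis that $M$ and $M'$ are indistinguishable by all finite-type invariants of degree at most $D$ (with arbitrary abelian targets) is equivalent to $[M] - [M'] \in I^{D+1}$ in $\mathbb{Z}[G]$, i.e.\ to $[M]\cdot [M']^{-1}$ lying in the dimension subgroup $(1+I^{D+1})\cap G$.

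It remains to invoke the algebraic fact, applicable to the nilpotent group $G$ (see the monograph \cite{Passi}): the dimension subgroups of $G$ form a decreasing sequence with trivial intersection, and stabilize at $\{1\}$ after finitely many steps. Choosing $D = D(d, \Sigma)$ with $(1+I^{D+1})\cap G = \{1\}$, the hypothesis on $M,M'$ forces $[M]\cdot [M']^{-1} = 1$ in $G$, i.e.\ $M\sim_{Y_{d+1}} M'$. The main obstacle is the middle step: matching the finite-type filtration on $\mathbb{Z}[\mathcal{IC}(\Sigma)]$ with the augmentation filtration on $\mathbb{Z}[G]$ is delicate because composition in $\mathcal{IC}(\Sigma)$ is not literally a surgery, so one must exploit the full battery of clasper-calculus operations $(\mathcal{O}_0)$--$(\mathcal{O}_5)$ of \S \ref{subsec:claspers} (together with the variants due to Goussarov and Habiro) in order to freely interchange Torelli twists and monoidal compositions without increasing degrees modulo $Y_{d+1}$.
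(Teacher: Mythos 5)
Your overall framing coincides with that of \cite{Mas07} (reduce the statement to the Dimension Subgroup Problem for the nilpotent group $G:=\mathcal{IC}(\Sigma)/Y_{d+1}$), but the decisive final step is a genuine gap. You invoke, as a quotable ``algebraic fact'' from \cite{Passi}, that the \emph{integral} dimension subgroups $(1+I^{k})\cap G$ of a nilpotent group are eventually trivial. No such general theorem is available: the integral DSP is precisely the delicate problem (cf$.$ Rips' counterexample), and for a nilpotent group with no finiteness restriction on its torsion the assertion is unjustified (and, as stated, doubtful). What actually makes the argument work, and what \cite{Mas07} really uses, is twofold: (a) $G$ is a \emph{finitely generated} nilpotent group --- finite generation of the graded quotients $Y_k\mathcal{IC}(\Sigma)/Y_{k+1}$ is a non-trivial output of the clasper calculus of Goussarov and Habiro which you never establish --- so its torsion subgroup is finite, of bounded exponent and involving finitely many primes; and (b) the \emph{solved} field-coefficient versions of the DSP (Jennings' theorem over $\Q$, the Jennings--Zassenhaus description over $\Z/p$), applied to finite-type invariants with values in $\Q$ and in $\Z/p$ for those finitely many primes, which force any element of $(1+I^{D+1})\cap G$ to be a torsion element of order prime to every relevant prime, hence trivial, for an explicit $D$ depending on $d$ and on $\Sigma$. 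Your appeal to \cite{Passi} hides exactly this content, and without it no integer $D$ with the required dependence can be extracted; this is the whole quantitative point of the theorem.

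A secondary problem lies in your middle step. The claim that ``any $Y_k$-surgery on a homology cylinder $M$ can be rewritten as a monoidal composition $M\circ V$ with $V$ supported in a collar of the top boundary'' is false in general: a graph clasper sitting in the interior of $M$ cannot be isotoped into a boundary collar, and already for $k=1$ not every $Y_1$-surgery on $M$ is of the form $M\circ V$ (that would make $M$ invertible in many cases where it is not). What is true --- and is exactly the content of the results of Goussarov \cite{Go00} and Habiro \cite{Hab00} reinterpreted in \cite{Mas07} --- is a graded comparison: the bracket (finite-type) filtration of $\Z[\mathcal{IC}(\Sigma)]$ defined by disjoint Torelli twists and the filtration by powers of the ``$Y$-ideal'' agree up to the relevant orders, so that indistinguishability by degree $\leq D$ invariants translates into $[M]-[M']\in I^{D+1}$ in $\Z[G]$. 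You correctly flag this as the main obstacle, but the fix you propose (collar-pushing plus the operations $(\mathcal{O}_0)$--$(\mathcal{O}_5)$) does not by itself yield it; one must invoke those theorems of Goussarov and Habiro, as the paper does, rather than derive them from an isotopy argument.
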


\noindent
We mention the following corollary: 
two homology cylinders  are not distinguished by finite-type invariants if, and only if, 
they are $Y_{k}$-equivalent for any integer $k\geq 1$. 
Actually, it is conjectured that finite-type invariants classify homology cylinders
(and, in particular, homology $3$-spheres).

We conclude with two questions which naturally arise from our discussion on Theorem \ref{th:Y_ZHS} 
and its expected generalization, namely the GHC.

\begin{itemize}[label=$\diamond$]
\item \emph{Does one know well enough all finite-type invariants of a given degree $d$?} 
For homology $3$-spheres, one can construct infinite  series of finite-type invariants  following Ohtsuki's original idea \cite{Oht95},
by appropriate expansions of quantum invariants.
Furthermore, there is a very powerful invariant of homology $3$-spheres:
the \emph{LMO invariant} \cite{LMO}, which is known to be universal  among $\Q$-valued finite-type invariants \cite{Le}
and to dominate large families of quantum invariants \cite{KLO}.
For homology cylinders too,
there is a universal  $\Q$-valued finite-type invariant: the \emph{LMO homomorphism} defined on the monoid $\mathcal{IC}(\Sigma)$,
which  allows for an explicit diagrammatic description of the Lie algebra $\operatorname{Gr}^Y\!\mathcal{IC}(\Sigma)$ 
with rational coefficients \cite{CHM,HM09}. (See \cite{HM12} for a survey.)
But computing those universal invariants is a challenge in high degrees (despite their combinatorial construction)
and, moreover, it is not known whether they dominate all finite-type invariants (including those with values in torsion abelian groups).
Nevertheless, recent works of Nozaki, Sato \& Suzuki provide encouraging perspectives  \cite{NSS20,NSS21}. \\
\item \emph{Can we hope an analogue of Theorem \ref{th:Y_ZHS}  for  arbitrary  closed $3$-manifolds?}
The answer is  trivially ``\emph{yes}'' in degree $0$, but it is certainly ``\emph{no}'' in higher degrees: for instance,
$\sharp^4 (S^1 \times S^2)$ and $(S^1 \times S^1 \times S^1) \sharp (S^1 \times S^2)$ are not $Y_2$-equivalent
(because their cohomology rings are not isomorphic), 
although they are not distinguished by finite-type invariants of degree at most one \cite[Ex$.$ 3.4]{Mas07}.
Yet, this negative answer is not necessarily disappointing.
It rather suggests that the notion of finite-type invariant (as given in Definition \ref{def:FTI}) is not appropriate
for homologically non-trivial  $3$-manifolds: the notion probably needs to be refined, 
by adding a kind of homological structures to $3$-manifolds, 
like a (complex) spin structure or a parametrization of its first homology group.
 \end{itemize}


\begin{thebibliography}{100}


\bibitem{AL}
E. Auclair \& C. Lescop,
\emph{Clover calculus for homology 3-spheres via basic algebraic topology.}
Algebr. Geom. Topol. 5 (2005), 71--106. 

\bibitem{Baer}
R. Baer,
\emph{Isotopie von Kurven auf orientierbaren, geschlossenen Fl\"achen und ihr Zusamenhang mit der topologischen Deformation der Fl\"achen}.
 J. Reine Angew. Math. 159 (1928), 101--116.

\bibitem{Bing}
 R. H. Bing, 
 \emph{Necessary and sufficient conditions that a 3-manifold be $S^3$}. 
 Ann. of Math. (2) 68 (1958), 17--37. 
 
 \bibitem{Bir71}
J. Birman, 
\emph{On {S}iegel's modular group}. 
{Math.  Ann.} {191} (1971), 59--68.

\bibitem{Bir74}
J. Birman, 
\emph{Poincaré's conjecture and the homeotopy group of a closed, orientable 2-manifold.} 
Collection of articles dedicated to the memory of Hanna Neumann, VI. J. Austral. Math. Soc. 17 (1974), 214--221.
 
 \bibitem{Birman}
 J. Birman,
\emph{Braids, links, and mapping class groups}. 
Princeton University Press, Princeton, N.J.; University of
Tokyo Press, Tokyo, 1974, Annals of Mathematics Studies, No. 82.

\bibitem{BC}
J. Birman \& R. Craggs, 
\emph{The $\mu$-invariant of 3-manifolds and certain structural properties of the group of homeomorphisms of a closed, oriented 2-manifold}. 
Trans. Amer. Math. Soc. 237 (1978), 283--309.

\bibitem{Casson}
 A. Casson, 
 \emph{Lecture notes}. MSRI Lectures (1985), Berkeley.
 
 \bibitem{CHM}
 D. Cheptea, K. Habiro \& G. Massuyeau,
 \emph{A functorial LMO invariant for Lagrangian cobordisms.} 
 Geom. Topol. 12 (2008), 1091--1170. 

\bibitem{CEP}
T. Church, M. Ershov \& A. Putman,
\emph{On finite generation of the Johnson filtrations.}
 J. Eur. Math. Soc.  24 (2022), no. 8, 2875--2914. 

\bibitem{CGO}
T. Cochran, A. Gerges \& K. Orr,
\emph{Dehn surgery equivalence relations on 3-manifolds.}
Math. Proc. Cambridge Philos. Soc. 131 (2001), no. 1, 97--127. 

\bibitem{CM}
T. Cochran \& P. Melvin,
\emph{Finite type invariants of 3-manifolds.} 
Invent. Math. 140 (2000), no. 1, 45--100.

\bibitem{Dehn}
M.~{Dehn}, 
\emph{{Die Gruppe der Abbildungsklassen. (Das arithmetische Feld auf
  Fl\"achen.)}} {Acta Math.} {69} (1938), 135--206.
  
\bibitem{DM}
F. Deloup \& G. Massuyeau,  
\emph{Quadratic functions and complex spin structures on 3-manifolds.} 
Topology 44:3 (2005) 509--555.

\bibitem{Durfee}
A. Durfee, 
\emph{Bilinear and quadratic forms on torsion modules}.
 Adv. in Math. 25 (1977), 133--164.

\bibitem{EH}
 M. Ershov \& S. He, 
 \emph{On finiteness properties of the Johnson filtrations}.
 Duke Math. J. 167 (2018), no. 9, 1713--1759.
 
 \bibitem{Faes_thesis}
 Q. Faes,
\emph{ Equivalence relations among homology 3-spheres and the Johnson filtration}.
Thèse de doctorat de l'Universit\'e de Bourgogne Franche--Comté, 2021.

 \bibitem{Faes22}
 Q. Faes,
 \emph{Triviality of the $J_4$-equivalence among homology 3-spheres}.
Trans. Amer. Math. Soc. 375 (2022), 6597--6620. 

\bibitem{FM}  
 B. Farb \& D. Margalit, 
 \emph{A primer on mapping class groups}. 
 Princeton Mathematical Series, vol. 49, Princeton University Press, Princeton, NJ, 2012.
 
 \bibitem{Gar96}
 S. Garoufalidis,
 \emph{On finite type 3-manifold invariants. I.}
 J. Knot Theory Ramifications 5 (1996), no. 4, 441--461.

\bibitem{Gar02}
 S. Garoufalidis,
\emph{The mystery of the brane relation.}
J. Knot Theory Ramifications 11 (2002), no. 5, 725--737. 
 
\bibitem{GGP}
S. Garoufalidis, M. Goussarov \& M. Polyak,
\emph{Calculus of clovers and finite type invariants of 3-manifolds.} 
Geom. Topol. 5 (2001) 75--108.

\bibitem{GL97}
S. Garoufalidis \& J. Levine, 
\emph{Finite type invariants, the mapping class group and blinks}.
 J. Diff. Geom. 47 (1997), 257--320.
 
\bibitem{GL98}
S. Garoufalidis \& J. Levine,
\emph{Finite type 3-manifold invariants and the structure of the Torelli group. I.}
Invent. Math. 131 (1998), no. 3, 541--594.

\bibitem{GL96}
S. Garoufalidis \& J. Levine,
\emph{On finite type 3-manifold invariants. II.}
Math. Ann. 306 (1996), no. 4, 691--718. 
 
 \bibitem{GL05}
S. Garoufalidis \& J. Levine, 
\emph{Tree-level invariants of three-manifolds, Massey products and the
Johnson homomorphism.} 
In Graphs and patterns in mathematics and theoretical physics, volume 73
of Proc. Sympos. Pure Math., pages 173--203. Amer. Math. Soc., Providence, RI, 2005.
 
 \bibitem{Go99} 
 M. Goussarov, 
 \emph{Finite type invariants and $n$-equivalence of $3$-manifolds}. 
 Compt. Rend. Ac. Sc. Paris 329 Série I (1999), 517--522.
  
 \bibitem{Go00} 
 M. Goussarov,  
\emph{Variations of knotted graphs. The geometric technique of $n$-equivalence.} 
Algebra i Analiz 12 (2000), no. 4, 79--125.


 \bibitem{Habegger}
 N. Habegger, 
 \emph{Milnor, Johnson, and tree level perturbative invariants}.
 Preprint (2000) available at \texttt{https://www.math.sciences.univ-nantes.fr/$\sim$habegger/}.
 
 \bibitem{Hab00}
 K. Habiro, 
 \emph{Claspers and finite type invariants of links}. 
 Geom. Topol. 4 (2000), 1--83.
 
 \bibitem{HM09} 
K. Habiro \& G. Massuyeau, 
\emph{Symplectic Jacobi diagrams and the Lie algebra of homology cylinders.} 
J. Topology 2:3 (2009) 527--569. 
 
\bibitem{HM12} 
K. Habiro \& G. Massuyeau, 
\emph{From mapping class groups to monoids of homology cobordisms: a survey.} 
Handbook of Teichm\"uller theory, Volume III, 465--529. Ed. A. Papadopoulos, IRMA Lect. Math. Theor. Phys., 17, Eur. Math. Soc., Z\"urich, 2012.
 
 
\bibitem{Heap}
 A. Heap,
 \emph{Bordism invariants of the mapping class group.} 
 Topology  45 (2006), no$.$ 5, 851--886.
 
 \bibitem{Hain}
 R. Hain,
\emph{Infinitesimal presentations of the Torelli groups.}
 J. Amer. Math. Soc. 10 (1997), no. 3, 597--651.
 
\bibitem{Hilden} 
H. Hilden,
\emph{Representations of homology 3-spheres.}
Pacific J. Math. 94 (1981), no. 1, 125--129. 

\bibitem{IO}
K. Igusa \& K. Orr,
\emph{Links, pictures and the homology of nilpotent groups.}
 Topology  40 (2001), no$.$ 6, 1125--1166.

\bibitem{Joh_to_Bir} 
D. Johnson,
\emph{Letter to J. Birman of 10 March 1977. Short description of machinery to be used in forthcoming paper.}
Available at \texttt{https://celebratio.org/Birman\textunderscore JS/article/640/}.
 
\bibitem{Joh79} 
D. Johnson, 
\emph{Homeomorphisms of a surface which act trivially on homology.}
{Proc. Amer. Math. Soc.} 75 (1979), no. 1, 119--125.

\bibitem{Joh80} 
D. Johnson, 
\emph{Quadratic forms and the Birman–Craggs homomorphisms.} 
Trans. Amer. Math. Soc. 261 (1980), no. 1, 235--254.

\bibitem{Joh83a}
D. Johnson, 
\emph{The structure of the {T}orelli group. {I}. {A} finite set of
  generators for {${\calI}$} }. \emph{Ann. of Math. (2)} {118} (1983), no.~3, 423--442.
  
 \bibitem{Joh83b}
D. Johnson, 
\emph{A survey of the {T}orelli group}, in \emph{Low-dimensional
  topology ({S}an {F}rancisco, {C}alif., 1981)}, Contemp. Math., vol.~20, Amer.
  Math. Soc., Providence, RI, 1983, p.~165--179.
  
\bibitem{Joh85a}
D.~Johnson,
\emph{The structure of the {T}orelli group. {II}. {A} characterization of
  the group generated by twists on bounding curves.}
{ Topology} 24 (1985), no. 2, 113--126. 

\bibitem{Joh85b}
D.~Johnson,
\emph{The structure of the {T}orelli group. {III}. {T}he abelianization of {$\mathcal{I}$}.}
{ Topology} 24 (1985), no. 2, 127--144.

\bibitem{KK}
A. Kawauchi \& S. Kojima. 
\emph{Algebraic classification of linking pairings on $3$-manifolds.}
Math. Ann. 253 (1980), no$.$ 1, 29--42.

\bibitem{Kirby} 
 R. Kirby, 
 \emph{The topology of $4$-manifolds.} 
 Lecture Notes in Mathematics, 1374. Springer-Verlag, Berlin, 1989.

\bibitem{KP}
M. Kneser \&  D. Puppe,
\emph{Quadratische Formen und Verschlingungsinvarianten von Knoten.} 
Math. Z. 58 (1953), 376--384. 

\bibitem{KLO}
T.  Kuriya, T. Le  \& T. Ohtsuki, 
\emph{The perturbative invariants of rational homology $3$-spheres can be recovered from the LMO invariant.} 
J. Topol. 5 (2012), no. 2, 458--484.

\bibitem{Kyle}
R. Kyle,  \emph{Branched covering spaces and the quadratic forms of links.}
Ann. of Math. 59 (1954), no$.$ 2, 539--548. 


\bibitem{Humphries}
S. P. Humphries, 
\emph{Generators for the mapping class group}, Topology
  of low-dimensional manifolds ({P}roc. {S}econd {S}ussex {C}onf., {C}helwood
  {G}ate, 1977), Lecture Notes in Math., vol. 722, Springer, Berlin, 1979, pp.~44--47. 

\bibitem{KT}
G. Kuperberg \& D. Thurston,
\emph{Perturbative 3-manifold invariants by cut-and-paste topology.}
Preprint (1999) \texttt{arXiv:math/9912167}.

\bibitem{LL}
J. Lannes \& F. Latour, 
\emph{Signature modulo 8 des variétés de dimension $4k$ dont le bord est stablement parall\'elis\'e}.
 Compt. Rend. Ac. Sc. Paris 279 S\'erie A (1974), 705--707.
 
\bibitem{Le} 
T. Le, 
\emph{An invariant of integral homology 3-spheres which is universal for all finite type invariants.} 
Solitons, geometry, and topology: on the crossroad, 75--100, Amer. Math. Soc. Transl. Ser. 2, 179, Adv. Math. Sci., 33, 
Amer. Math. Soc., Providence, RI, 1997. 
 
 \bibitem{LMO}
T. Le, J. Murakami \& T. Ohtsuki, 
\emph{On a universal perturbative invariant of 3-manifolds.} 
Topology 37 (1998), no. 3, 539--574.

\bibitem{Lescop}
C. Lescop, 
\emph{A sum formula for the Casson--Walker invariant.} 
Invent. Math. 133 (1998), no.~3, 613--681. 

\bibitem{Lev01}
J. Levine,
\emph{Homology cylinders: an enlargement of the mapping class group.}
 Alg$.$ Geom$.$ Topol$.$ 1 (2001), no$.$ 1, 243--270.

\bibitem{Lev06}
J. Levine,
\emph{The Lagrangian filtration of the mapping class group and finite-type invariants of homology spheres.}
 Math$.$  Proc$.$  Cambridge Phil$.$ Soc$.$  141 (2006), no$.$ 2, 303--315.


\bibitem{Lickorish}
W.~B.~R. Lickorish, 
\emph{A finite set of generators for the homeotopy group of
  a {$2$}-manifold}. Proc. Cambridge Philos. Soc. {60} (1964), 769--778.
 
\bibitem{MM}
D.  McCullough \& A. Miller,
\emph{The genus 2 Torelli group is not finitely generated.}
Topology Appl. 22 (1986), no. 1, 43--49. 

\bibitem{Magnus}
W. Magnus,
\emph{\"Uber Beziehungen zwischen höheren Kommutatoren.}
J. Reine Angew. Math. 177 (1937), 105--115. 

\bibitem{Mas03-}
G. Massuyeau,
\emph{Spin Borromean surgeries.} 
Trans. Amer. Math. Soc. 355 (2003),  no$.$ 10, 3991--4017.

\bibitem{Mas03}
G. Massuyeau,
\emph{Cohomology rings, Rochlin function, linking pairing and the Goussarov-Habiro theory of three-manifolds.} 
Algebr. Geom. Topol. 3 (2003), 1139--1166.


\bibitem{Mas05}
G. Massuyeau,
\emph{Claspers and finite-type invariants of $3$-manifolds}.
Handwritten notes of lectures given at the University of Z\"urich (2005),
available at \texttt{https://massuyea.perso.math.cnrs.fr/notes.html}.

\bibitem{Mas07}
G.~Massuyeau,
\emph{Finite-type invariants of 3-manifolds and the dimension subgroup  problem.}
 J. Lond. Math. Soc. (2) 75 (2007),  no$.$ 3, 791--811.
 
 \bibitem{Mas11}
G.~Massuyeau,
\emph{An introduction to the abelian Reidemeister torsion of three-dimensional manifolds.} 
Ann. Math. Blaise Pascal 18:1 (2011) 61--140. 

 \bibitem{Mas12}
G.~Massuyeau,
\emph{Infinitesimal Morita homomorphisms and the tree-level of the LMO invariant.} 
Bull. Soc. Math. France 140:1 (2012) 101--161.

\bibitem{MM03}
G. Massuyeau \& J.--B. Meilhan,  
\emph{Characterization of $Y_2$-equivalence for homology cylinders.} 
J. Knot Th. Ramifications 12:4 (2003), 493--522.

\bibitem{MM13}
G. Massuyeau \& J.--B. Meilhan,  
 \emph{Equivalence relations for homology cylinders and the core of the Casson invariant.} 
 Trans. Amer. Math. Soc. 365:10 (2013), 5431--5502.
 
 \bibitem{Matveev}
S. Matveev, \emph{Generalized surgery of three-dimensional manifolds and representations of homology spheres}. 
Mat. Zametki 42 n$^\circ$2 (1987), 268--278. (English translation in: Math. Notices Acad. Sci. USSR, 42:2).

\bibitem{MY}
J.--B. Meilhan \& A. Yasuhara,
 \emph{Arrow calculus for welded and classical links.} 
 Algebr. Geom. Topol. 19 (2019), no. 1, 397--456. 

 \bibitem{Milnor1}  
J. Milnor, 
\emph{Morse theory.}
Annals of Mathematics Studies, No. 51.
Princeton University Press, Princeton, N.J., 1963.

\bibitem{Milnor2}  
J. Milnor, 
\emph{Lectures on the h-cobordism theorem}.
 Princeton University Press, Princeton, N.J. 1965.
 

 
 \bibitem{MS}
 J. Morgan \& D. Sullivan, 
 \emph{The transversality characteristic class and linking cycles in surgery theory.} 
Ann. of Math. II Ser. 99 (1974), 463--544.
 
 \bibitem{Mor89}
S.~Morita,
\emph{Casson’s invariant for homology 3-spheres and characteristic classes of surface bundles I.} 
Topology 28 (1989), no$.$ 3, 305--323.

 \bibitem{Mor91}
S.~Morita,
\emph{On the structure of the Torelli group and the Casson invariant.} 
Topology 30  (1991), 603--621.

\bibitem{Mor93}
S.~Morita,
\emph{Abelian quotients of subgroups of the mapping class group of surfaces.}
Duke Math. J. 70 (1993), no. 3, 699--726.

\bibitem{Mor98}
S.~Morita,
\emph{Structure of the mapping class groups of surfaces: a survey and a prospect.} 
Proceedings of the Kirbyfest (Berkeley, CA, 1998), 349--406, Geom. Topol. Monogr., 2, Geom. Topol. Publ., Coventry, 1999.
 
 \bibitem{MSS}
 S.~Morita, T. Sakasai \& M. Suzuki,
 \emph{Torelli group, Johnson kernel, and invariants of homology spheres.} 
 Quantum Topol. 11 (2020), no. 2, 379--410. 
 
 \bibitem{Mous12}
D. Moussard, 
\emph{Finite type invariants of rational homology 3-spheres.}
Algebr. Geom. Topol. 12 (2012), no. 4, 2389--2428. 

 \bibitem{Mous15}
D. Moussard, 
\emph{Realizing isomorphisms between first homology groups of closed 3-manifolds by borromean surgeries.} 
J. Knot Theory Ramifications 24 (2015), no. 4, 1550024, 

\bibitem{MN}
H. Murakami \&  Y. Nakanishi, 
\emph{On a certain move generating link-homology.} 
Math. Ann. 284 (1989), 75--89.

\bibitem{NSS20}
Y. Nozaki, M. Sato \& M. Suzuki, 
\emph{Abelian quotients of the $Y$--filtration on the homology cylinders via the LMO functor.}
 Geom. Topol. 26 (2022), no. 1, 221--282.

\bibitem{NSS21}
Y. Nozaki, M. Sato \& M. Suzuki, 
\emph{On the kernel of the surgery map restricted to the $1$-loop part.}
 J. Topol. 15 (2022), no. 2, 587--619. 

\bibitem{Oht95}
T. Ohtsuki,
\emph{A polynomial invariant of integral homology 3-spheres.}
Math. Proc. Cambridge Philos. Soc. 117 (1995), no. 1, 83--112. 

\bibitem{Oht96}
T. Ohtsuki,
\emph{Finite type invariants of integral homology 3-spheres}.
J. Knot Theory Ramifications 5 (1996), no$.$ 1, 101--115.

\bibitem{Oht02}
T. Ohtsuki,
\emph{Quantum invariants. A study of knots, 3-manifolds, and their sets.}. 
Volume 29 of Series on Knots and Everything. World Scientific
Publishing Co. Inc., River Edge, NJ, 2002.

\bibitem{Passi} 
I. Passi, 
\emph{Group rings and their augmentation ideals}. 
LNM 715, Springer--Verlag, Berlin, 1979.

\bibitem{Pi08}
W. Pitsch, 
\emph{Integral homology 3-spheres and the Johnson filtration.} 
Trans. Amer. Math. Soc. 360 (2008), no. 6, 2825--2847.
 
 \bibitem{Pow78}
 J. Powell, 
 \emph{Two theorems on the mapping class group of a surface}.
 Proc. Amer. Math. Soc. 68 (1978), no. 3, 347--350.

\bibitem{Rips}
E. Rips, 
\emph{On the fourth integer dimension subgroup}. 
Israel J. Math. 12 (1972), 342--346.

\bibitem{Rochlin}
V. Rochlin,   
\emph{A three-dimensional manifold is the boundary of a four-dimensional one.}
 Doklady Akad. Nauk SSSR (N.S.)  81  (1951), 355--357.

\bibitem{Rourke}
C. Rourke, 
\emph{A new proof that $\Omega_3$ is zero}.
J. London Math. Soc. (2) 31 (1985), no. 2, 373--376. 

\bibitem{Satoh}
T. Satoh,
\emph{On the Johnson homomorphisms of the mapping class groups of surfaces.}
 Handbook of group actions. Vol. I, 373--407,
Adv. Lect. Math. (ALM), 31, Int. Press, Somerville, MA, 2015. 

\bibitem{Saveliev}
N. Saveliev,
\emph{Invariants for homology 3-spheres.}
Encyclopaedia of Mathematical Sciences, 140. Low-Dimensional Topology, I. 
Springer-Verlag, Berlin, 2002.

\bibitem{ST}
H. Seifert \& W. Threlfall,
\emph{Seifert and Threlfall: a textbook of topology.}
Translated from the German edition of 1934 by Michael A. Goldman. With a preface by Joan S. Birman. 
Pure and Applied Mathematics, 89. Academic Press, Inc., New York-London, 1980.

\bibitem{Stanford}
T.   Stanford,
\emph{Braid commutators and Vassiliev invariants.}
 Pacific J. Math. 174 (1996), no.~1, 269--276.

\bibitem{Sul75}
D. Sullivan, 
\emph{On the intersection ring of compact three manifolds.} 
Topology 14 (1975), no$.$~3, 275--277.

\bibitem{Swarup}
G. Swarup, 
\emph{On a theorem of C. B. Thomas.} 
J. London Math. Soc.  8 (1974), 13--21.

\bibitem{Thom51}  
R. Thom,
\emph{Quelques propri\'et\'es des vari\'et\'es-bords.}
Colloque de Topologie de Strasbourg, 1951, no. V, 
 10 pp. La Biblioth\`eque Nationale et Universitaire de Strasbourg,  1952.
 
\bibitem{Thom52}  
R. Thom,  
\emph{Sur les vari\'et\'es cobordantes.}
Colloque de topologie et g\'eom\'etrie diff\'erentielle, Strasbourg, 1952, no. 7, 
 4 pp. La Biblioth\`eque Nationale et Universitaire de Strasbourg,  1953.

\bibitem{Thom54}  
R. Thom, 
\emph{Quelques propri\'et\'es globales des vari\'et\'es diff\'erentiables.}
Comment. Math. Helv.  28  (1954), 17--86.

\bibitem{Thomas}
C. Thomas,
\emph{The oriented homotopy type of compact 3-manifolds.} 
Proc. London Math. Soc.  19  (1969), 31--44.

\bibitem{Tur84}
V. Turaev, 
\emph{Cohomology rings, linking coefficient forms and invariants of spin structures in three-dimensional manifolds.} 
Math. USSR Sb. 48 (1984), 65--79.

\bibitem{Tur82}
V. Turaev, 
\emph{Nilpotent homotopy types of closed 3-manifolds.} 
In Topology (Leningrad, 1982),
volume 1060 of Lecture Notes in Math., pages 355--366. Springer, Berlin, 1984.

\bibitem{Wall63}
C. Wall,
\emph{Quadratic forms on finite groups, and related topics.} 
Topology  2 (1963), 281--298.

\bibitem{Wall72}
C. Wall,
\emph{Quadratic forms on finite groups. II.}
Bull. London Math. Soc. 4 (1972), 156--160.

\bibitem{Wallace}
A.  Wallace, 
\emph{Modifications and cobounding manifolds.}
Canadian J. Math. 12 (1960), 503--528. 

\end{thebibliography}
\end{document}